\newcommand*\circled[1]{\tikz[baseline=(char.base)]{
            \node[shape=circle,dashed,draw,inner sep=2pt] (char) {#1};}}
\newcommand{\pp}{\mathbb{P}}
\newcommand{\zz}{\mathbb{Z}}
\newcommand{\na}[3]{\boldsymbol{a}_{#2,#3}^{#1}} 
\newcommand{\naa}[2]{\boldsymbol{a}_{#2}^{#1}} 
\newcommand{\nb}[3]{\boldsymbol{b}_{#2,#3}^{#1}} 
\newcommand{\nbb}[2]{\boldsymbol{b}_{#2}^{#1}} 
\newcommand{\x}[3]{x_{#1,#2}(#3)} 
\newcommand{\s}{s} 
\newcommand{\C}{\mathcal{C}}
\newcommand{\E}{\mathcal{E}}
\newcommand{\F}{\mathcal{F}}
\renewcommand{\H}{\mathcal{H}}
\renewcommand{\L}{\mathcal{L}}
\newcommand{\M}{\mathcal{M}}
\renewcommand{\O}{\mathcal{O}}
\renewcommand{\P}{\mathcal{P}}
\newcommand{\Q}{\mathcal{Q}}
\newcommand{\T}{\mathcal{T}}
\newcommand{\N}{\mathcal{N}}
\newcommand{\X}{\mathcal{X}}
\newcommand{\W}{\mathcal{W}}
\newcommand{\Res}{\text{Res}}
\newcommand{\pr}{\operatorname{pr}}
\newcommand{\Id}{\operatorname{Id}}
\newcommand{\ch}{\operatorname{ch}}
\newcommand{\Spec}{\operatorname{Spec}}
\newcommand{\Pic}{\operatorname{Pic}}
\renewcommand{\Im}{\operatorname{Im}}
\newcommand{\coker}{\operatorname{coker}}
\newcommand{\codim}{\operatorname{codim}}
\newcommand{\rk}{\operatorname{rk}}
\newcommand{\ev}{\mathrm{ev}}
\renewcommand{\bar}{\overline}
\newcommand{\Gr}{\operatorname{Gr}}
\newcommand{\Fr}{\operatorname{Fr}}
\newcommand{\fixme}[1]{}
\newcommand{\isabel}[1]{}
\newcommand{\hannah}[1]{}
\newcommand{\eric}[1]{}
\newcommand{\defi}[1]{\textsf{#1}} 
\newtheorem{thm}{Theorem}[section]
\newtheorem{lem}[thm]{Lemma}
\newtheorem{prop}[thm]{Proposition}
\newtheorem{cor}[thm]{Corollary}
\theoremstyle{definition}
\newtheorem{defin}[thm]{Definition}
\newtheorem{example}[thm]{Example}
\newtheorem*{question*}{Question}
\newtheorem{question}[thm]{Question}
\theoremstyle{remark}
\newtheorem{rem}{Remark}
\title{Global Brill--Noether Theory over the Hurwitz Space}
\author{Eric Larson, Hannah Larson, and Isabel Vogt}
\thanks{During the preparation of this article E.L.\ and I.V.\ were supported by
NSF MSPRF grants DMS-1802908 and DMS-1902743 respectively and H.L.\ was supported by the Hertz Foundation and NSF GRFP under grant DGE-1656518.}
\newlength{\myindent}
\begin{document}
\maketitle

\begin{abstract}
Let $C$ be a curve of genus $g$. A fundamental problem in the theory of
algebraic curves is to understand maps $C \to \pp^r$ of specified degree $d$.
When $C$ is general, the moduli space of such maps is well-understood by the
main theorems of Brill--Noether theory. 
Despite much study over the past three decades, a similarly complete
picture has proved elusive for curves of fixed gonality.
Here we complete such a picture, by proving analogs of all of the main theorems of Brill--Noether theory
in this setting.
As a corollary, we prove a conjecture of Eisenbud and Schreyer regarding versal deformation spaces of vector bundles on $\pp^1$.
\end{abstract}

\setcounter{tocdepth}{1}
\tableofcontents
\setcounter{tocdepth}{2}

\section{Introduction}

The notion of a (complex) algebraic curve without reference to an embedding
in projective space was developed in the 19th century. Ever since, a fundamental problem in algebraic geometry ---
whose study goes back at least to Riemann in 1851 \cite{riemann} -- has been:

\begin{question*}
Given an algebraic curve $C$, what is the geometry of the space of maps $C \to \pp^r$ of given degree $d$?
\end{question*}

The data of such a map is equivalent to a
line bundle $\mathcal{L}$ on $C$ of degree $d$, equipped with an $(r + 1)$-dimensional
basepoint-free space of sections $V \subseteq H^0(C, \mathcal{L})$.
A central object of study is therefore the \defi{Brill--Noether locus $W^r_d(C)$}
defined by
\[W^r_d(C) \colonequals \{\text{line bundles $\mathcal{L}$ on $C$ with $h^0(C, \mathcal{L}) \geq r + 1$}\} \subseteq \Pic^d(C).\]

When $C$ is a \emph{general} curve of genus $g$,
the fundamental results of Brill--Noether theory from the 1970s and 1980s give a good description of the geometry of $W^r_d(C)$.
Namely, $W^r_d(C)$ is\ldots
\begin{enumerate}
\item \label{expdim} Of the expected dimension $\rho = g - (r + 1)(g + r - d)$
and nonempty if and only if $\rho \geq 0$.
(Griffiths and Harris in 1980 \cite{bn})
\item \label{smooth} Normal and Cohen--Macaulay, and is smooth away from $W^{r + 1}_d(C)$. (Geiseker in 1982 \cite{gp})
\item \label{class} Of class
\[[W^r_d(C)] = \prod_{\alpha = 0}^r \frac{\alpha !}{(g - d + r + \alpha)!} \cdot \theta^{(r + 1)(g - d + r)}.\]
(Independently by Kempf in 1971 \cite{kempf}, and by Kleiman and Laksov in 1972 \cite{kl})
\item \label{irred} Irreducible if $\rho > 0$. (Fulton and Lazarsfeld in 1981 \cite{fl})
\item \label{monodromy} Reducible if $\rho = 0$, by \eqref{class},
except if $(d, r) = (0, 0)$ or $(d, r) = (2g - 2, g - 1)$.
Nonetheless, when $\rho \geq 0$, the \emph{universal} $\W^r_d$ has a \emph{unique} irreducible component dominating
the moduli space of curves. (Eisenbud and Harris in 1987 \cite{im})
\end{enumerate}

However, in nature, curves $C$ are often encountered already equipped with a map $C \to \pp^{r_0}$.
It is thus natural to ask how the presence of a \emph{given} map $C \to \pp^{r_0}$ --- which may force $C$ to not be general ---
affects the moduli spaces of \emph{other} maps $C \to \pp^r$.
The simplest case of this problem is when $r_0 = 1$, i.e.\ when $C$ is general among curves of fixed gonality $k$.
Unsurprisingly, therefore, the following question has received much attention from the 1990s to the present day:

\begin{question} \label{question-hurwitz}
Given a general degree $k$ genus $g$ cover $f \colon C \to \pp^1$, what is the geometry of $W^r_d(C)$?
\end{question}

For $k = 2$ and $3$, classical results answer this question (in fact for \emph{every} curve of genus $g$):
the case of hyperelliptic curves is a famous result of Clifford \cite{Clifford} from 1878;
the case of trigonal curves was answered by Maroni \cite{maroni} in 1946 (for further interpretations see also \cite{MS, trig}).
Partial progress has been made  when $k=4$ (by Coppens--Martens \cite{CM00} in 2000), and when $k=5$ (by Park \cite{park} in 2002).
Upper bounds on the dimension of $W^r_d(C)$ were given for odd $k$ by Martens \cite{M96} in 1996 and for arbitrary $k$ by Ballico--Keem \cite{BK} in 1996.
Moreover, for arbitrary $k$, the dimension of all components of $W^1_d(C)$ were determined by Coppens--Keem--Martens (in 1994 \cite{CKM}). Later, Coppens--Martens showed that $W^r_d(C)$ has a component of the expected dimension $\rho$ when $d - g < r \leq k-2$ (in 1999 \cite{CM99}), and that  $W^r_d(C)$ has components of the ``wrong dimension" $\rho(g, \alpha - 1, d) - (r - \alpha + 1)k$ for $\alpha$ dividing $r$ or $r + 1$  (in 2002 \cite{CM02}).
In 2016, Pflueger \cite{Pf} proved that a maximum over formulas of this type provide an upper bound
\begin{equation} \label{pfl}
\dim W^r_d(C) \leq \rho_k(g, r, d) \colonequals \max_{\ell \in \{0, \ldots, r'\}} \rho(g, r - \ell, d) - \ell k,
\end{equation}
where $r' \colonequals \min\{r, g-d+r -1\}$. The value where the above maximum is attained need not satisfy the divisibility conditions of Coppens--Martens. Nevertheless, in 2017 Jensen--Ranganathan \cite{JR} proved that equality holds in \eqref{pfl}, determining the dimension of the largest component.

In 2019, H.\ Larson \cite{refinedBN} and Cook-Powell--Jensen \cite{CPJ} independently proposed that these multiple components of varying dimensions are explained by splitting loci.
Indeed, if $f \colon C \to \pp^1$ is a $k$-gonal curve,
the condition $h^0(C, \mathcal{L}) \geq r + 1$ is equivalent to $h^0(\pp^1, f_* \mathcal{L}) \geq r + 1$.
The Brill--Noether locus $W^r_d(C)$ therefore splits into a union of \defi{Brill--Noether splitting loci}
$W^{\vec{e}}(C)$
corresponding to the possible splitting types $\vec{e}$
of the pushforward.
Namely if $\vec{e} = (e_1, \ldots, e_k)$ is a splitting type,
then write $\O(\vec{e}) \colonequals \O_{\pp^1}(e_1) \oplus \cdots \oplus \O_{\pp^1}(e_k)$, and define:
\[W^{\vec{e}} (C) = \{\text{line bundles $\mathcal{L}$ on $C$ with $f_* \mathcal{L} \simeq \O(\vec{e})$ or a specialization thereof}\} \subseteq \Pic^d(C).\]
In this language, we have
\[W^r_d(C) = \bigcup_{h^0(\O(\vec{e})) \geq r + 1} W^{\vec{e}}(C).\]

It thus natural to instead ask whether the Brill--Noether splitting loci $W^{\vec{e}}(C)$
satisfy analogs of \eqref{expdim}--\eqref{monodromy}.
An analog of \eqref{expdim} is known,
and there has been progress towards analogs of \eqref{smooth} and \eqref{class}. Namely,
it is known that $W^{\vec{e}}(C)$ is\ldots
\begin{enumerate}
\item[(1$'$)] Of the expected dimension $\rho' \colonequals g - u(\vec{e})$,
where $u(\vec{e}) \colonequals h^1(\operatorname{End}(\O(\vec{e}))) = \sum_{e_i < e_j} e_j - e_i - 1$,
and nonempty if and only if $\rho' \geq 0$. \\
(Independently by H.\ Larson in 2019 \cite{refinedBN}, and by Cook-Powell--Jensen in 2019/2020 \cite{CPJ, CPJ2}.)
\item[(2$_\circ'$)] Smooth away from the union of $W^{\vec{e}'}(C)$, for $\vec{e}'$ a specialization of $\vec{e}$. (H.\ Larson \cite{refinedBN})
\item[(3$_\circ'$)] Of class
\[[W^{\vec{e}}(C)] = \frac{N(\vec{e})}{u(\vec{e})!} \cdot \theta^{u(\vec{e})},\]
for some unknown integer $N(\vec{e})$ depending on $\vec{e}$ but not on $g$.
(H.\ Larson \cite{refinedBN})
\end{enumerate}

With the exception of (3$_\circ'$) above --- which follows from a structure theorem on the Chow ring of
the moduli stack of vector bundles on $\pp^1$ obtained in \cite{P1} ---
the principal tool in the study of $W^{\vec{e}}(C)$ thus far has been degeneration: Given
an element of $W^{\vec{e}}(C)$ on a general smooth $k$-gonal curve,
one can study the limiting behavior as the curve $C$ is specialized.
The central difficulty with this approach has been the lack of a ``regeneration theorem'':
Given the sort of object that looks like it might be a limit, we had
no way of showing that it was indeed a limit.
Thus, ``local'' information about the loci $W^{\vec{e}}(C)$
(e.g.\ smoothness, dimension, etc.)\ was accessible via degeneration, but ``global'' information
(irreducibility, class, etc.)\ remained inaccessible.

The central innovation of the present paper is to establish such a regeneration theorem,
thus enabling a degenerative study of \emph{global} information about $W^{\vec{e}}(C)$.
As a consequence, we obtain the following results, which provide a good answer to
Question~\ref{question-hurwitz}:

\begin{thm} \label{thm:main}
Suppose that the characteristic of the ground field is zero, or greater than $k$.
Let $f \colon C \to \pp^1$ be a general degree~$k$ cover of genus~$g$,
and let $\vec{e}$ be any splitting type.
\begin{enumerate}
\item[(2$'$)] \label{ncm} $W^{\vec{e}}(C)$ is normal and Cohen--Macaulay, and is smooth away from the union of the splitting loci
$W^{\vec{e}'}(C) \subset W^{\vec{e}}(C)$ having codimension $2$ or more. 
\item[(3$'$)] \label{NeTag} The integers $N(\vec{e})$ can be described in terms of
a well-studied problem in the theory of Coxeter groups (see Theorem~\ref{thm:enum} below
for a more precise statement).
\item[(4$'$)] $W^{\vec{e}}(C)$ is irreducible when $\rho' > 0$.
\item[(5$'$)] When $\rho' \geq 0$, the universal $\W^{\vec{e}}$ has a \emph{unique} component dominating
the Hurwitz space $\H_{k,g}$ of degree $k$ genus $g$ covers of $\pp^1$.
\end{enumerate}
See Remark \ref{rem:char} for more details on the characteristic assumptions.
\end{thm}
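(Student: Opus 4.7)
\begin{skproof}
The plan is to treat statements (2$'$)--(5$'$) uniformly, combining the already-known local results (1$'$)--(3$_\circ'$) with a new regeneration (smoothing) theorem. The setup is as follows: pushing forward the universal line bundle over the universal Picard scheme $\Pic^d(\C/\H_{k,g})$ produces a classifying morphism $\Phi$ to the moduli stack $\mathfrak{B}$ of rank-$k$ vector bundles on $\pp^1$, under which the universal splitting locus $\W^{\vec{e}}$ is the preimage of the universal splitting stratum $\Sigma^{\vec{e}} \subseteq \mathfrak{B}$. The stratum $\Sigma^{\vec{e}}$ is already well-understood: it has a Cohen--Macaulay determinantal local model, its singular locus is exactly the union of the deeper strata, and its Chow class was computed in \cite{P1}. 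The strategy is to transport these properties to $\W^{\vec{e}}$ (and thence to its fibers $W^{\vec{e}}(C)$) along $\Phi$.

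The main new input, and the step I expect to be the hardest, is a \emph{regeneration theorem}: if $f_0 \colon C_0 \to \pp^1$ is a suitable nodal degeneration of a smooth $k$-gonal cover and $\mathcal{L}_0$ is an admissible limit line bundle on $C_0$ with $(f_0)_* \mathcal{L}_0$ of splitting type $\vec{e}$, then $(f_0, \mathcal{L}_0)$ is smoothable, with the splitting type preserved. I would attack this through the deformation theory of the pair, identifying the obstructions to deforming $(f_0, \mathcal{L}_0)$ with obstructions pulled back from $\mathfrak{B}$, so that vanishing reduces to a statement about the tangent/obstruction spaces of $\Sigma^{\vec{e}}$. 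Equivalently, regeneration is the statement that $\Phi|_{\W^{\vec{e}}}$ is formally smooth along the open stratum of $\Sigma^{\vec{e}}$; this ingredient has been missing from every previous degenerative analysis of splitting loci, and it is what lets one upgrade local to global information.

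With regeneration in hand, (2$'$) follows by smooth pullback: normality, Cohen--Macaulay-ness, and the explicit singular locus of $\Sigma^{\vec{e}}$ pass to $\W^{\vec{e}}$ along $\Phi$. The codimension-$2$ clause follows because in the determinantal local model the codimension-$1$ boundary strata of $\Sigma^{\vec{e}}$ sit inside its smooth locus, so no codimension-$1$ sub-splitting-locus contributes to the singular locus of $W^{\vec{e}}(C)$. Statement (3$'$) then follows by pulling back $[\Sigma^{\vec{e}}]$ under the fiberwise map $\Pic^d(C) \to \mathfrak{B}$: the presentation of the Chow ring of $\mathfrak{B}$ from \cite{P1} expresses $N(\vec{e})$ as a universal combinatorial integer, which I would identify with an enumeration of reduced words in the symmetric group $S_k$ (or an affine relative), matching Theorem~\ref{thm:enum}.

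Finally, (5$'$) and (4$'$) follow from a boundary-degeneration argument enabled by regeneration. Specializing $\H_{k,g}$ to a stratum of admissible covers --- for instance by attaching an elliptic tail compatible with the $k$-to-$1$ structure, or by degenerating to a chain of elliptic curves as in \cite{JR} --- and using the known combinatorial description of splitting loci on the limit cover, one shows that regeneration forces the components of $\W^{\vec{e}}$ dominating $\H_{k,g}$ to biject with a single distinguished component of the limiting locus, giving the uniqueness statement (5$'$). For $\rho' > 0$, a monodromy/connectedness argument on top of (5$'$) --- bootstrapping from irreducibility of $\H_{k,g}$ and the formal-smoothness local model provided by regeneration --- implies that the general fiber of the unique dominant component is geometrically irreducible, giving (4$'$).
\end{skproof}
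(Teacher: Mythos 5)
There are two genuine gaps in your proposal, and they are intertwined: you have misidentified what the regeneration theorem needs to say, and as a result your mechanism for transporting global information does not work.

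First, the regeneration theorem you propose --- formal smoothness of $\Phi|_{\W^{\vec{e}}}$ along the open stratum of $\Sigma^{\vec{e}}$ --- is not a new ingredient at all: it is essentially what \cite{refinedBN} already proved (Theorem 1.2 there; the paper cites it directly in the proof of Corollary~\ref{cor:ESconj}), and it is exactly what yields the ``local'' results (1$'$)--(3$_\circ'$) that the paper takes as known. The paper's actual Theorem~\ref{regen} is about something fundamentally different: whether an $\vec{e}$-positive limit line bundle on the \emph{nodal} chain of elliptic curves $X = E^1 \cup \cdots \cup E^g$ lifts to a line bundle with pushforward of splitting type $\vec{e}$ on the nearby smooth $k$-gonal curve. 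On the nodal curve there is no pushforward to $\pp^1$, no morphism to $\mathfrak{B}$, and no ``splitting type'' in your sense --- only the combinatorial data of an $\vec{e}$-positive limit line bundle (Definition~\ref{def-epos}), and the difficulty is precisely that the associated limit linear series has the ``wrong'' dimension, so naive equation-counting in the style of Eisenbud--Harris produces too many equations. The paper gets around this by exploiting the affine-symmetric-group combinatorics of $k$-core tableaux (Sections~\ref{sec:epos}--\ref{sec:combin}) to show the limit linear series decouples into pieces that can be regenerated almost independently; nothing in your sketch replaces this.

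Second, and consequently, smooth pullback from $\Sigma^{\vec{e}}$ cannot give you (3$'$), (4$'$), or (5$'$). Smoothness of $\Phi$ transports local properties (normality, CM-ness, the singular locus --- i.e.\ (2$'$), which the paper does essentially this way in Section~\ref{sec:norm}), but it says nothing about irreducibility, connectedness, or monodromy: a variety smooth over an irreducible base need not be irreducible, and $\Phi$ is not proper, so you cannot control the component structure of the fibers. The known class formula $[W^{\vec{e}}(C)] = N(\vec{e})/u(\vec{e})! \cdot \theta^{u(\vec{e})}$ from \cite{P1} leaves $N(\vec{e})$ undetermined; its identification with $R(w(\vec{e}))$ in Theorem~\ref{thm:enum} comes from counting the reduced finite set $W^{\vec{e}}(X)$ on the central fiber (which is in bijection with reduced words by the tableau classification) and then using regeneration plus reducedness (Sections~\ref{regeneration-st}--\ref{sec:red_CM}) to equate this with the count on the general fiber. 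Likewise (4$'$) in the paper is Serre's criterion (CM $+$ $R_1$ gives normality, Theorem~\ref{normal}) combined with a separate \emph{connectedness} argument (Theorem~\ref{thm:conn}) that runs through the braid-move structure on tableaux, and (5$'$) is an explicit monodromy computation (Section~\ref{sec:mon}) realizing those braid moves geometrically inside $\H_{k,2,2}$ and $\H_{k,3,2}$. Your ``biject with a single distinguished component of the limiting locus'' is exactly the claim that needs a proof, and the proof has to come from somewhere other than the formal smoothness of $\Phi$.
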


It turns out that part (2$'$) of Theorem~\ref{thm:main} implies a conjecture of Eisenbud and Schreyer regarding the equations of splitting loci on versal deformation spaces. Suppose $\vec{e}' \leq \vec{e}$; let $\F$ on $\pp^1 \times \mathrm{Def}(\O(\vec{e}'))$ be the versal deformation of $\O(\vec{e}')$. 
The subscheme $\Sigma_{\vec{e}} \subseteq \mathrm{Def}(\O(\vec{e}'))$, defined by the Fitting support for $\rk R^1 \pi_* \F(m) \geq h^1(\pp^1, \O(\vec{e})(m))$, is clearly supported on the splitting locus for splitting type $\vec{e}$ or worse. Eisenbud and Schreyer conjecture that $\Sigma_{\vec{e}}$ is reduced (Conjecture 5.1 \cite{ES}).

\begin{cor} \label{cor:ESconj}
The splitting locus $\Sigma_{\vec{e}}$ is normal and Cohen--Macaulay (and hence reduced).
\end{cor}
\begin{proof}
Let $f\colon C \rightarrow \pp^1$ be a general cover of genus $g \geq u(\vec{e}')$ and let $L \in W^{\vec{e}'}(C)$. By \cite{refinedBN}, the induced map from $\Pic^d(C)$ near $L$ to $\mathrm{Def}(f_*L) = \mathrm{Def}(\O(\vec{e}'))$ is smooth. Thus, the fact that $W^{\vec{e}}(C)$ (whose scheme structure shall be defined by the appropriate Fitting supports) is normal and Cohen--Macaulay implies $\Sigma_{\vec{e}}$ is normal and Cohen--Macaulay.
\end{proof}

To further explain (3$'$), let $W$ be a Coxeter group with generating set $S$,
and let $w \in W$ be an element. Define
\[R(w) \colonequals \text{number of reduced words for $(W, S)$ equal to $w$}.\]

Determination of the integers $R(w)$ is a well-studied problem in combinatorics,
starting with Stanley's computation of $R(w)$ for Coxeter groups of type $A$
(i.e.\ the symmetric groups), and his proposal for a systematic study of $R(w)$
for other Coxeter groups, in 1984 \cite{stanley}.
This problem has since been solved completely for other finite Coxeter groups ---
including of type B by Haiman in 1992 \cite{hai}, and of type D by
Billey and Haiman in 1995 \cite{bh} --- and partial progress has been made
for some infinite Coxeter groups by Eriksson, Fan, and Stembridge in a series of papers from the late 1990s
\cite{eriksson, fan, fanstem, stem1, stem2, stem3}.

Of particular relevance to us are the Coxeter systems of type $\tilde{A}$, known as \emph{affine
symmetric groups}. Explicitly, these are groups generated by elements
$s_j$ with $j \in \zz/k\zz$, subject to relations
\[s_j^2 = 1, \quad s_j s_{j'} = s_{j'} s_j \ \text{if $j - j' \neq \pm 1$}, \quad \text{and} \quad (s_j s_{j + 1})^3 = 1.\]
Alternatively, elements of the affine symmetric group can be realized as permutations $f\colon \zz \to \zz$
such that
\[f(x + k) = f(x) + k \quad \text{and} \quad \sum_{x = 1}^k f(x) = \sum_{x = 1}^k x = \frac{k(k + 1)}{2};\]
here $s_j$ corresponds to the simple transposition defined by
\[f(x) = \begin{cases}
x + 1 & \text{if $x \equiv j \mod k$;} \\
x - 1 & \text{if $x \equiv j + 1 \mod k$;} \\
x & \text{otherwise.}
\end{cases}\]
For the affine symmetric group, Eriksson \cite{eriksson}
gave recursive formulas for $R(w)$,
and showed that for fixed $k$ the generating function for $R(w)$ is rational.

We relate the components of the Brill--Noether splitting locus on the central fiber to reduced words in the affine symmetric group.
As a consequence of our regeneration theorem,
the count of points (when $\rho' = 0$) on the general fiber is equal to the count on the central fiber. Therefore we obtain:

\begin{thm} \label{thm:enum}
Given a splitting type $\vec{e}$, define $w(\vec{e})$ to be the affine symmetric group element
that sends (for $1 \leq \ell \leq k$):
\[\ell \mapsto \chi(\O(\vec{e})(-e_{k + 1 - \ell})) - \#\{\ell' : e_{\ell'} \geq e_{k + 1 -\ell}\} + \# \{\ell' : \ell' \geq k + 1 - \ell \ \text{and} \ e_{\ell'} = e_{k+1-\ell}\}.\]
Then
\[N(\vec{e}) = R(w(\vec{e})).\]
\end{thm}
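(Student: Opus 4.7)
The plan is a two-stage argument: first reduce the computation of $N(\vec{e})$ to a count of points on a degeneration, then identify that count combinatorially. By (3$_\circ'$), $N(\vec{e})$ depends only on $\vec{e}$ and not on $g$, so I specialize to $g = u(\vec{e})$, making $\rho' = 0$. In this case $W^{\vec{e}}(C)$ is zero-dimensional on a general $k$-gonal curve, and by part (2$'$) of Theorem~\ref{thm:main} each isolated point is smooth (deeper strata have codimension at least $1$ and are therefore empty here), so $N(\vec{e})$ is an honest unweighted point count. The regeneration theorem -- the central innovation of this paper -- then lets me compute this count on the central fiber of a degeneration of the Hurwitz family, say to $f_0 \colon C_0 \to \pp^1$ where $C_0$ is a suitably chosen chain of $g$ elliptic or rational bridges (the same kind of degeneration used in establishing (1$'$)).

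The isolated limits in $W^{\vec{e}}(C_0)$ are classified by the local twisting data at the $g$ nodes of $C_0$. At each node, only finitely many local twists modify the splitting type of the push-forward, and these correspond to the simple reflections $s_j$ of the affine symmetric group $\tilde S_k$ acting on the set of splitting types of fixed total degree. In this language, a limit on $C_0$ is recorded by a sequence $(s_{j_1}, \ldots, s_{j_g})$ of node-by-node simple reflections, and such a sequence is admissible -- i.e.\ actually gives a limit with the prescribed splitting type $\vec{e}$ -- exactly when the partial products strictly increase in length and the full product $s_{j_1} \cdots s_{j_g}$ equals the affine permutation carrying the balanced splitting type to $\vec{e}$. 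That permutation, read in one-line notation, is precisely $w(\vec{e})$ from the statement: the $\chi$-term records the Euler-characteristic bookkeeping for the relevant twist, and the two correction terms enforce the lexicographic convention that breaks ties among equal parts of $\vec{e}$. Its length is $u(\vec{e}) = g$, so admissible sequences coincide with reduced words of length $g$ for $w(\vec{e})$.

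Combining the two ingredients yields $N(\vec{e}) = R(w(\vec{e}))$. The principal obstacle is the second paragraph: identifying the local combinatorics of limits at a node of $C_0$ with the generators $s_j$ of $\tilde S_k$, and verifying that non-reduced expressions correspond exactly to non-contributing limits. This is effectively an affine Schubert-calculus analysis of the local model (an affine-Grassmannian-type computation of how splitting types of rank-$k$ bundles on $\pp^1$ degenerate), together with a careful bookkeeping argument showing that the one-line notation of $w(\vec{e})$ -- with its $\chi$-term and two indicator-sum correction terms -- is forced by how equal parts of $\vec{e}$ interact with the reduction to the balanced type. The length identity $\ell(w(\vec{e})) = u(\vec{e}) = \sum_{e_i < e_j}(e_j - e_i - 1)$ can be verified directly from the inversion formula for affine permutations and provides a useful consistency check before carrying out the bijection.
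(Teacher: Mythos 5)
Your proposal matches the paper's strategy exactly at the architectural level: specialize to $g = u(\vec{e})$, degenerate to a chain of $g$ elliptic curves, use the regeneration theorem (together with reducedness of $W^{\vec{e}}(X)$, or equivalently smoothness of the isolated points via part (2$'$) of Theorem~\ref{thm:main}) to transfer the count to the central fiber, and identify that count with $R(w(\vec{e}))$. Your reading of the one-line notation for $w(\vec{e})$---the $\chi$-term, the count of $e_{\ell'} \geq e_{k+1-\ell}$, and the tie-breaking indicator among equal parts---also agrees exactly with Proposition~\ref{endpos}. You rightly flag the central-fiber combinatorics as ``the principal obstacle''; this is precisely the content of Sections~\ref{sec:epos}--\ref{sec:red_CM}. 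Where you gesture toward an affine-Grassmannian / Schubert-calculus analysis of the local model at a node, the paper works elementarily: it classifies $\vec{e}$-positive limit line bundles via the vanishing-order invariants $a^i_n$ (Definition~\ref{defi-ain}), records the forced equalities as a $k$-regular filling of the $k$-staircase $\Gamma(\vec{e})$ (Proposition~\ref{li-value}), reduces to efficient fillings (Proposition~\ref{efficient-only}), and then imports the Lapointe--Morse dictionary between efficient $k$-core tableaux and reduced words (Section~\ref{tab-word}). Both routes would yield the same bijection, but the paper's avoids any affine Grassmannian machinery; the genuinely hard new content your sketch leaves open is the regeneration theorem (Theorem~\ref{regen}) showing every tableau-indexed locus actually arises as a limit, and the reducedness theorem (Theorem~\ref{thm:reduced}) ensuring the count is an honest one.
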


In particular, the integers $N(\vec{e})$ grow rapidly, and may be easily computed
in any desired case using Eriksson's recursions mentioned above.
For example, $N(2, 7, 18, 18, 28, 28)$ is the integer
\[25867977167969459670048709047628541850991022718608668059259099938720 \approx 2.6 \cdot 10^{67}.\]
One can also check that the description of $N(\vec{e})$ in Theorem~\ref{thm:enum} agrees with the conjectural
value of $N(\vec{e})$ proposed by Cook-Powel--Jensen, and hence proves Conjecture~1.6 of \cite{CPJ2}.

\subsection{Overview of Techniques}
The degeneration we will use is to a chain of elliptic curves, as described in Section~\ref{our_degen}.
In Section~\ref{sec:lim-line}, we identify the sorts of objects that look like they might be a limit
of line bundles in $W^{\vec{e}}(C)$; we call these \defi{$\vec{e}$-positive limit line bundles}.

\vspace{1.5pt}

\noindent
\begin{minipage}{.4\textwidth}
\hspace{\myindent}This locus of $\vec{e}$-positive limit line bundles has an intricate combinatorial structure: In Section \ref{sec:epos} we show that its components are in bijection with certain fillings of a certain Young diagram $\Gamma(\vec{e})$.  In Section \ref{sec:combin}, we relate these fillings to the reduced word problem for the affine symmetric group.  As a preview, for example, the splitting type $\vec{e} = (-2, 0, 0, 2)$ corresponds to the Young diagram to the right.  When $g=u(\vec{e}) =7$, there are six $\vec{e}$-positive
limit line bundles on the central fiber,
corresponding to six fillings, one of which is shown to the right.
\end{minipage}
\begin{minipage}{.55\textwidth}
\hspace{5pt}
\begin{tikzpicture}[scale=.5]

\draw[thick] (0,0) -- (5, 0);
\draw[thick] (2,-1) -- (5,-1);
\draw[thick] (1,-2) -- (2,-2);
\draw[thick] (0,-5) -- (1,-5);
\draw[thick] (0,0) -- (0,-5);
\draw[thick] (1,-2) -- (1,-5);
\draw[thick] (2,-1) -- (2,-2);
\draw[thick] (5,0) -- (5,-1);

{\small
\draw[<->] (6.75, -2.5+.75) -- (10.25, -2.5+.75);
\node at (11.75, -2.5+.75) {$w(\vec{e}) \colonequals $};
\node at (14.73, -1-.25+.75) {$1 \mapsto -4$};
\node at (14.5, -1.75-.25+.75) {$2 \mapsto 2$};
\node at (14.5, -2.5-.25+.75) {$3 \mapsto 3$};
\node at (14.5, -3.25-.25+.75) {$4 \mapsto 9$};
\draw [decorate,decoration={brace,amplitude=3pt, mirror}] (13.5, -1+.75) --(13.5, -4+.75);
}

\node at (8.5, -2.2+.75) {{\tiny corresponds}};
\node at (8.5, -2.7+.75) {{\tiny to}};

{\tiny \color{gray!70!black}
\draw [<->] (-.4, 0) -- (-.4, -5);
\node[rotate=90] at (-.85, -2.5) {$h^1(\O_{\pp^1}(\vec{e})(-2))$};

\draw [<->] (5.4, 0) -- (5.4, -1);
\node at (7, -.5) {$h^1(\O_{\pp^1}(\vec{e}))$};

\draw [<->] (0, .4) -- (5, .4);
\node at (2.5, .8) {$h^0(\O_{\pp^1}(\vec{e}))$};

\draw [<->] (0, -1.75) -- (2, -1.75);
\node at (3.1, -2.5) {$h^0(\O_{\pp^1}(\vec{e})(-1))$};

}
\end{tikzpicture}

\vspace{10pt}

\begin{tikzpicture}[scale=.5]
\hspace{5pt}
{\small
\node[rotate=90] at (-.85, -2.5) {{\color{white}$h^1(\O_{\pp^1}(\vec{e})(-2))$}};

\draw (0,0) -- (5, 0);
\draw (0,-1) -- (5,-1);
\draw (0,-2) -- (2,-2);
\draw (0,-3) -- (1,-3);
\draw (0,-4) -- (1,-4);
\draw (0,-5) -- (1,-5);
\draw (0,0) -- (0,-5);
\draw (1,0) -- (1,-5);
\draw (2,0) -- (2,-2);
\draw (3,0) -- (3,-1);
\draw (4,0) -- (4,-1);
\draw (5,0) -- (5,-1);
\draw (0.5,-0.5) node{1};
\draw (1.5,-0.5) node{3};
\draw (2.5,-0.5) node{4};
\draw (3.5,-0.5) node{6};
\draw (4.5,-0.5) node{7};
\draw (0.5,-1.5) node{2};
\draw (1.5,-1.5) node{7};
\draw (0.5,-2.5) node{4};
\draw (0.5,-3.5) node{5};
\draw (0.5,-4.5) node{7};

\node at (14, -2.5+.75) {$w(\vec{e}) = s_4s_3s_1s_2s_1s_3s_4$};
}

\draw[<->] (6.75, -2.5+.75) -- (10.25, -2.5+.75);
\node at (8.5, -2.2+.75) {{\tiny corresponds}};
\node at (8.5, -2.7+.75) {{\tiny to}};

\end{tikzpicture}
\end{minipage}

\vspace{2pt}

We then prove our regeneration theorem,
which is the heart of the paper since it provides the bridge between the combinatorics
of the central fiber and the geometry of the general fiber.
Because the components of $W^r_d$ have the ``wrong'' dimension,
naively applying the techniques used by Eisenbud and Harris to prove their regeneration theorem in \cite{lls}
necessarily produces too many equations.
Our key insight is that the combinatorial structure coming from the affine symmetric group
forces the limit linear series associated to a \emph{general}
$\vec{e}$-positive limit line bundle to ``break up'' into minimally-interacting
pieces that can be regenerated almost independently.
This allows us to avoid overcounting equations, and prove a regeneration theorem in Section~\ref{regeneration-st}.
However, this ``breaking up'' happens a priori only set-theoretically.
We then upgrade this to a scheme-theoretic regeneration theorem in Section~\ref{sec:red_CM} by showing that the
locus of $\vec{e}$-positive limit line bundles on the central fiber
is reduced.

Having established the regeneration theorem, we then deduce the fundamental global
geometric properties of Brill--Noether splitting loci in Sections~\ref{sec:conn}--\ref{sec:mon}.

\begin{rem}[A note on our ground field]\label{rem:char}
Since the conclusion of Theorem~\ref{thm:main} is geometric, we suppose for the remainder of the paper that our ground
field $K$ is algebraically closed.

The assumption that the characteristic of $K$ is zero or greater than $k$ is used only to guarantee the irreducibility
of $\H_{k,g}$ (as proved by Fulton in \cite{fulton}),
and hence to be able to state Theorem~\ref{thm:main} in terms of
a ``general" degree $k$ cover.
However, in any characteristic, the conclusions of Theorem \ref{thm:main} parts (2$'$)--(4$'$)
hold for \emph{some} component of $\H_{k,g}$.  In particular, Corollary~\ref{cor:ESconj} requires no hypotheses on the characteristic.
Moreover, in any characteristic not \emph{dividing} $k$, the conclusion of Theorem \ref{thm:main} part (5$'$)
holds for some component of $\H_{k,g}$.

The paper is organized so that characteristic assumptions are made as late as possible.  All of Sections \ref{our_degen} -- \ref{sec:norm} make no assumptions on the characteristic of the ground field.  Section \ref{sec:mon} assumes that the ground field has characteristic not dividing $k$.
\end{rem}

\begin{rem}[A note on Hurwitz spaces]
Our arguments show the a priori stronger statement
that there exists a smooth degree $k$ cover $f \colon C \to \pp^1$
\emph{with two points of total ramification} satisfying (2$'$)--(4$'$).
Moreover, in (5$'$), the Hurwitz space can be replaced with a component of the
stack $\H_{k, g, 2}$ parameterizing
degree $k$ genus $g$ covers of $\pp^1$ with two marked points of total ramification
(see Definition~\ref{def-h2kg}).
\end{rem}

\subsection*{Acknowledgements}

We would especially like to thank Kaelin Cook-Powell and Dave Jensen for suggesting the importance of $k$-staircase tableaux in this problem during a visit of the second author to the University of Kentucky in 2019. 
We would also like to thank Dan Abramovich, Renzo Cavalieri, Izzet Coskun, David Eisenbud, Joe Harris, Aaron Landesman, Yoav Len, Andrew Obus, Geoffrey Smith, and Ravi Vakil for helpful conversations and comments on an earlier version of this manuscript.

\section{Our Degeneration}\label{our_degen}

We will prove Theorem~\ref{thm:main} via degeneration to a chain
$X = E^1 \cup_{p^1} E^2 \cup_{p^2} \cdots \cup_{p^{g - 1}} E^g$
of $g$ elliptic curves:
\begin{center}
\begin{tikzpicture}
\draw (0, 0) .. controls (1, -1) and (2, -1) .. (3, 0);
\draw (2, 0) .. controls (3, -1) and (4, -1) .. (5, 0);
\draw (4, 0) .. controls (5, -1) and (6, -1) .. (7, 0);
\filldraw (7.8, -0.5) circle[radius=0.02];
\filldraw (7.5, -0.5) circle[radius=0.02];
\filldraw (7.2, -0.5) circle[radius=0.02];
\draw (8, 0) .. controls (9, -1) and (10, -1) .. (11, 0);
\draw (10, 0) .. controls (11, -1) and (12, -1) .. (13, 0);
\filldraw (0.5, -0.42) circle[radius=0.03];
\filldraw (2.5, -0.42) circle[radius=0.03];
\filldraw (4.5, -0.42) circle[radius=0.03];
\filldraw (10.5, -0.42) circle[radius=0.03];
\filldraw (12.5, -0.42) circle[radius=0.03];
\draw (0.5, -0.75) node{$p^0$};
\draw (2.5, -0.75) node{$p^1$};
\draw (4.5, -0.75) node{$p^2$};
\draw (10.5, -0.8) node{$p^{g - 1}$};
\draw (12.5, -0.8) node{$p^g$};
\draw (1.5, -0.95) node{$E^1$};
\draw (3.5, -0.95) node{$E^2$};
\draw (5.5, -0.95) node{$E^3$};
\draw (9.5, -0.95) node{$E^{g - 1}$};
\draw (11.5, -0.95) node{$E^g$};
\end{tikzpicture}
\end{center}
Let $f^i \colon E^i \to \pp^1$ be degree $k$ maps.
Pasting these maps together, we get a map $f \colon X \to P$, where $P$ denotes a chain of $g$
rational curves, attached at points $q^i = f(p^i)$:
\begin{center}
\begin{tikzpicture}
\draw[->] (7.5, 1.5) -- (7.5, 0.25);
\draw (0, 0) .. controls (1, -1) and (2, -1) .. (3, 0);
\draw (2, 0) .. controls (3, -1) and (4, -1) .. (5, 0);
\draw (4, 0) .. controls (5, -1) and (6, -1) .. (7, 0);
\filldraw (7.8, -0.5) circle[radius=0.02];
\filldraw (7.5, -0.5) circle[radius=0.02];
\filldraw (7.2, -0.5) circle[radius=0.02];
\draw (8, 0) .. controls (9, -1) and (10, -1) .. (11, 0);
\draw (10, 0) .. controls (11, -1) and (12, -1) .. (13, 0);
\filldraw (0.5, -0.42) circle[radius=0.03];
\filldraw (2.5, -0.42) circle[radius=0.03];
\filldraw (4.5, -0.42) circle[radius=0.03];
\filldraw (10.5, -0.42) circle[radius=0.03];
\filldraw (12.5, -0.42) circle[radius=0.03];
\draw (0.5, -0.75) node{$q^0$};
\draw (2.5, -0.75) node{$q^1$};
\draw (4.5, -0.75) node{$q^2$};
\draw (10.5, -0.8) node{$q^{g - 1}$};
\draw (12.5, -0.8) node{$q^g$};
\draw (1.5, -0.95) node{$\pp^1$};
\draw (3.5, -0.95) node{$\pp^1$};
\draw (5.5, -0.95) node{$\pp^1$};
\draw (9.5, -0.95) node{$\pp^1$};
\draw (11.5, -0.95) node{$\pp^1$};
\end{tikzpicture}
\end{center}

\emph{If all the $f^i$ are totally ramified at $p^{i-1}$ and $p^i$}, then the theory of admissible covers
implies that $f$ is a limit of smooth $k$-gonal curves.
(The theory of admissible covers was developed by Harris and Mumford
in characteristic zero \cite{admis}; see also Section~5 of~\cite{liu}
for a characteristic-independent proof of this fact.)
In other words,
there is a map $\mathfrak{f} \colon \mathcal{X} \to \mathcal{P}$
between families of curves of genus $g$ and $0$ respectively
over the base $B = \Spec K[[t]]$,
such that the general fiber of $\mathfrak{f}$ is a smooth $k$-gonal curve
and the special fiber of $\mathfrak{f}$ is $f$.
Moreover, we may suppose that the total space $\mathcal{X}$ is smooth,
that $\mathcal{P} \to B$ is the base-change of a family
$\mathcal{P}_0 \to B_0$ with smooth total space via a map $\beta \colon B \to B_0$,
and that $\mathfrak{f}$ is totally ramified along sections $\mathfrak{p}^0$ and $\mathfrak{p}^g$
of $\C \to B$
whose special fibers are $p^0$ and $p^g$ respectively.

A map $f^i \colon E^i \to \pp^1$, of degree $k$ totally ramified at $p^{i - 1}$ and $p^i$, exists if and only if $p^i - p^{i - 1} \in \Pic E^i$ is $k$-torsion.
To keep things as generic as possible,
we therefore suppose for the remainder of the paper that $p^i - p^{i - 1}$
has order \emph{exactly} $k$ in $\Pic E^i$.

\begin{rem}[A note on ``general'' degree $k$ covers] \label{rem:general}
By a \defi{general} degree $k$ cover, we mean one in a component of $\H_{k, g}$ containing the above deformation of $X$.
When the characteristic of the ground field is zero or greater than $k$,
then $\H_{k,g}$ is irreducible \cite{fulton}, so such a component is the entire Hurwitz space.
\end{rem}

\section{Limits of Line Bundles} \label{sec:lim-line}

In this section, let $\mathfrak{f} \colon \C \to \P \to B$ be a family of degree $k$ genus $g$ covers,
over a smooth irreducible base $B$, which is smooth over the generic point $B^*$,
and has smooth total space $\C$.
(Prior to Section~\ref{sec:mon}, the only case of interest will be when $B$ is the spectrum of a DVR.)
We suppose that \emph{all} fibers (including over non-closed points) of $\C \to B$
are \defi{chain curves}, i.e.\ of the form $C^1 \cup_{p^1} \cup \cdots \cup_{p^{n-1}} C^n$,
with all $C^i$ smooth.  (The integer $n$ will depend on which fiber we consider.)
Equivalently, all \emph{geometric} fibers of $\C \to B$ are chain curves,
and these chain curves can be oriented (i.e.\ the two ends can be distinguished)
in a way which is consistent over $B$. This second condition holds, in particular,
if $\C \to B$ has a section whose value at any geometric point
$C^1 \cup_{p^1} \cup \cdots \cup_{p^{n-1}} C^n$ is supported in $C^1 \smallsetminus \{p^1\}$
(which allows us to consistently pick which end of the chain is ``left'' and ``right'').

Similarly, we suppose that all fibers of $\P \to B$ are chain curves with all components $P^i \simeq \pp^1$,
and that the map $\mathfrak{f} \colon \C \to \P$ respects this structure.
Finally, we suppose that for each fiber the maps $f^i \colon C^i \to P^i$ are totally ramified at the nodes
$p^{i - 1}$ and $p^i$ (note that this condition is vacuous if $C$ is smooth).

Note that such covers include our degeneration $\X \to \P \to B$ from the previous section as the special case
where $B$ is the spectrum of a DVR and all $C^i$ have genus~$1$.  Similarly, this includes $\P_0 \xrightarrow{\sim} \P_0 \to B_0$
as the special case
where all $C^i$ have genus~$0$.

In this section, we address the following two fundamental questions:
\begin{enumerate}
\item  \label{q-limit}
Suppose $\mathcal{L}^*$ is a line bundle of degree $d$
on the generic fiber $\mathcal{C}^* = \mathcal{C} \times_B B^*$.
What data do we obtain on a special fiber over $b \in B$?
\item 
If $\mathfrak{f}_* \mathcal{L}^*$ has splitting type $\vec{e}$,
what conditions must this data on a special fiber satisfy?
\end{enumerate}
These questions are local on $B$. Shrinking $B$ if necessary, we may suppose that every component
of the singular locus $\Delta$ of $\mathfrak{f}$ meets the fiber over $b \in B$.
In other words, writing
\[C = \C \times_B b = C^1 \cup_{p^1} \cup \cdots \cup_{p^{n-1}} C^n,\]
every component of $\Delta$ contains some $p^i$.

We now turn to Question~\eqref{q-limit} above. Since $\mathcal{C}$ is smooth, we may extend $\mathcal{L}^*$ to a line bundle $\mathcal{L}$
on $\mathcal{C}$. However, this extension is only unique up to twisting by divisors on $\C$ that do not meet the generic fiber,
i.e.\ which do not dominate $B$. We now describe a basis for such divisors.

\medskip

Since $\C \to B$ is a family of chain curves, each component of $\Delta$ contains at most one $p^i$.
Because $\C \to B$ is a family of nodal curves, $\mathfrak{f} \colon \Delta \to B$ is unramified.
Moreover, 
because the versal deformation space of a node is $\Spec K[[x, y, t]] / (xy - t) \to \Spec K[[t]]$,
and the total space $\C$ is smooth,
the image under $\mathfrak{f}$ of any component of $\Delta$ is a smooth divisor in $B$.
Consequently, $\Delta$ is smooth of codimension~$2$ in $\C$.
Thus, each $p^i$ is contained in a unique component of $\Delta^i$.

Putting this together, there are exactly $n - 1$ components of $\Delta$,
one containing each node of $C$.
Label these components $\Delta^1, \Delta^2, \ldots, \Delta^{n - 1}$, so that $\Delta^i$ contains $p^i$.

Consider any component $S$ of $\mathfrak{f}(\Delta)$, and let $\{i_1, i_2, \ldots, i_{m(S)}\}$
denote the set of $i$ such that $\mathfrak{f}(\Delta^i) = S$.  (As we range through all components of $\mathfrak{f}(\Delta)$, these sets form a partition of $\{1, 2, \dots, n\}$.)
Then, because $\C \to B$ is a family of chain curves, $\mathfrak{f}^{-1}(S) = S_1 \cup S_2 \cup \cdots \cup S_{m(S) + 1}$
has exactly $m(S) + 1$ components, meeting pairwise along the $\Delta^{i_j}$:

\begin{center}
\begin{tikzpicture}
\node[above right] at (-0.1, 9.95) {$C$};

\draw (-0.1, 10.1) .. controls (0.2, 9.75) and (0.2, 9.25) .. (-0.1, 8.9);
\draw (-0.1, 9.1) .. controls (0.2, 8.75) and (0.2, 8.25) .. (-0.1, 7.9);
\draw (-0.1, 7.1) .. controls (0.2, 6.75) and (0.2, 6.25) .. (-0.1, 5.9);
\draw (-0.1, 6.1) .. controls (0.2, 5.75) and (0.2, 5.25) .. (-0.1, 4.9);
\draw (-0.1, 5.1) .. controls (0.2, 4.75) and (0.2, 4.25) .. (-0.1, 3.9);
\draw (-0.1, 4.1) .. controls (0.2, 3.75) and (0.2, 3.25) .. (-0.1, 2.9);
\draw (-0.1, 10.1) -- (-5, 11);
\draw (-0.03, 9) -- (-6, 9);
\draw (-0.03, 6) -- (-4, 7);
\draw (-0.03, 4) -- (-4, 3.75);
\draw (-0.1, 2.9) -- (-5, 3);

\node[left] at (-6, 9) {$\Delta^{i_{m(S)}}$};

\node[left] at (-4, 7) {$\Delta^{i_2}$};

\node[left] at (-4, 3.75) {$\Delta^{i_1}$};

\draw (-5 + .5, 11-0.09)  to[out=190, in=100, looseness=.5] (-6 + .5, 9);
\draw (-6 + .5, 9) to[out=350, in=135, looseness=1] (-4.1, 8.25);
\draw (-3.7, 7.75) to[out=315, in=70, looseness=.5] (-4+.5, 7-0.125);
\draw (-4+.5, 7-0.125) to[out=285, in=130, looseness=1] (-4+.5, 3.75+ 0.031);
\draw (-4+.5, 3.75+ 0.031) to[out=200, in=90, looseness=.5] (-5+.5, 3-0.01);

\node at (-3.87, 8) {$\ddots$};
\node at (-.1, 7.5) {$\vdots$};

\node at (-1.8, 3.3) {$S_1$};
\node at (-2.2, 5.25) {$S_2$};
\node at (-1.5, 6.75) {$S_3$};
\node at (-2, 8.65) {$S_{m(S)}$};
\node at (-3.25, 9.75) {$S_{m(S)+1}$};

\draw[decorate,decoration={brace,amplitude=10pt, mirror}] (-5.25, 7) -- (-5.25, 3);

\node[left] at (-5.5, 5.1) {$\Sigma^{i_2}$};

\draw (-6.5, 1) -- (-5, 2) -- (1, 2) -- (-.5, 1) --(-6.5, 1);

\draw[fill=black] (-.1, 1.75) circle (1.5pt);
\node[below] at (-.1, 1.75) {$b$};

\draw (-.1, 1.75) -- (-4.5, 1.6);

\node[below left] at (-4.5, 1.6) {$S$};

\node[right] at (1, 2) {$B$};

\end{tikzpicture}
\end{center}
As shown in the above diagram, these components are indexed so that:
\[S_j \cap C = \begin{cases}
C^1 \cup \cdots \cup C^{i_1} & \text{if $j = 1$;} \\
C^{i_{m(S)} + 1} \cup \cdots \cup C^n & \text{if $j = m(S) + 1$;} \\
C^{i_{j-1} + 1} \cup \cdots \cup C^{i_j} & \text{otherwise.}
\end{cases} \qquad \text{and} \qquad S_j \cap S_{j'} = \begin{cases}
\Delta^{i_j} & \text{if $j' = j + 1$;} \\
\emptyset & \text{if $j' > j + 1$.}
\end{cases}
\]
For $1 \leq j \leq m(S)$, we define
\[\Sigma^{i_j} = S_1 + S_2 + \cdots + S_j \quad \text{which satisfies} \quad \Sigma^{i_j} \cap C = C^1 + C^2 + \cdots + C^{i_j}.\]
By construction, every divisor on $\C$ supported on $\mathfrak{f}^{-1}(S)$
is a unique linear combination of the $\Sigma^{i_j}$ and $\mathfrak{f}^{-1}(S)$.
Repeating this construction for every component $S$ of $\mathfrak{f}(\Delta)$, we will have
defined divisors $\Sigma^i$ for all $1 \leq i \leq n - 1$.

\begin{example}
When $B$ is the spectrum of a DVR, and $b$ is the special fiber, then we have
$\Sigma^i = C^1 + C^2 + \cdots + C^i$.
\end{example}

Now suppose that $D$ is any irreducible divisor such that $\mathfrak{f}(D)$ is a divisor on $B$
not contained in $\mathfrak{f}(\Delta)$.
Then the generic fiber of $\C$ over $\mathfrak{f}(D)$ is irreducible, so $D$ is a multiple of
$\mathfrak{f}^{-1}(\mathfrak{f}(D))$.
Putting this together, we learn that any divisor on $\C$ that does not dominate $B$
can be written uniquely as a linear combination of the $\Sigma^i$ and the pullback of a divisor on $B$.

Note that twisting by the pullback of a divisor on $B$
does not change $\mathcal{L}|_C$, and that twisting by the $\Sigma^i$
changes the $\mathcal{L}|_{C^j}$ as follows:
\begin{equation}
\mathcal{L}(\Sigma^i)|_{C^j} \simeq \begin{cases}
\mathcal{L}|_{C^j}(p^i) & \text{if $j = i$;} \\
\mathcal{L}|_{C^j}(-p^i) & \text{if $j = i + 1$;} \\
\mathcal{L}|_{C^j} & \text{otherwise.}
\end{cases}
\end{equation}
In particular, for any \defi{degree distribution} $\vec{d} = (d^1, d^2, \ldots, d^n)$
with $d = \sum d^i$, there is an extension $\mathcal{L}_{\vec{d}}$ of $\mathcal{L}^*$ to $\mathcal{C}$
so that $\mathcal{L}_{\vec{d}}|_C$ has degree $\vec{d}$
(i.e.\ has degree $d^i$ on $C^i$), which is unique up to twisting by the pullback of a divisor on $B$.
Moreover, any one extension $\mathcal{L}_{\vec{d}}$ determines all other extensions
(up to pullbacks of divisors on $B$)
via the above relation.

Restricting to the fiber $C$ over $b$, we conclude that for each such degree distribution $\vec{d}$,
there is a unique limit $L_{\vec{d}} \colonequals \mathcal{L}_{\vec{d}}|_C$
of degree $\vec{d}$. Moreover, any one limit $L_{\vec{d}}$ determines all other limits
via repeatedly applying the relation:
\begin{equation} \label{twisting-game}
L_{(d^1, d^2, \ldots, d^i + 1, d^{i + 1} - 1, \ldots, d^g)}|_{C^j} \simeq \begin{cases}
L_{(d^1, d^2, \ldots, d^g)}|_{C^j}(p^i) & \text{if $j = i$;} \\
L_{(d^1, d^2, \ldots, d^g)}|_{C^j}(-p^i) & \text{if $j = i + 1$;} \\
L_{(d^1, d^2, \ldots, d^g)}|_{C^j} & \text{otherwise.}
\end{cases}
\end{equation}
The following definition thus encapsulates the data we obtain on any fiber:

\begin{defin} \label{limit-linebundle} Let
\[\Pic^d C \colonequals \frac{\bigsqcup_{\vec{d} : \sum d^i = d} \Pic^{\vec{d}} C}{\sim},\]
where $\sim$ denotes the equivalence relation generated by \eqref{twisting-game}.
We call elements $L$ of $\Pic^d C$ \defi{limit line bundles} of degree $d$,
and write $L_{\vec{d}}$ for the corresponding line bundle on $C$ of degree $\vec{d}$.

If $D = C^i \cup C^{i + 1} \cup \cdots \cup C^j \subset C$ is any connected curve,
we write $L^D$ for the ``restriction of $L$ to $D$ as a limit line bundle of degree $d$''.
More formally, for any degree distribution $(d^i, d^{i + 1}, \ldots, d^j)$ on $D$ with $d^i + d^{i + 1} + \cdots + d^j = d$,
we have
\[(L^D)|_{(d^i, d^{i + 1}, \ldots, d^j)} = L_{(0, \ldots, 0, d^i, d^{i + 1}, \ldots, d^j, 0, \ldots, 0)}|_D.\]
For ease of notation when $C = X$ (respectively $C = P$) is our chain of $g$ elliptic (respectively rational) curves,
we set $L^i = L^{E^i}$ (respectively $L^i = L^{P^i}$).
These are limit line bundles on smooth curves, which are just ordinary line bundles.
\end{defin}

In other words, if we fix a degree distribution $\vec{d}$ with $\sum d^i = d$,
then we have a natural isomorphism $\Pic^d C \simeq \Pic^{\vec{d}} C$;
but $\Pic^d C$ exists without fixing a degree distribution
(although its elements do not then yet correspond naturally to line bundles on $C$).
Note that $\Pic^d C$ is a torsor for $\Pic^\circ C \simeq \prod \Pic^0 C^i$,
and that there are natural tensor product maps $\Pic^{d_1} C \times \Pic^{d_2} C \to \Pic^{d_1 + d_2} C$.

\begin{example} \label{limit-Om}
Consider the family appearing in Section~\ref{our_degen}.
When $\mathcal{L}^* = \O_{\mathcal{C}^*}(m) \colonequals \mathfrak{f}^* \O_{\mathcal{P^*}}(m)$,
we obtain limit line bundles $\O_C(m)$. These can be described
in terms of the geometry of the central fiber alone: For instance,
if we fix the degree distribution $(mk, 0, \ldots, 0)$, we have
\[\O_C(m)_{(mk, 0, \ldots, 0)}|_{C^i} = \begin{cases}
\O_{C^1}(m) \colonequals (f^1)^* \O_{\pp^1}(m) & \text{if $i = 1$;} \\
\O_{C^i} & \text{otherwise.}
\end{cases}\]
By slight abuse of notation, we write $\O_{\mathcal{P}}(m)^i \colonequals \beta^* \O_{\mathcal{P}_0}(m)^i$,
where $\beta \colon \P \to \P_0$ is the base-change of $\beta \colon B \to B_0$ appearing in Section~\ref{our_degen}.
\end{example}

This then provides an answer to the first question posed at the beginning of the section:
To a line bundle $\mathcal{L}^*$ on $\mathcal{C}^*$ on the generic fiber,
we can associate a limit line bundle $L$ of degree $d$ on $C$.

\medskip

We now turn to the second question: Suppose that $\mathfrak{f}_* \mathcal{L}^*$ has splitting type $\vec{e}$. What can we say about
the associated limit line bundle $L$?
First of all,
\[\chi(L) = \chi(\mathcal{L}^*) = \chi(\pp^1, \O_{\pp^1}(\vec{e})),\]
and so 
\begin{equation} \label{deg-constraint}
d = g - 1 + \chi(\pp^1, \O_{\pp^1}(\vec{e})).
\end{equation}
Moreover, since $\mathcal{L}^*$ has splitting type $\vec{e}$, we have
\begin{equation} \label{h0-gen}
h^0(\mathcal{C}^*,\mathcal{L}^*(m)) = h^0(\pp^1, \O_{\pp^1}(\vec{e})(m)) = \sum_{\ell=1}^k \max(0, e_\ell + m + 1) \quad \text{for any $m$}.
\end{equation}
By semicontinuity, the limit line bundle $L$ therefore satisfies
\begin{equation} \label{h0-limit}
h^0(C, L(m)_{\vec{d}}) \geq \sum_{\ell=1}^k \max(0, e_\ell + m + 1) \quad \text{for any degree distribution $\vec{d}$ with $\sum_{i=1}^n d^i = d + mk$}.
\end{equation}
The following definition thus encapsulates the conditions our data on the central fiber
must satisfy:

\begin{defin} \label{def-epos}
We say that a limit line bundle $L \in \Pic^d(C)$ is \emph{$\vec{e}$-positive} if it satisfies \eqref{deg-constraint} and \eqref{h0-limit}.
\end{defin}

This then provides an answer to the second question posed at the beginning of the section:
If $\mathfrak{f}_* \mathcal{L}^*$ has splitting type $\vec{e}$, then the associated limit line bundle
$L$ must be $\vec{e}$-positive.

\medskip

In fact, there is a proper \emph{scheme} $W^{\vec{e}}(\mathcal{C})$ over $B$ whose fibers
over every point parameterize
$\vec{e}$-positive line bundles on the corresponding fiber of $\C \to B$.
This scheme will be an intersection of determinantal loci (over all degree distributions).
To construct this scheme, work locally on the base near $b \in B$ as above, and write
$\pi \colon \Pic^d (\mathcal{C}/B) \times_B \mathcal{C} \to \Pic^d(\mathcal{C}/B)$ for the projection map.
For any degree distribution $\vec{d}$ on $C \colonequals \C \times_B b$
of $d + mk$,
we obtain a universal bundle $\mathcal{L}(m)_{\vec{d}}$. For each $m$
and $\vec{d}$, there is a natural scheme structure on
\begin{align*}
&\{L \in \Pic^{\vec{d}}(\mathcal{C}/B) : h^0(\pi^{-1}(L), L(m)) \geq h^0(\pp^1, \O(\vec{e})(m)) \} \\
 &\quad = \{L \in  \Pic^d (\mathcal{C}/B) :  \rk (R^1\pi_* \L(m)_{\vec{d}})|_{L} \geq h^1(\pp^1, \O(\vec{e})(m))\},
 \end{align*}
defined by the Fitting support for where  $\rk R^1\pi_* \L(m)_{\vec{d}} \geq h^1(\pp^1, \O(\vec{e})(m))$, as we now recall.
The Fitting supports of a coherent sheaf are defined by the appropriately sized determinantal loci of a resolution by vector bundles and are independent of the resolution (see for example Section~20.2 of~\cite{eca}).

An often-used resolution of $R^1\pi_* \L(m)_{\vec{d}}$ is constructed as follows.
Let $D_{\vec{d}} \subset \mathcal{C}$ be a sufficiently relatively ample divisor
(relative to $\vec{d}$), so that $\pi_* [\L(m)_{\vec{d}}(D_{\vec{d}})]$ and $\pi_* [\L(m)_{\vec{d}}(D_{\vec{d}})|_{D_{\vec{d}}}]$
are vector bundles on $ \Pic^d(\mathcal{C}/B)$. 
Pushing forward the exact sequence
\[0 \rightarrow \L(m)_{\vec{d}} \rightarrow \L(m)_{\vec{d}}(D_{\vec{d}}) \rightarrow \L(m)_{\vec{d}}(D_{\vec{d}}) |_{D_{\vec{d}}} \rightarrow 0\]
by $\pi$ we see that 
the restriction map
\[\pi_* [\L(m)_{\vec{d}}(D_{\vec{d}})] \to \pi_* [\L(m)_{\vec{d}}(D_{\vec{d}})|_{D_{\vec{d}}}]\]
provides a resolution of $R^1\pi_* \L(m)_{\vec{d}}$.
Using the scheme structure defined by the appropriate minors, we define
\[W^{\vec{e}}(\C) \colonequals \bigcap_{m, \vec{d}} \left\{L \in  \Pic^{\vec{d}} (\mathcal{C}/B) :  \rk (R^1\pi_* \L(m)_{\vec{d}})|_{L} \geq h^1(\pp^1, \O(\vec{e})(m))\right\}.\]
Since $h^1(\pp^1, \O(\vec{e})(m)) = 0$ for $m$ large, only finitely many terms in the intersection are proper subschemes of $\Pic^{\vec{d}} (\mathcal{C}/B)$.

\section{Classification of $\vec{e}$-Positive Limit Line Bundles}\label{sec:epos}

Returning to notation of Section \ref{our_degen}, in this section we classify $\vec{e}$-positive line bundles on the central fiber $X$. The following description in terms of $k$-staircase tableaux is an observation due to Cook-Powell--Jensen in the tropical setting \cite{CPJ2}. Here, we provide a self-contained proof in the classical setting.

For any $0 \leq i \leq g$, and any degree distribution $\vec{d}$, write
\[X^{\leq i} = E^1 \cup E^2 \cup \cdots \cup E^i \quad \text{and} \quad d^{\leq i} = d^1 + d^2 + \cdots + d^i.\]

\begin{defin} \label{defi-ain}
For a limit line bundle $L$, and $1 \leq i \leq g - 1$, and $n \geq 1$, define
\[a^i_n(L) = \min\{\alpha : \text{we have $h^0(X^{\leq i}, L_{\vec{d}}|_{X^{\leq i}}) \geq n$ for any degree distribution $\vec{d}$ with $d^{\leq i} = \alpha$}\}.\]
We extend this to $i = g$ via
\begin{multline*}
a^g_n(L) = \min\Big\{\alpha : \text{for some $m$ and $\epsilon$ with $d + mk = \alpha + \epsilon$ and $\epsilon \geq 0$, we have} \\
\text{$h^0(X, L(m)_{\vec{d}(m)}) \geq n + \epsilon$ for any degree distribution $\vec{d}(m)$ with $d(m)^{\leq g} = d + mk$} \Big\},
\end{multline*}
and to $i = 0$ via
\[a^0_n(L) = n - 1.\]
\end{defin}

For $1 \leq i \leq g -1$, unwinding the definition of $a^i_n$, there exists a degree distribution $\vec{d}$ with $d^{\leq i} = a^i_n -1$  satisfying
$h^0(X^{ \leq i}, L_{\vec{d}}|_{X^{\leq i}}) \leq n -1.$
Furthermore, since vanishing at a single point imposes at most one condition on global sections, there exists a degree distribution $\vec{d}$
with $d^{\leq i} = a^i_n$ witnessing $h^0(X^{ \leq i}, L_{\vec{d}}|_{X^{ \leq i}}) = n$,
such that not every section of $L_{\vec{d}}|_{X^{ \leq i}}$ vanishes at $p^i$.

\begin{prop} \label{ram-ineq-easy}
We have $a^i_n > a^i_{n - 1}$.
\end{prop}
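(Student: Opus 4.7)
The plan is to verify the equivalent inequality $a^i_{n-1} \leq a^i_n - 1$, handling separately the three regimes of Definition~\ref{defi-ain}. The case $i = 0$ is immediate since $a^0_n = n - 1$.

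For the main case $1 \leq i \leq g - 1$, set $\alpha = a^i_n$. I would show that \emph{every} degree distribution $\vec{b}$ with $b^{\leq i} = \alpha - 1$ satisfies $h^0(X^{\leq i}, L_{\vec{b}}|_{X^{\leq i}}) \geq n - 1$; by the defining minimality of $a^i_{n-1}$, this gives $a^i_{n-1} \leq \alpha - 1$. Given such a $\vec{b}$, I define $\vec{d}$ by setting $d^i = b^i + 1$ and $d^{i+1} = b^{i+1} - 1$ (with all other entries unchanged), so that $d^{\leq i} = \alpha$. The twisting relation~\eqref{twisting-game} yields
\[
L_{\vec{d}}|_{X^{\leq i}} \cong L_{\vec{b}}|_{X^{\leq i}}(p^i)
\]
as line bundles on $X^{\leq i}$. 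Since $p^i$ is a smooth point of $X^{\leq i}$, vanishing at $p^i$ imposes at most one condition on global sections, so
\[
h^0(X^{\leq i}, L_{\vec{b}}|_{X^{\leq i}}) \geq h^0(X^{\leq i}, L_{\vec{d}}|_{X^{\leq i}}) - 1 \geq n - 1,
\]
using the bound $h^0(L_{\vec{d}}|_{X^{\leq i}}) \geq n$ coming from the defining property of $a^i_n = \alpha$.

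For the endpoint case $i = g$, I would argue directly from the definition. If $\alpha = a^g_n$ is witnessed by a pair $(m_0, \epsilon_0)$ with $\epsilon_0 \geq 0$ and $d + m_0 k = \alpha + \epsilon_0$, then the shifted pair $(m_0, \epsilon_0 + 1)$ witnesses $\alpha - 1$ for $a^g_{n-1}$: the equality $d + m_0 k = (\alpha - 1) + (\epsilon_0 + 1)$ holds, and for every degree distribution $\vec{d}(m_0)$ with $d(m_0)^{\leq g} = d + m_0 k$ the inequality $h^0(X, L(m_0)_{\vec{d}(m_0)}) \geq n + \epsilon_0 = (n-1) + (\epsilon_0 + 1)$ already holds by assumption.

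There is no substantive obstacle: the whole argument amounts to trading one unit of degree for the single cohomological condition imposed at the smooth point $p^i$, or, in the endpoint case, absorbing that unit into the slack parameter $\epsilon$.
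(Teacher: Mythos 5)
Your proof is correct, and it takes a genuinely different (and somewhat cleaner) route than the paper's. The paper proceeds in the opposite direction: it invokes the observation stated just before the proposition to produce a \emph{tight} witness $\vec{d}$ with $d^{\leq i}=a^i_{n-1}$ and $h^0(X^{\leq i}, L_{\vec{d}}|_{X^{\leq i}})=n-1$ exactly, which shows $a^i_{n-1}$ is not in the defining set for $a^i_n$, and for $i=g$ argues by contradiction against the minimality of $a^g_{n-1}$. You instead directly verify that $a^i_n-1$ lies in the defining set for $a^i_{n-1}$: in the middle case you transfer a unit of degree across the node $p^i$ using \eqref{twisting-game} (noting that $\O_\C(\Sigma^i)|_{X^{\leq i}} = \O_{X^{\leq i}}(p^i)$ and that $p^i$ is a smooth point of $X^{\leq i}$), and in the endpoint case you absorb the unit into the slack $\epsilon_0 \mapsto \epsilon_0+1$. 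Both proofs turn on the same underlying fact (vanishing at one smooth point is at most one condition), but yours avoids contradiction and makes the $i=g$ case almost a one-liner, at the cost of requiring the twisting-game identity on restrictions, which the paper keeps implicit.
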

\begin{proof}
The case $i = 0$ is clear by definition.

When $1 \leq i \leq g - 1$, let $\vec{d}$ be a degree distribution with $d^{\leq i} = a^i_{n - 1}$
witnessing $h^0(X^{ \leq i}, L_{\vec{d}}|_{X^{ \leq i}}) = n - 1$. This implies $a^i_n > a^i_{n - 1}$ as desired.

Finally, when $i = g$, we claim that for any $m$ and $\epsilon$ with $d + mk = a^g_{n - 1} + \epsilon$,
there is some degree distribution $\vec{d}(m)$ with $d(m)^{\leq g} = d + mk$
such that $h^0(X, L(m)_{\vec{d}(m)}) \leq (n - 1) + \epsilon$.
Indeed, if not, then $h^0(X, L(m)_{\vec{d}(m)}) \geq (n - 1) + (\epsilon + 1)$
for every such degree distribution, which would contradict the definition of $a^g_{n - 1}$ because
$d + mk = (a^g_{n - 1} - 1) + (\epsilon + 1)$.
This implies $a^g_n > a^g_{n - 1}$ as desired.
\end{proof}

\begin{prop} \label{ram-ineq}
We have $a^{i}_n \geq a^{i-1}_n$. If equality holds, then $L^i \simeq \O_{E^i}(a^{i-1}_n p^{i - 1} + (d - a^{i-1}_n) p^i)$.
\end{prop}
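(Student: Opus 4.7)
The plan is to exploit the short exact sequence of sheaves associated to the decomposition $X^{\leq i} = X^{\leq i-1} \cup_{p^{i-1}} E^i$ (for $i \geq 2$):
\[
0 \to L_{\vec{d}}|_{E^i}(-p^{i-1}) \to L_{\vec{d}}|_{X^{\leq i}} \to L_{\vec{d}}|_{X^{\leq i-1}} \to 0,
\]
which yields the cohomological bound
\[
h^0(X^{\leq i}, L_{\vec{d}}|_{X^{\leq i}}) \leq h^0(X^{\leq i-1}, L_{\vec{d}}|_{X^{\leq i-1}}) + h^0(E^i, L_{\vec{d}}|_{E^i}(-p^{i-1})).
\]
The decisive feature is that $E^i$ is an elliptic curve, so a degree-$(-1)$ line bundle has no sections, while a degree-$0$ line bundle has a section if and only if it is trivial.

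For the inequality, fix $\alpha < a^{i-1}_n$ and take a witness distribution $\vec{d}$ with $d^{\leq i-1} = \alpha$ and $h^0(X^{\leq i-1}, L_{\vec{d}}|_{X^{\leq i-1}}) \leq n-1$. Extend $\vec{d}$ to a distribution $\vec{d}'$ on $X$ with $(d')^j = d^j$ for $j \leq i-1$ and $(d')^i = 0$. Then $L_{\vec{d}'}|_{X^{\leq i-1}} = L_{\vec{d}}|_{X^{\leq i-1}}$ still has $h^0 \leq n-1$, while $L_{\vec{d}'}|_{E^i}(-p^{i-1})$ has degree $-1$ and hence no sections. The displayed bound forces $h^0(X^{\leq i}, L_{\vec{d}'}|_{X^{\leq i}}) \leq n-1$, proving $a^i_n > \alpha$.

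For the equality case $a^i_n = a^{i-1}_n = a$ with $2 \leq i \leq g-1$, take the sharpest witness $\vec{d}$ with $d^{\leq i-1} = a-1$ and $h^0(X^{\leq i-1}, L_{\vec{d}}|_{X^{\leq i-1}}) \leq n-1$, and extend to $\vec{d}''$ with $(d'')^i = 1$. Since $(d'')^{\leq i} = a$, the definition of $a^i_n$ forces $h^0(X^{\leq i}, L_{\vec{d}''}|_{X^{\leq i}}) \geq n$, so the cohomological bound yields $h^0(E^i, L_{\vec{d}''}|_{E^i}(-p^{i-1})) \geq 1$. This is a degree-$0$ bundle on the elliptic curve $E^i$, hence trivial: $L_{\vec{d}''}|_{E^i} \simeq \O_{E^i}(p^{i-1})$. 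Translating to $L^i = L_{(0,\ldots,0,d,0,\ldots,0)}|_{E^i}$ via \eqref{twisting-game}, each of the $a-1$ units of degree moved from positions $<i$ contributes $+p^{i-1}$ on $E^i$, and each of the $d-a$ units moved from positions $>i$ contributes $+p^i$, assembling to
\[
L^i \simeq \O_{E^i}\bigl(p^{i-1} + (a-1) p^{i-1} + (d-a) p^i\bigr) = \O_{E^i}\bigl(a\,p^{i-1} + (d-a) p^i\bigr).
\]

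The boundary cases $i=1$ and $i=g$ each require a small modification. When $i=1$, the exact sequence degenerates and one applies Riemann--Roch directly on $E^1$: a degree-$\alpha$ bundle has $h^0 \leq n-1$ for every $\alpha \leq n-2$, giving $a^1_n \geq n-1 = a^0_n$; equality (which only arises when $n=1$) forces $L|_{E^1}$ to be trivial at the appropriate twist, i.e.\ $L^1 \simeq \O_{E^1}(d\,p^1)$. When $i=g$, one runs the argument with $L(m) = L \otimes \O_X(m)$ and the representative of $\O_X(m)$ whose degree is concentrated on $E^g$; the $k$-torsion hypothesis $k(p^j - p^{j-1}) = 0$ in $\Pic E^j$ implies $\O_X(m)_{(0,\ldots,0,mk)}|_{E^j} \simeq \O_{E^j}$ for $j < g$, so the sub-chain term of the bound is unchanged, while on $E^g$ the elliptic Riemann--Roch bound again applies. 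The equality case forces the witness parameter $\epsilon_0 = 0$, i.e.\ $m_0 k = a - d$, and the same $k$-torsion identity converts the resulting $\O_{E^g}(d\,p^{g-1})$ into the claimed $\O_{E^g}(a\,p^{g-1} + (d-a) p^g)$. I anticipate the main obstacle to be this last piece of $i=g$ bookkeeping, where the twist by $\O_X(m_0)$ must be propagated carefully through the twisting game to match the claimed form.
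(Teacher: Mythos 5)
Your proof is correct and follows essentially the same route as the paper: the same three-way case split, the same short exact sequence from the decomposition $X^{\leq i} = X^{\leq i-1}\cup_{p^{i-1}}E^i$, and the same elliptic Riemann--Roch punchline, differing only in the cosmetic choice of witness distribution (you shift one unit of degree onto $E^i$ and argue a degree-$0$ bundle has a section, whereas the paper keeps $d^i = 0$ and exploits a witness where restriction to $p^{i-1}$ is surjective). Your $i=g$ bookkeeping, though flagged as tentative, does go through exactly as you sketch: $\epsilon_0 = 0$ forces $m_0 k = a - d$, which is divisible by $k$, so the $k$-torsion of $p^{g-1}-p^g$ converts $\O_{E^g}(d\,p^{g-1})$ into the claimed $\O_{E^g}(a\,p^{g-1} + (d-a)p^g)$.
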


\begin{rem} Our proof will show that if equality holds when $i = 1$ (respectively $i = g$) then $a^0_n = 0$
(respectively $a^{g - 1}_n \equiv d$ mod $k$). Thus the formula given for $L^i$
is independent of choice of $p^0$ and $p^g$.
\end{rem}

\begin{proof}
We separately consider the following cases:

\smallskip

\paragraph{\textbf{\boldmath The Case $i = 1$:}} 
For any degree distribution $\vec{d}$, the line bundle
$L_{\vec{d}}|_{E^1} \simeq L^1(-(d - d^1)p^1)$
is of degree $d^1$ on a genus $1$ curve and hence by Reimann--Roch
has a $\max(0,d^1)$-dimensional space of global sections unless $d^1=0$ and $L^1(-d p^1) \simeq \O_{E^1}$.
Hence, there is no degree distribution $\vec{d}$ such that $d^1 \leq a^0_n - 1 = n - 2$ and $h^0(E^1, L_{\vec{d}}|_{E^1}) \geq n$.
Furthermore, there is no such degree distribution with $d^1 = a^0_n = n - 1$ and $h^0(L_{\vec{d}}|_{E^1}) \geq n$ unless $a^0_n = 0$ and $L^1 = \O_{E^1}(dp^1) = \O_{E^1}(a^0_n p^0 + (d - a^0_n) p^1)$.

\smallskip

\paragraph{\textbf{\boldmath The Case $2 \leq i \leq g-1$:}}
Let $\vec{d}$ be a degree distribution such that $d^{\leq i-1} = a^{i-1}_n -1$ and 
\[h^0(X^{\leq i-1}, L_{\vec{d}}|_{X^{ \leq i-1}}) < n.\]    
We may further assume that $d^i = 0$. Then
\[h^0(X^{\leq i}, L_{\vec{d}}|_{X^{\leq i}}) \leq h^0(X^{\leq i-1}, L_{\vec{d}}|_{X^{\leq i-1}}) + h^0(E^i, L_{\vec{d}}|_{E^i}(-p^{i - 1})) < n. \]
Therefore $a^{i}_n \geq a^{i-1}_n$.  

Furthermore, there exists a degree distribution $\vec{d}$ with $d^{\leq i-1} = a^{i-1}_n$ and $d^i = 0$ witnessing $h^0(X^{\leq i-1 }, L_{\vec{d}}|_{X^{\leq i-1}}) = n$, and
$h^0(X^{\leq i-1 }, L_{\vec{d}}|_{X^{\leq i-1}}(-p^{i-1})) = n - 1$. Thus
\[h^0(X^{\leq i}, L_{\vec{d}}|_{X^{\leq i}}) = h^0(E^i, L_{\vec{d}}|_{E^i}) + n - 1.\]
If $a^{i}_n = a^{i-1}_n$, then to ensure this degree distribution has enough sections,
$h^0(E^i, L_{\vec{d}}|_{E^i}) > 0$.
Since $L_{\vec{d}}|_{E^i}$ has degree zero, this implies $L_{\vec{d}}|_{E^i} \simeq \O_{E^i}$.
Applying \eqref{twisting-game},
\[L_{\vec{d}}|_{E^i} \simeq L^i(-a^{i-1}_n p^{i-1} - (d- a^{i-1}_n)p^i), \]
so this implies the desired condition.

\paragraph{\textbf{\boldmath The Case $i = g$:}}
Let $\vec{d}$ be a degree distribution such that $d^{\leq g-1} = a^{g-1}_n -1$
and
\[h^0(X^{\leq g-1}, L_{\vec{d}}|_{X^{\leq g-1}}) < n.\]
Let $m$ and $\epsilon \geq 0$ be any integers such that $d + mk = a^{g-1}_n - 1 + \epsilon$.
Define
\[\vec{d}(m) \colonequals (d^1, d^2, \dots, d^{g-1}, d^g + mk).\]
Then $d^g + mk = \epsilon$. Thus
\[h^0(X, L(m)_{\vec{d}(m)}) \leq h^0(X^{\leq g-1}, L(m)_{\vec{d}(m)}|_{X^{\leq g-1}}) + h^0(E^g, L(m)_{\vec{d}(m)}|_{E^g}(-p^{g - 1})) < n + \epsilon.\]
Therefore $a^{g}_n \geq a^{g-1}_n$.

Furthermore, there exists a degree distribution $\vec{d}$
with $d^{\leq g-1} = a^{g-1}_n$, witnessing
\[h^0(X^{\leq g-1}, L_{\vec{d}}|_{X^{\leq g-1}}) = n \quad \text{and} \quad h^0(X^{\leq g-1}, L_{\vec{d}}|_{X^{\leq g-1}}(-p^{g-1})) = n - 1.\]

Let $m$ and $\epsilon \geq 0$ be any integers such that $d + mk = a^{g-1}_n + \epsilon$.
Define $\vec{d}(m)$ as above; as before, $d^g + mk = \epsilon$. We have
\[h^0(X, L(m)_{\vec{d}(m)}) = h^0(E^g, L(m)_{\vec{d}(m)}|_{E^g}) + n - 1.\]
If $a^g_n = a^{g-1}_n$, then for some such choice of $m$ and $\epsilon$, we must have
$h^0(E^g, L(m)_{\vec{d}(m)}|_{E^g}) > \epsilon$.
Since $\deg L(m)_{\vec{d}(m)}|_{E^g} = d^g + mk = \epsilon$, this implies $\epsilon = 0$ and
$L(m)_{\vec{d}(m)}|_{E^g} \simeq \O_{E^g}$. Applying \eqref{twisting-game},
\[L(m)_{\vec{d}(m)}|_{E^g} \simeq L^g(m)(-a^{g - 1}_n p^{g - 1}) \simeq L^g((mk - a^{g - 1}_n) p^{g - 1}) \simeq L^g(-a^{g - 1}_n p^{g - 1} - (d - a^{g - 1}_n) p^g),\]
so this is exactly the desired condition.
\end{proof}

\noindent
We now repackage this information as follows:

\begin{defin} 
For $n \geq 1$, write
\[f_n(i) = i + n - 1 - a^i_n,\]
and define
\begin{align*}
h(n) \colonequals h_{\vec{e}}(n) &= \max\left\{ h^1(\pp^1, \O(\vec{e})(m) ): \text{$m$ satisfies $h^0(\pp^1, \O(\vec{e})(m)) \geq n$}\right\} \\
&=  \max \left\{\sum_{\ell = 1}^k \max(0, -e_\ell - m - 1) : \text{$m$ satisfies $\sum_{\ell = 1}^k \max(0, e_\ell + m + 1) \geq  n$}\right\}.
\end{align*}
\end{defin}
\noindent
Note that $h(n)$ is nonincreasing and is zero for $n$ large.

\begin{prop} \label{f-h}
If $L$ is $\vec{e}$-positive, then $f_n(g) \geq h(n)$.
\end{prop}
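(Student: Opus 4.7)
The goal is to show $a^g_n \leq g + n - 1 - h(n)$, so I will find a witness to the ``$\min$'' defining $a^g_n(L)$ at the required value. The idea is to choose the integer $m$ in the definition of $a^g_n$ to be the one achieving the maximum in the definition of $h(n)$, and then check that $\vec{e}$-positivity provides exactly the $h^0$ inequality needed.

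Concretely, let $m$ be an integer realizing the maximum in $h_{\vec{e}}(n)$, so that
\[h^0(\pp^1, \O(\vec{e})(m)) \geq n \qquad \text{and} \qquad h^1(\pp^1, \O(\vec{e})(m)) = h(n).\]
Set $\alpha \colonequals g + n - 1 - h(n)$ and $\epsilon \colonequals d + mk - \alpha$. Using the degree constraint \eqref{deg-constraint}, namely $d = g - 1 + \chi(\pp^1, \O(\vec{e}))$, together with $\chi(\pp^1, \O(\vec{e})(m)) = \chi(\pp^1, \O(\vec{e})) + mk$, a direct calculation gives
\[\epsilon = \chi(\pp^1, \O(\vec{e})(m)) + h(n) - n = h^0(\pp^1, \O(\vec{e})(m)) - n \geq 0,\]
where the last inequality is the first defining property of $m$.

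It remains to verify that $h^0(X, L(m)_{\vec{d}(m)}) \geq n + \epsilon$ for every degree distribution $\vec{d}(m)$ with $d(m)^{\leq g} = d + mk$. But $n + \epsilon = h^0(\pp^1, \O(\vec{e})(m))$ by the previous display, and this is precisely the lower bound guaranteed by the $\vec{e}$-positivity hypothesis \eqref{h0-limit}. Thus $(m, \epsilon)$ witnesses $a^g_n(L) \leq \alpha = g + n - 1 - h(n)$, which rearranges to $f_n(g) \geq h(n)$.

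The proof is essentially an unwinding of the definitions once one notices the right choice of $m$, so there is no genuine obstacle beyond the bookkeeping: the $h(n)$ is designed exactly to be the largest deficit $h^0 - \chi$ attainable among twists of $\O_{\pp^1}(\vec{e})$ having at least $n$ sections, and $\vec{e}$-positivity transports this $h^0$-bound to $X$ uniformly over all degree distributions, which is what the definition of $a^g_n$ demands.
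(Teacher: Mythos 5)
Your proof is correct and is essentially the same argument as the paper's: both pick a twist $m$ with $h^0(\pp^1, \O(\vec{e})(m)) \geq n$, set $\epsilon = h^0(\pp^1, \O(\vec{e})(m)) - n$, and use the $\vec{e}$-positivity bound \eqref{h0-limit} together with the degree constraint \eqref{deg-constraint} to witness $a^g_n \leq d + mk - \epsilon$. The only cosmetic difference is that the paper proves $f_n(g) \geq h^1(\pp^1, \O(\vec{e})(m))$ for an arbitrary admissible $m$ and then maximizes, while you fix $m$ at the maximizer from the start.
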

\begin{proof}
Suppose that $m$ satisfies $h^0(\pp^1, \O(\vec{e})(m)) \geq n$; let $\epsilon = h^0(\pp^1, \O(\vec{e})(m)) - n \geq 0$.
By \eqref{h0-limit}, we have
\[h^0(X, L(m)_{\vec{d}(m)}) \geq h^0(\pp^1, \O(\vec{e})(m)) = n + \epsilon,\]
for any degree distribution $\vec{d}(m)$ with $d(m)^{\leq g} = d + mk$.
Therefore by Definition~\ref{defi-ain}, we have
\[a^g_n \leq d + mk - \epsilon = d + mk - h^0(\pp^1, \O(\vec{e})(m)) + n.\]
Thus,
\[f_n(g) \geq g + n - 1 - [d + mk - h^0(\pp^1, \O(\vec{e})(m)) + n] = h^1(\pp^1, \O(\vec{e})(m)).\]
Therefore $f_n(g) \geq h(n)$.
\end{proof}

The inequality of Proposition~\ref{f-h} forces equality to hold in Proposition~\ref{ram-ineq}
for many values of $i$ and $n$. To keep track of when equality holds, we use a combinatorial object
that we will term a \defi{$k$-staircase tableau}.

\begin{defin} \label{yd-b}
A \defi{Young diagram} is a finite collection of boxes arranged in left-justified rows, such that the number of boxes in each row is nonincreasing.  We index the boxes by their row and column $(r,c)$, beginning with $(1,1)$, and we define the
\defi{diagonal index} of a box to be $c - r$.

The \defi{boundary} of a Young diagram is the sequence of line segments formed by the right-most edges of the last box in every row and the bottom-most edge of the last box in every column.  For convenience, we extend this to infinity below and to the right of the diagram.
We index the boundary segments by the diagonal index of the box above (if the segment is horizontal),
or to the right (if the segment is vertical).
\begin{center}
\begin{tikzpicture}[scale=.4]
\draw (0,0) -- (8, 0);
\draw (0,-1) -- (8,-1);
\draw (0,-2) -- (5,-2);
\draw (0,-3) -- (4,-3);
\draw (0,-4) -- (3,-4);
\draw (0,-5) -- (2,-5);
\draw (0,-6) -- (1,-6);
\draw (0,-7) -- (1,-7);
\draw (0,-8) -- (1,-8);
\draw (0,0) -- (0,-8);
\draw (1,0) -- (1,-8);
\draw (2,0) -- (2,-5);
\draw (3,0) -- (3,-4);
\draw (4,0) -- (4,-3);
\draw (5,0) -- (5,-2);
\draw (6,0) -- (6,-1);
\draw (7,0) -- (7,-1);
\draw (8,0) -- (8,-1);

\draw[->] (-.5,0) -- (-.5,-7);
\draw (-.5, -3.5)node[left]{$r$};

\draw[->] (0,.5) -- (7, .5);
\draw (3.5, 0.5)node[above]{$c$};

\draw[ ultra thick] (0,-10) -- (0,-8) -- (1,-8) -- (1,-5) -- (2,-5) -- (2,-4) -- (3,-4) -- (3,-3) -- (4, -3) -- (4, -2) -- (5, -2) -- (5, -1) -- (8,-1) -- (8,0) -- (10,0);

\draw[ ultra thick] (0,-10.3)--(0, -10.4);
\draw[ ultra thick] (0,-10.5)--(0, -10.6);
\draw[ ultra thick] (0,-10.7)--(0, -10.8);

\draw[ ultra thick] (10.3,0)--(10.4,0);
\draw[ ultra thick] (10.5,0)--(10.6,0);
\draw[ ultra thick] (10.7,0)--(10.8, 0);

\draw [ultra thick, decorate,decoration={brace,amplitude=10pt, mirror},xshift=-4pt,yshift=0pt]
(3,-6) -- (6,-3);

\draw (7.5, -5.25) node{{\small \textbf{boundary}}};

{\tiny
\draw[color=white, fill=white] (0,-8.5) circle (8pt);
\draw (0,-8.5)node{\textbf{-8}};
\draw[color=white, fill=white] (0.5,-8) circle (8pt);
\draw (0.5,-8)node{\textbf{-7}};
\draw[color=white, fill=white] (1,-7.5) circle (8pt);
\draw (1,-7.5)node{\textbf{-6}};
\draw[color=white, fill=white] (1, -6.5) circle (8pt);
\draw (1, -6.5)node{\textbf{-5}};
\draw[color=white, fill=white] (1, -5.5) circle (8pt);
\draw (1, -5.5)node{\textbf{-4}};
\draw[color=white, fill=white] (1.5, -5) circle (8pt);
\draw (1.5, -5)node{\textbf{-3}};
\draw[color=white, fill=white] (2, -4.5) circle (8pt);
\draw (2, -4.5)node{\textbf{-2}};
\draw[color=white, fill=white] (2.5, -4) circle (8pt);
\draw (2.5, -4)node{\textbf{-1}};
\draw[color=white, fill=white] (3, -3.5) circle (8pt);
\draw (3, -3.5)node{\textbf{0}};
\draw[color=white, fill=white] (3.5, -3) circle (8pt);
\draw (3.5, -3)node{\textbf{1}};
\draw[color=white, fill=white] (4, -2.5) circle (8pt);
\draw (4, -2.5)node{\textbf{2}};
\draw[color=white, fill=white] (4.5, -2) circle (8pt);
\draw (4.5, -2)node{\textbf{3}};
\draw[color=white, fill=white] (5, -1.5) circle (8pt);
\draw (5, -1.5)node{\textbf{4}};
\draw[color=white, fill=white] (5.5, -1) circle (8pt);
\draw (5.5, -1)node{\textbf{5}};
\draw[color=white, fill=white] (6.5, -1) circle (8pt);
\draw (6.5, -1)node{\textbf{6}};
\draw[color=white, fill=white] (7.5, -1) circle (8pt);
\draw (7.5, -1)node{\textbf{7}};
\draw[color=white, fill=white] (8, -.5) circle (8pt);
\draw (8, -.5)node{\textbf{8}};
\draw[color=white, fill=white] (8.5, 0) circle (8pt);
\draw (8.5, 0)node{\textbf{9}};
}

\end{tikzpicture}
\end{center}

\end{defin}

Because $h(n)$ is nonincreasing and zero for $n$ large, the data of the function $h(n)$
(which is defined for positive integers $n$)
is thus the same as the data of a Young diagram,
where we put $h(n)$ boxes in the $n$th column.

\begin{defin}
For a splitting type $\vec{e}$, we write $\Gamma(\vec{e})$ for
the Young diagram determined by $h_{\vec{e}}(n)$ in the above manner.
We call a Young diagram of the form $\Gamma(\vec{e})$ for some $\vec{e}$ a \defi{$k$-staircase}.
\end{defin}

\begin{center}
\begin{tikzpicture}[scale=0.7]
\draw (0,0) -- (7, 0);
\draw (0,-1) -- (7,-1);
\draw (0,-2) -- (4,-2);
\draw (0,-3) -- (2,-3);
\draw (0,-4) -- (2,-4);
\draw (0,0) -- (0,-4);
\draw (1,0) -- (1,-4);
\draw (2,0) -- (2,-4);
\draw (3,0) -- (3,-2);
\draw (4,0) -- (4,-2);
\draw (5,0) -- (5,-1);
\draw (6,0) -- (6,-1);
\draw (7,0) -- (7,-1);
\node[below right] at (-0.1, -3.9) {$h(1) =  h(2) = 4$};
\node[below right] at (1.9, -1.9) {$h(3) =  h(4) = 2$};
\node[below right] at (3.9, -0.9) {$h(5) =  h(6) =  h(7) = 1$};
\node[below right] at (6.9,0.1) {$h(n) = 0$ for $n \geq 8$};
\end{tikzpicture} \\
Example: $\Gamma(\vec{e})$ for $\vec{e} = (-4, -2, 0, 0)$ 
\end{center}

For each $\vec{e}$-positive line bundle $L$, we will use the functions $f_n(i)$ to build a filling $T$ of
$\Gamma(\vec{e})$.
Namely, we have $f_n(0) = 0$ and $f_n(g) \geq h(n)$, and by Proposition~\ref{ram-ineq},
$f_n(i) \leq f_n(i-1) + 1$. Therefore, $f_n$ assumes every value between $0$ and $h(n)$
inclusive.
Our filling $T$ of $\Gamma(\vec{e})$ is obtained by placing
$\min \{i : f_c(i) = r\}$ in the $r$th row of the $c$th column.

\begin{prop} \label{li-value}
If $i$ is in the $r$th row of the $c$th column of $T$, then
\[L^i \simeq \O_{E^i}((c - r + i - 1) p^{i - 1} + (d - (c - r + i - 1)) p^i).\]
In particular, if $i$ appears in multiple boxes of $T$, then it follows that all such boxes have the same value of $c - r$ modulo $k$.
Moreover, this filling is increasing along rows and columns.
\end{prop}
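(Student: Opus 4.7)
My plan is to deduce all three assertions from Proposition~\ref{ram-ineq} combined with the standing hypothesis that $p^i - p^{i-1}$ has order exactly $k$ in $\Pic^0(E^i)$.

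First I will establish the formula for $L^i$. If $T(r,c) = i$, then by definition $i$ is the minimum index at which $f_c(i) = r$; combining this minimality with the bound $f_c(i) \leq f_c(i-1) + 1$ (which is a direct rephrasing of $a^i_c \geq a^{i-1}_c$ from Proposition~\ref{ram-ineq}) forces $f_c(i-1) = r-1$, which unwinds to $a^i_c = a^{i-1}_c = c - r + i - 1$. The equality clause of Proposition~\ref{ram-ineq} then delivers the asserted formula for $L^i$. The ``in particular'' clause is then immediate: if $i$ appears at two positions $(r_1, c_1)$ and $(r_2, c_2)$ of $T$, comparing the two resulting formulas for $L^i$ shows that the class of $((c_1 - r_1) - (c_2 - r_2))(p^{i-1} - p^i)$ vanishes in $\Pic^0(E^i)$, and since $p^i - p^{i-1}$ has order exactly $k$ this forces $c_1 - r_1 \equiv c_2 - r_2 \pmod{k}$.

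For monotonicity, the strict increase down columns, $T(r+1, c) > T(r, c)$, is immediate from the definition: $f_c$ starts at $f_c(0) = 0$, increases by at most $1$ per step, and first attains $r$ at $i = T(r, c)$, so it cannot attain $r + 1$ before $i = T(r, c) + 1$. For strict increase along rows, $T(r, c+1) > T(r, c)$, I will first note that the inequality $f_{c+1}(i) \leq f_c(i)$ (a rephrasing of Proposition~\ref{ram-ineq-easy}) rules out the possibility $T(r, c+1) < T(r, c)$, giving weak monotonicity for free; to upgrade to strict monotonicity, I invoke the ``in particular'' clause just established, since the equality $T(r, c) = T(r, c+1) = i$ would force $c - r \equiv (c + 1) - r \pmod{k}$, i.e.\ $k \mid 1$, which is impossible. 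The one place the argument is not purely formal --- and the main obstacle --- is precisely this last step: the strict row inequality genuinely requires the torsion-order hypothesis on $p^i - p^{i-1}$ rather than just the existence of a degree-$k$ totally ramified cover, since the weak monotonicity supplied by Proposition~\ref{ram-ineq-easy} is by itself compatible with consecutive rows containing repeated entries.
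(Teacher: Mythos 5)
Your proof is correct and follows essentially the same route as the paper: deduce $f_c(i-1)=r-1$ from minimality and the step bound, apply the equality clause of Proposition~\ref{ram-ineq}, get the modular congruence from the exact $k$-torsion of $p^{i-1}-p^i$, and upgrade the weak row monotonicity from Proposition~\ref{ram-ineq-easy} to strict via that congruence. The only nit is the informal closing remark, where ``consecutive rows containing repeated entries'' should read something like ``a row containing a repeated entry in adjacent columns''; the mathematics is unaffected.
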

\begin{proof}
Given $r$ and $c$, suppose $i$ is the first time for which $f_c(i) = r$.
 Because this is a new maximum, we must have $f_c(i-1) = r-1$, which implies $a^{i-1}_c = a^{i}_c = c - r + i - 1$. By Proposition \ref{ram-ineq},
\[L^i = \O_{E^i}(a^{i-1}_np^{i-1} + (d - a_n^{i-1})p^i) = \O_{E^i}((c - r + i - 1)p^{i-1} + (d - (c - r + i - 1))p^i), \]
as desired.
In particular, if $i$ appears in multiple boxes of $T$, then since $p^{i - 1} - p^i$ is exactly $k$-torsion in $\Pic^0 E^i$,
all such boxes have the same value of $c - r$ modulo $k$.

We now show that the filling is increasing along rows and columns. Since $f_c(i) \leq f_c(i-1) + 1$, the function $f_c$ must attain the value $r$ before it attains $r+1$. This shows the filling is increasing down column $c$.
Meanwhile, by Proposition~\ref{ram-ineq-easy}, we have $a^i_{c-1} < a^i_{c}$ and so $f_{c-1}(i) \geq f_{c}(i)$. It follows that 
$\min\{i : f_{c-1}(i) = r\} \leq \min \{i : f_{c}(i) = r\}$
(the larger function must attain $r$ at an earlier or same time).
However, if equality holds, the first part of this proposition says that
$c - r \equiv (c - 1) - r$ mod~$k$, which is impossible.
Thus, $\min\{i : f_{c-1}(i) = r\} < \min \{i : f_{c}(i) = r\}$, which shows the filling is increasing along row $r$.
\end{proof}

\begin{defin} \label{Tsquare}
A filling $T$ of a Young diagram is called \defi{$k$-regular} if it is increasing along rows and columns,
and all boxes containing the same symbol $i$ have the same value of $c - r$ modulo $k$.
We write $T[i] \in \zz/k\zz \cup \{*\} = \{1, 2, \ldots, k, *\}$
for this common value of $c - r$ modulo $k$ if $i$ appears in $T$;
if $i$ does not appear in $T$ then we set $T[i] = *$.
We call a $k$-regularly filled $k$-staircase a \defi{$k$-staircase tableau}.
\end{defin}

\noindent
For the remainder of the paper, all fillings of any Young diagram will be assumed to be $k$-regular.

\begin{defin} \label{WT}
Given a tableau $T$, we define a corresponding reduced subscheme of $\Pic^d(X)$ by
\[W^T(X) \colonequals \left\{L \in \Pic^d(X) : L^i \simeq \O_{E^i}((T[i] + i - 1)p^{i-1} + (d - (T[i] + i - 1))p^i) \ \text{if} \ T[i] \neq *\right\}.\]
Similarly, given a diagram $\Gamma$, we define
\[W^\Gamma(X) \colonequals \bigcup_{\text{$T$ filling} \atop \text{of $\Gamma$}} W^T(X).\]
\end{defin}

In this language, Proposition~\ref{li-value} states that
$W^{\vec{e}}(X)_{\text{red}} \subseteq W^{\Gamma(\vec{e})}(X)$.
In fact, we will see later that $W^{\vec{e}}(X) = W^{\Gamma(\vec{e})}(X)$.

\section{Combinatorics}\label{sec:combin}

In the previous section, we classified limit $\vec{e}$-positive line bundles in terms of $k$-staircase tableaux.
Such tableaux are special cases of a more general class of tableaux
known as \defi{$k$-core tableaux}, which are well-studied due to their relationship with the affine symmetric group
(see \cite{lm} and \cite{las}, or for an overview see Section~1.2 of~\cite{ksf}).
To make the paper self-contained, we recall the basic facts about this relationship here (without proof) in the
next two subsections, and
use them to deduce
the structure results for $k$-staircase tableaux that are needed for the proof of the regeneration theorem.  This explicit description of $W^\Gamma(X)$ will also be used directly in the proofs of
all of our main theorems.

\subsection{\boldmath $k$-cores and the affine symmetric group.}
Recall that the \defi{affine symmetric group} $\tilde{S}_k$ is
the group of permutations $f\colon \zz \to \zz$
such that
\[f(x + k) = f(x) + k \quad \text{and} \quad \sum_{x = 1}^k f(x) = \sum_{x = 1}^k x = \frac{k(k + 1)}{2}.\]
Such permutations automatically satisfy
\begin{equation} \label{distinct-A}
f(x) \not\equiv f(y) \pmod{k} \quad \text{for} \quad x \not\equiv y \pmod{k}.
\end{equation}
The affine symmetric group is generated by transpositions $s_j$ (for $j \in \zz/k\zz)$ satisfying
\[s_j(x) = \begin{cases}
x + 1 & \text{if $x \equiv j \mod k$;} \\
x - 1 & \text{if $x \equiv j + 1 \mod k$;} \\
x & \text{otherwise,}
\end{cases}\]
with relations
\[s_j^2 = 1, \quad s_j s_{j'} = s_{j'} s_j \ \text{if $j - j' \neq \pm 1$}, \quad \text{and} \quad (s_j s_{j + 1})^3 = 1.\]
For ease of notation, we include the identity $e = s_*$ as a generator
(so generators are indexed by $\zz/k\zz \cup \{*\}$).

Each line segment making up the boundary of a Young diagram is either vertical or horizontal.  The following key definition generalizes the notion of a $k$-staircase.

\begin{defin}
A sequence $\{\gamma_j\}$ of vertical and horizonal line segments is called \defi{$k$-convex} if $\gamma_j$ is vertical only if $\gamma_{j-k}$ is also vertical.  A Young diagram is called a \defi{$k$-core} if its boundary is $k$-convex.
A ($k$-regular) filling of a $k$-core will be called a \defi{$k$-core tableau}.
\end{defin}

In the literature, $k$-cores are also frequently defined in terms of their
\defi{hook lengths}, which
are the number of boxes to the right or bottom of a given box (including the given box).
Namely, a Young diagram is a $k$-core if and only if no hook lengths are divisible by $k$,
or equivalently if and only if no hook lengths are equal to $k$.

\vspace{10pt}

\begin{center}
\begin{minipage}{.34\textwidth}
\begin{center}
\begin{tikzpicture}[scale=.4]
\draw (0,0) -- (13.5, 0);
\draw (10, 0) -- (10, -.2);
\draw (11, 0) -- (11, -.2);
\draw (12, 0) -- (12, -.2);
\draw (13, 0) -- (13, -.2);
\draw (0,-1) -- (9,-1);
\draw (0,-2) -- (6,-2);
\draw (0,-3) -- (4,-3);
\draw (0,-4) -- (4,-4);
\draw (0,-5) -- (2,-5);
\draw (0,-6) -- (2,-6);
\draw (0,-7) -- (1,-7);
\draw (0,-8) -- (1,-8);
\draw (0,-9) -- (1,-9);
\draw (0,0) -- (0,-10.5);
\draw (0, -10) -- (.2, -10);
\draw (1,0) -- (1,-9);
\draw (2,0) -- (2,-6);
\draw (3,0) -- (3,-4);
\draw (4,0) -- (4,-4);
\draw (5,0) -- (5,-2);
\draw (6,0) -- (6,-2);
\draw (7,0) -- (7,-1);
\draw (8,0) -- (8,-1);
\draw (9,0) -- (9,-1);
{\tiny
\draw[color=white, fill=white] (0, -9.5) circle (8pt);
\draw (0,-9.5)node{3};
\draw[color=white, fill=white] (0.5,-9) circle (8pt);
\draw (0.5,-9)node{\textbf{4}};
\draw[color=white, fill=white] (1,-8.5) circle (8pt);
\draw (1,-8.5)node{1};
\draw[color=white, fill=white] (1,-7.5) circle (8pt);
\draw (1,-7.5)node{2};
\draw[color=white, fill=white] (1,-6.5) circle (8pt);
\draw (1,-6.5)node{{3}};
\draw[color=white, fill=white] (1.5,-6) circle (8pt);
\draw (1.5,-6)node{4};
\draw[color=white, fill=white] (2,-5.5) circle (8pt);
\draw (2,-5.5)node{1};
\draw[color=white, fill=white] (2,-4.5) circle (8pt);
\draw (2,-4.5)node{2};
\draw[color=white, fill=white] (2.5,-4) circle (8pt);
\draw (2.5,-4)node{\textbf{3}};
\draw[color=white, fill=white] (3.5,-4) circle (8pt);
\draw (3.5,-4)node{4};
\draw[color=white, fill=white] (4,-3.5) circle (8pt);
\draw (4,-3.5)node{1};
\draw[color=white, fill=white] (4,-2.5) circle (8pt);
\draw (4,-2.5)node{{2}};
\draw[color=white, fill=white] (4.5,-2) circle (8pt);
\draw (4.5,-2)node{3};
\draw[color=white, fill=white] (5.5,-2) circle (8pt);
\draw (5.5,-2)node{4};
\draw[color=white, fill=white] (6,-1.5) circle (8pt);
\draw (6,-1.5)node{1};
\draw[color=white, fill=white] (6.5,-1) circle (8pt);
\draw (6.5,-1)node{\textbf{2}};
\draw[color=white, fill=white] (7.5,-1) circle (8pt);
\draw (7.5,-1)node{3};
\draw[color=white, fill=white] (8.5,-1) circle (8pt);
\draw (8.5,-1)node{4};
\draw[color=white, fill=white] (9,-.5) circle (8pt);
\draw (9,-.5)node{{1}};
\draw[color=white, fill=white] (9.5,0) circle (8pt);
\draw (9.5,0)node{2};
\draw[color=white, fill=white] (10.5,0) circle (8pt);
\draw (10.5,0)node{3};
\draw[color=white, fill=white] (11.5,0) circle (8pt);
\draw (11.5,0)node{4};
\draw[color=white, fill=white] (12.5,0) circle (8pt);
\draw (12.5,0)node{\textbf{1}};
}
\end{tikzpicture}
\end{center}

\noindent A $4$-staircase 
\noindent is $4$-core.

\end{minipage}
\begin{minipage}{.33\textwidth}
\begin{center}
\begin{tikzpicture}[scale=.4]
\draw (0,0) -- (12.5, 0);
\draw (9, 0) -- (9, -.2);
\draw (10, 0) -- (10, -.2);
\draw (11, 0) -- (11, -.2);
\draw (12, 0) -- (12, -.2);
\draw (0,-1) -- (8,-1);
\draw (0,-2) -- (5,-2);
\draw (0,-3) -- (4,-3);
\draw (0,-4) -- (3,-4);
\draw (0,-5) -- (2,-5);
\draw (0,-6) -- (1,-6);
\draw (0,-7) -- (1,-7);
\draw (0,-8) -- (1,-8);
\draw (0,0) -- (0,-9.5);
\draw (0, -9) -- (.2, -9);
\draw (1,0) -- (1,-8);
\draw (2,0) -- (2,-5);
\draw (3,0) -- (3,-4);
\draw (4,0) -- (4,-3);
\draw (5,0) -- (5,-2);
\draw (6,0) -- (6,-1);
\draw (7,0) -- (7,-1);
\draw (8,0) -- (8,-1);
{\tiny
\draw[color=white, fill=white] (0, -8.5) circle (8pt);
\draw (0,-8.5)node{4};
\draw[color=white, fill=white] (0.5,-8) circle (8pt);
\draw (0.5,-8)node{\textbf{1}};
\draw[color=white, fill=white] (1,-7.5) circle (8pt);
\draw (1,-7.5)node{2};
\draw[color=white, fill=white] (1, -6.5) circle (8pt);
\draw (1, -6.5)node{{3}};
\draw[color=white, fill=white] (1, -5.5) circle (8pt);
\draw (1, -5.5)node{4};
\draw[color=white, fill=white] (1.5, -5) circle (8pt);
\draw (1.5, -5)node{1};
\draw[color=white, fill=white] (2, -4.5) circle (8pt);
\draw (2, -4.5)node{2};
\draw[color=white, fill=white] (2.5, -4) circle (8pt);
\draw (2.5, -4)node{\textbf{3}};
\draw[color=white, fill=white] (3, -3.5) circle (8pt);
\draw (3, -3.5)node{4};
\draw[color=white, fill=white] (3.5, -3) circle (8pt);
\draw (3.5, -3)node{1};
\draw[color=white, fill=white] (4, -2.5) circle (8pt);
\draw (4, -2.5)node{{2}};
\draw[color=white, fill=white] (4.5, -2) circle (8pt);
\draw (4.5, -2)node{3};
\draw[color=white, fill=white] (5, -1.5) circle (8pt);
\draw (5, -1.5)node{4};
\draw[color=white, fill=white] (5.5, -1) circle (8pt);
\draw (5.5, -1)node{1};
\draw[color=white, fill=white] (6.5, -1) circle (8pt);
\draw (6.5, -1)node{\textbf{2}};
\draw[color=white, fill=white] (7.5, -1) circle (8pt);
\draw (7.5, -1)node{3};
\draw[color=white, fill=white] (8, -.5) circle (8pt);
\draw (8, -.5)node{{4}};
\draw[color=white, fill=white] (8.5,0) circle (8pt);
\draw (8.5,0)node{1};
\draw[color=white, fill=white] (9.5,0) circle (8pt);
\draw (9.5,0)node{2};
\draw[color=white, fill=white] (10.5,0) circle (8pt);
\draw (10.5,0)node{3};
\draw[color=white, fill=white] (11.5,0) circle (8pt);
\draw (11.5,0)node{\textbf{4}};
}
\end{tikzpicture}
\end{center}

\noindent Another $4$-core that is \\
\noindent  not a $4$-staircase.

\end{minipage}
\begin{minipage}{.28\textwidth}
\begin{center}
\begin{tikzpicture}[scale=.4]
\draw (0,0) -- (9, 0);
\draw (0,-1) -- (8,-1);
\draw (0,-2) -- (4,-2);
\draw (0,-3) -- (3,-3);
\draw (0,-4) -- (3,-4);
\draw (0,-5) -- (1,-5);
\draw (0,-6) -- (1,-6);
\draw (0,-7) -- (1,-7);
\draw (0,-8) -- (1,-8);
\draw (0,0) -- (0,-9);
\draw (1,0) -- (1,-8);
\draw (2,0) -- (2,-4);
\draw (3,0) -- (3,-4);
\draw (4,0) -- (4,-2);
\draw (5,0) -- (5,-1);
\draw (6,0) -- (6,-1);
\draw (7,0) -- (7,-1);
\draw (8,0) -- (8,-1);
{\tiny
\draw[color=white, fill=white] (0.5,-8) circle (8pt);
\draw (0.5,-8)node{1};
\draw[color=white, fill=white] (1,-7.5) circle (8pt);
\draw (1,-7.5)node{2};
\draw[color=white, fill=white] (1, -6.5) circle (8pt);
\draw (1, -6.5)node{3};
\draw[color=white, fill=white] (1, -5.5) circle (8pt);
\draw (1, -5.5)node{4};
\draw[color=white, fill=white] (1, -4.5) circle (8pt);
\draw (1, -4.5)node{1};
\draw[color=white, fill=white] (1.5, -4) circle (8pt);
\draw (1.5, -4)node{2};
\draw[color=white, fill=white] (2.5, -4) circle (8pt);
\draw (2.5, -4)node{3};
\draw[color=white, fill=white] (3, -3.5) circle (8pt);
\draw (3, -3.5)node{4};
\draw[color=white, fill=white] (3, -2.5) circle (8pt);
\draw (3, -2.5)node{1};
\draw[color=white, fill=white] (3.5, -2) circle (8pt);
\draw (3.5, -2)node{2};
\draw[color=white, fill=white] (4, -1.5) circle (8pt);
\draw (4, -1.5)node{3};
\draw[color=white, fill=white] (4.5, -1) circle (8pt);
\draw (4.5, -1)node{4};
\draw[color=white, fill=white] (5.5, -1) circle (8pt);
\draw (5.5, -1)node{1};
\draw[color=white, fill=white] (6.5, -1) circle (8pt);
\draw (6.5, -1)node{2};
\draw[color=white, fill=white] (7.5, -1) circle (8pt);
\draw (7.5, -1)node{3};
\draw[color=white, fill=white] (8, -.5) circle (8pt);
\draw (8, -.5)node{4};

\draw[very thick, color=purple, <->] (.5, -7.5) -- (.5, -2.5) -- (2.5, -2.5);
}
\end{tikzpicture}
\end{center}

\noindent A diagram that is not \\
\noindent a $4$-core.
\end{minipage}
\end{center} 

\vspace{10pt}

A sequence $\{\gamma_j\}$ is $k$-convex if each residue class of segments is composed
of an infinite sequence of vertical segments followed by an infinite sequence of horizontal segments.
Thus, to specify a $k$-core, it suffices to give a collection $\{t_1, \dots, t_k\}$
of integers (distinct mod $k$),
representing the first horizontal segment in each residue class.
Such data is a priori determined up to
addition of an overall constant (i.e.\ $\{t_j\} \mapsto \{t_j + \delta\}$);
the indexing of boundary segments in Definition~\ref{yd-b}
corresponds to the unique normalization so that
\[\sum_{j=1}^k t_j = \sum_{j =1}^k j = \frac{k(k + 1)}{2}.\]
Therefore, $k$-cores are in bijection with elements of $\tilde{S}_k / S_k$ --- by
sending $\{t_j\}$ to the coset of permutations sending $\{1, 2, \ldots, k\}$ to $\{t_1, t_2, \ldots, t_k\}$.
There is a distinguished coset representative $f$ satisfying $f(1) < f(2) < \cdots < f(k)$.

\begin{defin} \label{Tround}
If $\Gamma$ is a $k$-core and $x \in \zz$, we define $\Gamma(x)$ to be the value
of this distinguished permutation applied to $x$;
if $T$ is a $k$-core tableau of shape $\Gamma$, we define $T(x) = \Gamma(x)$.
\end{defin}

In the definition of a $k$-convex sequence $\{\gamma_j\}$, we could equivalently have considered pairs of adjacent line segments $(\gamma_j, \gamma_{j+1})$.  
Then, the mod $k$ residue class of pairs of boundary segments
\[ \{ \dots, (\gamma_j, \gamma_{j+1}), (\gamma_{j+k}, \gamma_{j+k+1}), \dots \} \]
is composed of a sequence of (vertical, vertical) segments, followed by a (possibly empty) sequence of \emph{either} (vertical, horizontal) \emph{or} (horizontal, vertical) corners, followed by a sequence of (horizontal, horizontal) segments.
In other words,
the mod $k$ residue classes of pair of boundary segments in a $k$-core always progress along \emph{one} of the following trajectories: 

\vspace{5pt}

\begin{center}
\begin{tikzpicture}[scale = .5]

\draw (1,-1) -- (1,0) -- (1,1); 
\filldraw(1,-1) circle (2pt);
\filldraw(1,0) circle (2pt);
\filldraw(1,1) circle (2pt);

\draw[->] (1.5,.5) -- (3.5,1.75);

\draw (4,2) -- (5,2) -- (5,3); 
\filldraw(4,2) circle (2pt);
\filldraw(5,2) circle (2pt);
\filldraw(5,3) circle (2pt);

\draw (4.5, 3.8)node{{\small removable}};

\draw[->] (1.5,-.5) -- (3.5,-1.75);

\draw (4,-3) -- (4,-2) -- (5,-2); 
\filldraw(4,-3) circle (2pt);
\filldraw(4,-2) circle (2pt);
\filldraw(5,-2) circle (2pt);

\draw (4.5, -3.8)node{{\small addable}};

\draw[->] (5.5, 1.75) -- (7.5, .5);

\draw[->] (5.5, -1.75) -- (7.5, -.5);

\draw (8,0) -- (9,0) -- (10,0); 
\filldraw(8,0) circle (2pt);
\filldraw(9,0) circle (2pt);
\filldraw(10,0) circle (2pt);

\end{tikzpicture}
\end{center}
The configuration of a vertical and then horizontal segment is called
an \defi{addable corner}, and the configuration of a horizontal and then vertical segment is called a \defi{removable corner}.

This gives a natural (left) action of the affine symmetric group $\tilde{S}_k$ on the set of $k$-cores.
Namely, $s_j \cdot \Gamma$ is the $k$-core obtained from $\Gamma$
by adding a box in all addable corners whose diagonal index has residue class $j$
(if such addable corners exist),
or removing a box from all removable corners whose diagonal index has residue class $j$
(if such removable corners exist),
or doing nothing (if no such addable or removable corners exist).

\vspace{20pt}
\begin{minipage}{.4\textwidth}
\begin{center}
\begin{tikzpicture}
{\small 
\draw (0,0) -- (4, 0);
\draw (0,-1) -- (4,-1);
\draw (0,-2) -- (2,-2);
\draw (0,-3) -- (1,-3);
\draw (0,-4) -- (1,-4);
\draw (0,0) -- (0,-4);
\draw (1,0) -- (1,-4);
\draw (2,0) -- (2,-2);
\draw (3,0) -- (3,-1);
\draw (4,0) -- (4,-1);

\draw[dashed] (.5,.5) -- ( 3.5,-2.5);
 \node[right, rotate=-45] at ( 3.5,-2.5) {$1$};
\draw[dashed] (1.5,.5) -- ( 4,-2);
 \node[right, rotate=-45] at ( 4,-2) {$2$};
\draw[dashed] (2.5,.5) -- ( 4.5,-1.5);
 \node[right, rotate=-45] at ( 4.5,-1.5) {$3$};
\draw[dashed] (3.5,.5) -- ( 5,-1);
 \node[right, rotate=-45] at ( 5,-1) {$1$};

\draw[dashed] (-.5,.5) -- ( 3,-3);
 \node[right, rotate=-45] at ( 3,-3) {$3$};
\draw[dashed] (-.5,-.5) -- ( 2.5,-3.5);
 \node[right, rotate=-45] at ( 2.5,-3.5) {$2$};
\draw[dashed] (-.5,-1.5) -- ( 2,-4);
 \node[right, rotate=-45] at ( 2,-4) {$1$};
\draw[dashed] (-.5,-2.5) -- ( 1.5,-4.5);
 \node[right, rotate=-45] at ( 1.5,-4.5) {$3$};
\draw[dashed] (-.5,-3.5) -- ( 1,-5);
 \node[right, rotate=-45] at ( 1,-5) {$2$};
}
\end{tikzpicture}

A $k$-core diagram $\Gamma$.
\end{center}
\end{minipage}
\begin{minipage}{.55\textwidth}
{\small 
\begin{center}
\begin{tikzpicture}[scale=.6]
\node at (-1.5, -2) {$s_1 \cdot \Gamma=$};
\draw (0,0) -- (4, 0);
\draw (0,-1) -- (4,-1);
\draw (0,-2) -- (2,-2);
\draw (0,-3) -- (1,-3);
\draw (0,-4) -- (1,-4);
\draw (0,0) -- (0,-4);
\draw (1,0) -- (1,-4);
\draw (2,0) -- (2,-2);
\draw (3,0) -- (3,-1);
\draw (4,0) -- (4,-1);

\draw[thick, color=violet, fill=violet!10] (4,0) -- (5,0) -- (5, -1) -- (4, -1) -- (4,0);
\draw[thick, color=violet, fill=violet!10] (2,-1) -- (3, -1) -- (3, -2) -- (2, -2) -- (2, -1);

\draw[dashed] (.5,.5) -- ( 3.5,-2.5);
 \node[right, rotate=-45] at ( 3.5,-2.5) {$1$};
 \draw[dashed] (3.5,.5) -- ( 5,-1);
 \node[right, rotate=-45] at ( 5,-1) {$1$};
 \draw[dashed] (-.5,-1.5) -- ( 2,-4);
 \node[right, rotate=-45] at ( 2,-4) {$1$};
\end{tikzpicture}
\end{center}

\vspace{5pt}

\begin{center}
\begin{tikzpicture}[scale=.6]
\node at (-1.5, -2) {$s_2 \cdot \Gamma=$};
\draw (0,0) -- (4, 0);
\draw (0,-1) -- (4,-1);
\draw (0,-2) -- (2,-2);
\draw (0,-3) -- (1,-3);
\draw (0,-4) -- (1,-4);
\draw (0,0) -- (0,-4);
\draw (1,0) -- (1,-4);
\draw (2,0) -- (2,-2);
\draw (3,0) -- (3,-1);
\draw (4,0) -- (4,-1);

\draw[thick, color=violet, fill=violet!10] (1, -2) -- (2, -2) -- (2, -3) -- (1, -3) -- (1, -2);
\draw[thick, color=violet, fill=violet!10] (0,-4) -- (1,-4) -- (1, -5) -- (0, -5) -- (0, -4);

\draw[dashed] (1.5,.5) -- ( 4,-2);
 \node[right, rotate=-45] at ( 4,-2) {$2$};
 \draw[dashed] (-.5,-.5) -- ( 2.5,-3.5);
 \node[right, rotate=-45] at ( 2.5,-3.5) {$2$};
 \draw[dashed] (-.5,-3.5) -- ( 1,-5);
 \node[right, rotate=-45] at ( 1,-5) {$2$};
\end{tikzpicture}
\end{center}

\vspace{5pt}

\begin{center}
\begin{tikzpicture}[scale=.6]
\node at (-1.5, -2) {$s_3 \cdot \Gamma =$};
\draw (0,0) -- (3, 0);
\draw (0,-1) -- (3,-1);
\draw (0,-2) -- (1,-2);
\draw (0,-3) -- (1,-3);
\draw (0,0) -- (0,-3);
\draw (1,0) -- (1,-3);
\draw (2,0) -- (2,-1);
\draw (3,0) -- (3,-1);

\draw[color=red, dotted, pattern=north west lines, pattern color=red] (3, 0) -- (4, 0) -- (4, -1) -- (3, -1);
\draw[color=red, dotted, pattern=north west lines, pattern color=red] (1, -1) -- (2, -1) -- (2, -2) -- (1, -2);
\draw[color=red, dotted, pattern=north west lines, pattern color=red] (1, -3) -- (1, -4) -- (0, -4) -- (0, -3);

\draw[dashed] (2.5,.5) -- ( 4.5,-1.5);
 \node[right, rotate=-45] at ( 4.5,-1.5) {$3$};
 \draw[dashed] (-.5,.5) -- ( 3,-3);
 \node[right, rotate=-45] at ( 3,-3) {$3$};
\draw[dashed] (-.5,-2.5) -- ( 1.5,-4.5);
 \node[right, rotate=-45] at ( 1.5,-4.5) {$3$};
\end{tikzpicture}
\end{center}
}
\end{minipage}

One easily checks that this respects the relations for the affine symmetric group, and that under
this action,
\[\{\text{$k$-core diagrams}\} \leftrightarrow \tilde{S}_k / S_k\]
is an $\tilde{S}_k$-equivariant bijection of sets.

\subsection{\boldmath $k$-core tableaux and the word problem\label{tab-word}}

Given a $k$-core $\Gamma$, let $w \in \tilde{S}_k$ be the
representative of the corresponding coset with $w(1) < w(2) < \cdots < w(k)$.
If $w = s_{j_g} s_{j_{g-1}} \cdots s_{j_1}$ is a word for $w$ in $\tilde{S}_k$,
then we obtain a filling of $\Gamma$: Indeed, we build $\Gamma$ from the empty $k$-core
by consecutively applying the $s_{j_i}$; this determines
a $k$-core tableau of shape $\Gamma$ where
any box added in the $i$th step contains the symbol $i$.

\begin{center}
\begin{minipage}{.48\textwidth}
\begin{center}
\begin{tikzpicture}[scale=.75]
\draw (0,0) -- (4, 0);
\draw (0,-1) -- (4,-1);
\draw (0,-2) -- (2,-2);
\draw (0,-3) -- (1,-3);
\draw (0,-4) -- (1,-4);
\draw (0,0) -- (0,-4);
\draw (1,0) -- (1,-4);
\draw (2,0) -- (2,-2);
\draw (3,0) -- (3,-1);
\draw (4,0) -- (4,-1);

\draw (0.5,-0.5) node{1};
\draw (1.5,-0.5) node{3};
\draw (2.5,-0.5) node{4};
\draw (3.5,-0.5) node{5};
\draw (0.5,-1.5) node{2};
\draw (1.5,-1.5) node{5};
\draw (0.5,-2.5) node{3};
\draw (0.5,-3.5) node{5};
{\tiny
\draw[dotted] (.5,.5) -- ( 3.5,-2.5);
 \node[right, rotate=-45] at ( 3.5,-2.5) {$1$};
\draw[dotted] (1.5,.5) -- ( 4,-2);
 \node[right, rotate=-45] at ( 4,-2) {$2$};
\draw[dotted] (2.5,.5) -- ( 4.5,-1.5);
 \node[right, rotate=-45] at ( 4.5,-1.5) {$3$};
\draw[dotted] (3.5,.5) -- ( 5,-1);
 \node[right, rotate=-45] at ( 5,-1) {$1$};

\draw[dotted] (-.5,.5) -- ( 3,-3);
 \node[right, rotate=-45] at ( 3,-3) {$3$};
\draw[dotted] (-.5,-.5) -- ( 2.5,-3.5);
 \node[right, rotate=-45] at ( 2.5,-3.5) {$2$};
\draw[dotted] (-.5,-1.5) -- ( 2,-4);
 \node[right, rotate=-45] at ( 2,-4) {$1$};
\draw[dotted] (-.5,-2.5) -- ( 1.5,-4.5);
 \node[right, rotate=-45] at ( 1.5,-4.5) {$3$};
\draw[dotted] (-.5,-3.5) -- ( 1,-5);
 \node[right, rotate=-45] at ( 1,-5) {$2$};
}

\node at (2, -6.5) {$w = s_3s_2s_1s_2s_3$};
\end{tikzpicture}
\end{center}
\end{minipage}
\begin{minipage}{.48\textwidth}
\begin{tikzpicture}[scale=.75]
\draw (0,0) -- (4, 0);
\draw (0,-1) -- (4,-1);
\draw (0,-2) -- (2,-2);
\draw (0,-3) -- (1,-3);
\draw (0,-4) -- (1,-4);
\draw (0,0) -- (0,-4);
\draw (1,0) -- (1,-4);
\draw (2,0) -- (2,-2);
\draw (3,0) -- (3,-1);
\draw (4,0) -- (4,-1);

\draw (0.5,-0.5) node{1};
\draw (1.5,-0.5) node{2};
\draw (2.5,-0.5) node{3};
\draw (3.5,-0.5) node{5};
\draw (0.5,-1.5) node{3};
\draw (1.5,-1.5) node{5};
\draw (0.5,-2.5) node{4};
\draw (0.5,-3.5) node{5};
{\tiny
\draw[dotted] (.5,.5) -- ( 3.5,-2.5);
 \node[right, rotate=-45] at ( 3.5,-2.5) {$1$};
\draw[dotted] (1.5,.5) -- ( 4,-2);
 \node[right, rotate=-45] at ( 4,-2) {$2$};
\draw[dotted] (2.5,.5) -- ( 4.5,-1.5);
 \node[right, rotate=-45] at ( 4.5,-1.5) {$3$};
\draw[dotted] (3.5,.5) -- ( 5,-1);
 \node[right, rotate=-45] at ( 5,-1) {$1$};

\draw[dotted] (-.5,.5) -- ( 3,-3);
 \node[right, rotate=-45] at ( 3,-3) {$3$};
\draw[dotted] (-.5,-.5) -- ( 2.5,-3.5);
 \node[right, rotate=-45] at ( 2.5,-3.5) {$2$};
\draw[dotted] (-.5,-1.5) -- ( 2,-4);
 \node[right, rotate=-45] at ( 2,-4) {$1$};
\draw[dotted] (-.5,-2.5) -- ( 1.5,-4.5);
 \node[right, rotate=-45] at ( 1.5,-4.5) {$3$};
\draw[dotted] (-.5,-3.5) -- ( 1,-5);
 \node[right, rotate=-45] at ( 1,-5) {$2$};
}
\node at (2, -6.5) {$w = s_3s_1s_2s_1s_3$};
\end{tikzpicture}
\end{minipage}

\vspace{5pt}

\noindent The two efficient fillings of this $3$-staircase diagram.
\end{center}

Lapointe and Morse showed in \cite{lm} that this completely describes efficiently filled $k$-core tableaux.
Namely:
\begin{enumerate}
\item Any efficient filling (i.e.\ with the fewest possible symbols)
arises in this way from a unique \defi{reduced word} for $w$ (i.e.\ a word with the fewest possible
non-identity generators). Conversely, any reduced word
gives an efficient filling
(and if the word is reduced then no boxes are ever removed). See Section~8 of \cite{lm}.
\item The minimal number of symbols needed to fill $\Gamma$, which we will denote $u(\Gamma)$, is exactly
the number of boxes in $\Gamma$
whose hook length is less than $k$. See Lemma 31 of \cite{lm}.
\item \label{eff} Efficient fillings can be constructed inductively:
Suppose $\Gamma$ has a removable corner whose diagonal index has residue class $j$,
so that $s_j \cdot \Gamma$ is strictly contained in $\Gamma$.
Then we have $u(s_j \cdot \Gamma) = u(\Gamma) - 1$.
See Proposition~22 of \cite{lm}. In particular:
\begin{enumerate}
\item \label{eff-res} An efficient filling of $\Gamma$ whose largest symbol
appears in a box with diagonal index of residue $j$
restricts to an efficient filling of $s_j \cdot \Gamma$.
\item \label{eff-com} An efficient filling of $s_j \cdot \Gamma$
can be completed to an efficient filling of $\Gamma$ whose largest symbol
appears in a box with diagonal index of residue $j$.
\end{enumerate}
\end{enumerate}

\subsection{Reduction to efficient tableaux}
One consequence of this final property (\ref{eff-com}) is that we need only consider
efficient tableaux for our geometric problem.

\begin{prop} \label{efficient-only}
Let $T$ be a $k$-core tableau of shape $\Gamma$.
Then there is an efficiently filled $k$-core tableau $T'$ of shape $\Gamma$
with $W^T(X) \subseteq W^{T'}(X)$. In particular,
\[W^\Gamma(X) = \bigcup_{\text{$T$ \textbf{efficient}} \atop \text{filling of $\Gamma$}} W^T(X).\]
\end{prop}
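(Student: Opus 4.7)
The plan is to proceed by induction on $u(\Gamma)$, the minimum number of symbols needed to fill $\Gamma$. The base case $u(\Gamma) = 0$ forces $\Gamma = \emptyset$, so the only filling is empty and trivially efficient. For the inductive step, let $i_m$ denote the largest symbol appearing in $T$ and set $\ell_m \colonequals T[i_m]$. The first key observation is that every box labeled $i_m$ in $T$ must be a removable corner of $\Gamma$: if not, then the adjacent box to the right or below would lie in $\Gamma$ and carry a strictly larger symbol, contradicting the maximality of $i_m$. Hence $\Gamma$ admits a removable corner of class $\ell_m$, so $\Gamma' \colonequals s_{\ell_m} \cdot \Gamma$ is strictly contained in $\Gamma$ with $u(\Gamma') = u(\Gamma) - 1$ by property~(\ref{eff}).

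Next I would restrict $T$ to $\Gamma'$, producing a filling $T^\circ$. Because $\Gamma \smallsetminus \Gamma'$ consists of \emph{all} removable corners of $\Gamma$ of class $\ell_m$, and every box labeled $i_m$ in $T$ is such a corner, the filling $T^\circ$ uses only symbols in $\{i_1,\dots,i_{m-1}\}$, with $T^\circ[i] = T[i]$ for each symbol $i$ that still appears; moreover $T^\circ$ manifestly inherits $k$-regularity from $T$. Applying the inductive hypothesis to $(\Gamma', T^\circ)$ yields an efficient filling $T''$ of $\Gamma'$ with $W^{T^\circ}(X) \subseteq W^{T''}(X)$; equivalently, the symbols of $T''$ form a subset of those of $T^\circ$ with matching diagonal classes.

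Finally, property~(\ref{eff-com}) allows me to extend $T''$ to an efficient filling $T'$ of $\Gamma$ whose largest symbol lies in a box of diagonal class $\ell_m$; since all symbols of $T''$ are strictly smaller than $i_m$, I may take this largest symbol to be $i_m$ itself. Then by construction $T'[i_m] = \ell_m = T[i_m]$ and $T'[i] = T''[i] = T^\circ[i] = T[i]$ for every other symbol $i$ used in $T'$. Hence $T'$ uses a subset of the symbols of $T$ with matching classes, which translates directly (via Definition~\ref{WT}, where imposing fewer conditions yields a larger scheme) to $W^T(X) \subseteq W^{T'}(X)$. The ``in particular'' statement then follows, since the reverse inclusion is tautological.

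The main delicacy lies in the restriction step: one must simultaneously confirm that $\Gamma \smallsetminus \Gamma'$ contains every box of $T$ labeled $i_m$ (so that $T^\circ$ genuinely loses this symbol and the inductive hypothesis can produce $T''$ without using $i_m$), and also recognize that $\Gamma \smallsetminus \Gamma'$ may strictly exceed those boxes, having swept away some removable corners of class $\ell_m$ that were labeled by smaller symbols in $T$. The latter is harmless for the argument because it only further decreases the symbol set of $T^\circ$, but the bookkeeping requires care; both features follow from the $k$-regularity of $T$ together with the description of $\Gamma'$ as obtained by removing every removable corner of class $\ell_m$.
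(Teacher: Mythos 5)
Your proof is correct and follows essentially the same route as the paper's: induction on $u(\Gamma)$, peeling off the largest symbol via passage to $s_{T[i_m]} \cdot \Gamma$, applying the inductive hypothesis to the restriction, and completing back to $\Gamma$ with property~(\ref{eff-com}). You supply details the paper's terse proof leaves implicit --- notably that strict increase along rows and columns forces every box labeled with the largest symbol to be a removable corner (so that restriction to $s_{T[i_m]}\cdot\Gamma$ genuinely drops that symbol), and the translation of $W^T(X) \subseteq W^{T'}(X)$ into the condition that the symbols of $T'$, with their diagonal residues, form a subset of those of $T$ --- but the underlying argument is the same.
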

\begin{proof}
We argue by induction on $u(\Gamma)$; the base case $u(\Gamma) = 0$ is tautological.
For the inductive step, let $t$ be the largest symbol appearing in $T$, and $j = T[t]$ (c.f. \ Definition~\ref{Tsquare}).
Let $T_\circ$ be the restriction of $T$ to $s_j \cdot \Gamma$. By our inductive hypothesis,
there is an efficient filling $T'_\circ$ of $s_j \cdot \Gamma$ with $W^{T_\circ}(X) \subseteq W^{T_\circ'}(X)$.
If $T'$ is the completion of $T'_\circ$ to a filling of $\Gamma$ using the additional symbol $t$,
then $W^T(X) \subseteq W^{T'}(X)$ as desired.
\end{proof}

\subsection{Truncations}

The following will be a convenient way of packaging the data of an efficient filling
as necessary for our regeneration theorem.

\begin{defin} \label{trunc_func}
Let $T$ be an efficiently filled $k$-core tableau, corresponding to a reduced word
$s_{j_g} \cdots s_{j_2} s_{j_1}$.
Define $T^{\leq t}$
to be the tableau formed by the boxes of $T$ with symbols up to $t$,
i.e.\ corresponding to the reduced word $s_{j_t} \cdots s_{j_2} s_{j_1}$.

In particular, for each $t$ and $\ell$, we obtain an integer
which we refer to as the $\ell$th \defi{truncation} at time $t$:
\[T^{\leq t}(\ell) = (s_{j_t} \cdots s_{j_2} s_{j_1})(\ell).\]
\end{defin}

\noindent
We now summarize several properties of the $T^{\leq t}(\ell)$. First of all,
by construction, we have:
\begin{equation} \label{atzero}
T^{\leq 0}(\ell)=\ell.
\end{equation}
Moreover, by \eqref{distinct-A},
\begin{equation} \label{distinct}
T^{\leq t}(\ell_1) \not\equiv T^{\leq t}(\ell_2) \pmod{k} \quad \text{for} \quad \ell_1 \not\equiv \ell_2 \pmod{k}.
\end{equation}

If $s_{j_t}$ is the identity (equivalently if $T[t] = *$), then $T^{\leq t}(\ell) = T^{\leq t - 1}(\ell)$ for all $\ell$.
Otherwise, $s_{j_t}$ is a simple transposition, and there are exactly two values of $\ell \in \{1, 2, \ldots, k\}$,
say $\ell_-$ and $\ell_+$,
for which $T^{\leq t}(\ell)$ changes:
\begin{equation} \label{swap}
T^{\leq t}(\ell) = \begin{cases}
T^{\leq t - 1}(\ell) & \text{if $\ell \neq \ell_\pm$;} \\
T^{\leq t - 1}(\ell_\pm) \pm 1 & \text{if $\ell = \ell_\pm$.}
\end{cases}
\end{equation}
In this case, we say that $t$ is \defi{increasing} for $\ell_+$
and \defi{decreasing} for $\ell_-$.
Combining \eqref{distinct} and \eqref{swap}, we have
\begin{equation} \label{swapmod}
T^{\leq t}(\ell_+) - T^{\leq t}(\ell_-) \equiv T^{\leq t - 1}(\ell_-) - T^{\leq t - 1}(\ell_+) \equiv 1 \pmod{k}.
\end{equation}
The following pictures illustrate the behavior of the $T^{\leq t}(\ell)$ at fixed time $t$,
and the relation between times $t - 1$ and $t$:

\begin{minipage}{.49\textwidth}
\begin{center}
\begin{tikzpicture}[scale=.55]
{\small

\draw [fill=violet!10] (0, -6) -- (1, -6) -- (1, -3) -- (3, -3) -- (3, -1) -- (6, -1) -- (6, 0) -- (0, 0) ;
\draw [very thick, color=violet] (0, -9) -- (0, -6) -- (1, -6) -- (1, -3) -- (3, -3) -- (3, -1) -- (6, -1) -- (6, 0) -- (10,0);

\draw (0,0) -- (9, 0);
\draw (0,-1) -- (9,-1);
\draw (0,-2) -- (6,-2);
\draw (0,-3) -- (4,-3);
\draw (0,-4) -- (4,-4);
\draw (0,-5) -- (2,-5);
\draw (0,-6) -- (2,-6);
\draw (0,-7) -- (1,-7);
\draw (0,-8) -- (1,-8);
\draw (0,-9) -- (1,-9);
\draw (0,0) -- (0,-9);
\draw (1,0) -- (1,-9);
\draw (2,0) -- (2,-6);
\draw (3,0) -- (3,-4);
\draw (4,0) -- (4,-4);
\draw (5,0) -- (5,-2);
\draw (6,0) -- (6,-2);
\draw (7,0) -- (7,-1);
\draw (8,0) -- (8,-1);
\draw (9,0) -- (9,-1);

\draw (0.5,-0.5) node{1};
\draw (1.5,-0.5) node{2};
\draw (2.5,-0.5) node{4};
\draw (3.5,-0.5) node{6};
\draw (4.5,-0.5) node{7};
\draw (5.5,-0.5) node{9};
\draw (6.5,-0.5) node{11};
\draw (7.5,-0.5) node{12};
\draw (8.5,-0.5) node{14};
\draw (0.5,-1.5) node{3};
\draw (1.5,-1.5) node{7};
\draw (2.5,-1.5) node{9};
\draw (3.5,-1.5) node{11};
\draw (4.5,-1.5) node{12};
\draw (5.5,-1.5) node{14};
\draw (0.5,-2.5) node{4};
\draw (1.5,-2.5) node{8};
\draw (2.5,-2.5) node{10};
\draw (3.5,-2.5) node{13};
\draw (0.5,-3.5) node{5};
\draw (1.5,-3.5) node{11};
\draw (2.5,-3.5) node{12};
\draw (3.5,-3.5) node{14};
\draw (0.5,-4.5) node{7};
\draw (1.5,-4.5) node{13};
\draw (0.5,-5.5) node{8};
\draw (1.5,-5.5) node{14};
\draw (0.5,-6.5) node{11};
\draw (0.5,-7.5) node{13};
\draw (0.5,-8.5) node{14};

\draw (1, -10) -- (11,0);

\draw[line width=1mm, color=blue] (0, -6) -- (1, -6);

\draw[line width=1mm, color=blue] (2, -3) -- (3, -3);

\draw[line width=1mm, color=blue] (5, -1) -- (6, -1);

\draw[line width=1mm, color=blue] (9, 0) -- (10, 0);

\draw[dotted, color=blue] (-.5, -4.5) -- (3, -8);
\draw[dotted, color=blue] (-.5, .5) -- (5.5, -5.5);
\draw[dotted, color=blue] (4.5, .5) -- (8, -3);
\draw[dotted, color=blue] (9.5, .5) -- (10.5, -.5);
}

{\small
\node at (3, -8) {{\color{blue}$\bullet$}};
\node[below right] at (3, -8) {{\color{violet}$T^{\leq 10}$}$(1)=${\color{blue}$-5$}};
\node at (5.5, -5.5) {{\color{blue}$\bullet$}};
\node[below right] at (5.5, -5.5) {{\color{violet}$T^{\leq 10}$}$(2)=${\color{blue}$0$}};
\node at (8, -3) {{\color{blue}$\bullet$}};
\node[below right] at (8, -3) {{\color{violet}$T^{\leq 10}$}$(3)=${\color{blue}$5$}};
\node at (10.5, -.5) {{\color{blue}$\bullet$}};
\node[below right] at (10.5, -.5) {{\color{violet}$T^{\leq 10}$}$(4)=${\color{blue}$10$}};
}

\node at (3, .75) {{\color{violet} $T^{\leq 10}$}};

\end{tikzpicture}
\end{center}
\end{minipage}
\begin{minipage}{.48\textwidth}
\begin{center}
\begin{tikzpicture}[scale=.55]

{\small
\draw (0,0) -- (8, 0);
\draw (5,-1) -- (8,-1);
\draw (3,-2) -- (5,-2);
\draw (1,-4) -- (3,-4);
\draw (0,-7) -- (1,-7);
\draw (0,0) -- (0,-8);
\draw (1,-4) -- (1,-7);
\draw (3,-2) -- (3,-4);
\draw (5,-1) -- (5,-2);
\draw (8,0) -- (8,-1);

\draw (1.5, -10.5) -- (10, -2);

\draw[densely dotted, fill=magenta!10] (1, -4) -- (1, -5) -- (2, -5) -- (2 , -4);

\draw[densely dotted, fill=magenta!10]   (3,-2) -- (4,-2) -- (4,-3) -- (3,-3) -- (3,-2);

\draw[densely dotted, fill=magenta!10]   (0,-7) -- (1,-7) -- (1,-8) -- (0,-8) -- (0,-7);

\draw[dotted, color=magenta!70!black] (-.5, -6.5) -- (2.5, -9.5);
\draw[dotted, color=green!70!black] (-.5, -5.5) -- (3, -9);
\draw[dotted] (-.5, -2.5) -- (4.5, -7.5);
\draw[dotted] (.5, .5) -- (6.5, -5.5);
\draw[dotted, color=green!70!black] (4.5, .5) -- ( 8.5, -3.5);
\draw[dotted, color=magenta!70!black] (5.5, .5) -- ( 9, -3);

\node[ left] at (-.5, -6.5) {{\color{magenta!70!black}$T[t]$}};
\node[ left] at (-.5, -2.5)  {{\color{magenta!70!black}$T[t]$}};
\node[ left] at (.5, .5)  {{\color{magenta!70!black}$T[t]$}};
\node[ left] at (4.5, .5)  {{\color{magenta!70!black}$T[t]$}};

\node at (2.5, -9.5) {{\color{magenta!70!black}$\bullet$}};
\node at (3, -9) {{\color{green!70!black}$\bullet$}};
\node at ( 8.5, -3.5) {{\color{green!70!black}$\bullet$}};
\node at ( 9, -3) {{\color{magenta!70!black}$\bullet$}};

\node[below right] at (2.5, -9.5) {{\color{magenta!70!black}$T^{\leq t}(\ell_-)$}};
\node[below right] at (3, -8.7) {{\color{green!70!black}$T^{\leq t-1}(\ell_-)$}};
\node[below right] at ( 8.5, -3.5) {{\color{green!70!black}$T^{\leq t-1}(\ell_+)$}};
\node[below right] at ( 9, -2.7) {{\color{magenta!70!black}$T^{\leq t}(\ell_+)$}};

\draw[thick] (0,-9) -- (0, -7) -- (1, -7) -- (1, -4) -- (3, -4) -- (3, -2) -- (4, -2) -- (5, -2) -- (5, -1)  -- (8, -1) -- (8, 0) -- (9,0);

\draw[line width=1mm, color=green!60!black] (0, -7) -- (1, -7);

\draw[line width=1mm, color=magenta!70!black] (0, -8) -- (1, -8);

\draw[line width=1mm, color=green!60!black] (5, -1) -- (6, -1);

\draw[line width=1mm, color=magenta!70!black] (6, -1) -- (7, -1);
}
\draw (3.5,-2.5) node{{\color{magenta!70!black}$t$}};
\draw (1.5,-4.5) node{{\color{magenta!70!black}$t$}};
\draw (0.5,-7.5) node{{\color{magenta!70!black}$t$}};

\end{tikzpicture}
\end{center}
\end{minipage}

\noindent As can be seen in the diagram above, we have the relation
\begin{equation}
T[t] \equiv T^{\leq t}(\ell_-) \pmod{k}.
\end{equation}
As can be seen in the right diagram,
$T^{\leq t - 1}(\ell_-)$ is an edge of the leftmost addable corner whose diagonal index has residue class $T[t]$,
while $T^{\leq t - 1}(\ell_+)$
lies to the right of the rightmost addable corner whose diagonal index has residue class $T[t]$.
Thus,
\begin{equation} \label{sort-0}
T^{t-1}(\ell_-) \leq T^{t-1}(\ell_+) - (k - 1) < T^{t-1}(\ell_+) \quad \text{and} \quad T^{t}(\ell_-) \leq T^{t}(\ell_+) - (k + 1) < T^{t}(\ell_+).
\end{equation}

\begin{prop}\label{main_ants}
If $\ell_1 > \ell_2$ then
\begin{equation}\label{metric}
\left\lfloor \frac{T^{\leq t}(\ell_1) - T^{\leq t}(\ell_2)}{k} \right\rfloor 
\end{equation}
is a non-decreasing function of $t$.
In particular, the truncations are ``sorted,"
i.e.
\[T^{\leq t}(\ell_1) > T^{\leq t}(\ell_2) \quad \text{if} \quad \ell_1 > \ell_2.\]
\end{prop}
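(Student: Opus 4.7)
The plan is to induct on $t$, proving simultaneously that the quantity in \eqref{metric} is non-decreasing in $t$ and (the ``in particular'' conclusion) that $T^{\leq t}(\ell_1) > T^{\leq t}(\ell_2)$ whenever $\ell_1 > \ell_2$. The base case $t = 0$ is immediate from \eqref{atzero}: for $\ell_1, \ell_2 \in \{1, \ldots, k\}$ with $\ell_1 > \ell_2$ the difference lies in $\{1, \ldots, k-1\}$, so the floor is $0$ and sortedness is trivial.

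For the inductive step, assume both statements at time $t - 1$. By \eqref{swap}, nothing changes at step $t$ unless $s_{j_t}$ is a genuine transposition, so assume this is the case, with associated indices $\ell_-, \ell_+ \in \{1, \ldots, k\}$. The crucial observation is that $\ell_- < \ell_+$: indeed \eqref{sort-0} gives $T^{\leq t-1}(\ell_-) < T^{\leq t-1}(\ell_+)$, and combining this with the inductive sortedness statement at time $t-1$ yields $\ell_- < \ell_+$. This is the step where the joint induction earns its keep, and without it the most dangerous case below would become a genuine obstacle.

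I then split into cases based on how many of $\ell_1, \ell_2$ lie in $\{\ell_-, \ell_+\}$. If neither does, the difference in \eqref{metric} is unchanged by \eqref{swap}. If both do, the constraints $\ell_1 > \ell_2$ and $\ell_- < \ell_+$ force $\ell_1 = \ell_+$ and $\ell_2 = \ell_-$; using \eqref{sort-0} together with \eqref{swap} one reads off that the old difference $T^{\leq t-1}(\ell_1) - T^{\leq t-1}(\ell_2)$ has the form $(k - 1) + jk$ for some $j \geq 0$, while the new difference is $(k + 1) + jk$, so the floor jumps from $j$ to $j + 1$. If exactly one of $\ell_1, \ell_2$ lies in $\{\ell_-, \ell_+\}$, then by \eqref{swap} the difference changes by $\pm 1$; the only way the floor could strictly decrease would be if the old difference were divisible by $k$, but this is ruled out by \eqref{distinct} since $\ell_1 \not\equiv \ell_2 \pmod{k}$ implies $T^{\leq t-1}(\ell_1) \not\equiv T^{\leq t-1}(\ell_2) \pmod{k}$.

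The ``in particular'' statement at time $t$ then follows by monotonicity: the floor was $0$ at $t = 0$ and is non-decreasing, hence non-negative, and since $T^{\leq t}(\ell_1) \neq T^{\leq t}(\ell_2)$ by \eqref{distinct}, we obtain the strict inequality $T^{\leq t}(\ell_1) > T^{\leq t}(\ell_2)$. The main obstacle is the interlocking nature of the induction: the sortedness hypothesis at time $t-1$ is what upgrades $T^{\leq t-1}(\ell_-) < T^{\leq t-1}(\ell_+)$ to $\ell_- < \ell_+$, which in turn is what prevents the catastrophic configuration $\ell_1 = \ell_-, \ell_2 = \ell_+$ from occurring in the ``both'' case and causing a floor decrease.
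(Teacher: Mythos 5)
Your proof is correct and follows essentially the same route as the paper: a joint induction on $t$ in which the sortedness hypothesis at time $t-1$ (via \eqref{sort-0}) yields $\ell_- < \ell_+$, and \eqref{distinct} together with \eqref{swap} controls the change in the floor. Your write-up simply makes the floor-change bookkeeping slightly more explicit than the paper's one-line case formula, but the key insight and structure are identical.
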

\begin{proof}
We will prove this by induction on $t$. From \eqref{distinct} and \eqref{swap}, we have
\[\left\lfloor \frac{T^{\leq t}(\ell_1) - T^{\leq t}(\ell_2)}{k} \right\rfloor  = \left\lfloor \frac{T^{\leq t-1}(\ell_1) - T^{\leq t-1}(\ell_2)}{k} \right\rfloor + \begin{cases}
1 & \text{if $(\ell_1, \ell_2) = (\ell_+, \ell_-)$;} \\
-1 & \text{if $(\ell_1, \ell_2) = (\ell_-, \ell_+)$;} \\
0 & \text{otherwise.}
\end{cases}\]
It thus remains to see that $\ell_- < \ell_+$. But this follows from
\eqref{sort-0}, given our inductive hypothesis
that the truncations are sorted.
\end{proof}

\subsection{\boldmath $k$-staircases}
Let $\vec{e}$ be a splitting type;
write $d_1 > d_2 > \cdots > d_s$ for the distinct parts of $\vec{e}$,
and $m_1, m_2, \ldots, m_s$ for the corresponding multiplicities. (Note that $e_1 \leq e_2 \leq \cdots \leq e_k$ but we have $d_1 > d_2 > \cdots > d_\s$!)
The integers $1, 2, \ldots, k$ are then naturally in bijection with pairs $(j, n)$ with $1 \leq j \leq s$
and $1 \leq n \leq m_j$
(via lexicographic order).

\begin{prop} Every $k$-staircase is a $k$-core, and $u(\Gamma(\vec{e})) = u(\vec{e})$.
\end{prop}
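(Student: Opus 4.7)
The plan is to analyze the boundary of $\Gamma(\vec{e})$ block-by-block, using the jumps of $h^0(\pp^1, \O(\vec{e})(m))$ and $h^1(\pp^1, \O(\vec{e})(m))$ as $m$ varies. Set $P_m \colonequals \#\{\ell : e_\ell \geq -m\}$ and $N_m \colonequals \#\{\ell : e_\ell \leq -m-1\}$, so that $P_m + N_m = k$. One computes $h^0(\O(\vec{e})(m)) - h^0(\O(\vec{e})(m-1)) = P_m$ and $h^1(\O(\vec{e})(m-1)) - h^1(\O(\vec{e})(m)) = N_m$. From the definition of $h_{\vec{e}}(n)$, the points $(h^0(\O(\vec{e})(m)), -h^1(\O(\vec{e})(m)))$ all lie on the boundary path of $\Gamma(\vec{e})$, and between the points at $m-1$ and $m$ the path first takes $N_m$ vertical (upward) steps and then $P_m$ horizontal (rightward) steps. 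Labeling the boundary as in Definition~\ref{yd-b}, the ``$m$th block'' consists of $k$ consecutive indices, the first $N_m$ vertical and the last $P_m$ horizontal.

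To show $\Gamma(\vec{e})$ is a $k$-core, I verify the $k$-convexity of its boundary. If the step at index $j$ is vertical, then $j$ sits at some position $a \in \{1, \ldots, N_m\}$ within block $m$; the step at index $j-k$ sits at the same relative position $a$ within block $m-1$. Since $N_m$ is non-increasing in $m$ (the set $\{\ell : e_\ell \leq -m-1\}$ shrinks as $m$ grows), we have $a \leq N_m \leq N_{m-1}$, so the step at $j-k$ is also vertical. Hence the boundary is $k$-convex.

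For $u(\Gamma(\vec{e})) = u(\vec{e})$, I use the characterization $u(\Gamma) = \#\{\text{boxes with hook length} < k\}$ recalled after Definition~\ref{yd-b}. For any Young diagram, the hook length of a box $(r, c)$ equals $V_r - H_c$, where $V_r \colonequals \lambda_r - r + 1$ is the index of the vertical step at the east end of row $r$ and $H_c \colonequals c - \lambda_c'$ is the index of the horizontal step at the south end of column $c$; so boxes with hook $< k$ biject with pairs $(v, h)$ of boundary indices, $v$ vertical and $h$ horizontal, with $0 < v - h < k$. The block structure forces such $v$ to lie in the vertical portion of some block $m$ (at position $a \in \{1, \ldots, N_m\}$) and $h$ in the horizontal portion of block $m-1$ (at position $b \in \{N_{m-1}+1, \ldots, k\}$); the constraint $v - h < k$ then reduces to $a < b$, which is automatic since $a \leq N_m \leq N_{m-1} < b$. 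Summing gives $u(\Gamma(\vec{e})) = \sum_m N_m P_{m-1}$. Dually, starting from $u(\vec{e}) = \sum_{e_\ell > e_{\ell'}}(e_\ell - e_{\ell'} - 1)$, each such pair contributes $e_\ell - e_{\ell'} - 1 = \#\{m : e_\ell \geq -m+1 \text{ and } e_{\ell'} \leq -m-1\}$ to the sum, and exchanging the order of summation yields $u(\vec{e}) = \sum_m N_m P_{m-1}$ as well.

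The main obstacle is the initial boundary analysis---specifically, verifying that within each block the vertical steps precede the horizontal ones (the ``L-shape''). This requires carefully tracking how the column heights $h_{\vec{e}}(n)$ change as $n$ crosses the jump points $h^0(\O(\vec{e})(m))$, and distinguishing convex from concave corners along the profile. Once this structural description is in place, both the $k$-core property and the identity $u(\Gamma(\vec{e})) = u(\vec{e})$ reduce to the single monotonicity $N_m \leq N_{m-1}$, which is immediate from the definition.
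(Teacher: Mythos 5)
Your proof is correct and follows essentially the same approach as the paper's: establish the block structure of the boundary (between the points corresponding to $m-1$ and $m$, the boundary takes $N_m$ vertical steps then $P_m$ horizontal steps), use the monotonicity $N_m \leq N_{m-1}$ (equivalently, $n(m)$ nondecreasing in the paper's notation) to get $k$-convexity, and count boxes of hook length $<k$ — your formula $\sum_m N_m P_{m-1}$ is the paper's $\sum_m n(m)\bigl(k - n(m+1)\bigr)$ after reindexing. The only real difference is presentational: the paper reads off the hook-length dichotomy from its figure, whereas you verify it explicitly via boundary indices and the standard bijection between small-hook boxes and pairs $(v,h)$ of vertical/horizontal boundary segments with $0 < v - h < k$, which makes the argument somewhat more self-contained.
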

\begin{proof}
Write $n(m) \colonequals \# \{\ell : e_\ell \geq -m \}$. The $k$-staircase $\Gamma(\vec{e})$ has the following form:
\begin{center}
\begin{tikzpicture}[scale=.55]

\draw (0,0) -- (0, -9);
\node[rotate=90] at (0, -9.5) {$\dots$};
\node[rotate=90] at (2, -9.5) {$\dots$};
\draw (0,0) -- (14, 0);
\node at (14.65, 0) {$\dots$};
\node at (14.65, -2) {$\dots$};
\draw (14, -2) -- (13, -2) -- (13, -3) -- (8, -3) -- (8, -5) -- (4, -5) -- (4, -8) -- (2, -8) -- (2, -9);

\draw[draw=none, fill=teal!5] (2, -8) -- (2, -5) -- (4, -5) -- (4, -8);
\draw[draw=none, fill=teal!5] (4, -5) -- (4, -3) -- (8, -3) -- (8, -5);
\draw[draw=none, fill=teal!5] (8, -3) -- (8, -2) -- (13, -2) -- (13, -3);

\draw[dotted, fill=teal!5] (2, -8) -- (2, -5) -- (4, -5);
\draw[dotted, fill=teal!5] (4, -5) -- (4, -3) -- (8, -3);
\draw[dotted, fill=teal!5] (8, -3) -- (8, -2) -- (13, -2);
{\tiny
\node at (3, -6.5) {$h< k$};
\node at (6, -4) {$h< k$};
\node at (10.5, -2.5) {$h< k$};

\node at (3.1, -8.3) {$n(m-1)$};
\node at (6, -5.3) {$n(m)$};
\node at (10.5, -3.3) {$n(m+1)$};

\node[right] at (4, -6.5) {$k - n(m)$};
\node[right] at (8, -4) {$k - n(m+1)$};
\node[right] at (13, -2.5) {$k-n(m+2)$};

\node at (4, -2.25) {$h > k$};

\draw[<->] (0,.5) -- (13,.5);
\node[above] at (7.5, .5) {$h^0(\O(\vec{e})(m+1))$};

\draw [<->] (-.5, 0) -- (-.5, -8);
\node[rotate=90] at (-1, -4) {$h^1(\O(\vec{e})(m-1))$};

\draw [<->] (.5, 0) -- (.5, -5);
\node[rotate=90] at (1, -2.5) {$h^1(\O(\vec{e})(m))$};

\draw[<->] (0,-.5) -- (8,-.5);
\node[below] at (4, -.5) {$h^0(\O(\vec{e})(m))$};
}
\end{tikzpicture}
\end{center}

Since $n(m)$ is a nondecreasing function of $m$,
the boxes in the shaded regions have hook length $h < k$,
and the remaining boxes have $h > k$.
In particular, no box has $h = k$, so $\Gamma(\vec{e})$
is a $k$-core. Counting up the number of boxes with $h < k$, we obtain
\begin{align*}
u(\Gamma(\vec{e})) &= \sum_m n(m) \cdot (k - n(m + 1)) \\
&= \sum_m \ \sum_{e_{\ell_1} \geq -m} \ \ \sum_{e_{\ell_2} < -(m + 1)} 1 \\
&= \sum_{\ell_1, \ell_2} \# \{m : e_{\ell_2} + 1 < -m \leq e_{\ell_1} \} \\
&= \sum_{\ell_1, \ell_2} h^1(\pp^1, \O_{\pp^1}(e_{\ell_2} - e_{\ell_1})) \\
&= h^1(\pp^1, \operatorname{End}(\O_{\pp^1}(\vec{e}))) \\
&= u(\vec{e}). \qedhere
\end{align*}
\end{proof}

\begin{rem}
The fact that $u(\Gamma(\vec{e})) = u(\vec{e})$ already establishes that for $C \to \pp^1$ a general degree $k$ genus $g$ cover, $\dim W^{\vec{e}}(C) \leq \dim W^{\Gamma(\vec{e})}(X) = g - u(\vec{e}).$
\end{rem}

\begin{prop} \label{endpos} We have
\[T^{\leq g}(j, n) = \chi(\O(\vec{e})(-d_j)) - (m_1 + \cdots + m_j) + n.\]
\end{prop}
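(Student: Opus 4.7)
The plan is to leverage the identification (from Definition~\ref{Tround} and Section~\ref{tab-word}) that $T^{\leq g}(\ell) = w(\vec{e})(\ell)$, where $w(\vec{e}) \in \tilde{S}_k$ is the distinguished coset representative for $\Gamma(\vec{e})$, characterized by $w(\vec{e})(1) < \cdots < w(\vec{e})(k)$. Since by Proposition~\ref{main_ants} the values $T^{\leq g}(\ell)$ are already sorted in $\ell$, and lexicographic order on $(j, n)$ corresponds to the natural order on $\ell \in \{1, \ldots, k\}$, it suffices to show that the set of predicted values matches, as a set, the collection of diagonal indices $\{t_\rho\}_{\rho = 1}^{k}$ of the first horizontal boundary segment of $\Gamma(\vec{e})$ in each residue class mod~$k$.

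First I would give an explicit description of the horizontal boundary segments of $\Gamma(\vec{e})$. Writing $h(c)$ for the column-$c$ height (extended by $h(c) = 0$ past the last column), these segments sit at diagonal indices $c - h(c)$ for $c \geq 1$. Setting $a_j \colonequals m_1 + \cdots + m_j$ and $D \colonequals \sum_\ell e_\ell$, the columns group into plateaus indexed by integers $m \geq -d_1$: since $h^0(\O(\vec{e})(m)) - h^0(\O(\vec{e})(m-1)) = n(m) = a_{j(m)}$ (where $j(m)$ is the regime containing $m$), the plateau at level $m$ consists of $a_{j(m)}$ columns of height $h^1(\O(\vec{e})(m))$, contributing the consecutive indices
\[\{\chi(\O(\vec{e})(m)) - a_{j(m)} + 1,\, \chi(\O(\vec{e})(m)) - a_{j(m)} + 2,\, \ldots,\, \chi(\O(\vec{e})(m))\}\]
to the horizontal-boundary index set.

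Reducing mod~$k$ using $\chi(\O(\vec{e})(m)) \equiv D \pmod{k}$, the residues at level $m$ are $\{D - a_{j(m)} + 1, \ldots, D\} \pmod{k}$, depending only on the regime. Hence exactly $m_j$ new residue classes appear on entering regime $j$, and for each such new residue, the representative at $m = -d_j$ has value $\chi(\O(\vec{e})(-d_j)) - a_j + n$ for some $n \in \{1, \ldots, m_j\}$. Within a fixed residue class, consecutive representatives differ by a multiple of $k$, so I would verify that the cross-regime gap $\chi(\O(\vec{e})(-d_{j'})) - \chi(\O(\vec{e})(-d_j)) = k(d_j - d_{j'}) \geq k$ for $j' > j$ is enough to force any later same-class representative to be strictly larger; hence the representative at $m = -d_j$ is the global minimum in its class. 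Matching the within-regime enumeration by increasing $n$ to lex order on $(j, n)$ then identifies the sorted list of $t_\rho$'s with the claimed formula.

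The main obstacle is the cross-regime comparison: in the tight corner case $d_j - d_{j+1} = 1$ with $a_{j+1}$ close to $k$, the gap $k(d_j - d_{j+1}) = k$ only barely suffices, and one must invoke the residue-class constraint --- that same-class values are spaced by multiples of $k$ --- to upgrade a non-strict bound into the required strict inequality. Once that is in hand, the remainder is a careful bookkeeping exercise in the plateau structure of $\Gamma(\vec{e})$.
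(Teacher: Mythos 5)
Your proposal is correct and conceptually the same as the paper's: both read off the diagonal index of the first horizontal boundary segment in each residue class directly from the shape of $\Gamma(\vec{e})$, identifying the $n$th new segment at the regime-$j$ corner with $\ell = (j,n)$. The paper compresses this into a single labeled diagram of the corner; you instead enumerate the plateaux and verify the disjointness and ordering claims one at a time, which is longer but amounts to the same computation.

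One small correction: the ``tight corner case'' you flag as the main obstacle is not in fact tight. The cross-regime comparison required is
\[
k(d_j - d_{j+1}) \;>\; m_j + m_{j+1} - 1,
\]
and since $d_j > d_{j+1}$ are distinct integers the left side is at least $k$, while $m_j$ and $m_{j+1}$ are two of the multiplicities summing to $k$, so the right side is at most $k - 1$. The inequality is therefore strict with no further input. The spacing-by-$k$ observation you invoke is only needed for the separate and immediate fact that within a fixed residue class the diagonal index increases by exactly $k$ each time the level $m$ increases by one, so the global minimum in that class is attained at $m = -d_j$.
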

\begin{proof}
We use the
lengths labeled in the diagram below to calculate
the diagonal index of the first horizontal segment in every residue class
along the boundary of $\Gamma(\vec{e})$:
 
 \begin{center}
 \begin{tikzpicture}[scale = 0.75]
 \node at (4, .75) {\tiny $h^0(\O(\vec{e})(-d_j))$};
 \draw[<->] (0,.5) -- (8, .5);
 \draw (0, 0) -- (9, 0);
 \node at (9.4, 0) {$\ldots$};
 \draw (0, 0) -- (0, -4.5);
 \node at (0, -4.25-.5) {$\vdots$};
  \node at (5, -4.25-.5) {$\vdots$};
 \draw[<->] (-.5,0) -- (-.5, -3);
 \node[rotate=90] at (-.75,-1.5) {\tiny $h^1(\O(\vec{e})(-d_j))$};
 \draw (5, -3) -- (5, -4.5);
 \draw[<->] (0, -3) -- (5, -3);
 \node at (2.5, -3.25) {\tiny $h^0(\O(\vec{e})(-d_j - 1))$};
 \draw (5, -3) -- (8, -3);
 \draw[<->] (5, -3.5) -- (8, -3.5);
 \node at (6.5, -3.75) {\tiny $m_1 + \ldots + m_j$};
 \draw (8, -3) -- (8, -1.5);
 \draw (8, -1.5) -- (9, -1.5);
 \node at (9.4, -1.5) {$\ldots$};
 \draw[line width=1mm, color=violet] (6.5-1.5/3,-3) -- (6.5, -3);
 \draw[<->] (5, -2.5) -- (6.5, -2.5);
 \node at (5.75, -2.3) {\tiny $n$};
 \node at (14, -2.75) {\small ${\color{violet} T^{\leq g}(\ell)} = (\text{horizontal position}) - (\text{vertical position})$};
\node at (15.25, -3.5) {\small $= h^0(\pp^1, \O(\vec{e})(-d_j-1)) + n - h^1(\pp^1, \O(\vec{e})(-d_j))$};
\node at (14.15, -4.25) {\small $= \chi(\O(\vec{e})(-d_j)) -  (m_1 + \ldots + m_j) + n$};
 \end{tikzpicture}
 \end{center}
 as desired.
\end{proof}

We conclude the section with two results on the relationship between the truncations
with same value of $j$, respectively distinct values of $j$.

\begin{cor}\label{same_layer}
If $n_1 \geq n_2$, then $T^{\leq t}(j, n_1) - T^{\leq t}(j, n_2) \leq k - 1$.
\end{cor}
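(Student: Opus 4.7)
The plan is to sandwich the quantity $T^{\leq t}(j, n_1) - T^{\leq t}(j, n_2)$ between its values at the two endpoints $t = 0$ and $t = g$ using the monotonicity result of Proposition~\ref{main_ants}. First, I may assume $n_1 > n_2$, since the case $n_1 = n_2$ is trivial. Since $(j, n_1)$ and $(j, n_2)$ correspond under the lexicographic bijection to distinct elements of $\{1, \dots, k\}$ with $(j, n_1) > (j, n_2)$, Proposition~\ref{main_ants} already gives the lower bound $T^{\leq t}(j, n_1) > T^{\leq t}(j, n_2)$, so it suffices to control the upper bound.

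The key observation is that
\[
\left\lfloor \frac{T^{\leq t}(j, n_1) - T^{\leq t}(j, n_2)}{k} \right\rfloor
\]
takes the value $0$ at both $t = 0$ and $t = g$. Indeed, at $t = 0$ we have $T^{\leq 0}(j, n) = m_1 + \cdots + m_{j-1} + n$ by \eqref{atzero} and the lexicographic indexing, so
\[
T^{\leq 0}(j, n_1) - T^{\leq 0}(j, n_2) = n_1 - n_2 \in [1, m_j - 1] \subseteq [1, k-1].
\]
At $t = g$, Proposition~\ref{endpos} gives $T^{\leq g}(j, n) = \chi(\O(\vec{e})(-d_j)) - (m_1 + \cdots + m_j) + n$, whence again $T^{\leq g}(j, n_1) - T^{\leq g}(j, n_2) = n_1 - n_2 \in [1, k-1]$. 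In both cases the floor equals $0$.

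Now applying Proposition~\ref{main_ants}, the floor is non-decreasing in $t$; since it is $0$ at $t = 0$ and $0$ at $t = g$, it must equal $0$ for all intermediate $t$ as well. Combined with the strict positivity of the numerator coming from Proposition~\ref{main_ants}, we obtain
\[
0 < T^{\leq t}(j, n_1) - T^{\leq t}(j, n_2) < k,
\]
which yields the desired bound $T^{\leq t}(j, n_1) - T^{\leq t}(j, n_2) \leq k - 1$. There is no real obstacle here; the only content is identifying the endpoint values via \eqref{atzero} and Proposition~\ref{endpos} and invoking the floor-monotonicity already packaged into Proposition~\ref{main_ants}.
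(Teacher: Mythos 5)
Your proof is correct and follows the same approach as the paper: bound the floor of the difference divided by $k$ via the monotonicity in Proposition~\ref{main_ants}, and compute its value at $t=g$ using Proposition~\ref{endpos}. The paper's proof is a one-liner that only uses the upper bound from $t = g$ (since that already forces the floor to be $\leq 0$); your additional check at $t = 0$ via \eqref{atzero} pins the floor down to exactly $0$ throughout, which is a valid and slightly stronger observation but not needed for the stated corollary.
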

\begin{proof}
By Propositions~\ref{main_ants} and~\ref{endpos},
\[\left\lfloor \frac{T^{\leq t}(j, n_1) - T^{\leq t}(j, n_2)}{k} \right\rfloor \leq \left\lfloor \frac{T^{\leq g}(j, n_1) - T^{\leq g}(j, n_2)}{k} \right\rfloor = \left\lfloor\frac{n_1 - n_2}{k} \right\rfloor = 0. \qedhere\]
\end{proof}

\begin{cor} \label{jpm}
If $t$ is decreasing for $(j_-, n_-)$ and increasing for $(j_+, n_+)$,
then $j_- < j_+$.
\end{cor}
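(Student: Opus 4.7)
The plan is to derive a contradiction from assuming $j_- \geq j_+$, using Corollary~\ref{same_layer} together with the lower bound on $T^{\leq t}(\ell_+) - T^{\leq t}(\ell_-)$ coming from \eqref{sort-0}.

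First, I would translate the setup into the lexicographic labeling: under the bijection $\ell \leftrightarrow (j, n)$, write $\ell_- = (j_-, n_-)$ and $\ell_+ = (j_+, n_+)$. From the proof of Proposition~\ref{main_ants} (which established $\ell_- < \ell_+$), we know that in lexicographic order either $j_- < j_+$, or else $j_- = j_+$ and $n_- < n_+$.

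Next, I would rule out the second alternative. By \eqref{sort-0} applied at time $t$, we have
\[T^{\leq t}(\ell_+) - T^{\leq t}(\ell_-) \geq k + 1.\]
On the other hand, if $j_- = j_+ =: j$ with $n_+ > n_-$, Corollary~\ref{same_layer} gives
\[T^{\leq t}(j, n_+) - T^{\leq t}(j, n_-) \leq k - 1,\]
contradicting the previous inequality. Hence $j_- \neq j_+$, and combined with $\ell_- < \ell_+$ in lexicographic order this forces $j_- < j_+$.

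I do not expect any real obstacle here: the corollary is an almost immediate consequence of the two results just invoked. The only subtlety is keeping the two orderings (the lexicographic one on pairs $(j,n)$ versus the numerical one on the truncation values) straight, and remembering that the ``sorted'' conclusion of Proposition~\ref{main_ants} is what lets us compare $\ell_-$ and $\ell_+$ in the first place.
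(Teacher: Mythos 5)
Your proof is correct and takes essentially the same route as the paper: both rely on the lower bound $T^{\leq t}(\ell_+) - T^{\leq t}(\ell_-) \geq k+1$ from \eqref{sort-0}, the sortedness from Proposition~\ref{main_ants} to get $j_- \leq j_+$, and Corollary~\ref{same_layer} to rule out equality. The only cosmetic difference is that you cite the fact $\ell_- < \ell_+$ from within the proof of Proposition~\ref{main_ants}, whereas the paper re-derives it on the spot from \eqref{sort-0} together with the \emph{statement} of Proposition~\ref{main_ants}; both are equally valid.
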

\begin{proof}
By \eqref{sort-0}, we have $T^{\leq t}(j_-, n_-) \leq T^{\leq t}(j_+, n_+) - (k + 1)$.
Therefore by Proposition~\ref{main_ants}, we have $j_- \leq j_+$.
Moreover, by Corollary~\ref{same_layer}, we have $j_- \neq j_+$.
\end{proof}

\begin{cor} \label{distinct layers}
If $j' > j$, then $T^{\leq g}(j', n') - T^{\leq t}(j', n') \geq T^{\leq g}(j, n) - T^{\leq t}(j, n)$.
\end{cor}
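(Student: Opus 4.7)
The plan is to establish the equivalent reformulation
\[
T^{\leq g}(j',n') - T^{\leq g}(j,n) \;\geq\; T^{\leq t}(j',n') - T^{\leq t}(j,n)
\]
by backward induction on $t$, from the trivial base case $t = g$ down to $t = 0$. Writing $G^{(t)}_\ell := T^{\leq g}(\ell) - T^{\leq t}(\ell)$, the desired statement becomes $G^{(t)}_{(j',n')} \geq G^{(t)}_{(j,n)}$ whenever $j' > j$. For the inductive step from $t$ to $t-1$, I examine the backward transposition at step $t$, which by Corollary~\ref{jpm} swaps $\ell_-^{(t)} = (j_-, n_-)$ and $\ell_+^{(t)} = (j_+, n_+)$ with $j_- < j_+$. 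Going backward, $G_{\ell_+^{(t)}}$ increases by $1$, $G_{\ell_-^{(t)}}$ decreases by $1$, and the other $G$-values are unchanged.

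A case analysis on how $(j',n'), (j,n)$ relate to $\ell_\pm^{(t)}$ dispatches most configurations immediately from the inductive hypothesis (either $(j',n') = \ell_+^{(t)}$, which makes the inequality strengthen, or $(j,n) = \ell_-^{(t)}$, which again strengthens; or neither is involved, giving the claim unchanged). The remaining obstacle, which is the heart of the argument, is when the backward step \emph{decreases} $G_{(j',n')}$: that is, when $(j',n') = \ell_-^{(t)}$ (so $j' = j_-^{(t)}$ and $(j,n)$ is some particle in a strictly lower layer). Then the naive inductive hypothesis only gives $G^{(t)}_{(j',n')} \geq G^{(t)}_{(j,n)}$, whereas after the backward decrement we need the strict inequality $G^{(t)}_{(j',n')} \geq G^{(t)}_{(j,n)} + 1$ to conclude at time $t-1$. (The symmetric scenario, $(j,n) = \ell_+^{(t)}$ with $j = j_+^{(t)}$, is analogous.)

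To force this strictness, the plan is to invoke Corollary~\ref{jpm} to bring in the \emph{witness} particle $\ell_+^{(t)} = (j_+, n_+)$, which sits in the strictly higher layer $j_+ > j' = j_-$. Applying the inductive hypothesis at time $t$ to the pairs $((j_+,n_+), (j',n'))$ and $((j_+,n_+), (j,n))$, and tracking how all three $G$-values propagate backward through step $t$ (the witness's $G$ goes up by $1$, making its advantage over $(j',n')$ jump by $2$), the bookkeeping recovers exactly the strict inequality needed. Equivalently, one argues by contradiction: if equality $G^{(t)}_{(j',n')} = G^{(t)}_{(j,n)}$ held while simultaneously $(j',n') = \ell_-^{(t)}$, the floor inequality of Proposition~\ref{main_ants} together with the endpoint formula of Proposition~\ref{endpos} and the intra-layer control of Corollary~\ref{same_layer} would force the residues modulo $k$ to behave inconsistently with $s_{j_t}$ actually being a reduced factor in the affine symmetric group word. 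The hardest part of the write-up will be making this residue-level contradiction fully rigorous.
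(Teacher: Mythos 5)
You have correctly identified the one genuinely dangerous case in a backward induction on $t$: when $(j',n') = \ell_-^{(t)}$ (or symmetrically $(j,n) = \ell_+^{(t)}$), the quantity $G_{(j',n')}$ drops by $1$ in passing from $t$ to $t-1$, so the nonstrict inductive hypothesis $G^{(t)}_{(j',n')} \geq G^{(t)}_{(j,n)}$ is not enough to carry the step. However, neither of your proposed repairs closes the gap as written. The ``witness'' computation only shows that the advantage of $\ell_+^{(t)}$ over $\ell_-^{(t)}$ increases by $2$ across the step; it says nothing about the comparison between $\ell_-^{(t)}$ and the lower-layer particle $(j,n)$, which is the inequality you actually need. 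You would have to turn that into a strict inequality $G^{(t)}_{(j',n')} > G^{(t)}_{(j,n)}$, and nothing in the inductive hypothesis (nor in the extra inequality supplied by $\ell_+^{(t)}$) gives strictness. Your second suggestion --- a mod-$k$ contradiction from Propositions~\ref{main_ants} and~\ref{endpos} and Corollary~\ref{same_layer} --- is not worked out and, as you yourself flag, is ``the hardest part''; at present it is a hope, not an argument. As written, the proposal is a plan with an unfilled hole at exactly the step that matters.

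The paper avoids this difficulty entirely by not inducting on $t$. It first reduces to $j' = j+1$, then assembles three inequalities: two ``intra-layer'' estimates coming from sortedness of the truncations (Proposition~\ref{main_ants}), comparing $(j+1, n')$ with $(j+1, m_{j+1})$ and $(j, n)$ with $(j,1)$; and a third estimate comparing the extremal pair $(j+1, m_{j+1})$ and $(j,1)$. For that third one, Proposition~\ref{main_ants} only gives a floor inequality $\lfloor (T^{\leq g}(j+1,m_{j+1}) - T^{\leq g}(j,1))/k \rfloor \geq \lfloor (T^{\leq t}(j+1,m_{j+1}) - T^{\leq t}(j,1))/k \rfloor$, but Proposition~\ref{endpos} shows the left-hand numerator is $\equiv -1 \pmod k$, which is exactly the residue that promotes the floor inequality to a genuine inequality between the differences. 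Adding the three inequalities finishes the proof. If you want to salvage your backward induction, the cleanest fix is probably to strengthen the inductive hypothesis rather than to pursue a mod-$k$ contradiction; but the paper's direct argument is shorter and requires no case analysis at all.
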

\begin{proof}
It suffices to consider the case that $j' = j + 1$.
Because the truncations remains sorted (Proposition~\ref{main_ants}), we have
\begin{align}
T^{\leq t}(j + 1, m_{j + 1}) - T^{\leq t}(j + 1, n') &\geq m_{j + 1} - n' = T^{\leq g}(j + 1, m_{j + 1}) - T^{\leq g}(j + 1, n') \label{e1} \\
T^{\leq t}(j, n) - T^{\leq t}(j, 1) &\geq n - 1 = T^{\leq g}(j, n) - T^{\leq g}(j, 1). \label{e2}
\end{align}
Moreover, by Proposition~\ref{main_ants}, we also have
\[\left\lfloor \frac{T^{\leq g}(j + 1, m_{j + 1}) - T^{\leq g}(j, 1)}{k} \right\rfloor \geq \left\lfloor\frac{T^{\leq t}(j + 1, m_{j + 1}) - T^{\leq t}(j, 1)}{k}\right\rfloor.\]
By Proposition~\ref{endpos}, we have $T^{\leq g}(j + 1, m_{j + 1}) - T^{\leq g}(j, 1) \equiv -1$ mod $k$, so this implies
\begin{equation}
T^{\leq g}(j + 1, m_{j + 1}) - T^{\leq g}(j, 1) \geq T^{\leq t}(j + 1, m_{j + 1}) - T^{\leq t}(j, 1). \label{e3}
\end{equation}
Adding \eqref{e1}, \eqref{e2}, and \eqref{e3}, we obtain
\[T^{\leq t}(j, n) - T^{\leq t}(j + 1, n') \geq T^{\leq g}(j, n) - T^{\leq g}(j + 1, n'),\]
as desired.
\end{proof}

\subsection{Alternative construction of truncations: Paths on the tableau}
The truncations can alternatively be described directly
in terms of the tableau, without going through the affine symmetric group.
In this section, we give a brief exposition of this.
We omit proofs since the construction of the truncations given above
is logically sufficient for the remainder of the paper.

We call a box in a tableau \defi{decreasing} if it is filled with the first instance of a number, reading from the left.  Dually, we call
a box \defi{increasing} if it is filled with the first instance of a of a number, reading from the top.

\begin{minipage}{.3\textwidth}
\begin{center}
\begin{tikzpicture}[scale=.75]
\draw[fill=green!30]  (0,0) -- (1,0) -- (1,-1) -- (0,-1) -- (0,0);
\draw (0.5,0) -- (0.5,-0.25);
\draw (1,0) -- (0.75,-0.25);
\draw (0,0) -- (0.25,-0.25);

\draw (3, -.5) node{a decreasing box};

\draw[fill=yellow!30]  (0,-2) -- (1,-2) -- (1,-3) -- (0,-3) -- (0,-2);
\draw (0,-2) -- (0.25,-0.25-2);
\draw (0,-0.5-2) -- (0.25,-0.5-2);
\draw (0,-1-2) -- (0.225,-0.75-2);

\draw (3.1, -2-.5) node{an increasing box};

\draw[fill=green!30!yellow!30]  (0,-4) -- (1,-4) -- (1,-5) -- (0,-5) -- (0,-4);
\draw (0.5,-4) -- (0.5,-0.25-4);
\draw (1,-4) -- (0.75,-0.25-4);
\draw (0,-4) -- (0.25,-0.25-4);
\draw (0,-0.5-4) -- (0.25,-0.5-4);
\draw (0,-1-4) -- (0.225,-0.75-4);

\draw (3.1, -4-.2) node{an increasing and};
\draw (3.1, -4.5-.2) node{decreasing box};

\end{tikzpicture}
\end{center}
\end{minipage}
\begin{minipage}{.6\textwidth}

\begin{center}
\begin{tikzpicture}[scale=.6]
{\small
\draw (0,0) -- (9, 0);
\draw (0,-1) -- (9,-1);
\draw (0,-2) -- (6,-2);
\draw (0,-3) -- (4,-3);
\draw (0,-4) -- (4,-4);
\draw (0,-5) -- (2,-5);
\draw (0,-6) -- (2,-6);
\draw (0,-7) -- (1,-7);
\draw (0,-8) -- (1,-8);
\draw (0,-9) -- (1,-9);
\draw (0,0) -- (0,-9);
\draw (1,0) -- (1,-9);
\draw (2,0) -- (2,-6);
\draw (3,0) -- (3,-4);
\draw (4,0) -- (4,-4);
\draw (5,0) -- (5,-2);
\draw (6,0) -- (6,-2);
\draw (7,0) -- (7,-1);
\draw (8,0) -- (8,-1);
\draw (9,0) -- (9,-1);
\draw[fill=yellow!30]  (2,0) -- (3,0) -- (3,-1) -- (2,-1) -- (2,0);
\draw (2,0) -- (2.25,-0.25);
\draw (2,-0.5) -- (2.25,-0.5);
\draw (2,-1) -- (2.225,-0.75);
\draw[fill=yellow!30]  (4,0) -- (5,0) -- (5,-1) -- (4,-1) -- (4,0);
\draw (4,0) -- (4.25,-0.25);
\draw (4,-0.5) -- (4.25,-0.5);
\draw (4,-1) -- (4.225,-0.75);
\draw[fill=yellow!30]  (5,0) -- (6,0) -- (6,-1) -- (5,-1) -- (5,0);
\draw (5,0) -- (5.25,-0.25);
\draw (5,-0.5) -- (5.25,-0.5);
\draw (5,-1) -- (5.225,-0.75);
\draw[fill=yellow!30]  (6,0) -- (7,0) -- (7,-1) -- (6,-1) -- (6,0);
\draw (6,0) -- (6.25,-0.25);
\draw (6,-0.5) -- (6.25,-0.5);
\draw (6,-1) -- (6.225,-0.75);
\draw[fill=yellow!30]  (7,0) -- (8,0) -- (8,-1) -- (7,-1) -- (7,0);
\draw (7,0) -- (7.25,-0.25);
\draw (7,-0.5) -- (7.25,-0.5);
\draw (7,-1) -- (7.225,-0.75);
\draw[fill=yellow!30]  (8,0) -- (9,0) -- (9,-1) -- (8,-1) -- (8,0);
\draw (8,0) -- (8.25,-0.25);
\draw (8,-0.5) -- (8.25,-0.5);
\draw (8,-1) -- (8.225,-0.75);
\draw[fill=yellow!30]  (1,-2) -- (2,-2) -- (2,-3) -- (1,-3) -- (1,-2);
\draw (1,-2) -- (1.25,-2.25);
\draw (1,-2.5) -- (1.25,-2.5);
\draw (1,-3) -- (1.225,-2.75);
\draw[fill=yellow!30]  (3,-2) -- (4,-2) -- (4,-3) -- (3,-3) -- (3,-2);
\draw (3,-2) -- (3.25,-2.25);
\draw (3,-2.5) -- (3.25,-2.5);
\draw (3,-3) -- (3.225,-2.75);
\draw[fill=green!30]  (0,-2) -- (1,-2) -- (1,-3) -- (0,-3) -- (0,-2);
\draw (0,-2) -- (0.25,-2.25);
\draw (0.5,-2) -- (0.5,-2.25);
\draw (1,-2) -- (0.75,-2.25);
\draw[fill=green!30]  (0,-4) -- (1,-4) -- (1,-5) -- (0,-5) -- (0,-4);
\draw (0,-4) -- (0.25,-4.25);
\draw (0.5,-4) -- (0.5,-4.25);
\draw (1,-4) -- (0.75,-4.25);
\draw[fill=green!30]  (0,-5) -- (1,-5) -- (1,-6) -- (0,-6) -- (0,-5);
\draw (0,-5) -- (0.25,-5.25);
\draw (0.5,-5) -- (0.5,-5.25);
\draw (1,-5) -- (0.75,-5.25);
\draw[fill=green!30]  (0,-6) -- (1,-6) -- (1,-7) -- (0,-7) -- (0,-6);
\draw (0,-6) -- (0.25,-6.25);
\draw (0.5,-6) -- (0.5,-6.25);
\draw (1,-6) -- (0.75,-6.25);
\draw[fill=green!30]  (0,-7) -- (1,-7) -- (1,-8) -- (0,-8) -- (0,-7);
\draw (0,-7) -- (0.25,-7.25);
\draw (0.5,-7) -- (0.5,-7.25);
\draw (1,-7) -- (0.75,-7.25);
\draw[fill=green!30]  (0,-8) -- (1,-8) -- (1,-9) -- (0,-9) -- (0,-8);
\draw (0,-8) -- (0.25,-8.25);
\draw (0.5,-8) -- (0.5,-8.25);
\draw (1,-8) -- (0.75,-8.25);
\draw[fill=green!30]  (2,-1) -- (3,-1) -- (3,-2) -- (2,-2) -- (2,-1);
\draw (2,-1) -- (2.25,-1.25);
\draw (2.5,-1) -- (2.5,-1.25);
\draw (3,-1) -- (2.75,-1.25);
\draw[fill=green!30]  (2,-3) -- (3,-3) -- (3,-4) -- (2,-4) -- (2,-3);
\draw (2,-3) -- (2.25,-3.25);
\draw (2.5,-3) -- (2.5,-3.25);
\draw (3,-3) -- (2.75,-3.25);
\draw[fill=green!30!yellow!30]  (0,0) -- (1,0) -- (1,-1) -- (0,-1) -- (0,0);
\draw (0.5,0) -- (0.5,-0.25);
\draw (1,0) -- (0.75,-0.25);
\draw (0,0) -- (0.25,-0.25);
\draw (0,-0.5) -- (0.25,-0.5);
\draw (0,-1) -- (0.225,-0.75);
\draw[fill=green!30!yellow!30]  (1,0) -- (2,0) -- (2,-1) -- (1,-1) -- (1,0);
\draw (1.5,0) -- (1.5,-0.25);
\draw (2,0) -- (1.75,-0.25);
\draw (1,0) -- (1.25,-0.25);
\draw (1,-0.5) -- (1.25,-0.5);
\draw (1,-1) -- (1.225,-0.75);
\draw[fill=green!30!yellow!30]  (3,0) -- (4,0) -- (4,-1) -- (3,-1) -- (3,0);
\draw (3.5,0) -- (3.5,-0.25);
\draw (4,0) -- (3.75,-0.25);
\draw (3,0) -- (3.25,-0.25);
\draw (3,-0.5) -- (3.25,-0.5);
\draw (3,-1) -- (3.225,-0.75);
\draw[fill=green!30!yellow!30]  (0,-1) -- (1,-1) -- (1,-2) -- (0,-2) -- (0,-1);
\draw (0.5,-1) -- (0.5,-1.25);
\draw (1,-1) -- (0.75,-1.25);
\draw (0,-1) -- (0.25,-1.25);
\draw (0,-1.5) -- (0.25,-1.5);
\draw (0,-2) -- (0.225,-1.75);
\draw[fill=green!30!yellow!30]  (2,-2) -- (3,-2) -- (3,-3) -- (2,-3) -- (2,-2);
\draw (2.5,-2) -- (2.5,-2.25);
\draw (3,-2) -- (2.75,-2.25);
\draw (2,-2) -- (2.25,-2.25);
\draw (2,-2.5) -- (2.25,-2.5);
\draw (2,-3) -- (2.225,-2.75);
\draw[fill=green!30!yellow!30]  (0,-3) -- (1,-3) -- (1,-4) -- (0,-4) -- (0,-3);
\draw (0.5,-3) -- (0.5,-3.25);
\draw (1,-3) -- (0.75,-3.25);
\draw (0,-3) -- (0.25,-3.25);
\draw (0,-3.5) -- (0.25,-3.5);
\draw (0,-4) -- (0.225,-3.75);
\draw (0.5,-0.5) node{1};
\draw (1.5,-0.5) node{2};
\draw (2.5,-0.5) node{4};
\draw (3.5,-0.5) node{6};
\draw (4.5,-0.5) node{7};
\draw (5.5,-0.5) node{9};
\draw (6.5,-0.5) node{11};
\draw (7.5,-0.5) node{12};
\draw (8.5,-0.5) node{14};
\draw (0.5,-1.5) node{3};
\draw (1.5,-1.5) node{7};
\draw (2.5,-1.5) node{9};
\draw (3.5,-1.5) node{11};
\draw (4.5,-1.5) node{12};
\draw (5.5,-1.5) node{14};
\draw (0.5,-2.5) node{4};
\draw (1.5,-2.5) node{8};
\draw (2.5,-2.5) node{10};
\draw (3.5,-2.5) node{13};
\draw (0.5,-3.5) node{5};
\draw (1.5,-3.5) node{11};
\draw (2.5,-3.5) node{12};
\draw (3.5,-3.5) node{14};
\draw (0.5,-4.5) node{7};
\draw (1.5,-4.5) node{13};
\draw (0.5,-5.5) node{8};
\draw (1.5,-5.5) node{14};
\draw (0.5,-6.5) node{11};
\draw (0.5,-7.5) node{13};
\draw (0.5,-8.5) node{14};

}
\end{tikzpicture}
\end{center}
\end{minipage}

\vspace{10pt}

For an efficient filling, one can show that the number of decreasing boxes in a row is a nonincreasing function of the row,
and is always at most $k - 1$.
For $0\leq i \leq k$, we call the collection of boxes which are the $i$th decreasing box in each row the
\defi{$i$th decreasing cascade} of the tableau.  Dually, we define the \defi{increasing cascades}.
(The $k$th cascade is always empty.)
One then shows that the translation of the $(k+1-i)$th increasing cascade of a tableau up $k+1-i$ and over $i$ ``meshes''
with the $i$th decreasing cascade to form a ``walking path''
and consists of symbols in increasing order.

\begin{center}
\begin{minipage}{.48\textwidth}
\begin{center}
\begin{tikzpicture}[scale=.35]
{\small

\draw (0.5, .35) node{\includegraphics[width=10pt]{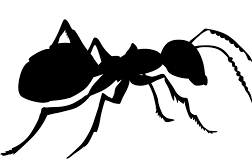}};

\draw[color=white] (0,0) -- (0, 1) -- (1,1) -- (1,0) -- (0,0);
\draw (0,0) -- (9, 0);
\draw (0,-1) -- (9,-1);
\draw (0,-2) -- (6,-2);
\draw (0,-3) -- (4,-3);
\draw (0,-4) -- (4,-4);
\draw (0,-5) -- (2,-5);
\draw (0,-6) -- (2,-6);
\draw (0,-7) -- (1,-7);
\draw (0,-8) -- (1,-8);
\draw (0,-9) -- (1,-9);
\draw (0,0) -- (0,-9);
\draw (1,0) -- (1,-9);
\draw (2,0) -- (2,-6);
\draw (3,0) -- (3,-4);
\draw (4,0) -- (4,-4);
\draw (5,0) -- (5,-2);
\draw (6,0) -- (6,-2);
\draw (7,0) -- (7,-1);
\draw (8,0) -- (8,-1);
\draw (9,0) -- (9,-1);
\draw[fill=green!30]  (0,0) -- (1,0) -- (1,-1) -- (0,-1) -- (0,0);
\draw (0,0) -- (0.5,-0.5);
\draw (0.5,0) -- (0.5,-0.5);
\draw (1,0) -- (0.5,-0.5);
\draw[fill=green!30]  (0,-1) -- (1,-1) -- (1,-2) -- (0,-2) -- (0,-1);
\draw (0,-1) -- (0.5,-1.5);
\draw (0.5,-1) -- (0.5,-1.5);
\draw (1,-1) -- (0.5,-1.5);
\draw[fill=green!30]  (0,-2) -- (1,-2) -- (1,-3) -- (0,-3) -- (0,-2);
\draw (0,-2) -- (0.5,-2.5);
\draw (0.5,-2) -- (0.5,-2.5);
\draw (1,-2) -- (0.5,-2.5);
\draw[fill=green!30]  (0,-3) -- (1,-3) -- (1,-4) -- (0,-4) -- (0,-3);
\draw (0,-3) -- (0.5,-3.5);
\draw (0.5,-3) -- (0.5,-3.5);
\draw (1,-3) -- (0.5,-3.5);
\draw[fill=green!30]  (0,-4) -- (1,-4) -- (1,-5) -- (0,-5) -- (0,-4);
\draw (0,-4) -- (0.5,-4.5);
\draw (0.5,-4) -- (0.5,-4.5);
\draw (1,-4) -- (0.5,-4.5);
\draw[fill=green!30]  (0,-5) -- (1,-5) -- (1,-6) -- (0,-6) -- (0,-5);
\draw (0,-5) -- (0.5,-5.5);
\draw (0.5,-5) -- (0.5,-5.5);
\draw (1,-5) -- (0.5,-5.5);
\draw[fill=green!30]  (0,-6) -- (1,-6) -- (1,-7) -- (0,-7) -- (0,-6);
\draw (0,-6) -- (0.5,-6.5);
\draw (0.5,-6) -- (0.5,-6.5);
\draw (1,-6) -- (0.5,-6.5);
\draw[fill=green!30]  (0,-7) -- (1,-7) -- (1,-8) -- (0,-8) -- (0,-7);
\draw (0,-7) -- (0.5,-7.5);
\draw (0.5,-7) -- (0.5,-7.5);
\draw (1,-7) -- (0.5,-7.5);
\draw[fill=green!30]  (0,-8) -- (1,-8) -- (1,-9) -- (0,-9) -- (0,-8);
\draw (0,-8) -- (0.5,-8.5);
\draw (0.5,-8) -- (0.5,-8.5);
\draw (1,-8) -- (0.5,-8.5);
}
\end{tikzpicture}

\noindent The $1$st walking path.
\end{center}

\end{minipage}
\begin{minipage}{.48\textwidth}
\begin{center}
\begin{tikzpicture}[scale=.35]
{\small

\draw (1.5, .35) node{\includegraphics[width=10pt]{ant2.png}};

\draw[color=white] (0,0) -- (0, 1) -- (1,1) -- (1,0) -- (0,0);
\draw (0,0) -- (9, 0);
\draw (0,-1) -- (9,-1);
\draw (0,-2) -- (6,-2);
\draw (0,-3) -- (4,-3);
\draw (0,-4) -- (4,-4);
\draw (0,-5) -- (2,-5);
\draw (0,-6) -- (2,-6);
\draw (0,-7) -- (1,-7);
\draw (0,-8) -- (1,-8);
\draw (0,-9) -- (1,-9);
\draw (0,0) -- (0,-9);
\draw (1,0) -- (1,-9);
\draw (2,0) -- (2,-6);
\draw (3,0) -- (3,-4);
\draw (4,0) -- (4,-4);
\draw (5,0) -- (5,-2);
\draw (6,0) -- (6,-2);
\draw (7,0) -- (7,-1);
\draw (8,0) -- (8,-1);
\draw (9,0) -- (9,-1);
\draw[fill=green!30]  (1,0) -- (2,0) -- (2,-1) -- (1,-1) -- (1,0);
\draw (1,0) -- (1.5,-0.5);
\draw (1.5,0) -- (1.5,-0.5);
\draw (2,0) -- (1.5,-0.5);
\draw[fill=green!30]  (2,-1) -- (3,-1) -- (3,-2) -- (2,-2) -- (2,-1);
\draw (2,-1) -- (2.5,-1.5);
\draw (2.5,-1) -- (2.5,-1.5);
\draw (3,-1) -- (2.5,-1.5);
\draw[fill=green!30]  (2,-2) -- (3,-2) -- (3,-3) -- (2,-3) -- (2,-2);
\draw (2,-2) -- (2.5,-2.5);
\draw (2.5,-2) -- (2.5,-2.5);
\draw (3,-2) -- (2.5,-2.5);
\draw[fill=green!30]  (2,-3) -- (3,-3) -- (3,-4) -- (2,-4) -- (2,-3);
\draw (2,-3) -- (2.5,-3.5);
\draw (2.5,-3) -- (2.5,-3.5);
\draw (3,-3) -- (2.5,-3.5);
\draw[fill=yellow!30]  (2,0) -- (3,0) -- (3,-1) -- (2,-1) -- (2,0);
\draw (2,0) -- (2.5,-0.5);
\draw (2,-0.5) -- (2.5,-0.5);
\draw (2,-1) -- (2.5,-0.5);

\draw[fill=yellow!20]  (0,-3) -- (1,-3) -- (1,-4) -- (0,-4) -- (0,-3);
\draw[densely dotted] (0,-3) -- (0.5,-3.5);
\draw[densely dotted] (0,-3.5) -- (0.5,-3.5);
\draw[densely dotted] (0,-4) -- (0.5,-3.5);

\draw[dashed, ->] (.5, -2.75) -- (1.75, -1.25);
\draw[color=white]  (12,1) -- (13,1) -- (13,0) -- (12,0) -- (12,1);
}
\end{tikzpicture}

\noindent The $2$nd walking path.
\end{center}

\end{minipage}

\vspace{5pt}

\end{center}

\begin{center}
\begin{minipage}{.48\textwidth}
\begin{center}
\begin{tikzpicture}[scale=.35]
{\small

\draw (2.5, .35) node{\includegraphics[width=10pt]{ant2.png}};

\draw (0,0) -- (9, 0);
\draw (0,-1) -- (9,-1);
\draw (0,-2) -- (6,-2);
\draw (0,-3) -- (4,-3);
\draw (0,-4) -- (4,-4);
\draw (0,-5) -- (2,-5);
\draw (0,-6) -- (2,-6);
\draw (0,-7) -- (1,-7);
\draw (0,-8) -- (1,-8);
\draw (0,-9) -- (1,-9);
\draw (0,0) -- (0,-9);
\draw (1,0) -- (1,-9);
\draw (2,0) -- (2,-6);
\draw (3,0) -- (3,-4);
\draw (4,0) -- (4,-4);
\draw (5,0) -- (5,-2);
\draw (6,0) -- (6,-2);
\draw (7,0) -- (7,-1);
\draw (8,0) -- (8,-1);
\draw (9,0) -- (9,-1);
\draw[fill=green!30]  (3,0) -- (4,0) -- (4,-1) -- (3,-1) -- (3,0);
\draw (3,0) -- (3.5,-0.5);
\draw (3.5,0) -- (3.5,-0.5);
\draw (4,0) -- (3.5,-0.5);
\draw[fill=yellow!30]  (3,1) -- (4,1) -- (4,0) -- (3,0) -- (3,1);
\draw (3,1) -- (3.5,0.5);
\draw (3,0.5) -- (3.5,0.5);
\draw (3,0) -- (3.5,0.5);
\draw[fill=yellow!30]  (4,0) -- (5,0) -- (5,-1) -- (4,-1) -- (4,0);
\draw (4,0) -- (4.5,-0.5);
\draw (4,-0.5) -- (4.5,-0.5);
\draw (4,-1) -- (4.5,-0.5);
\draw[fill=yellow!30]  (5,0) -- (6,0) -- (6,-1) -- (5,-1) -- (5,0);
\draw (5,0) -- (5.5,-0.5);
\draw (5,-0.5) -- (5.5,-0.5);
\draw (5,-1) -- (5.5,-0.5);
\draw[fill=yellow!30]  (6,0) -- (7,0) -- (7,-1) -- (6,-1) -- (6,0);
\draw (6,0) -- (6.5,-0.5);
\draw (6,-0.5) -- (6.5,-0.5);
\draw (6,-1) -- (6.5,-0.5);

\draw[fill=yellow!20]  (0,-1) -- (1,-1) -- (1,-2) -- (0,-2) -- (0,-1);
\draw[densely dotted] (0,-1) -- (0.5,-1.5);
\draw[densely dotted] (0,-1.5) -- (0.5,-1.5);
\draw[densely dotted] (0,-2) -- (0.5,-1.5);
\draw[fill=yellow!20]  (1,-2) -- (2,-2) -- (2,-3) -- (1,-3) -- (1,-2);
\draw[densely dotted] (1,-2) -- (1.5,-2.5);
\draw[densely dotted] (1,-2.5) -- (1.5,-2.5);
\draw[densely dotted] (1,-3) -- (1.5,-2.5);
\draw[fill=yellow!20]  (2,-2) -- (3,-2) -- (3,-3) -- (2,-3) -- (2,-2);
\draw[densely dotted] (2,-2) -- (2.5,-2.5);
\draw[densely dotted] (2,-2.5) -- (2.5,-2.5);
\draw[densely dotted] (2,-3) -- (2.5,-2.5);
\draw[fill=yellow!20]  (3,-2) -- (4,-2) -- (4,-3) -- (3,-3) -- (3,-2);
\draw[densely dotted] (3,-2) -- (3.5,-2.5);
\draw[densely dotted] (3,-2.5) -- (3.5,-2.5);
\draw[densely dotted] (3,-3) -- (3.5,-2.5);

\draw[dashed, ->] (1.5, -1.75) -- (3.75, -1.25);

}
\end{tikzpicture}

\noindent The $3$rd walking path.
\end{center}

\end{minipage}
\begin{minipage}{.48\textwidth}
\begin{center}
\begin{tikzpicture}[scale=.35]
{\small

\draw (3.5, .35) node{\includegraphics[width=10pt]{ant2.png}};

\draw (0,0) -- (9, 0);
\draw (0,-1) -- (9,-1);
\draw (0,-2) -- (6,-2);
\draw (0,-3) -- (4,-3);
\draw (0,-4) -- (4,-4);
\draw (0,-5) -- (2,-5);
\draw (0,-6) -- (2,-6);
\draw (0,-7) -- (1,-7);
\draw (0,-8) -- (1,-8);
\draw (0,-9) -- (1,-9);
\draw (0,0) -- (0,-9);
\draw (1,0) -- (1,-9);
\draw (2,0) -- (2,-6);
\draw (3,0) -- (3,-4);
\draw (4,0) -- (4,-4);
\draw (5,0) -- (5,-2);
\draw (6,0) -- (6,-2);
\draw (7,0) -- (7,-1);
\draw (8,0) -- (8,-1);
\draw (9,0) -- (9,-1);
\draw[fill=yellow!20]  (0,0) -- (1,0) -- (1,-1) -- (0,-1) -- (0,0);
\draw[densely dotted] (0,0) -- (0.5,-0.5);
\draw[densely dotted] (0,-0.5) -- (0.5,-0.5);
\draw[densely dotted] (0,-1) -- (0.5,-0.5);
\draw[fill=yellow!20]  (1,0) -- (2,0) -- (2,-1) -- (1,-1) -- (1,0);
\draw[densely dotted] (1,0) -- (1.5,-0.5);
\draw[densely dotted] (1,-0.5) -- (1.5,-0.5);
\draw[densely dotted] (1,-1) -- (1.5,-0.5);
\draw[fill=yellow!20]  (2,0) -- (3,0) -- (3,-1) -- (2,-1) -- (2,0);
\draw[densely dotted] (2,0) -- (2.5,-0.5);
\draw[densely dotted] (2,-0.5) -- (2.5,-0.5);
\draw[densely dotted] (2,-1) -- (2.5,-0.5);
\draw[fill=yellow!20]  (3,0) -- (4,0) -- (4,-1) -- (3,-1) -- (3,0);
\draw[densely dotted] (3,0) -- (3.5,-0.5);
\draw[densely dotted] (3,-0.5) -- (3.5,-0.5);
\draw[densely dotted] (3,-1) -- (3.5,-0.5);
\draw[fill=yellow!20]  (4,0) -- (5,0) -- (5,-1) -- (4,-1) -- (4,0);
\draw[densely dotted] (4,0) -- (4.5,-0.5);
\draw[densely dotted] (4,-0.5) -- (4.5,-0.5);
\draw[densely dotted] (4,-1) -- (4.5,-0.5);
\draw[fill=yellow!20]  (5,0) -- (6,0) -- (6,-1) -- (5,-1) -- (5,0);
\draw[densely dotted] (5,0) -- (5.5,-0.5);
\draw[densely dotted] (5,-0.5) -- (5.5,-0.5);
\draw[densely dotted] (5,-1) -- (5.5,-0.5);
\draw[fill=yellow!20]  (6,0) -- (7,0) -- (7,-1) -- (6,-1) -- (6,0);
\draw[densely dotted] (6,0) -- (6.5,-0.5);
\draw[densely dotted] (6,-0.5) -- (6.5,-0.5);
\draw[densely dotted] (6,-1) -- (6.5,-0.5);
\draw[fill=yellow!20]  (7,0) -- (8,0) -- (8,-1) -- (7,-1) -- (7,0);
\draw[densely dotted] (7,0) -- (7.5,-0.5);
\draw[densely dotted] (7,-0.5) -- (7.5,-0.5);
\draw[densely dotted] (7,-1) -- (7.5,-0.5);
\draw[fill=yellow!20]  (8,0) -- (9,0) -- (9,-1) -- (8,-1) -- (8,0);
\draw[densely dotted] (8,0) -- (8.5,-0.5);
\draw[densely dotted] (8,-0.5) -- (8.5,-0.5);
\draw[densely dotted] (8,-1) -- (8.5,-0.5);

\draw[fill=yellow!30]  (4,1) -- (5,1) -- (5,0) -- (4,0) -- (4,1);
\draw (4,1) -- (4.5,0.5);
\draw (4,0.5) -- (4.5,0.5);
\draw (4,0) -- (4.5,0.5);
\draw[fill=yellow!30]  (5,1) -- (6,1) -- (6,0) -- (5,0) -- (5,1);
\draw (5,1) -- (5.5,0.5);
\draw (5,0.5) -- (5.5,0.5);
\draw (5,0) -- (5.5,0.5);
\draw[fill=yellow!30]  (6,1) -- (7,1) -- (7,0) -- (6,0) -- (6,1);
\draw (6,1) -- (6.5,0.5);
\draw (6,0.5) -- (6.5,0.5);
\draw (6,0) -- (6.5,0.5);
\draw[fill=yellow!30]  (7,1) -- (8,1) -- (8,0) -- (7,0) -- (7,1);
\draw (7,1) -- (7.5,0.5);
\draw (7,0.5) -- (7.5,0.5);
\draw (7,0) -- (7.5,0.5);
\draw[fill=yellow!30]  (8,1) -- (9,1) -- (9,0) -- (8,0) -- (8,1);
\draw (8,1) -- (8.5,0.5);
\draw (8,0.5) -- (8.5,0.5);
\draw (8,0) -- (8.5,0.5);
\draw[fill=yellow!30]  (9,1) -- (10,1) -- (10,0) -- (9,0) -- (9,1);
\draw (9,1) -- (9.5,0.5);
\draw (9,0.5) -- (9.5,0.5);
\draw (9,0) -- (9.5,0.5);
\draw[fill=yellow!30]  (10,1) -- (11,1) -- (11,0) -- (10,0) -- (10,1);
\draw (10,1) -- (10.5,0.5);
\draw (10,0.5) -- (10.5,0.5);
\draw (10,0) -- (10.5,0.5);
\draw[fill=yellow!30]  (11,1) -- (12,1) -- (12,0) -- (11,0) -- (11,1);
\draw (11,1) -- (11.5,0.5);
\draw (11,0.5) -- (11.5,0.5);
\draw (11,0) -- (11.5,0.5);
\draw[fill=yellow!30]  (12,1) -- (13,1) -- (13,0) -- (12,0) -- (12,1);
\draw (12,1) -- (12.5,0.5);
\draw (12,0.5) -- (12.5,0.5);
\draw (12,0) -- (12.5,0.5);

}
\end{tikzpicture}

\noindent The $4$th walking path.
\end{center}

\end{minipage}

\end{center} 

\vspace{10pt}

We think of these walking paths as giving instructions to a collection of $k$ ants starting at $(0, \ell)$ for $1 \leq \ell \leq k$.
At time $t$, if the symbol $t$ appears as the next box in the $\ell$th walking path, the $\ell$th ant steps onto the corresponding box.
The diagonal index of the $\ell$th ant at time $t$ recovers $T^{\leq t}(\ell)$;
the symbol $t$ is decreasing (respectively increasing) for $\ell$ if $t$ is a decreasing (respectively increasing) box in the $\ell$th
walking path.

\section{Set-Theoretic Regeneration \label{regeneration-st}}

In this section, we show that every point of $W^{\Gamma(\vec{e})}(X)$
arises as a limit of line bundles with splitting type $\vec{e}$.
In particular, the fiber over $0$ of the closure of $W^{\vec{e}}(\mathcal{X}^* / B^*)$
coincides \emph{set-theoretically} with $W^{\Gamma(\vec{e})}(X)$, and therefore also with $W^{\vec{e}}(X)_{\text{red}}$.
For this task, we will use the language of limit linear series, as developed by
Eisenbud and Harris \cite{lls}.

Then, in the following section, we will show that $W^{\vec{e}}(X)$ is reduced.
Since the closure of $W^{\vec{e}}(\mathcal{X}^* / B^*)$
is a priori contained scheme-theoretically in $W^{\vec{e}}(X)$,
this will upgrade our set-theoretic regeneration theorem to a scheme-theoretic regeneration theorem. In other words, this
will show that the fiber over $0$ of the
closure of $W^{\vec{e}}(\mathcal{X}^* / B^*)$ is equal to $W^{\Gamma(\vec{e})}(X)$ as schemes.

Recall that, for a splitting type $\vec{e}$, we write $d_1 > \cdots  > d_\s$ for the distinct entries of $\vec{e}$, and $m_1, \ldots, m_\s$ for the corresponding multiplicities.
(Note that $e_1 \leq e_2 \leq \cdots \leq e_k$ but $d_1 > d_2 > \cdots > d_\s$!)
If $f\colon C \to \pp^1$ is a smooth degree $k$ cover, the locus $W^{\vec{e}}(C)$ can be described (set-theoretically) as follows.
A line bundle $L$ is in $W^{\vec{e}}(C)$ if and only if $L$ possesses a collection for $j = 1, \ldots, \s-1$ of linear series $V_j \subset H^0(C,L(-d_j))$ with $\dim V_j = h^0(\pp^1,\O(\vec{e})(-d_j))$.
Moreover, the $V_j$ may be chosen so that the image of the natural map
\begin{equation} \label{im}
 V_{j-1} \otimes H^0(\pp^1, \O(d_{j-1} - d_{j})) \rightarrow H^0(C, L(-d_{j-1})) \otimes H^0(\pp^1, \O(d_{j-1} - d_{j})) \rightarrow H^0(C, L(-d_{j}))
 \end{equation}
is contained in $V_{j}$. We call such a collection $\{V_1, \ldots, V_\s\}$ satisfying \eqref{im} an \defi{$\vec{e}$-nested linear series} and $V_j$ the linear series at \defi{layer $j$}. 
By convention, we set $V_0 \colonequals \{0\}$.

Now suppose that $p \in C$ is a point of total ramification for $f$, and
let $\Im_{j}(V_{j-1})$ denote the image of \eqref{im}.
Call the ramification indices of $V_{j}$ at $p$ that are \textit{not} ramification indices of $\Im_{j}(V_{j-1})$ at $p$ the \defi{new ramification indices for layer $j$ at $p$}.
An $\vec{e}$-nested linear series will be called \defi{non-colliding at $p$} if the new ramification indices at $p$ are distinct from eachother, and from all ramification indices at lower layers, modulo $k$. 
For $j' < j$, if $V_{j'}$ has a section vanishing to order $a$ at $p$, then there are sections vanishing to orders $a, a+k, \ldots, a+(d_{j'} - d_{j})k$ in $\Im_{j}(V_{j-1})$.

\begin{lem} \label{max-dim}
If $\{V_1, \ldots, V_{s}\}$ is a $\vec{e}$-nested linear series that is non-colliding at some $p \in C$, then
\[\dim \Im_j(V_{j-1}) = h^0(\pp^1, \O_{\pp^1}(\vec{e})(-d_j)) - m_j.\]
Hence, the number of new ramification indices at layer $j$
is exactly $m_j$.
\end{lem}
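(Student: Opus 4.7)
The plan is to induct on $j$. The base case $j = 1$ is immediate: $V_0 = \{0\}$ forces $\dim \Im_1(V_0) = 0$, and $h^0(\pp^1, \O_{\pp^1}(\vec{e})(-d_1)) = m_1$, so both sides of the claimed equality reduce to $0$.

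For the inductive step, I would first build an explicit basis of $V_{j-1}$ adapted to the vanishing filtration at $p$. The inductive hypothesis together with the non-colliding assumption gives, for every $j^* \leq j-1$ and every $\nu \in \{1, \ldots, m_{j^*}\}$, a ``new'' ramification index $\alpha_{j^*, \nu}$ of $V_{j^*}$; the residues $\alpha_{j^*, \nu} \bmod k$ are pairwise distinct across all such pairs $(j^*, \nu)$. Pick a section $s_{j^*, \nu} \in V_{j^*}$ with $\ord_p(s_{j^*, \nu}) = \alpha_{j^*, \nu}$. Using surjectivity of the multiplication maps $H^0(\pp^1, \O(a)) \otimes H^0(\pp^1, \O(b)) \to H^0(\pp^1, \O(a+b))$ for $a, b \geq 0$, one iterates the $\vec{e}$-nesting condition from layer $j^*$ up to layer $j - 1$ to conclude that $s_{j^*, \nu} \cdot f^*(t^r) \in V_{j-1}$ for every $0 \leq r \leq d_{j^*} - d_{j-1}$, where $t$ is a uniformizer of $\pp^1$ at $f(p)$. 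These sections have vanishing orders $\alpha_{j^*, \nu} + rk$ at $p$: distinct pairs $(j^*, \nu)$ give distinct residues mod $k$ by non-collision, and a fixed $(j^*, \nu)$ with distinct $r$ gives distinct orders. Thus they are linearly independent, and since their cardinality $\sum_{j^* \leq j-1} m_{j^*}(d_{j^*} - d_{j-1} + 1)$ equals $h^0(\pp^1, \O_{\pp^1}(\vec{e})(-d_{j-1})) = \dim V_{j-1}$, they form a basis.

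To compute $\dim \Im_j(V_{j-1})$, multiply this basis by the basis $\{f^*(t^{i'}) : 0 \leq i' \leq d_{j-1} - d_j\}$ of $H^0(\pp^1, \O(d_{j-1} - d_j))$. Reindexing by $R = r + i'$, the resulting spanning set of $\Im_j(V_{j-1})$ is $\{s_{j^*, \nu} \cdot f^*(t^R) : j^* \leq j-1, \; 1 \leq \nu \leq m_{j^*}, \; 0 \leq R \leq d_{j^*} - d_j\}$. The same non-collision argument shows the corresponding vanishing orders $\alpha_{j^*, \nu} + Rk$ at $p$ are pairwise distinct, so this spanning set is linearly independent. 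Counting elements gives $\dim \Im_j(V_{j-1}) = \sum_{j^* \leq j-1} m_{j^*}(d_{j^*} - d_j + 1) = h^0(\pp^1, \O_{\pp^1}(\vec{e})(-d_j)) - m_j$, proving the lemma. The main obstacle is the combinatorial bookkeeping: organizing the ramification indices of $V_{j-1}$ via the bijection with triples (source layer $j^*$, index $\nu$, lifting level $r$), and invoking non-collision at the right moments so that pairwise distinctness of vanishing orders is preserved both under pulling up through the layers and under multiplication by $H^0(\pp^1, \O(d_{j-1} - d_j))$.
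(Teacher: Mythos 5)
Your proof is correct and takes essentially the same approach as the paper's: induct on $j$, use the non-colliding hypothesis to see that propagated sections have pairwise distinct vanishing orders at $p$, and count. Your version is somewhat more explicit than the paper's — you carefully construct a basis of $V_{j-1}$ by propagating the new sections $s_{j^*,\nu}$ up through the layers (via surjectivity of $H^0(\O(a))\otimes H^0(\O(b))\to H^0(\O(a+b))$) before multiplying by $H^0(\pp^1,\O(d_{j-1}-d_j))$, whereas the paper directly asserts that $\Im_j(V_{j-1})$ is spanned by the image of $\bigoplus_{j'<j}\langle\sigma_{j',1},\ldots,\sigma_{j',m_{j'}}\rangle\otimes H^0(\pp^1,\O(d_{j'}-d_j))$ and checks injectivity — but the underlying argument is the same.
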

\begin{proof}
We induct on $j$. When $j = 1$, we have $\dim \Im_{1}(V_0) = 0$ by definition. Suppose that for all $j' < j$, there are $m_{j'}$ new sections at layer $j'$, say $\sigma_{j', 1}, \ldots, \sigma_{j', m_{j'}}$, of distinct vanishing orders mod~$k$. Then $\Im_j(V_{j-1})$ is spanned by
the image of
\[\bigoplus_{j' < j} \langle \sigma_{j', 1}, \ldots, \sigma_{j', m_{j'}} \rangle \otimes H^0(\pp^1, \O(d_{j'} - d_j)) \rightarrow H^0(C, L(-d_j)).\]
Considering orders of vanishing at $p$, we see that the map above is injective, so
\[\dim \Im_j(V_{j-1}) = \sum_{j' < j} m_{j'} h^0(\pp^1, \O(d_{j'} - d_j)) = h^0(\pp^1, \O(\vec{e})(-d_j)) - m_{j}. \qedhere\]
\end{proof}

The above notions extend readily to limit linear series
on our chain curve $X$: we call a collection of limit linear series $\{\mathcal{V}_1, \ldots, \mathcal{V}_\s\}$ an \defi{$\vec{e}$-nested limit linear series}
if for each component $E^i \rightarrow \pp^1$, the collection of aspects $\{\mathcal{V}_1(E^i), \ldots, \mathcal{V}_\s(E^i)\}$ form an $\vec{e}$-nested linear series;
we say this limit linear series is \defi{non-colliding} if $\{\mathcal{V}_1(E^i), \ldots, \mathcal{V}_\s(E^i)\}$
is non-colliding at $p^{i-1}$ and $p^i$.

In what follows, new ramification indices will be denoted in bold: $\na{i}{j}{n}$ for $n = 1, \ldots, m_j$ will be the new ramification indices in layer $j$ at $p^i$ on the component $E^{i+1}$.
In terms of the $\na{i}{j}{n}$, the ramification indices of $V_j(E^{i+1})$ at $p^i$ are
\begin{equation} \label{ramj}
\{\na{i}{j'}{n} + \delta k : \delta = 0, \ldots, d_j - d_{j'}, n = 1, \ldots, m_{j'}, j' = 1, \ldots, j\}.
\end{equation}
We define $\nbb{i}{j,n} \colonequals d-d_jk- \naa{i}{j,n}$; these represent the new ramification indices
in layer $j$ at $p^i$ on the component $E^i$
(if our limit linear series is refined).
\begin{center}
\begin{tikzpicture}
\draw (.5, -.25) .. controls (.5, -.3) and (2, -.5) .. (3, 0);
\draw (2, 0) .. controls (4, -1) and (6, -1) .. (8, 0);
\draw (7, 0) .. controls (8, -.5) and (9.5, -.3) .. (9.5, -.25);
\filldraw (2.44, -0.205) circle[radius=0.03];
\filldraw (7.56, -0.205) circle[radius=0.03];
\draw (2.65, .15) node{$p^{i-1}$};
\draw (7.9, .15) node{$p^{i}$};
\draw (.5, -0.65) node{$E^{i-1}$};
\draw (5, -0.95) node{$E^{i}$};
\draw (9.5, -0.65) node{$E^{i+1}$};

\draw[dashed] (2.44, -.1) -- (2.44, -2);
\node[left] at (2.44, -1.25) {$\nb{i-1}{j}{n}$};
\node[right] at (2.44, -1.25) {$\na{i-1}{j}{n}$};

\draw[dashed] (7.56, -.1) -- (7.56, -2);
\node[left] at (7.56, -1.25) {$\nb{i}{j}{n}$};
\node[right] at (7.56, -1.25) {$\na{i}{j}{n}$};
\end{tikzpicture}
\end{center}
 For ease of notation, we will sometimes replace $(j, n)$ by its corresponding lexigraphical order
\[\ell \colonequals \ell(j, n) = m_1 + \ldots + m_{j-1} + n,\]
and so write $\na{i}{j}{n} = \naa{i}{\ell}$ and $\nb{i}{j}{n} = \nbb{i}{\ell}$.

Let $T$ be any efficient tableau of shape $\Gamma(\vec{e})$.
Our argument for the regeneration theorem will
proceed in two basic steps. 
First, we show that a general $\vec{e}$-positive line bundle
in $W^T(X)$
arises from a refined, non-colliding $\vec{e}$-nested limit linear series.
Then, we prove a regeneration theorem for refined, non-colliding $\vec{e}$-nested limit linear series.

\subsection{From tableaux to limit linear series}

We explain how to construct nested limit linear series from tableaux. We will need to know our proposed new ramification indices increase across layers, as established in the following lemma.

\begin{lem} \label{bincreasing}
Let $T$ be an efficient tableau of shape $\Gamma(\vec{e})$. Define
\begin{equation} \label{defab}
\na{i}{j}{n} \colonequals T^{\leq i}(j, n) + i - 1 \quad (\text{so} \ \nb{i}{j}{n} = d-d_jk- \na{i}{j}{n} = d - d_jk - i + 1 - T^{\leq i}(j,n)).
\end{equation}
If $j' < j$, then
$\na{i}{j'}{n'} < \na{i}{j}{n}$ and $\nb{i}{j'}{n'} < \nb{i}{j}{n}$.
\end{lem}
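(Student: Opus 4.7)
The plan is to unwind both inequalities into statements about the truncations $T^{\leq i}(j,n)$ and then apply the combinatorial results already established in the excerpt (Propositions \ref{main_ants} and \ref{endpos} and Corollary \ref{distinct layers}).

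First I would handle the inequality $\na{i}{j'}{n'} < \na{i}{j}{n}$. After cancelling the common summand $i-1$ this reduces to showing $T^{\leq i}(j',n') < T^{\leq i}(j,n)$. Because $j'<j$, the lexicographic indices satisfy $\ell(j',n') \leq m_1+\cdots+m_{j'} < m_1+\cdots+m_{j-1}+1 \leq \ell(j,n)$, so this is precisely the ``sorted'' conclusion of Proposition~\ref{main_ants}. This step is routine; no obstruction is expected.

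The second inequality is the interesting one. Unwinding the definition $\nb{i}{j}{n} = d - d_jk - i + 1 - T^{\leq i}(j,n)$, the desired strict inequality $\nb{i}{j'}{n'} < \nb{i}{j}{n}$ is equivalent to
\[
T^{\leq i}(j,n) - T^{\leq i}(j',n') < k(d_{j'} - d_j).
\]
My plan is to push $i$ up to $g$ and then evaluate. Corollary~\ref{distinct layers} (applied with its $j'$ taken to be our $j$ and its $j$ taken to be our $j'$, which is legal since $j>j'$) gives
\[
T^{\leq g}(j,n) - T^{\leq g}(j',n') \;\geq\; T^{\leq i}(j,n) - T^{\leq i}(j',n'),
\]
so it suffices to prove the inequality at time $g$. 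Then I would substitute the closed formula of Proposition~\ref{endpos} and use $\chi(\O(\vec{e})(-d_j)) - \chi(\O(\vec{e})(-d_{j'})) = k(d_{j'}-d_j)$ to rewrite the left-hand side as
\[
k(d_{j'} - d_j) - (m_{j'+1} + \cdots + m_j) + (n-n').
\]
The statement then becomes $(m_{j'+1} + \cdots + m_j) > n-n'$, which follows from $n \leq m_j$, $n' \geq 1$, and $j \geq j'+1$: indeed $n - n' \leq m_j - 1 < m_j \leq m_{j'+1} + \cdots + m_j$.

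No step looks like a genuine obstacle; the lemma is essentially bookkeeping on top of the already-established combinatorics of $k$-staircase truncations. The only subtlety is to be careful about (a) which direction of Corollary~\ref{distinct layers} to invoke and (b) maintaining a strict inequality in the final estimate, which is why I would isolate the factor $(m_{j'+1}+\cdots+m_j)$ cleanly before comparing it with $n-n'$.
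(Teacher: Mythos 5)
Your proof is correct and follows essentially the same route as the paper's: the $\na$ inequality via the sorting in Proposition~\ref{main_ants}, and the $\nb$ inequality by pushing to time $g$ with Corollary~\ref{distinct layers}, evaluating with Proposition~\ref{endpos}, and closing with the estimate $m_{j'+1}+\cdots+m_j > n-n'$. The only cosmetic difference is that you work with the truncations $T^{\leq i}$ directly rather than first rewriting everything in terms of the $\nb$'s as the paper does.
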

\begin{proof}
For each $i$, Lemma \ref{main_ants} says $\naa{i}{1} < \naa{i}{2} < \naa{i}{3} < \cdots$. 
To obtain the statement for the $\nb{i}{j}{n}$, we first apply Proposition~\ref{endpos} to obtain
\[\nb{g}{j}{n} = d - d_jk - g + 1 - [\chi(\O(\vec{e})(-d_j)) - (m_1 + \cdots + m_j) + n] = m_1 + \ldots + m_j - n.\]
In particular,
\begin{equation} \label{bless1}
\nb{g}{j'}{n'} = m_1 + \ldots + m_{j'} - n' < m_1 + \ldots + m_{j} - n = \nb{g}{j}{n}.
\end{equation}
We then rewrite Lemma \ref{distinct layers} in terms of the $\boldsymbol{b}$'s to obtain
\begin{equation} \label{bless2}
\nb{i}{j'}{n'} - \nb{g}{j'}{n'} \leq \nb{i}{j}{n} - \nb{g}{j}{n}.
\end{equation}
The claim now follows from adding \eqref{bless1} and \eqref{bless2}.
\end{proof}

\begin{rem}
Although $\nb{i}{j'}{n'} < \nb{i}{j}{n}$ for $j' < j$ (increasing across layers), we have $\nb{i}{j}{1} > \nb{i}{j}{2} > \cdots > \nb{i}{j}{m_j}$ (decreasing within a layer).
\end{rem}

Given a tableau $T$ of shape $\Gamma(\vec{e})$, we now show that a general line bundle in $W^T(X)$ posses a unique $\vec{e}$-nested limit linear series with the proposed ramification.

\begin{lem} \label{hasit}
Let $T$ be a tableau of shape $\Gamma(\vec{e})$, and let $\naa{i}{\ell}$ and $\nbb{i}{\ell}$ be as defined in the previous lemma.
A general line bundle $L$ in $W^T(X)$ possesses a unique $\vec{e}$-nested limit linear series whose new ramification indices
at $p^i$ are are (exactly) $\naa{i}{\ell}$ for the $E^{i + 1}$-aspects and $\nbb{i}{\ell}$ for the $E^i$-aspects.
This $\vec{e}$-nested limit linear series is refined and non-colliding.
\end{lem}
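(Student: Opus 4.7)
The plan is to build the $\vec{e}$-nested limit linear series component-by-component from the combinatorial data of the tableau, then verify refinement, non-collision, and uniqueness.

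On each component $E^i$ and each layer $j$, I would define $V_j(E^i) \subseteq H^0(E^i, L^i(-d_j))$ to be the subspace of sections whose vanishing orders at $p^{i-1}$ lie in the multiset prescribed by \eqref{ramj} with new indices $\naa{i-1}{\ell}$, and whose vanishing orders at $p^i$ lie in the analogous multiset with new indices $\nbb{i}{\ell}$. The nesting condition $\Im_j(V_{j-1}(E^i)) \subseteq V_j(E^i)$ is automatic from this definition, since multiplication by a section of $H^0(\pp^1, \O(d_{j-1}-d_j))$ shifts vanishing orders at the totally ramified nodes by multiples of $k$, preserving the prescribed multisets. The refined condition $\naa{i}{\ell} + \nbb{i}{\ell} = d - d_j k$ is tautological from the definition of $\nbb{i}{\ell}$.

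The technical heart is the dimension count: $\dim V_j(E^i) = h^0(\pp^1, \O(\vec{e})(-d_j))$ for generic $L$. I would induct on $j$. On an elliptic curve, a line bundle $\M$ of degree $\delta$ admits a section vanishing to orders $a$ at $p$ and $b$ at $q$ with $a + b = \delta$ if and only if $\M \simeq \O(ap + bq)$, a codimension-one condition on $\Pic^\delta$. A direct computation from \eqref{swap} shows that when $T[i] = *$, each new-index pair satisfies $\naa{i-1}{\ell} + \nbb{i}{\ell} = d - d_j k - 1 < \deg L^i(-d_j)$, so no condition is imposed on $L^i$. When $T[i] \neq *$, only the decreasing index $\ell_-$ yields $\naa{i-1}{\ell_-} + \nbb{i}{\ell_-} = d - d_j k$; using the relation $T[i] \equiv T^{\leq i}(\ell_-) \pmod{k}$ together with the exact $k$-torsion of $p^{i-1} - p^i$ in $\Pic^0(E^i)$, this codimension-one condition simplifies to precisely $L^i \simeq \O_{E^i}((T[i] + i - 1) p^{i-1} + (d - T[i] - i + 1) p^i)$, which is the constraint defining $W^T(X)$ via Proposition~\ref{li-value}. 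The $k$-regularity of $T$ (Definition~\ref{Tsquare}) ensures that every box labeled $i$ (possibly across multiple layers) imposes this same condition, so the many implicit constraints collapse to the single tautological one.

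Non-collision is immediate from \eqref{distinct}: the values $T^{\leq i-1}(\ell)$, and hence the $\naa{i-1}{\ell}$, lie in distinct residues mod $k$ as $\ell$ ranges over $1, \ldots, k$; the inherited ramification at lower layers differs by multiples of $k$ from its new index, which Lemma~\ref{bincreasing} strictly separates from the new indices at higher layers. Uniqueness on each component follows because each new section, corresponding to a pair $(\alpha, \beta)$ summing to the degree, spans a space of dimension at most one by Riemann--Roch on the elliptic curve, while the inherited sections at layer $j$ are determined by $V_{j-1}(E^i)$ via the nesting map, pinning down $V_j(E^i)$ uniquely. The principal obstacle is the dimension count: verifying that the prescribed ramification at both endpoints of $E^i$ is simultaneously attainable without overdetermination; the combinatorial facts---$k$-regularity, Lemma~\ref{bincreasing}, and \eqref{distinct}---conspire to reduce all implicit conditions on $L^i$ to the single codimension-one condition of Proposition~\ref{li-value}, establishing both existence (for generic $L \in W^T(X)$) and the correct dimension.
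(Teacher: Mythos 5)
Your proposal follows the same layer-by-layer strategy as the paper's proof, but it glosses over the case that is the technical heart of the argument: the increasing index $\ell_+$ when $T[i] \neq *$. From \eqref{rule}, one has $\naa{i-1}{\ell_+} + \nbb{i}{\ell_+} = d - d_j k - 2$, so the twisted line bundle $L^i(-d_{j_+})(-\naa{i-1}{\ell_+}p^{i-1} - \nbb{i}{\ell_+}p^{i}) \simeq \O_{E^i}(p^{i-1}+p^i)$ has degree $2$ and $h^0 = 2$, \emph{not} $1$. Your claim that each new section ``corresponds to a pair $(\alpha,\beta)$ summing to the degree'' and spans at most a $1$-dimensional space is therefore false for $\ell_+$. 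The paper resolves this by showing that the $1$-dimensional subspace $H^0(\O_{E^i})\subset H^0(\O_{E^i}(p^{i-1}+p^i))$ consisting of sections with higher vanishing at both nodes is already contained in $\Im_{j_+}(V_{j_+-1}(E^i))$. That containment is not free: it needs $\naa{i-1}{\ell_+}+1\equiv\naa{i-1}{\ell_-}\pmod k$ (from \eqref{swapmod}) together with the inequalities $\naa{i-1}{\ell_+}+1\geq\naa{i-1}{\ell_-}$ and $\nbb{i}{\ell_+}+1\geq\nbb{i}{\ell_-}$, which come from $j_-<j_+$ (Corollary~\ref{jpm}) via Lemma~\ref{bincreasing}. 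Without this step the dimension count fails and the linear series $V_{j_+}(E^i)$ is over-determined or ill-defined.

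A second, smaller gap: when $T[i]\neq *$ and $\ell\neq\ell_\pm$, you assert that the degree-$1$ twisted bundle contributes a new section with exactly the prescribed vanishing orders ``automatically,'' but $L^i$ is now a \emph{fixed} line bundle (pinned down by the $W^T(X)$ condition), not a general one, so one must verify that this degree-$1$ bundle is neither $\O_{E^i}(p^{i-1})$ nor $\O_{E^i}(p^i)$. The paper does this by observing $\naa{i-1}{\ell}\not\equiv\naa{i-1}{\ell_\pm}\pmod{k}$ from \eqref{distinct} and using the exact $k$-torsion of $p^{i-1}-p^i$. You only invoke genericity of $L^i$, which covers $T[i]=*$ but not $T[i]\neq *$. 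Both gaps are fixable with precisely the combinatorial inputs you list (Lemma~\ref{bincreasing}, \eqref{distinct}, $k$-regularity), but they need to be deployed at these two specific points rather than waved at from a distance.
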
 

\begin{proof} Equation \eqref{distinct} ensures that the proposed ramification indices are non-colliding.
Equation \eqref{defab} ensures the limit linear series is refined (c.f.\ \eqref{ramj}).

Fix $i$; we will build an $\vec{e}$-nested linear series on $E^i$, which will be the $E^i$-aspect
of our desired $\vec{e}$-nested limit linear series.
If $T[i] = *$, then $L^i$ is a general degree $d$
line bundle. Moreover, for any~$(j, n)$, we have $T^{\leq i - 1}(j, n) = T^{\leq i}(j, n)$,
so
\[\na{i-1}{j}{n} + \nb{i}{j}{n} = \na{i-1}{j}{n} + d - d_jk - \na{i}{j}{n} = (T^{\leq i - 1}(j, n) + i - 2) + d - d_jk - (T^{\leq i}(j, n) + i - 1) = d - d_jk - 1.\]
Thus, $L(-d_j)^i$
has a unique (up to rescaling) section $\sigma_{j, n}$ vanishing to orders exactly $\naa{i-1}{j, n}$ at $p^{i - 1}$
and $\nbb{i}{j, n}$ at $p^i$.
The unique linear series on $E^i$ at layer $j$ having the prescribed ramification is therefore
\[V_j(E^i) = \bigoplus_{(j', n') : j' \leq j} H^0(\O_{\pp^1}(d_{j'} - d_j)) \cdot \sigma_{j', n'}.\]

We now suppose that $i$ appears in $T$.
Let $\ell_\pm$ be the indices such that $i$ is decreasing for $\ell_-$ and increasing for $\ell_+$,
and write $\ell_\pm = (j_\pm, n_\pm)$.
By Corollary~\ref{jpm}, we have $j_- < j_+$.
The $\naa{i-1}{\ell}$ and $\naa{i}{\ell}$ are related via:
\begin{align}
\naa{i-1}{\ell_\pm} = T^{\leq i - 1}(\ell_\pm) + i - 2 = T^{\leq i}(\ell_\pm) + i - 2 \mp 1 &= \naa{i}{\ell_{\pm}} - 1 \mp 1 \label{ih} \\
\naa{i-1}{\ell} = T^{\leq i - 1}(\ell) + i - 2 = T^{\leq i}(\ell) + i - 2 &= \naa{i}{\ell} - 1 \ \text{for $\ell \neq \ell_\pm$}. \label{invh}
\end{align}
Equivalently,
\begin{equation}
\naa{i-1}{\ell} + \nbb{i}{\ell} = \begin{cases}
d - d_jk - 1 \mp 1 &\text{if $\ell = \ell_\pm$} \\
d - d_jk - 1 & \text{if $\ell \neq \ell_\pm$} \\
\end{cases} \label{rule}
\end{equation}
Futhermore, by Equation \eqref{swapmod},
\begin{equation} \label{sr}
\naa{i-1}{\ell_-} \equiv \naa{i-1}{\ell_+} + 1 \quad \text{and} \quad \naa{i}{\ell_+} \equiv \naa{i}{\ell_-} + 1 \quad \pmod{k}.
\end{equation}
By definition of $W^T(X)$, we have
\begin{equation} \label{li}
L^i \simeq \O_{E^i}(d_{j_-})(\naa{i-1}{\ell_-} p^{i - 1} + \nbb{i}{\ell_-} p^i) \simeq \O_{E^i}(d_{j_+})((\naa{i-1}{\ell_+} + 1) p^{i - 1} + (\nbb{i}{\ell_+} + 1) p^i).
\end{equation}

We now build the layers inductively (starting with the tautological case $V_0(E^i) = \{0\}$).
After we have built layer $V_{j - 1}(E^i)$, write $\ell = (j, n)$, and let
\[S_\ell \colonequals H^0(E^i, L^i(-d_j)(-\naa{i-1}{\ell}  p^{i-1}- \nbb{i}{\ell}p^i ) )\subset H^0(E^i, L^i(-d_j))\]
be the subspace of sections having vanishing order at least $\naa{i - 1}{\ell}$ at $p^{i-1}$ and $\nbb{i}{\ell}$ at $p^{i}$.
We must show that $S_\ell$ posseses a section vanishing to order exactly $\naa{i-1}{\ell}$ at $p^{i-1}$ and $\nbb{i}{\ell}$ at $p^{i}$,
and that this section is essentially unique, in the sense that,
along with $\Im_j(V_{j-1}(E^i)) \cap S_\ell$, it generates $S_\ell$.

For $\ell \neq \ell_{\pm}$, the line bundle
$L^i(-d_j)(-\naa{i - 1}{\ell}  p^{i-1}- \nbb{i}{\ell}p^i )$
has degree $1$.
Equation \eqref{distinct} implies
$\naa{i-1}{\ell} \not\equiv \naa{i-1}{\ell_-}$ mod $k$,
so this line bundle is not equal to $\O_{E^i}(p^i)$.
Similarly, since $\naa{i - 1}{\ell} \not\equiv \naa{i - 1}{\ell_+}$ mod $k$,
this line bundle is not equal to $\O_{E^i}(p^{i - 1})$.
Thus $\dim S_\ell = 1$ and $S_\ell$ consists of sections of the exact vanishing order desired. 

When $\ell = \ell_-$, we have
$L^i(-d_{j_-})(-\naa{i - 1}{\ell_-}  p^{i-1}- \nbb{i}{\ell_-}p^i ) = \O_{E^i}$.
Thus
$\dim S_{\ell_-} = 1$, and the section with the required vanishing orders corresponds to the constant section of $L^i(-d_{j_-})$. 

Finally, when $\ell = \ell_+$, we have
 $L^i(-d_{j_+})(-\naa{i - 1}{\ell_+}  p^{i-1}- \nbb{i}{\ell_+}p^i ) = \O_{E^i}(p^{i-1} + p^{i})$ and
  $\dim S_{\ell_+} = 2$. 
However, we shall show that
$\Im_{j}(V_{(j_+)-1}(E^i))$ contains the $1$-dimensional subspace of sections
\[H^0(\O_{E^i}) \subset H^0(\O_{E^i}(p^{i-1} + p^{i})) = S_{\ell_+} \subset H^0(E^i, L^i(-d_{j_+}))\]
that vanish to order $\naa{i - 1}{\ell_+} +1$ at $p^{i-1}$ and $\nbb{i}{\ell_+} + 1$ at $p^i$.
By \eqref{ramj}, this follows in turn from
\begin{align*}
\naa{i - 1}{\ell_+} +1 \equiv \naa{i-1}{\ell_-} \quad &\text{and} \quad \nbb{i}{\ell_+} + 1 \equiv \nbb{i}{\ell_-} \qquad \pmod{k} \\
\naa{i-1}{\ell_+} + 1 \geq \naa{i-1}{\ell_-} \quad &\text{and} \quad \nbb{i}{\ell_+} + 1 \geq \nbb{i}{\ell_-}.
\end{align*}
The first line follows from \eqref{sr}.
The second line follows from Lemma \ref{bincreasing} because $j_- < j_+$.
 \end{proof}

 \begin{landscape}
\begin{minipage}{.6\textwidth}
\begin{tikzpicture}[scale=.70]
{\small 
\draw (0,0) -- (12, 0);
\draw (0,-1) -- (12,-1);
\draw (0,-2) -- (8,-2);
\draw (0,-3) -- (5,-3);
\draw (0,-4) -- (5,-4);
\draw (0,-5) -- (2,-5);
\draw (0,-6) -- (2,-6);
\draw (0,-7) -- (1,-7);
\draw (0,-8) -- (1,-8);
\draw (0,-9) -- (1,-9);
\draw (0,-10) -- (1,-10);
\draw (0,0) -- (0,-10);
\draw (1,0) -- (1,-10);
\draw (2,0) -- (2,-6);
\draw (3,0) -- (3,-4);
\draw (4,0) -- (4,-4);
\draw (5,0) -- (5,-4);
\draw (6,0) -- (6,-2);
\draw (7,0) -- (7,-2);
\draw (8,0) -- (8,-2);
\draw (9,0) -- (9,-1);
\draw (10,0) -- (10,-1);
\draw (11,0) -- (11,-1);
\draw (12,0) -- (12,-1);
\draw[fill=yellow!30]  (3,0) -- (4,0) -- (4,-1) -- (3,-1) -- (3,0);
\draw (3,0) -- (3.25,-0.25);
\draw (3,-0.5) -- (3.25,-0.5);
\draw (3,-1) -- (3.25,-0.75);
\draw[fill=yellow!30]  (4,0) -- (5,0) -- (5,-1) -- (4,-1) -- (4,0);
\draw (4,0) -- (4.25,-0.25);
\draw (4,-0.5) -- (4.25,-0.5);
\draw (4,-1) -- (4.25,-0.75);
\draw[fill=yellow!30]  (5,0) -- (6,0) -- (6,-1) -- (5,-1) -- (5,0);
\draw (5,0) -- (5.25,-0.25);
\draw (5,-0.5) -- (5.25,-0.5);
\draw (5,-1) -- (5.25,-0.75);
\draw[fill=yellow!30]  (6,0) -- (7,0) -- (7,-1) -- (6,-1) -- (6,0);
\draw (6,0) -- (6.25,-0.25);
\draw (6,-0.5) -- (6.25,-0.5);
\draw (6,-1) -- (6.25,-0.75);
\draw[fill=yellow!30]  (8,0) -- (9,0) -- (9,-1) -- (8,-1) -- (8,0);
\draw (8,0) -- (8.25,-0.25);
\draw (8,-0.5) -- (8.25,-0.5);
\draw (8,-1) -- (8.25,-0.75);
\draw[fill=yellow!30]  (9,0) -- (10,0) -- (10,-1) -- (9,-1) -- (9,0);
\draw (9,0) -- (9.25,-0.25);
\draw (9,-0.5) -- (9.25,-0.5);
\draw (9,-1) -- (9.25,-0.75);
\draw[fill=yellow!30]  (10,0) -- (11,0) -- (11,-1) -- (10,-1) -- (10,0);
\draw (10,0) -- (10.25,-0.25);
\draw (10,-0.5) -- (10.25,-0.5);
\draw (10,-1) -- (10.25,-0.75);
\draw[fill=yellow!30]  (11,0) -- (12,0) -- (12,-1) -- (11,-1) -- (11,0);
\draw (11,0) -- (11.25,-0.25);
\draw (11,-0.5) -- (11.25,-0.5);
\draw (11,-1) -- (11.25,-0.75);
\draw[fill=yellow!30]  (3,-1) -- (4,-1) -- (4,-2) -- (3,-2) -- (3,-1);
\draw (3,-1) -- (3.25,-1.25);
\draw (3,-1.5) -- (3.25,-1.5);
\draw (3,-2) -- (3.25,-1.75);
\draw[fill=yellow!30]  (4,-2) -- (5,-2) -- (5,-3) -- (4,-3) -- (4,-2);
\draw (4,-2) -- (4.25,-2.25);
\draw (4,-2.5) -- (4.25,-2.5);
\draw (4,-3) -- (4.25,-2.75);
\draw[fill=green!30]  (0,-2) -- (1,-2) -- (1,-3) -- (0,-3) -- (0,-2);
\draw (0,-2) -- (0.25,-2.25);
\draw (0.5,-2) -- (0.5,-2.25);
\draw (1,-2) -- (0.75,-2.25);
\draw[fill=green!30]  (0,-3) -- (1,-3) -- (1,-4) -- (0,-4) -- (0,-3);
\draw (0,-3) -- (0.25,-3.25);
\draw (0.5,-3) -- (0.5,-3.25);
\draw (1,-3) -- (0.75,-3.25);
\draw[fill=green!30]  (0,-4) -- (1,-4) -- (1,-5) -- (0,-5) -- (0,-4);
\draw (0,-4) -- (0.25,-4.25);
\draw (0.5,-4) -- (0.5,-4.25);
\draw (1,-4) -- (0.75,-4.25);
\draw[fill=green!30]  (0,-7) -- (1,-7) -- (1,-8) -- (0,-8) -- (0,-7);
\draw (0,-7) -- (0.25,-7.25);
\draw (0.5,-7) -- (0.5,-7.25);
\draw (1,-7) -- (0.75,-7.25);
\draw[fill=green!30]  (0,-8) -- (1,-8) -- (1,-9) -- (0,-9) -- (0,-8);
\draw (0,-8) -- (0.25,-8.25);
\draw (0.5,-8) -- (0.5,-8.25);
\draw (1,-8) -- (0.75,-8.25);
\draw[fill=green!30]  (0,-9) -- (1,-9) -- (1,-10) -- (0,-10) -- (0,-9);
\draw (0,-9) -- (0.25,-9.25);
\draw (0.5,-9) -- (0.5,-9.25);
\draw (1,-9) -- (0.75,-9.25);
\draw[fill=green!30]  (1,-2) -- (2,-2) -- (2,-3) -- (1,-3) -- (1,-2);
\draw (1,-2) -- (1.25,-2.25);
\draw (1.5,-2) -- (1.5,-2.25);
\draw (2,-2) -- (1.75,-2.25);
\draw[fill=green!30]  (2,-2) -- (3,-2) -- (3,-3) -- (2,-3) -- (2,-2);
\draw (2,-2) -- (2.25,-2.25);
\draw (2.5,-2) -- (2.5,-2.25);
\draw (3,-2) -- (2.75,-2.25);
\draw[fill=green!30]  (2,-3) -- (3,-3) -- (3,-4) -- (2,-4) -- (2,-3);
\draw (2,-3) -- (2.25,-3.25);
\draw (2.5,-3) -- (2.5,-3.25);
\draw (3,-3) -- (2.75,-3.25);
\draw[fill=green!30]  (3,-3) -- (4,-3) -- (4,-4) -- (3,-4) -- (3,-3);
\draw (3,-3) -- (3.25,-3.25);
\draw (3.5,-3) -- (3.5,-3.25);
\draw (4,-3) -- (3.75,-3.25);
\draw[fill=green!30!yellow!30]  (0,0) -- (1,0) -- (1,-1) -- (0,-1) -- (0,0);
\draw (0.5,0) -- (0.5,-0.25);
\draw (1,0) -- (0.75,-0.25);
\draw (0,0) -- (0.25,-0.25);
\draw (0,-0.5) -- (0.25,-0.5);
\draw (0,-1) -- (0.225,-0.75);
\draw[fill=green!30!yellow!30]  (1,0) -- (2,0) -- (2,-1) -- (1,-1) -- (1,0);
\draw (1.5,0) -- (1.5,-0.25);
\draw (2,0) -- (1.75,-0.25);
\draw (1,0) -- (1.25,-0.25);
\draw (1,-0.5) -- (1.25,-0.5);
\draw (1,-1) -- (1.225,-0.75);
\draw[fill=green!30!yellow!30]  (2,0) -- (3,0) -- (3,-1) -- (2,-1) -- (2,0);
\draw (2.5,0) -- (2.5,-0.25);
\draw (3,0) -- (2.75,-0.25);
\draw (2,0) -- (2.25,-0.25);
\draw (2,-0.5) -- (2.25,-0.5);
\draw (2,-1) -- (2.225,-0.75);
\draw[fill=green!30!yellow!30]  (7,0) -- (8,0) -- (8,-1) -- (7,-1) -- (7,0);
\draw (7.5,0) -- (7.5,-0.25);
\draw (8,0) -- (7.75,-0.25);
\draw (7,0) -- (7.25,-0.25);
\draw (7,-0.5) -- (7.25,-0.5);
\draw (7,-1) -- (7.225,-0.75);
\draw[fill=green!30!yellow!30]  (0,-1) -- (1,-1) -- (1,-2) -- (0,-2) -- (0,-1);
\draw (0.5,-1) -- (0.5,-1.25);
\draw (1,-1) -- (0.75,-1.25);
\draw (0,-1) -- (0.25,-1.25);
\draw (0,-1.5) -- (0.25,-1.5);
\draw (0,-2) -- (0.225,-1.75);
\draw[fill=green!30!yellow!30]  (1,-1) -- (2,-1) -- (2,-2) -- (1,-2) -- (1,-1);
\draw (1.5,-1) -- (1.5,-1.25);
\draw (2,-1) -- (1.75,-1.25);
\draw (1,-1) -- (1.25,-1.25);
\draw (1,-1.5) -- (1.25,-1.5);
\draw (1,-2) -- (1.225,-1.75);
\draw[fill=green!30!yellow!30]  (2,-1) -- (3,-1) -- (3,-2) -- (2,-2) -- (2,-1);
\draw (2.5,-1) -- (2.5,-1.25);
\draw (3,-1) -- (2.75,-1.25);
\draw (2,-1) -- (2.25,-1.25);
\draw (2,-1.5) -- (2.25,-1.5);
\draw (2,-2) -- (2.225,-1.75);
\draw[fill=green!30!yellow!30]  (0,-5) -- (1,-5) -- (1,-6) -- (0,-6) -- (0,-5);
\draw (0.5,-5) -- (0.5,-5.25);
\draw (1,-5) -- (0.75,-5.25);
\draw (0,-5) -- (0.25,-5.25);
\draw (0,-5.5) -- (0.25,-5.5);
\draw (0,-6) -- (0.225,-5.75);
\draw[fill=green!30!yellow!30]  (0,-6) -- (1,-6) -- (1,-7) -- (0,-7) -- (0,-6);
\draw (0.5,-6) -- (0.5,-6.25);
\draw (1,-6) -- (0.75,-6.25);
\draw (0,-6) -- (0.25,-6.25);
\draw (0,-6.5) -- (0.25,-6.5);
\draw (0,-7) -- (0.225,-6.75);
\draw (0.5,-0.5) node{1};
\draw (1.5,-0.5) node{3};
\draw (2.5,-0.5) node{5};
\draw (3.5,-0.5) node{7};
\draw (4.5,-0.5) node{8};
\draw (5.5,-0.5) node{10};
\draw (6.5,-0.5) node{11};
\draw (7.5,-0.5) node{12};
\draw (8.5,-0.5) node{16};
\draw (9.5,-0.5) node{17};
\draw (10.5,-0.5) node{19};
\draw (11.5,-0.5) node{20};
\draw (0.5,-1.5) node{2};
\draw (1.5,-1.5) node{4};
\draw (2.5,-1.5) node{6};
\draw (3.5,-1.5) node{9};
\draw (4.5,-1.5) node{16};
\draw (5.5,-1.5) node{17};
\draw (6.5,-1.5) node{19};
\draw (7.5,-1.5) node{20};
\draw (0.5,-2.5) node{7};
\draw (1.5,-2.5) node{8};
\draw (2.5,-2.5) node{10};
\draw (3.5,-2.5) node{11};
\draw (4.5,-2.5) node{18};
\draw (0.5,-3.5) node{9};
\draw (1.5,-3.5) node{16};
\draw (2.5,-3.5) node{17};
\draw (3.5,-3.5) node{19};
\draw (4.5,-3.5) node{20};
\draw (0.5,-4.5) node{11};
\draw (1.5,-4.5) node{18};
\draw (0.5,-5.5) node{14};
\draw (1.5,-5.5) node{20};
\draw (0.5,-6.5) node{15};
\draw (0.5,-7.5) node{16};
\draw (0.5,-8.5) node{18};
\draw (0.5,-9.5) node{20};

\draw[white] (0,-10) -- (0, -13);
}
\end{tikzpicture}
\end{minipage}
\begin{minipage}{.6\textwidth}
\begin{tikzpicture}[scale=.70]
{\small 
\draw (0,0) -- (12, 0);
\draw (0,-1) -- (12,-1);
\draw (0,-2) -- (8,-2);
\draw (0,-3) -- (5,-3);
\draw (0,-4) -- (5,-4);
\draw (0,-5) -- (2,-5);
\draw (0,-6) -- (2,-6);
\draw (0,-7) -- (1,-7);
\draw (0,-8) -- (1,-8);
\draw (0,-9) -- (1,-9);
\draw (0,-10) -- (1,-10);
\draw (0,0) -- (0,-10);
\draw (1,0) -- (1,-10);
\draw (2,0) -- (2,-6);
\draw (3,0) -- (3,-4);
\draw (4,0) -- (4,-4);
\draw (5,0) -- (5,-4);
\draw (6,0) -- (6,-2);
\draw (7,0) -- (7,-2);
\draw (8,0) -- (8,-2);
\draw (9,0) -- (9,-1);
\draw (10,0) -- (10,-1);
\draw (11,0) -- (11,-1);
\draw (12,0) -- (12,-1);
\draw[fill=yellow!30]  (4,1) -- (5,1) -- (5,0) -- (4,0) -- (4,1);
\draw (4,1) -- (4.25,0.75);
\draw (4,0.5) -- (4.25,0.5);
\draw (4,0) -- (4.25,0.25);
\draw[fill=yellow!30]  (5,1) -- (6,1) -- (6,0) -- (5,0) -- (5,1);
\draw (5,1) -- (5.25,0.75);
\draw (5,0.5) -- (5.25,0.5);
\draw (5,0) -- (5.25,0.25);
\draw[fill=yellow!30]  (6,1) -- (7,1) -- (7,0) -- (6,0) -- (6,1);
\draw (6,1) -- (6.25,0.75);
\draw (6,0.5) -- (6.25,0.5);
\draw (6,0) -- (6.25,0.25);
\draw[fill=yellow!30]  (7,1) -- (8,1) -- (8,0) -- (7,0) -- (7,1);
\draw (7,1) -- (7.25,0.75);
\draw (7,0.5) -- (7.25,0.5);
\draw (7,0) -- (7.25,0.25);
\draw[fill=yellow!30]  (8,0) -- (9,0) -- (9,-1) -- (8,-1) -- (8,0);
\draw (8,0) -- (8.25,-0.25);
\draw (8,-0.5) -- (8.25,-0.5);
\draw (8,-1) -- (8.25,-0.75);

\draw[fill=green!30]  (0,-2) -- (1,-2) -- (1,-3) -- (0,-3) -- (0,-2);
\draw (0,-2) -- (0.25,-2.25);
\draw (0.5,-2) -- (0.5,-2.25);
\draw (1,-2) -- (0.75,-2.25);
\draw[fill=green!30]  (0,-3) -- (1,-3) -- (1,-4) -- (0,-4) -- (0,-3);
\draw (0,-3) -- (0.25,-3.25);
\draw (0.5,-3) -- (0.5,-3.25);
\draw (1,-3) -- (0.75,-3.25);
\draw[fill=green!30]  (0,-4) -- (1,-4) -- (1,-5) -- (0,-5) -- (0,-4);
\draw (0,-4) -- (0.25,-4.25);
\draw (0.5,-4) -- (0.5,-4.25);
\draw (1,-4) -- (0.75,-4.25);
\draw[fill=green!30]  (0,-7) -- (1,-7) -- (1,-8) -- (0,-8) -- (0,-7);
\draw (0,-7) -- (0.25,-7.25);
\draw (0.5,-7) -- (0.5,-7.25);
\draw (1,-7) -- (0.75,-7.25);
\draw[fill=green!30]  (0,-8) -- (1,-8) -- (1,-9) -- (0,-9) -- (0,-8);
\draw (0,-8) -- (0.25,-8.25);
\draw (0.5,-8) -- (0.5,-8.25);
\draw (1,-8) -- (0.75,-8.25);
\draw[fill=green!30]  (0,-9) -- (1,-9) -- (1,-10) -- (0,-10) -- (0,-9);
\draw (0,-9) -- (0.25,-9.25);
\draw (0.5,-9) -- (0.5,-9.25);
\draw (1,-9) -- (0.75,-9.25);
\draw[fill=green!30]  (2,-2) -- (3,-2) -- (3,-3) -- (2,-3) -- (2,-2);
\draw (2,-2) -- (2.25,-2.25);
\draw (2.5,-2) -- (2.5,-2.25);
\draw (3,-2) -- (2.75,-2.25);
\draw[fill=green!30]  (0,0) -- (1,0) -- (1,-1) -- (0,-1) -- (0,0);
\draw (0.5,0) -- (0.5,-0.25);
\draw (1,0) -- (0.75,-0.25);
\draw (0,0) -- (0.25,-0.25);
\draw (0,-0.5) -- (0.25,-0.5);
\draw (0,-1) -- (0.225,-0.75);
\draw[fill=green!30]  (2,0) -- (3,0) -- (3,-1) -- (2,-1) -- (2,0);
\draw (2.5,0) -- (2.5,-0.25);
\draw (3,0) -- (2.75,-0.25);
\draw (2,0) -- (2.25,-0.25);
\draw (2,-0.5) -- (2.25,-0.5);
\draw (2,-1) -- (2.225,-0.75);
\draw[fill=green!30]  (7,0) -- (8,0) -- (8,-1) -- (7,-1) -- (7,0);
\draw (7.5,0) -- (7.5,-0.25);
\draw (8,0) -- (7.75,-0.25);
\draw (7,0) -- (7.25,-0.25);
\draw (7,-0.5) -- (7.25,-0.5);
\draw (7,-1) -- (7.225,-0.75);
\draw[fill=green!30]  (0,-1) -- (1,-1) -- (1,-2) -- (0,-2) -- (0,-1);
\draw (0.5,-1) -- (0.5,-1.25);
\draw (1,-1) -- (0.75,-1.25);
\draw (0,-1) -- (0.25,-1.25);
\draw (0,-1.5) -- (0.25,-1.5);
\draw (0,-2) -- (0.225,-1.75);
\draw[fill=yellow!30]  (3,-2) -- (4,-2) -- (4,-3) -- (3,-3) -- (3,-2);
\draw (3,-2) -- (3.25,-2.25);
\draw (3,-2.5) -- (3.25,-2.5);
\draw (3,-3) -- (3.25,-2.75);
\draw[fill=green!30]  (2,-1) -- (3,-1) -- (3,-2) -- (2,-2) -- (2,-1);
\draw (2.5,-1) -- (2.5,-1.25);
\draw (3,-1) -- (2.75,-1.25);
\draw (2,-1) -- (2.25,-1.25);
\draw (2,-1.5) -- (2.25,-1.5);
\draw (2,-2) -- (2.225,-1.75);
\draw[fill=green!30]  (3,-3) -- (4,-3) -- (4,-4) -- (3,-4) -- (3,-3);
\draw (3,-3) -- (3.25,-3.25);
\draw (3.5,-3) -- (3.5,-3.25);
\draw (4,-3) -- (3.75,-3.25);
\draw[fill=green!30]  (0,-5) -- (1,-5) -- (1,-6) -- (0,-6) -- (0,-5);
\draw (0.5,-5) -- (0.5,-5.25);
\draw (1,-5) -- (0.75,-5.25);
\draw (0,-5) -- (0.25,-5.25);
\draw (0,-5.5) -- (0.25,-5.5);
\draw (0,-6) -- (0.225,-5.75);
\draw[fill=green!30]  (0,-6) -- (1,-6) -- (1,-7) -- (0,-7) -- (0,-6);
\draw (0.5,-6) -- (0.5,-6.25);
\draw (1,-6) -- (0.75,-6.25);
\draw (0,-6) -- (0.25,-6.25);
\draw (0,-6.5) -- (0.25,-6.5);
\draw (0,-7) -- (0.225,-6.75);
\draw (4.5,.5) node{2};
\draw (5.5,.5) node{4};
\draw (6.5,.5) node{6};
\draw (7.5,.5) node{9};
\draw (0.5,-0.5) node{1};
\draw (2.5,-0.5) node{5};
\draw (7.5,-0.5) node{12};
\draw (8.5,-0.5) node{18};
\draw (0.5,-1.5) node{2};
\draw (2.5,-1.5) node{6};
\draw (0.5,-2.5) node{7};
\draw (2.5,-2.5) node{10};
\draw (3.5,-2.5) node{14};
\draw (0.5,-3.5) node{9};
\draw (3.5,-3.5) node{19};
\draw (0.5,-4.5) node{11};
\draw (0.5,-5.5) node{14};
\draw (0.5,-6.5) node{15};
\draw (0.5,-7.5) node{16};
\draw (0.5,-8.5) node{18};
\draw (0.5,-9.5) node{20};

\node at (0.5, 0.3) {$\ell=1$};
\node at (2.5, 0.3) {{ \color{violet}$\ell=3$}};
\node at (6, 1.3) {{\color{blue}$\ell=4$}};

}
{\scriptsize

\draw[gray] (6.5 - 5, -6.5-5) -- (6.5 + 4, -6.5+4);
\draw[gray] (6.5 - 5 + 10/8, -6.5-5-10/8) -- (6.5 + 5+ 10/8, -6.5+5-10/8);
\draw[gray] (6.5 - 5 + 14/8, -6.5-5-14/8) -- (6.5 + 5+ 14/8, -6.5+5-14/8);
\draw[gray] (6.5 - 5 + 20/8, -6.5-5-20/8) -- (6.5 + 5+ 20/8, -6.5+5-20/8);

\node[rotate=45, right] at (6.5 + 4, -6.5+4) {$t=0$};
\node[rotate=45, right] at (6.5 + 5+ 10/8, -6.5+5-10/8)  {$t=10$};
\node[rotate=45, right] at (6.5 + 5+ 14/8, -6.5+5-14/8)  {$t=14$};
\node[rotate=45,  right] at (6.5 + 5+ 20/8, -6.5+5-20/8) {$t=20$};

\draw[fill=gray!40] (7.0,-6.0) circle (7pt);
\draw (7.0,-6.0)node{$1$};
\draw[fill=violet!40] (8.0,-5.0) circle (7pt);
\draw (8.0,-5.0)node{$3$};
\draw[fill=blue!40] (8.5,-4.5) circle (7pt);
\draw (8.5,-4.5)node{$4$};

\draw[fill=gray!40] (6.25,-9.25) circle (7pt);
\draw (6.25,-9.25)node{$1$};
\draw[fill=violet!40] (7.75,-7.75) circle (7pt);
\draw (7.75,-7.75)node{$3$};
\draw[fill=blue!40] (11.75,-3.75) circle (7pt);
\draw (11.75,-3.75)node{$4$};

\draw[fill=gray!40] (5.75,-10.75) circle (7pt);
\draw (5.75,-10.75)node{$1$};
\draw[fill=violet!40] (8.75,-7.75) circle (7pt);
\draw (8.75,-7.75)node{$3$};
\draw[fill=blue!40] (11.75,-4.75) circle (7pt);
\draw (11.75,-4.75)node{$4$};

\draw[fill=gray!40] (4.5,-13.5) circle (7pt);
\draw (4.5,-13.5)node{$1$};
\draw[fill=violet!40] (9.0,-9.0) circle (7pt);
\draw (9.0,-9.0)node{$3$};
\draw[fill=blue!40] (13.0,-5.0) circle (7pt);
\draw (13.0,-5.0)node{$4$};

}
\end{tikzpicture}
\end{minipage}

\vspace{10pt}
 
 \small
 \setlength{\tabcolsep}{0.15em}
 \begin{tabular}{llr|lr|lr|lr|lr|lr|lr|lr|lr|lr|lr|lr|lr|lr|lr|lr|lr|lr|lr|lr}
&\multicolumn{2}{c}{{\ \ \ 1 \ \ \ } } & \multicolumn{2}{c}{ \ \ \  2 \ \ \ } & \multicolumn{2}{c}{\ \ \ 3 \ \ \ } & \multicolumn{2}{c}{ \ \ \ 4 \ \ \ } & \multicolumn{2}{c}{\ \ \ 5 \ \ \ } & \multicolumn{2}{c}{\ \ \ 6 \ \ \ } 
& \multicolumn{2}{c}{7} & \multicolumn{2}{c}{8}
& \multicolumn{2}{c}{9} & \multicolumn{2}{c}{10}
& \multicolumn{2}{c}{11} & \multicolumn{2}{c}{12} & \multicolumn{2}{c}{13} & \multicolumn{2}{c}{14} & \multicolumn{2}{c}{15} & \multicolumn{2}{c}{16} 
& \multicolumn{2}{c}{17} & \multicolumn{2}{c}{18}
& \multicolumn{2}{c}{19} & \multicolumn{2}{c}{20}
\\
\\[-12pt]
 \multicolumn{3}{l}  {\textit{Layer 1}} 
 \\[-6pt]
 \\
$\ell = 1$ &\cellcolor{green!30}$0$ & \cellcolor{green!30} $10$ 
& \cellcolor{green!30}$0$ & \cellcolor{green!30} $10$ 
& \cellcolor{white}$0$ & \cellcolor{white} $9$ 
& \cellcolor{white}$1$ & \cellcolor{white} $8$ 
& \cellcolor{white}$2$ & \cellcolor{white} $7$ 
& \cellcolor{white}$3$ & \cellcolor{white} $6$ 
& \cellcolor{green!30}$4$ & \cellcolor{green!30} $6$ 
& \cellcolor{white}$4$ & \cellcolor{white} $5$ 
& \cellcolor{green!30}$5$ & \cellcolor{green!30} $5$ 
& \cellcolor{white}$5$ & \cellcolor{white} $4$ 
& \cellcolor{green!30}$6$ & \cellcolor{green!30} $4$ 
& \cellcolor{white}$6$ & \cellcolor{white} $3$ 
& \cellcolor{white}$7$ & \cellcolor{white} $2$ 
& \cellcolor{green!30}$8$ & \cellcolor{green!30} $2$ 
& \cellcolor{green!30}$8$ & \cellcolor{green!30} $2$ 
& \cellcolor{green!30}$8$ & \cellcolor{green!30} $2$ 
& \cellcolor{white}$8$ & \cellcolor{white} $1$ 
& \cellcolor{green!30}$9$ & \cellcolor{green!30} $1$ 
& \cellcolor{white}$9$ & \cellcolor{white} $0$ 
& \cellcolor{green!30}$10$ & \cellcolor{green!30} $0$  \\
\\[-6pt]
 \multicolumn{3}{l}  {\color{red!50!violet} \textit{Layer 2}} 
 \\[-6pt]
\\
${\color{red} \ell = 2}$ & \cellcolor{white}{\color{red} $1$} & \cellcolor{white} {\color{red} $18$} 
& \cellcolor{white}{\color{red} $2$} & \cellcolor{white} {\color{red} $17$} 
& \cellcolor{green!30}{\color{red} $3$} & \cellcolor{green!30} {\color{red} $17$} 
& \cellcolor{green!30}{\color{red} $3$} & \cellcolor{green!30} {\color{red} $17$} 
& \cellcolor{white}{\color{red} $3$} & \cellcolor{white} {\color{red} $16$} 
& \cellcolor{white}{\color{red} $4$} & \cellcolor{white} {\color{red} $15$} 
& \cellcolor{white}{\color{red} $5$} & \cellcolor{white} {\color{red} $14$} 
& \cellcolor{green!30}{\color{red} $6$} & \cellcolor{green!30} {\color{red} $14$} 
& \cellcolor{white}{\color{red} $6$} & \cellcolor{white} {\color{red} $13$} 
& \cellcolor{white}{\color{red} $7$} & \cellcolor{white} {\color{red} $12$} 
& \cellcolor{white}{\color{red} $8$} & \cellcolor{white} {\color{red} $11$} 
& \cellcolor{white}{\color{red} $9$} & \cellcolor{white} {\color{red} $10$} 
& \cellcolor{white}{\color{red} $10$} & \cellcolor{white} {\color{red} $9$} 
& \cellcolor{white}{\color{red} $11$} & \cellcolor{white} {\color{red} $8$} 
& \cellcolor{yellow!30}{\color{red} $12$} & \cellcolor{yellow!30} {\color{red} $6$} 
& \cellcolor{white}{\color{red} $14$} & \cellcolor{white} {\color{red} $5$} 
& \cellcolor{green!30}{\color{red} $15$} & \cellcolor{green!30} {\color{red} $5$} 
& \cellcolor{white}{\color{red} $15$} & \cellcolor{white} {\color{red} $4$} 
& \cellcolor{white}{\color{red} $16$} & \cellcolor{white} {\color{red} $3$} 
& \cellcolor{white}{\color{red} $17$} & \cellcolor{white} {\color{red} $2$} 
\\
\\
${\color{violet} \ell = 3}$ & \cellcolor{white}{\color{violet} $2$} & \cellcolor{white} {\color{violet} $17$} 
& \cellcolor{white}{\color{violet} $3$} & \cellcolor{white} {\color{violet} $16$} 
& \cellcolor{white}{\color{violet} $4$} & \cellcolor{white} {\color{violet} $15$} 
& \cellcolor{white}{\color{violet} $5$} & \cellcolor{white} {\color{violet} $14$} 
& \cellcolor{green!30}{\color{violet} $6$} & \cellcolor{green!30} {\color{violet} $14$} 
& \cellcolor{green!30}{\color{violet} $6$} & \cellcolor{green!30} {\color{violet} $14$} 
& \cellcolor{white}{\color{violet} $6$} & \cellcolor{white} {\color{violet} $13$} 
& \cellcolor{white}{\color{violet} $7$} & \cellcolor{white} {\color{violet} $12$} 
& \cellcolor{white}{\color{violet} $8$} & \cellcolor{white} {\color{violet} $11$} 
& \cellcolor{green!30}{\color{violet} $9$} & \cellcolor{green!30} {\color{violet} $11$} 
& \cellcolor{white}{\color{violet} $9$} & \cellcolor{white} {\color{violet} $10$} 
& \cellcolor{white}{\color{violet} $10$} & \cellcolor{white} {\color{violet} $9$} 
& \cellcolor{white}{\color{violet} $11$} & \cellcolor{white} {\color{violet} $8$} 
& \cellcolor{yellow!30}{\color{violet} $12$} & \cellcolor{yellow!30} {\color{violet} $6$} 
& \cellcolor{white}{\color{violet} $14$} & \cellcolor{white} {\color{violet} $5$} 
& \cellcolor{white}{\color{violet} $15$} & \cellcolor{white} {\color{violet} $4$} 
& \cellcolor{white}{\color{violet} $16$} & \cellcolor{white} {\color{violet} $3$} 
& \cellcolor{white}{\color{violet} $17$} & \cellcolor{white} {\color{violet} $2$} 
& \cellcolor{green!30}{\color{violet} $18$} & \cellcolor{green!30} {\color{violet} $2$} 
& \cellcolor{white}{\color{violet} $18$} & \cellcolor{white} {\color{violet} $1$} 
\\
\\[-6pt]
 \multicolumn{3}{l}  {\color{blue} \textit{Layer 3}} 
 \\[-6pt]
\\
${\color{blue} \ell = 4}$ \qquad \qquad & \cellcolor{white}{\color{blue} $3$} & \cellcolor{white} {\color{blue} $26$} 
& \cellcolor{yellow!30}{\color{blue} $4$} & \cellcolor{yellow!30} {\color{blue} $24$} 
& \cellcolor{white}{\color{blue} $6$} & \cellcolor{white} {\color{blue} $23$} 
& \cellcolor{yellow!30}{\color{blue} $7$} & \cellcolor{yellow!30} {\color{blue} $21$} 
& \cellcolor{white}{\color{blue} $9$} & \cellcolor{white} {\color{blue} $20$} 
& \cellcolor{yellow!30}{\color{blue} $10$} & \cellcolor{yellow!30} {\color{blue} $18$} 
& \cellcolor{white}{\color{blue} $12$} & \cellcolor{white} {\color{blue} $17$} 
& \cellcolor{white}{\color{blue} $13$} & \cellcolor{white} {\color{blue} $16$} 
& \cellcolor{yellow!30}{\color{blue} $14$} & \cellcolor{yellow!30} {\color{blue} $14$} 
& \cellcolor{white}{\color{blue} $16$} & \cellcolor{white} {\color{blue} $13$} 
& \cellcolor{white}{\color{blue} $17$} & \cellcolor{white} {\color{blue} $12$} 
& \cellcolor{green!30}{\color{blue} $18$} & \cellcolor{green!30} {\color{blue} $12$} 
& \cellcolor{white}{\color{blue} $18$} & \cellcolor{white} {\color{blue} $11$} 
& \cellcolor{white}{\color{blue} $19$} & \cellcolor{white} {\color{blue} $10$} 
& \cellcolor{white}{\color{blue} $20$} & \cellcolor{white} {\color{blue} $9$} 
& \cellcolor{white}{\color{blue} $21$} & \cellcolor{white} {\color{blue} $8$} 
& \cellcolor{white}{\color{blue} $22$} & \cellcolor{white} {\color{blue} $7$} 
& \cellcolor{yellow!30}{\color{blue} $23$} & \cellcolor{yellow!30} {\color{blue} $5$} 
& \cellcolor{white}{\color{blue} $25$} & \cellcolor{white} {\color{blue} $4$} 
& \cellcolor{white}{\color{blue} $26$} & \cellcolor{white} {\color{blue} $3$} 
\end{tabular}

 \end{landscape}

\noindent
\begin{minipage}{0.56\textwidth}

\begin{example}[$g = 20, \vec{e} = (-6, -4, -2, -2, 0)$]
Let $T$ be the tableau on the previous page.
The locus $W^T(X)$ is $1$-dimensional, corresponding to the fact that $13$ does not appear in $T$, and so $L^{13}$ may be any degree $10$ line bundle on $E^{13}$. We assume $L^{13}$ is not a linear combination of the nodes. The table below the tableau on the previous page lists the new ramification indices $\naa{i}{\ell}$ and $\nbb{i}{\ell}$. The table to the right lists all ramification indices for $V_j(E^{14})$ at each layer $j$. The ramification indices at $p^{13}$ are on the left and those for $p^{14}$ are on the right.  The new ramification indices are bold, and images of a section in higher layers are given the same color.

\hspace{\myindent}The number $14$ is decreasing for the first truncation ($\ell_- = 1 = (1, 1)$), so we have $L^{14} = \O_{E^{14}}(\mathbf{8} p^{13} + \mathbf{2} p^{14})$.

\hspace{\myindent}There are two new sections in layer $2$, drawn in red ($\ell = 2 = (2, 1)$) and purple ($\ell = 3 = (2, 2)$).
The number $14$ is increasing for the third truncation ($\ell_+ = 3 = (2, 2)$), and the old ramification indices that are
one more than the new ramification indices are as follows:
\begin{align*}
{\color{violet} \naa{13}{2,2}} + 1 = {\color{violet} \mathbf{12}} + 1  &= \circled{13}= \mathbf{8} + 5 = \naa{13}{1,1} + k \\
{\color{violet} \nbb{14}{2,2}} + 1 = {\color{violet} \mathbf{6}} + 1  &= \circled{7} = \mathbf{2} + 5= \nbb{14}{1,1} + k
\end{align*}

\hspace{\myindent}There is one new section in layer $3$ \mbox{($\ell = 4 = (3, 1)$)}, colored blue.
\end{example}
\end{minipage}
\hfill
\begin{minipage}{0.4\textwidth}
\begin{center}

\hspace{.17in}
\begin{tikzpicture}
\draw (2, 0) .. controls (3, -1) and (4, -1) .. (5, 0);
\filldraw (2.5, -0.42) circle[radius=0.03];
\filldraw (4.5, -0.42) circle[radius=0.03];
\draw (2.4, -0.7) node{$p^{13}$};
\draw (4.5, -0.7) node{$p^{14}$};
\draw (3.5, 0) node{$E^{14}$};
\end{tikzpicture}

\vspace{.1in}
\hspace{-0.715in}
 \setlength{\tabcolsep}{0.35em}
\begin{tabular}{c | c c c |}
&&&\\[-6pt]
\textit{Layer $j = 1$} & $\mathbf{8}$ && $\mathbf{2}$\\ 
&&& \\[-6pt]
\hline
&&& \\[-6pt]
{\color{red!50!violet} \textit{Layer $j = 2$}} & $8$ && $2$ \\
& ${\color{red} \mathbf{11}}$ && ${\color{violet} \mathbf{6}}$ \\
& ${\color{violet} \mathbf{12}}$ && $\circled{7}$ \\
& $\circled{13}$ && ${\color{red} \mathbf{8}}$ \\
& $18$ && $12$  \\
&&& \\[-6pt]
\hline
&&& \\[-6pt]
{\color{blue} \textit{Layer $j = 3$}} & $8$ && $2$ \\
& ${\color{red} 11}$ && ${\color{violet} 6}$ \\
& ${\color{violet} 12}$ && $7$ \\
& $13$ && ${\color{red} 8}$ \\
& ${\color{red} 16}$ && ${\color{blue} \mathbf{10}}$ \\
& ${\color{violet} 17}$ && ${\color{violet} 11}$ \\ 
& $18$ && $12$ \\
& ${\color{blue} \mathbf{19}}$ && {\color{red} $13$} \\
& ${\color{red} 21}$ &&  {\color{violet} $16$} \\
& ${\color{violet} 22}$ && $17$ \\
& $23$ && ${\color{red} 18}$ \\
& $28$ && $22$
\end{tabular}
\end{center}
\end{minipage}

\subsection{Regeneration for refined, non-colliding nested limit linear series}

\begin{thm}[Regeneration] \label{regen}
Suppose $\naa{i}{\ell}$ and $\nbb{i}{\ell}$ are the new ramification indices at $p^i$ for a refined, non-colliding $\vec{e}$-nested limit linear series. Let $ \mathcal{X} \rightarrow \P \rightarrow B$ be the family of curves constructed in Section \ref{our_degen}.
There is a quasi-projective scheme $\widetilde{W}$ over $B$ whose general fiber is contained in the space of $\vec{e}$-nested linear series on $\mathcal{X}^*$
and whose special fiber is the space of $\vec{e}$-nested limit linear series on $X$ with the (strictly) specified ramification. Every component of $\widetilde{W}$ has dimension at least $\dim \Pic^d(\mathcal{X}/B) - u(\vec{e})$. 
\end{thm}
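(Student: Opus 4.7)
The plan is to construct $\widetilde{W}$ as a determinantal-type subscheme of a tower of relative Grassmannians over $\Pic^d(\mathcal{X}/B)$. For each layer $j = 1, \ldots, \s$, let $\Gr_j \to \Pic^d(\mathcal{X}/B)$ denote the relative Grassmannian of subspaces of rank $h^0(\pp^1, \O(\vec{e})(-d_j))$ inside a sufficiently positive pushforward of the universal line bundle twisted by $-d_j$ (padded by an auxiliary divisor supported on $\mathfrak{p}^g$ so that one has a genuine vector bundle on $B$). Inside the fiber product $\prod_j \Gr_j$ over $\Pic^d(\mathcal{X}/B)$, cut out $\widetilde{W}$ by: the nesting inclusions $\Im_j(\mathcal{V}_{j-1}) \subseteq \mathcal{V}_j$ of \eqref{im}; Schubert vanishing conditions on each $\mathcal{V}_j$ imposing the prescribed new ramification orders $\naa{0}{\ell}$ at $\mathfrak{p}^0$ and $\nbb{g}{\ell}$ at $\mathfrak{p}^g$; and, on the central fiber only, additional Schubert conditions at each internal node $p^i$ imposing the prescribed orders $\naa{i}{\ell}$ on the $E^{i+1}$-aspect and $\nbb{i}{\ell}$ on the $E^i$-aspect. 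On the generic fiber this produces $\vec{e}$-nested linear series on $\mathcal{X}^*$, while on the special fiber it produces precisely the refined, non-colliding $\vec{e}$-nested limit linear series with exactly the prescribed new ramification indices.

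The dimension bound then reduces to a codimension count of Schubert conditions in the Grassmannian bundles. By Lemma \ref{max-dim}, the ``new'' portion of the ramification conditions at each node $p^i$ and each layer $j$ comprises exactly $m_j$ vanishing orders, and the codimension they impose on $\mathcal{V}_j$ is the sum of these orders minus the minimum a priori values. Summing these contributions over all $i$, $j$, and $\ell$, invoking Proposition \ref{endpos} for the terminal values $\naa{g}{\ell}$ and the telescoping relation \eqref{rule} between $\naa{i-1}{\ell}$ and $\nbb{i}{\ell}$ at each internal node, the total codimension collapses to exactly $u(\vec{e})$. Since the ambient parameter space is smooth and of the correct fiberwise dimension over $\Pic^d(\mathcal{X}/B)$ (the Grassmannian factors being essentially collapsed by the nesting conditions), any component of $\widetilde{W}$ has dimension at least $\dim \Pic^d(\mathcal{X}/B) - u(\vec{e})$ by the standard dimension bound for an intersection of determinantal loci (a codimension-$c$ locus cuts dimension by at most $c$).

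The crux of the argument --- and where the novel combinatorics enters decisively --- is verifying that the Schubert conditions on the special fiber really do impose \emph{independent} codimensions, so that the expected count is sharp rather than a loose upper bound. On each genus-one component $E^i$ with $p^i - p^{i-1}$ exactly $k$-torsion, two vanishing conditions at a single node whose orders are congruent mod $k$ would collapse to a single condition, while mod-$k$-distinct orders impose independent conditions; the non-colliding hypothesis, via \eqref{distinct}, guarantees the latter alternative. The sorting statement of Proposition \ref{main_ants}, the containment $j_- < j_+$ from Corollary \ref{jpm}, and the layer-interaction bound of Corollary \ref{distinct layers} are precisely the combinatorial inputs needed to verify, as one assembles the Schubert conditions layer by layer and node by node, that each newly imposed condition genuinely adds the expected codimension --- in other words, that the different layers of the nested limit linear series ``break up'' into minimally-interacting pieces that can be regenerated almost independently. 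Once this independence is in place, the lower bound on the dimension of every component of $\widetilde{W}$ follows, and quasi-projectivity is immediate from the construction as a locally closed subscheme of a projective bundle over $\Pic^d(\mathcal{X}/B)$.
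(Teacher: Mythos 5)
Your proposal has some of the right raw materials --- Grassmannian-type parameter spaces over $\Pic^d(\mathcal{X}/B)$, Schubert-type vanishing conditions at nodes, and the combinatorics of Propositions \ref{main_ants} and \ref{endpos} and Corollaries \ref{jpm} and \ref{distinct layers} controlling the count --- but the central mechanism is missing and the logic of the dimension estimate is inverted.

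First, you cannot impose Schubert conditions ``on the central fiber only.'' A single quasi-projective $\widetilde{W}$ over $B$ must be cut out by equations defined over all of $B$, and the entire subtlety of regeneration is to find such equations that on the general fiber say ``these are honest linear series'' while on the special fiber they specialize to the prescribed ramification at nodes. The paper achieves this with the single equation
\[
\sigma^i_{j,n} \otimes (\tau^i)^{\na{i}{j}{n}} = \lambda^i_{j,n} \otimes (\mu^i)^{\nb{i}{j}{n}},
\]
where $\tau^i$ and $\mu^i$ are sections of $\O_{\mathcal{X}}(X^{\leq i})$ and $\O_{\mathcal{X}}(X^{>i})$. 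Away from the central fiber both are invertible, so the equation identifies the two frames; on the central fiber $\tau^i$ kills $X^{\leq i}$ and $\mu^i$ kills $X^{>i}$, so it forces exactly the vanishing orders $\nb{i}{j}{n}$ and $\na{i}{j}{n}$ and determines each side from the other. With no analog of this interpolation you do not have a coherent family; you have two unrelated loci, and in particular no uniform equation count in a neighborhood of the central fiber from which to deduce a lower bound on the dimension of a component that passes through it.

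Second, the logic of the dimension bound is backwards. A lower bound on every component's dimension is \emph{automatic} once $\widetilde{W}$ is realized inside a smooth ambient space cut out by at most $c$ equations (Krull's principal ideal theorem); ``independence'' is irrelevant to a lower bound. The non-colliding hypothesis and Lemma \ref{max-dim} are used to avoid \emph{overcounting} equations: that lemma guarantees exactly $m_j$ new sections at each layer, which is why the paper parameterizes layer $j$ by a Grassmannian $\Gr(m_j, \Q^i_j)$ of new sections rather than a full $h^0(\pp^1,\O(\vec{e})(-d_j))$-dimensional subspace as you propose, and phrases compatibility at each node as one identification of frames per $(j,n)$ rather than two separate Schubert conditions on the two aspects. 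If you carry out your construction with full linear series and independent ramification conditions on both sides of each node, you impose strictly too many equations and the bound falls below $\dim \Pic^d(\mathcal{X}/B) - u(\vec{e})$. Finally, the paper's count also requires subtracting the fiber dimension of the frame space over the Grassmannian tower and verifying this is the \emph{same} on the general and special fibers --- a nontrivial check that you omit but that is necessary to pass from the frame space to $\widetilde{W}$ itself.
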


\begin{proof}
We will construct a variety $\widetilde{W}^{\mathrm{fr}}$ which parametrizes compatible framings of nested linear series and surjects onto the desired $\widetilde{W}$. 
Set $\chi \colonequals \deg(\O(\vec{e})) + k = d - g + 1$. We retain notation as above so $d_1 > \cdots > d_\s$ are the distinct degrees appearing in $\vec{e}$ and $m_1, \ldots, m_\s$ are the corresponding multiplicities. 
Let $\Pic \colonequals \Pic^d(\mathcal{X}/B)$ and label maps as in the diagram below.

\begin{center}
\begin{tikzcd}
\X \times_B \Pic \arrow{r} \arrow{dd}[swap]{\pi} & \X \arrow{d}{\mathfrak{f}}\\
& \P \arrow{d}{\varphi} \\
\Pic \arrow{r}[swap]{\eta} & B
\end{tikzcd}
\end{center}

Let $\mathcal{L}$ be the universal limit line bundle on $\X \times_B \Pic \xrightarrow{\pi} \Pic$.
Recall that $\mathcal{L}^i = \mathcal{L}_{(0, \ldots, 0, d, 0, \ldots 0)}$
has degree $d$ on component $E^i$ and degree $0$ on all other components. 
In addition, recall that $\O_{\P}(n)^i = \O_{\P}(n)_{(0, \ldots, 0, n, 0, \ldots, 0)}$ is isomorphic to $\O_{\pp^1}(n)$ on the smooth fibers of $\P \rightarrow B$; on the central fiber, it has degree $n$ on $P^i$ and degree $0$ on all other components.
(See Example~\ref{limit-Om}.)

Let $D$ be an effective divisor of relative degree $N$ on $\mathcal{X} \rightarrow B$ so that $D$ meets each component of the central fiber with sufficiently large degree. By slight abuse of notation we denote by $D$ the pullback of this divisor to $\mathcal{X} \times_B \Pic$.
By cohomology and base change, $\pi_* \mathcal{L}(-d_j)^i(D)$ is a vector bundle
on $\Pic^{d-d_jk+N}(\mathcal{X}/B)$,
which we identify with $\Pic$ via tensoring with $\O_{\X}(-d_j)(D)$;
its rank is $N+\chi - d_jk$.

For each component $E^i$ of the central fiber, we are going to build a tower
\[G^i_{\s-1} \xrightarrow{\psi^i_{\s-1}} G^i_{\s-2} \xrightarrow{\psi^i_{\s-2}} \cdots \rightarrow G^i_1 \xrightarrow{\psi^i_1} G^i_0 \colonequals \Pic,\]
where each $G^i_j$ is a Grassmann bundle $\Gr(m_j, \mathcal{Q}^i_j)$ for $\Q^i_j$ a vector bundle over an open $U^i_{j-1} \subset G^i_{j-1}$. We will write $\mathcal{S}^i_j$ for the tautological subbundle on $G^i_j$.
This tower of Grassmann bundles will parametrize $\vec{e}$-nested linear series on $E^i$. The bundle $\mathcal{S}^i_j$ will correspond to the space of ``new sections at layer $j$'' (i.e.\ sections at layer $j$ modulo those coming from lower layers).

We build the tower by constructing the $\Q^i_j$ inductively.  By convention, set $G^i_0 \colonequals \Pic$ and $U^i_0 \colonequals \Pic$.
We begin by defining $\Q^i_1$ as the push forward $\mathcal{Q}^i_1 \colonequals \pi_*\mathcal{L}(-d_1)^i(D)$, which we have seen above is a vector bundle.

Now suppose, by induction, that we have defined $\mathcal{Q}^i_{j-1}$ 
as a quotient
\[(\psi^i_1 \circ \cdots \circ \psi^i_{j-2})^*\pi_*\mathcal{L}(- d_{j-1})^i(D) \rightarrow \Q^i_{j-1},\]
defined on some open $U^i_{j-2} \subset G^i_{j-2}$. 
Let $(\mathcal{S}^i_{j-1})'$ be the pullback of the tautological bundle,
i.e.\ the bundle so that the diagram below is a fiber square:
\begin{center}
\begin{tikzcd}
(\psi^i_1 \circ \cdots \circ \psi^i_{j-1})^*\pi_*\mathcal{L}(- d_{j-1})^i(D) \arrow{r} &(\psi^i_{j-1})^*\Q^i_{j-1} \\
(\mathcal{S}_{j-1}^i)' \arrow[hook]{u} \arrow{r} & \arrow[hook]{u} \mathcal{S}^i_{j-1}.
\end{tikzcd}
\end{center}
The bundle $(\mathcal{S}^i_{j-1})'$ will correspond to the full linear series at layer $j-1$ (not just the new sections).

The layer $j$ comparison maps of \eqref{im} fit together in our family as the map
\begin{center}
\begin{tikzcd}
(\mathcal{S}^i_{j-1})' \otimes (\eta \circ \psi_1^i \circ \cdots \circ \psi^i_{j-1})^*\varphi_* \O_{\P}(d_{j-1} - d_{j})^i  \arrow{d} \\
 (\psi^i_{1} \circ \cdots \circ \psi^i_{j-1})^*\pi_*\mathcal{L}(-d_{j-1})^i(D) \otimes (\eta \circ \psi_1^i \circ \cdots \circ \psi^i_{j-1})^*\varphi_* \O_{\P}(d_{j-1} - d_{j})^i   \arrow{d} \\
 (\psi^i_{1} \circ \cdots \circ \psi^i_{j-1})^*\pi_*\mathcal{L}(-d_j)^i(D).
\end{tikzcd}
\end{center}
As in the proof of Lemma \ref{max-dim}, the above composition has rank at most
$h^0(\O(\vec{e})(-d_j)) - m_j$.
Define $U^{i}_{j-1} \subset G^i_{j-1}$ to be the open where the composition has exactly this rank. On $U^{i}_{j-1}$,
define $\mathcal{Q}^i_{j}$ to be the cokernel of the compositon.
Its rank $q_j \colonequals \rk \mathcal{Q}^i_{j}$ is
\begin{align}
q_j &= \rk(\pi_*\mathcal{L}(-d_j)^i(D)) - (h^0(\O(\vec{e})(-d_j)) - m_j) \notag \\
&= N+ \chi(\O(\vec{e})(-d_j)) - (h_0(\O(\vec{e})(-d_j)) - m_j) \notag \\
&= N + m_j -h^1(\O(\vec{e})(-d_j)). \label{qi}
\end{align}

Now we introduce a space parametrizing ``lifted projective frames'' over our tower of Grassmann bundles.
Recalling that $\Q^i_j$ is a quotient of $(\psi^i_1 \circ \cdots \circ \psi^i_{j-1})^* \pi_* \L(-d_j)^i(D)$, let $\widetilde{G}^i_j \rightarrow G^i_j$ be the space of lifts of $m_j$-dimensional subspaces of $\Q^i_j$ to $m_j$-dimensional subspaces of $(\psi^i_1 \circ \cdots \circ \psi^i_{j-1})^* \pi_* \L(-d_j)^i(D)$
(so $\widetilde{G}^i_j$ is an open inside $\Gr(m_j, (\psi^i_1 \circ \cdots \circ \psi^i_{j-1})^* \pi_* \L(-d_j)^i(D))$).
Let $\widetilde{\mathcal{S}}^i_j$ denote the tautological bundle on $\widetilde{G}^i_j$, and let $\Fr(\widetilde{\mathcal{S}}^i_j) \rightarrow \widetilde{G}^i_j$ be the space of projective frames of $\widetilde{\mathcal{S}}^i_j$.

For $p$ a node on a component $E^i$, we inductively define
\[F^{i,p}_1 \colonequals  \Fr(\widetilde{\mathcal{S}}^i_1) \quad \text{and} \quad F^{i,p}_j \colonequals F^{i,p}_{j-1} \times_{G^i_{j-1}} \Fr(\widetilde{\mathcal{S}}^i_j),\]
which encodes framing information for component $i$ of all sections up to layer $j$. Note that $F^{i, p}_j$ does not depend on $p$ --- however, we include the $p$ to highlight that we will be imposing conditions on these frames at the node $p$.
We define a ``master frame space"
\[F\colonequals F^{1, p^1}_{s-1} \times F^{2, p^1}_{s-1} \times F^{2, p^2}_{s-1} \times F^{3, p^2}_{s-1} \times F^{3, p^3}_{s-1} \times \cdots   \times F^{g-1, p^{g-2}}_{s-1} \times F^{g-1, p^{g-1}}_{s-1} \times F^{g, p^{g-1}}_{s-1}\]
where all products are over $\Pic$. This maps to
\[G \colonequals G^1_{s-1} \times G^2_{s-1} \times G^2_{s-1} \times G^3_{s-1} \times G^3_{s-1} \times \cdots \times  G^{g-1}_{s-1} \times G^{g-1}_{s-1} \times G^{g}_{s-1}.\]

Next we compute $\dim F$.
The relative dimension of $\Fr(\widetilde{\mathcal{S}}^i_j)$ over $G^i_j$ is
\begin{equation} \label{FoG}
m_j \cdot (\rk \pi_* \L(-d_j)^i(D) - 1) = m_j \cdot [h^0(\O(\vec{e})(-d_j)) - 1].
\end{equation}
Since each $\psi^i_{j}$ is relative dimension $m_j(q_j - m_j)$,
we have
\[\dim F_{s-1}^{i,p} = \dim \Pic + \sum_{j=1}^{\s-1} m_j \cdot [h^0(\O(\vec{e})(-d_j)) - 1] + m_j(q_j - m_j) .\]
All together, we find that
\begin{align}
\dim F &= \dim \Pic + (2g - 2) \left(\sum_{j=1}^{\s-1}  m_j \cdot [h^0(\O(\vec{e})(-d_j)) - 1] + m_j(q_j - m_j) \right) \notag \\
&= \dim \Pic + (2g - 2) \left(\sum_{j=1}^{\s-1}   m_j \cdot [h^0(\O(\vec{e})(-d_j)) - 1] + m_j(N - h^1(\O(\vec{e})(-d_j)))\right) \notag \\
&= \dim \Pic + (2g - 2) \left(\sum_{j=1}^{\s-1} m_j (N + \chi (\O(\vec{e})(-d_j)) - 1)\right). \label{dimF}
\end{align}

We now construct a subvariety $\widetilde{W}^{\mathrm{fr}} \subset F$ that parametrizes compatible projective frames of $\vec{e}$-nested limit linear series. 
The image of $\widetilde{W}^{\mathrm{fr}}$ in $G$ will be the desired variety $\widetilde{W}$.

We will impose three types of conditions on frames; the first two will be pull-backs of conditions on $G$. 

\begin{enumerate}
\item First, we require that the spaces of sections corresponding to the same component are equal.  \item Second we require that the space of sections vanish along $D$.  
\item Finally, we impose compatibility conditions for the two frames labeled with the same node.
\end{enumerate}

\subsection{Compatibility along components} \label{comp1}

The first of these conditions is represented by restricting to a diagonal.  This
imposes
\begin{align}
\codim \left( G^1_{s-1} \times G^2_{s-1} \times \cdots \times G^g_{s-1} \hookrightarrow G \right) &=
\dim G^2_{s-1} + \ldots + \dim G^{g-1}_{s-1} \notag\\
&= (g - 2) \sum_{j=1}^{s-1} m_j(q_j - m_j) \notag \\
&= (g - 2) \sum_{j=1}^{s-1} m_j (N - h^1(\O(\vec{e})(-d_j))) \label{diag_cond}
\end{align}
conditions.

\subsection{\boldmath Vanishing along $D$}

After \'{e}tale base change, we may assume that each component of $D$ meets only one $E^i$. Let $D^i$ be the union of components of $D$ meeting $E^i$, and let $Z^i_j \subset G^i_j$
be the locus where the nested linear series vanishes on $D^i$ (up to layer $j$).
To determine an upper bound on the codimension of $Z^i_{s-1} \subset G^i_{\s-1}$, we count the number of equations needed to describe $Z^i_{j} \subset (\psi^i_{j})^{-1}(Z^i_{j-1})$ at each layer.

\begin{center}
\begin{tikzcd}[scale=0.7]
Z^i_{\s-1}  \arrow[thick, color=violet, hook]{d} \\
(\psi^i_{\s-1})^{-1}(Z^i_{\s-2}) \arrow{r} \arrow[hook]{dddd} & \cdots \\
&&Z_3^i \arrow[thick, color=violet,hook]{d} \\
&&(\psi^i_{3})^{-1}(Z_2^i) \arrow{r} \arrow[hook]{dd} & Z_2^i \arrow[thick, color=violet,hook]{d} \\
&&& (\psi_2^i)^{-1}(Z_1^i) \arrow[hook]{d} \arrow{r} & Z_1^i \arrow[thick, color=violet,hook]{d} \\
G^i_{\s-1} \arrow{r}[swap]{\psi^i_{s-1}} & \cdots \arrow{r}[swap]{\psi^i_4} & G^i_3 \arrow{r}[swap]{\psi^i_3} & G_2^i \arrow{r}[swap]{\psi^i_2} & G_1^i
\end{tikzcd}
\end{center}

We start with the locus $Z^i_1 \subset G^i_1$, which is defined by the vanishing of the composition
\[\mathcal{S}^i_{1} \rightarrow (\psi^i_1)^*\Q^i_1 = (\psi^i_1)^*\pi_*\mathcal{L}(-d_1)^i(D) \rightarrow (\psi^i_1)^*\pi_*(\mathcal{L}(-d_1)^i(D) \otimes \O_{D^i}). \]
This represents $m_1 \deg(D^i)$ equations.

On $(\psi^i_j)^{-1} Z^i_{j-1}$, evaluation $(\psi_1^i \circ \cdots \circ \psi_{j-1}^i)^* \pi_*(\L(-d_j)^i(D)) \to (\psi_1^i \circ \cdots \circ \psi_{j-1}^i)^* \pi_*(\L(-d_j)^i(D) \otimes \O_{D^i})$ factors through $\Q^i_j$.
Therefore, $Z_j^i \subset (\psi_{j-1}^i)^{-1}(Z^i_{j-1})$ is the locus where the composition
\[\mathcal{S}^i_{j} \rightarrow (\psi^i_j)^* \Q^i_{j} \rightarrow (\psi_1^i \circ \cdots \circ \psi_{j}^i)^* \pi_*(\L(-d_j)^i(D) \otimes \O_{D^i})\]
vanishes. This represents $m_{j} \deg(D^i)$ equations.

Totaling over the layers, we see that every component of $Z_{\s-1}^i$ has codimension at most
\[\codim (Z_{\s-1}^i \subset G^i_{\s-1}) \leq (m_1 + \ldots + m_{s-1}) \deg(D^i).\]
Taking the product over all components, every component of $Z_{\s-1}^1 \times_{\Pic} \cdots \times_{\Pic} Z_{\s-1}^g$
has codimension at most
\begin{equation}\label{van_D}\codim(Z_{\s-1}^1 \times_{\Pic} \cdots \times_{\Pic} Z_{\s-1}^g \subset G^1_{s-1} \times G^2_{s-1} \times \cdots \times G^g_{s-1}) \leq (m_1 + \ldots + m_{s-1})N.\end{equation}

\subsection{Compatibility at nodes} \label{comp2}
For each node $p^i$ we describe equations on $F^{i,p^i}_{s-1} \times F^{i+1,p^i}_{s-1}$
that impose ramification conditions on both frames at $p^i$ in the central fiber, and say the two frames are equal up to translation by old sections on the general fiber.

Let $\sigma^{i}_{j, 1}, \ldots, \sigma^{i}_{j,m_j}$ be coordinates on the $\Fr(\widetilde{\mathcal{S}}^i_j)$ component of $F^{i,p^{i}}_{s-1}$ (the universal framing of $\widetilde{\mathcal{S}}^i_j$ associated to $p^i$). 
Similarly, let $\lambda^{i}_{j, 1}, \ldots, \lambda^{i}_{j,m_j}$ be coordinates on the $\Fr(\widetilde{\mathcal{S}}^{i+1}_j)$ component of $F^{i+1, p^{i}}_{s-1}$ (the universal framing of $\widetilde{\mathcal{S}}^{i+1}_j$ associated to $p^i$).
Let  $\tau^{i}$ be the constant section of $\O_{\mathcal{X}}(X^{\leq i})$ (that vanishes to the left of $p^i$), and let $\mu^i$ be the constant section of $\O_{\mathcal{X}}(X^{>i})$ (that vanishes to the right of $p^i$).
\begin{center}
\begin{tikzpicture}
\draw (2.5+.5-.2, 1.5+.3) -- (5-1+.4+.1, 1.5+.3+.3)node[right]{\tiny $\lambda^{i}_{j,2}$};
\draw (2.5+.5-.2, 1.5+.3) -- (5-1+.4+.1, 1.5-.3+.3)node[right]{\tiny $\lambda^{i}_{j,1}$};
\draw (2.5+.5-.2, 1.5+.3) -- (5-1.3+.3+.1, 1.5+1.2+.3)node[above]{\tiny $\lambda^{i}_{j,n}$};
\filldraw (5-.5+.3-.2, 2.4+.3) circle[radius=0.015];
\filldraw (5-.45+.3-.2, 2.25+.3) circle[radius=0.015];
\filldraw (5-.55+.3-.2, 2.55+.3) circle[radius=0.015];

\draw (2.5-.5, 1.5-.3) -- (1-.4-.3, 1.5+.3-.3)node[left]{\tiny $\sigma^i_{j,2}$};
\draw (2.5-.5, 1.5-.3) -- (1-.4-.3, 1.5-.3-.3)node[left]{\tiny $\sigma^i_{j,1}$};
\draw (2.5-.5, 1.5-.3) -- (1.3-.3-.3, 1.5+1.2-.3)node[above]{\tiny $\sigma^i_{j,n}$};
\filldraw (.5-.3, 2.4-.3) circle[radius=0.015];
\filldraw (.45-.3, 2.25-.3) circle[radius=0.015];
\filldraw (.55-.3, 2.55-.3) circle[radius=0.015];
\draw (0, 0) .. controls (1, -1) and (2, -1) .. (3, 0);
\draw (2, 0) .. controls (3, -1) and (4, -1) .. (5, 0);
\filldraw (2.5, -0.42) circle[radius=0.03];
\draw (2.5, 0) node{$p^i$};
\draw (1.5, -0.95) node{$E^i$};
\draw (3.5, -0.95) node{$E^{i+1}$};
\draw[->] (2.5, -1.65) -- (5, -1.65);
\node at (3.75,-2) {$\mu^i = 0$};
\draw(2.5, -1.55) -- (2.5, -1.75);
\draw[->] (2.5, -2.25) -- (0, -2.25);
\node at (1.25, - 2.6) {$\tau^i = 0$};
\draw (2.5, -2.15) -- (2.5, -2.35);
\node at (-.5, -.35) {$\cdots$};
\node at (5.5, -.35) {$\cdots$};
\end{tikzpicture}
\end{center}
Suppose that for layer $j$, the specified new ramification indices at $p^{i}$ on $E^{i+1}$ are $\na{i}{j}{1}, \na{i}{j}{2}, \ldots, \na{i}{j}{m_j}$ (desired ramification of the $\lambda^{i}_{j,n}$'s), and that the new ramification indices at $p^i$ on $E^i$ are
the $\nb{i}{j}{n} = d - d_jk - \na{i}{j}{n}$ (desired ramification of the $\sigma^{i}_{j,n}$'s).

We define a closed subvariety $Y^i \subset F^{i,p^i}_{s-1} \times F^{i+1,p^{i}}_{s-1}$ by the
conditions (for $j = 1, \ldots, s-1$, and $n = 1, \ldots, m_j$):
\begin{equation} \label{condition_j}
\sigma^i_{j,n} \otimes (\tau^i)^{\na{i}{j}{n}} = \lambda^i_{j,n} \otimes (\mu^i)^{\nb{i}{j}{n}},
\end{equation}
viewed as elements of the projectivization of
\[\pi_*\L(-d_j)^i(D + \na{i}{j}{n}X^{\leq i}) \cong \pi_*\L(-d_j)^{i+1}(D + \nb{i}{j}{n}X^{> i}).\]
The isomorphism above comes from the fact that 
\[\L(-d_j)^{i} \simeq \L(-d_j)^{i+1}(-(d-d_jk) X^{\leq i})\]
 and $\O(X^{\leq i}) \cong \O(-X^{> i})$ and $\na{i}{j}{n} + \nb{i}{j}{n} = d-d_jk$.

Away from the central fiber, $\tau^i$ and $\mu^i$ are non-zero, so \eqref{condition_j} says the new sections $\sigma^i_{j,n}$ and $\lambda^i_{j,n}$ are equal (up to scaling).

In the central fiber,
$\tau^i|_{X^{>i}}$ is not a zero divisor and vanishes only at $p^i$, while $\mu^i|_{X^{\leq i}}$ is not a zero a divisor
and vanishes only at $p^i$. Thus
condition~\eqref{condition_j} says that $\sigma^i_{j,n}$ and $\lambda^i_{j,n}$
are determined by
the restrictions $\sigma^i_{j,n}|_{X^{\leq i}}$ and $\lambda^i_{j,n}|_{X^{> i}}$,
which can be anything of the desired vanishing order at $p^i$:
\begin{gather}
\sigma^i_{j,n}|_{X^{\leq i}} \text{ vanishes to order at least $\nb{i}{j}{n}$ at $p^i$} \label{want10}\\
\lambda^i_{j,n}|_{X^{> i}} \text{ vanishes to order at least $\na{i}{j}{n}$ at $p^i$} \label{want20} \\
\lambda^i_{j,n}|_{X^{\leq i}} = \sigma^i_{j,n}|_{X^{ \leq i}} \cdot \frac{\tau^i|_{X^{\leq i}}^{\otimes \na{i}{j}{n}}}{\mu^i|_{X^{\leq i}}^{\otimes \nb{i}{j}{n}}}  \qquad \text{and} \qquad \sigma^i_{j,n}|_{X^{>i}} = \lambda^i_{j,n}|_{X^{>i}} \cdot \frac{\mu^i|_{X^{> i}}^{\otimes \nb{i}{j}{n}}}{\tau^i|_{X^{> i}}^{\otimes \na{i}{j}{n}}}. \label{want30}
\end{gather}
Since $\tau^i$ vanishes on $X^{\leq i}$, and $\mu^i$ vanishes on $X^{> i}$,
in most cases \eqref{want30} simplifies to:
$\lambda^i_{j,n}|_{X^{\leq i}} = 0$ (unless $\na{i}{j}{n} = 0$), and
$\sigma^i_{j,n}|_{X^{>i}} = 0$ (unless $\nb{i}{j}{n} = 0$).
In particular, when $j > 1$, both $\na{i}{j}{n}$ and $\nb{i}{j}{n}$ are positive,
so \eqref{want30} simplifies to   
$\lambda^i_{j,n}|_{X^{\leq i}} = 0$ and
$\sigma^i_{j,n}|_{X^{>i}} = 0$.

For each $(j, n)$, equation \eqref{condition_j} represents
\[\rk \pi_*\L(-d_j)^i(D + \na{i}{j}{n} X^{\leq i}) - 1 = \rk \pi_*\L(-d_j)^i(D) - 1 = N + \chi(\pp^1, \O(\vec{e})(-d_j)) - 1\]
conditions.
Thus, every component of $Y^i$ has codimension at most
\begin{align}
\codim(Y^i \subset F^{i, p^i}_{s-1} \times F^{i + 1, p^{i}}_{s-1}) &\leq \sum_{j=1}^{s-1} m_j(N + \chi(\pp^1, \O(\vec{e})(-d_j)) - 1), \notag \\
\intertext{and so every component of $Y^1 \times Y^2 \times \cdots \times Y^{g-1}$ has codimension at most}
\codim(Y^1 \times Y^2 \times \cdots \times Y^{g-1} \subset F) &\leq (g - 1) \sum_{j=1}^{s-1} m_j(N + \chi(\pp^1, \O(\vec{e})(-d_j)) - 1). \label{compnodes}
\end{align}

Imposing all the conditions of Sections \ref{comp1} -- \ref{comp2} defines a closed subvariety of $F$. Let us additionally remove the locus over the central fiber where any ramification index of the frame is larger than the specified ramification index. We call the resulting quasiprojective variety $ \widetilde{W}^{\mathrm{fr}}$. The image of $ \widetilde{W}^{\mathrm{fr}} \rightarrow G$ is the desired $\widetilde{W}$.

\subsection{Fiber dimensions}
Finally, let us count the dimension of fibers $\widetilde{W}^{\mathrm{fr}} \rightarrow G$. 
At each layer $j$, the master frame space $F$ parameterizes $2g - 2$ lifted projective frames
of dimension $m_j$.

On the general fiber, our equations specify that these frames are equal in $g - 1$ pairs. The fiber
dimension $\widetilde{W}^{\mathrm{fr}} \to G$ is thus equal to (c.f.\ \eqref{FoG}):
\begin{equation} \label{fib_gen} (g - 1) \sum_{j = 1}^{s-1} m_j (h^0(\O(\vec{e})(-d_j)) - 1).\end{equation}

On the special fiber,
let $\{\sigma^i_{j, n}, \lambda^i_{j,n}\}$ denote a point of $\widetilde{W}^{\mathrm{fr}}$.
The other points in the same fiber are then obtained by applying linear transformations $\Sigma^i$ and $\Lambda^i$
to the $\sigma^i_{j, n}$ and $\lambda^i_{j,n}$, of a particular form we will now explain.
Let $x^i$ and $y^i$ be sections of $\O_P(1)^i$ that vanish at $q^{i-1} = f(p^{i-1})$ and $q^{i} = f(p^i)$ respectively;
via pullback, we think of them as sections of $\O_X(1)^i$ on $X$ vanishing at $p^{i-1}$ and $p^i$ respectively.
Then to $\sigma^i_{j, n}$, the linear transformation $\Sigma^i$ may add any section from a previous layer whose
image in layer $j$ has higher vanishing order; explicitly, we may add $\sigma^i_{j', n'}$
times any monomial $(x^i)^\delta \cdot (y^i)^{d_{j'} - d_j - \delta}$ (with $0 \leq \delta \leq d_{j'} - d_j$) such that
\begin{equation} \label{mon-sigma}
\nb{i}{j'}{n'} + (d_{j'} - d_{j} -\delta) k > \nb{i}{j}{n}.
\end{equation}
Similarly, to $\lambda^i_{j, n}$, the linear transformation $\Lambda^i$ may add $\lambda^i_{j', n'}$
times any monomial $(x^i)^\delta \cdot (y^i)^{d_{j'} - d_j - \delta}$ (with $0 \leq \delta \leq d_{j'} - d_j$) such that
\begin{equation} \label{mon-lambda}
\na{i}{j'}{n'} + \delta k > \na{i}{j}{n}.
\end{equation}
The fiber dimension is therefore the total number of monomials satisfying
\eqref{mon-sigma}, plus the number satisfying \eqref{mon-lambda}.
Recall that $\nb{i}{j'}{n'} + \na{i}{j'}{n'} = d - d_{j'}k$ and $\nb{i}{j}{n} + \na{i}{j}{n} = d - d_j k$,
and $\na{i}{j'}{n'} \not\equiv \na{i}{j}{n} \pmod k$ unless $(j', n') = (j, n)$.
Therefore,
every monomial satisfies exactly one of \eqref{mon-sigma} or \eqref{mon-lambda},
except when $(j', n') = (j, n)$.
The fiber dimension is therefore
\begin{equation}\label{fib_spec} (g - 1) \sum_{j = 1}^{s - 1} m_j \left(-1 + \sum_{j' \leq j} m_{j'} (d_{j'} - d_j + 1)\right) = (g - 1) \sum_{j = 1}^{s - 1} m_j (h^0(\O(\vec{e})(-d_j)) - 1) .\end{equation}
Note that this is the same as the fiber dimension on the general fiber.

\subsection{Final dimension estimate}
Recalling the dimension of $F$ and totaling the equations imposed in Sections \ref{comp1} -- \ref{comp2}, we find
that every component $\widetilde{W}'$ of $\widetilde{W}$ has dimension
\begin{align*}
\dim \widetilde{W}' &\geq \dim F - \text{(number of defining equations)} - \text{(fiber dimension)} \notag \\
&= \dim \Pic + (2g - 2) \left(\sum_{j=1}^{\s-1} m_j (N + \chi (\O(\vec{e})(-d_j)) - 1)\right). \hspace{-1in} & \text{($\dim F$, c.f.\ \eqref{dimF})} \notag \\
&\qquad - (g - 2) \sum_{j=1}^{s-1} m_j (N - h^1(\O(\vec{e})(-d_j))) & \hspace{-1in} \text{(diagonal condition, c.f.\ \eqref{diag_cond})} \\
&\qquad - N\sum_{j=1}^{\s-1} m_j & \hspace{-0.08in}\text{(vanishing along $D$, c.f.\ \eqref{van_D})} \notag \\
&\qquad - (g - 1)\sum_{j=1}^{s-1} m_j(N + \chi(\O(\vec{e})(-d_j)) - 1) & \hspace{-1in}\text{(compatibility at nodes, c.f.\ \eqref{compnodes})} \notag \\
&\qquad - (g - 1)\sum_{j=1}^{s-1} m_j(h^0(\O(\vec{e})(-d_j)) - 1) &  \hspace{-1in} \text{(fiber dimension, c.f.\eqref{fib_gen} and \eqref{fib_spec})} \notag \\
&= \dim \Pic - \sum_{j=1}^{\s -1} m_jh^1(\O(\vec{e})(-d_j)) \notag \\
&= \dim \Pic -u(\vec{e}). & \qedhere
\end{align*}
\end{proof}

The regeneration theorem allows us to show that the $\vec{e}$-nested limit linear series built from tableaux as in Lemma~\ref{hasit}
arise as limits from smooth curves, implying that $W^{\Gamma(\vec{e})}(X)$
is contained in the closure of $W^{\vec{e}}(\mathcal{X}^*/B^*)$.

\begin{cor} \label{reg-st}
Let $T$ be an efficiently filled tableau of shape $\Gamma(\vec{e})$. Then $W^T(X)$ is contained in the closure of $W^{\vec{e}}(\mathcal{X}^*/B^*)$.
\end{cor}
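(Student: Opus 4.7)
The plan is to combine Lemma~\ref{hasit} with the Regeneration Theorem~\ref{regen} via a dimension count. The key preliminary observation is that $W^T(X) \subseteq \Pic^d(X)$ is irreducible of dimension $g - u(\vec{e})$: each condition $L^i \simeq \O_{E^i}((T[i]+i-1)p^{i-1} + (d-T[i]-i+1)p^i)$ cuts the $g$-dimensional torsor $\Pic^d(X)$ by one, and the number of $i$ with $T[i] \neq *$ equals $u(\Gamma(\vec{e})) = u(\vec{e})$.

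Given a general $L \in W^T(X)$, Lemma~\ref{hasit} produces a unique refined, non-colliding $\vec{e}$-nested limit linear series with new ramification indices $\naa{i}{\ell}$ and $\nbb{i}{\ell}$ defined by \eqref{defab}. Feeding these into Theorem~\ref{regen} yields a quasi-projective scheme $\widetilde{W}$ over $B = \Spec K[[t]]$ whose special fiber parametrizes such nested limit linear series on $X$, with every component of $\widetilde{W}$ of dimension at least $\dim \Pic^d(\mathcal{X}/B) - u(\vec{e}) = (g+1) - u(\vec{e})$.

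The crux of the argument will be to show that any irreducible component $W'$ of $\widetilde{W}$ meeting the preimage of a general point of $W^T(X)$ dominates $B$. For this I would use that the forgetful map from the special fiber of $\widetilde{W}$ to $\Pic^d(X)$ lands in $W^T(X)$ (the ramification indices at $p^{i-1}$ on the $E^i$-aspect force $L^i$ via \eqref{li}) and is generically injective by the uniqueness clause of Lemma~\ref{hasit}. Hence the special fiber has dimension exactly $g - u(\vec{e})$, so a component with $\dim W' \geq g+1 - u(\vec{e})$ cannot be contained in the special fiber and must dominate $B$.

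Finally, the general fiber of $W'$ parametrizes $\vec{e}$-nested linear series on the smooth curve $\mathcal{X}^*$, whose image in $\Pic^d(\mathcal{X}^*/B^*)$ consists of line bundles $L^*$ satisfying $h^0(L^*(-d_j)) \geq h^0(\pp^1, \O(\vec{e})(-d_j))$ for every $j$; these lie in $W^{\vec{e}}(\mathcal{X}^*/B^*)$ by the cohomological characterization of splitting type. Taking closures and invoking irreducibility of $W^T(X)$ yields the claimed containment. The main obstacle is ensuring the dimension count is sharp, which relies essentially on the ``strictly specified'' ramification clause in Theorem~\ref{regen}; the non-colliding property (a consequence of \eqref{distinct} in Section~\ref{sec:combin}) is precisely what guarantees this sharpness, so the combinatorial setup of Sections~\ref{sec:epos}--\ref{sec:combin} is doing all the heavy lifting here.
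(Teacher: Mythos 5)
Your proposal is correct and takes essentially the same approach as the paper's proof: combine Lemma~\ref{hasit} with Theorem~\ref{regen}, use the dimension bound to force the relevant component of $\widetilde{W}$ to dominate $B$, and conclude by taking closures in $\Pic^d(\mathcal{X}/B)$. The only minor imprecision is your claim that the whole special fiber of $\widetilde{W}$ has dimension exactly $g - u(\vec{e})$; the uniqueness in Lemma~\ref{hasit} only applies over the \emph{general} locus of $W^T(X)$, so a priori the special fiber could have larger components over the non-general locus — the paper instead singles out the single irreducible component $Y$ of the special fiber dominating $W^T(X)$ (which does have dimension $g - u(\vec{e})$) and notes that the component $Y'$ of $\widetilde{W}$ containing $Y$ has strictly larger dimension, hence dominates $B$. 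Your argument survives this correction since you only consider components meeting the preimage of a general point.
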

\begin{proof}
Let $\widetilde{W}$ be the quasiprojective scheme with ramification corresponding to $T$ as constructed in Theorem~\ref{regen}. By Lemma \ref{hasit}, a generic line bundle in $W^T(X)$ is in the image of $\widetilde{W}$ in $\Pic^d(X)$. Moreover, the uniqueness statement in Lemma \ref{hasit} together with the fact that $\dim W^T(X) = g - u(\vec{e})$
shows that the restriction of $\widetilde{W}$ to the central fiber
has an irreducible component $Y$ of dimension $g - u(\vec{e})$ that dominates $W^T(X)$. By  Theorem~\ref{regen}, the dimension of $Y$ is less than the dimension of any component of $\widetilde{W}$. Let $Y'$ be an irreducible component of $\widetilde{W}$ containing $Y$, which necessarily dominates the base. The closure of the image of $Y'$ in $\Pic^d(\mathcal{X}/B)$ contains $W^T(X)$ and is contained in the closure of $W^{\vec{e}}(\mathcal{X}^*/B^*)$.
\end{proof}

\section{Reducedness and Cohen--Macaulayness}\label{sec:red_CM}

\subsection{Reducedness}
At the central fiber $X = E^1 \cup_{p^1} \cdots \cup_{p^{g-1}} E^g$, the schemes $W^{\vec{e}}(X)$ are defined as intersections of determinantal loci of the form
\[W^r_{\vec{d}}(X) = \{L \in \Pic^{\vec{d}}(X) : h^0(X, L) \geq r+1\},\]
for various degree distributions $\vec{d} = (d^1, \ldots, d^g)$ and integers $r$.

We will first show that any such determinental locus is a union of preimages of reduced points
under various projection maps $\Pic^{\vec{d}}(X) \to \prod_{i \in S} \Pic^{d^i}(E^i)$
(for $S \subseteq \{1, 2, \ldots, g\}$).
Then, we will show that the class of such varieties is closed under intersection,
thus establishing that $W^{\vec{e}}(X)$ is reduced. Combined with Corollary~\ref{reg-st},
this will show that
\[W^{\vec{e}}(X) = W^\Gamma(X).\]

Let $\pi\colon X \times \Pic^{\vec{d}}(X) \rightarrow \Pic^{\vec{d}}(X)$ be the projection and let $\mathcal{L} \colonequals \mathcal{L}_{\vec{d}}$ be a universal line bundle on $X \times \Pic^{\vec{d}}(X)$. Additionally, let $D$ be a divisor on $X$, contained in the smooth locus, and
such that $D^i \colonequals D \cap E^i$ is of sufficiently high degree.
Recall that, in terms of the natural map
\[\phi\colon \pi_*\mathcal{L}(D) \rightarrow \pi_*(\mathcal{L}(D)|_D),\]
the loci of interest are
\[W^r_{\vec{d}}(X) = \{L\in \Pic^{\vec{d}}(X): \dim \ker \phi|_L \geq r + 1\}.\]
The scheme structure is given by the $(n+1) \times (n+1)$ minors of $\phi$, where 
\[n = \rk \pi_*\mathcal{L}(D) - (r+1) = d - g - r + \deg(D).\]

We will describe the rank loci of $\phi$ in terms of evaluation maps on the normalization of $X$. Let $\nu\colon E^1 \sqcup \cdots \sqcup E^g \rightarrow X$ be the normalization and let $\pi^i\colon E^i \times \Pic^{d^i}(E^i) \rightarrow \Pic^{d^i} (E^i)$ be the projection. 
 In addition, let $\pr^i\colon \Pic^{\vec{d}}(X) \rightarrow \Pic^{d^i}(E^i)$ be the projection and let $\mathcal{L}_i$ be a universal line bundle on $E^i \times \Pic^{d^i}(E^i)$ such that $\L|_{E^i \times \Pic^{\vec{d}}(X)} = (\Id \times \pr^i)^* \L_i$.
 These maps fit into the diagram:
 \begin{center}
 \begin{tikzcd}
 & \bigsqcup_{i=1}^g E^i \times \Pic^{\vec{d}}(X) \arrow{dr}{\nu \times \Id} \\
E^i \times \Pic^{\vec{d}}(X) \arrow{ur} \arrow{rr}{\subset} \arrow{d}[swap]{\Id \times \pr^i} &&X \times \Pic^{\vec{d}}(X) \arrow{d}{\pi}\\
E^i \times \Pic^{d^i}(E^i) \arrow{dr}[swap]{\pi^i} && \Pic^{\vec{d}}(X) \arrow{dl}{\pr^i} \\
 &\Pic^{d^i}(E^i)
 \end{tikzcd}
 \end{center}
There is a commuting diagram of vector bundles on $\Pic^{\vec{d}}(X)$:
\begin{center}
\begin{tikzcd}
&& \bigoplus_{i=1}^g (\pr^i)^*(\pi^i)_{*} \L_i(D^i) \arrow[equal]{d} \\
0 \arrow{r} &\pi_*\mathcal{L}(D) \arrow{r} \arrow{d}[swap]{\phi} & \pi_*(\nu \times \Id)_*(\nu \times \Id)^*\mathcal{L}(D) \arrow{r}{\eta} \arrow{d}{\psi} & \bigoplus_{i=1}^{g-1} \pi_* (\mathcal{L}(D)|_{p^i}) \arrow{r} &0\\ 
&\pi_*(\mathcal{L}(D)|_D) \arrow{r}{\sim} & \pi_*(\nu \times \Id)_*( \nu \times \Id)^*\mathcal{L}(D)|_{\nu^{-1}(D)} \arrow[equal]{d} \\
&& \bigoplus_{i=1}^g (\pr^i)^*(\pi^{i})_* \L_i(D^i)|_{D^i}
\end{tikzcd}
\end{center}
The top row is exact by our assumption that $D$ is sufficiently positive.
Since sequences of vector bundles split locally, the rank loci of $\phi$ are corresponding rank loci of $\psi \oplus \eta$:
\begin{equation} \label{psi-eta}
\{L\in \Pic^{\vec{d}}(X): \rk \phi \leq n\} = \{L\in \Pic^{\vec{d}}(X): \rk (\psi \oplus \eta) \leq n + (g - 1)\}.
\end{equation}
The restriction of $\psi \oplus \eta$ to each summand $(\pr^i)^*(\pi^{i})_* \L_i(D^i)$ is $(\pr^i)^*\ev^i$, where
\[\mathrm{ev}^i\colon (\pi^{i})_* \L_i(D^i) \rightarrow (\pi^{i})_*(\L_i(D^i)|_{p^{i-1} \cup p^i \cup D^i})\]
is the map that evaluates a section on $E^i$ at the points of $D^i$ and the nodes $p^{i-1}$ and $p^i$ (or just $p^1$ on $E^1$ or just $p^{g-1}$ on $E^g$). The matrix for $\psi \oplus \eta$ is almost block diagonal.

\begin{center}
\begin{tikzpicture}[scale=0.7]
\draw (0, 0) -- (0, -6-.5);
\draw (0, 0) -- (6+.5, 0);
\draw (2,0) -- (2, -4);
\draw[color=violet]  (0, -2+.25) rectangle  (4, -2);
\draw (4,-2) -- (4, -6);
\draw[color = violet] (2, -4+.25) rectangle (6, -4);
\draw (6, -4) -- (6, -6);
\draw[color = violet] (4, -6) -- (6.5, -6);
\draw[color = violet] (4, -6) -- (4, -6+.25);
\draw[color = violet] (4, -6+.25) -- (6.5, -6+.25);
\node at (7, -6 + .25/2 -.05) {\color{violet} $\cdots$};
\node at (7, -8 - .25/2 -.05) {\color{violet} $\cdots$};
\node at (7 ,-7) {$\ddots$};
\node at (7, -.05) {$\cdots$};
\node at (0, -7) {$\vdots$};
\draw[color = violet] (8-.5, -8) -- (10, -8);
\draw[color = violet] (8-.5, -8-.25) -- (10, -8-.25);
\draw (8, -8) -- (8, -10);
\draw (0, -8+.5) -- (0, -10);
\draw (0, -10) -- (10, -10);
\draw (8-.5, 0) -- (10, 0);
\draw (10, 0) -- (10, -6-.5);
\node at (10, -7) {$\vdots$};
\draw (10, -8+.5) -- (10, -10);
\draw (0-1, 0+.25) .. controls (-.7-1,-2/3+.25/3) and (-.7-1,-4/3+.5/3) .. (0-1,-2);
\draw (0-1, -2+.25) .. controls (-.7-1,-2-2/3+.25/3) and (-.7-1,-2-4/3+.5/3) .. (0-1,-4);
\draw (0-1, -4+.25) .. controls (-.7-1,-4-2/3+.25/3) and (-.7-1,-4-4/3+.5/3) .. (0-1,-6);
\draw (0-1, -8) .. controls (-.7-1,-8-2/3-.25/3) and (-.7-1,-8-4/3-.5/3) .. (0-1,-10-.25);
\node at (-1.25, -7) {$\vdots$};
\node at (-1.1,-2+.125) {\tiny \color{violet} $\bullet$};
\node at (-1.1-.4,-2+.125) {\small \color{violet} $p^1$};
\node at (-1.1,-4+.125) {\tiny \color{violet} $\bullet$};
\node at (-1.1-.4,-4+.125) {\small \color{violet} $p^2$};
\node at (-1.1,-6+.125) {\tiny \color{violet} $\bullet$};
\node at (-1.1-.4,-6+.125) {\small \color{violet} $p^3$};
\node at (-1.1,-8-.125) {\tiny \color{violet} $\bullet$};
\node at (-1.1-.5,-8-.125) {\small \color{violet} $p^{g-1}$};
\node at (1, -1) {\small $\mathrm{ev}^1$};
\node at (3, -3) {\small $\mathrm{ev}^2$};
\node at (5, -5) {\small $\mathrm{ev}^3$};
\node at (9, -9.125) {\small $\mathrm{ev}^g$};
\node at (8, -2) {\huge $0$};
\node at (2, -8) {\huge $0$};
\end{tikzpicture}
\end{center}

The following lemma helps us describe the rank loci of such almost block diagonal matrices scheme-theoretically.

\begin{lem} \label{AB}
Let $\mathrm{Mat}(x, y)$ denote the affine space of $x \times y$ matrices.
Suppose that $S$ and $T$ are given schemes, and
$A\colon S \rightarrow \mathrm{Mat}(x_1, y_1)$ and $B \colon T \rightarrow \mathrm{Mat}(x_2, y_2)$ are two families of matrices. 
Let $M \colon S \times T \rightarrow \mathrm{Mat}(x_1 + x_2, y_1 + y_2 - 1)$ be the family of matrices where the upper left entries are given by $A$, the lower right entries are given by $B$, and the rest of the entries are $0$.
\begin{center}
\begin{tikzpicture}
\draw (0, 0) rectangle (4, -4.25);
\node at (1, -1) {$A_-$};
\node at (3, -3.25) {$B^-$};
\draw[violet, thick] (0, -2) rectangle (4, -2.25);
\draw (2, 0) -- (2, -4.25);
\draw [decorate,decoration={brace,amplitude=4pt},xshift=0.25cm,yshift=0pt]
      (4, -2) -- (4,-4.25) node [midway,right,xshift=.1cm] {$B$};
 \draw [decorate,decoration={brace,amplitude=4pt,mirror},xshift=-0.25cm,yshift=0pt]
(0,0) -- (0,-2.25) node [midway,left,xshift=-0.1cm]  {$A$};
\node at (3, -1) {$0$};
\node at (1, -3.25) {$0$};
\end{tikzpicture}
\end{center}
Let $A_-\colon S \rightarrow \mathrm{Mat}(x_1-1, y_1)$ be the composition of $A$ with the map that removes the bottom row. Similarly, let
$B^-\colon T \rightarrow \mathrm{Mat}(x_2, y_2-1)$ be the composition of $B$ with the map that removes the top row.
Let
\begin{align*}
S_a &= \{s \in S: \rk A(s) \leq a\} & T_b &= \{t \in T: \rk B(t) \leq b\} \\ 
S_a^- &= \{s \in S : \rk A_-(s)\leq a\} & T_b^- &= \{t \in T : \rk B^-(t)\leq b\}, 
\end{align*}
defined scheme-theoretically by the vanishing of appropriately sized minors.
Then
\begin{equation} \label{schemey}
\{(s,t) \in S \times T :\rk M(s,t) \leq n\} = \bigcup_{a + b= n} (S_a \times T_b)
\cup \bigcup_{a + b=n-1} (S_a^- \times T_b^-)
 \end{equation}
as schemes.
In particular, if all rank loci of $A, A_-, B,$ and $B^-$ are reduced, then all rank loci of $M$ are reduced.
\end{lem}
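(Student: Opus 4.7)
The plan is to establish \eqref{schemey} as an equality of subschemes of $S \times T$; since the statement is local, I would reduce to affine opens, treating $A$ and $B$ as matrices of regular functions. The matrix $M$ decomposes into two blocks, one given by $A$ and the other by $B$, glued along a single shared row, so that $A_-$ and $B^-$ are $A$ and $B$ with this shared row removed. The LHS of \eqref{schemey} is cut out by the Fitting ideal $I_n(M)$ generated by the $(n + 1) \times (n + 1)$ minors of $M$, while the RHS is cut out by
\[
\mathfrak{I} \ \colonequals\  \Bigl(\,\bigcap_{a + b = n}(I_{a + 1}(A) + I_{b + 1}(B))\Bigr)\ \cap\ \Bigl(\,\bigcap_{a + b = n - 1}(I_{a + 1}(A_-) + I_{b + 1}(B^-))\Bigr),
\]
where $I_c(X)$ denotes the ideal of $c \times c$ minors of a matrix $X$.

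As a warm-up, I would establish the set-theoretic identity. Projecting the row span of $M(s, t)$ onto the columns of the first block (respectively the second) recovers the row span of $A(s)$ (respectively $B(t)$), and a short case analysis on whether the shared row of $A$ lies in $\mathrm{rowspan}(A_-)$ and whether the shared row of $B$ lies in $\mathrm{rowspan}(B^-)$ yields the rank formula: $\rk M = \rk A + \rk B$, unless simultaneously $\rk A = \rk A_- + 1$ and $\rk B = \rk B^- + 1$, in which case $\rk M = \rk A + \rk B - 1$. Consequently $\rk M(s, t) \leq n$ if and only if $\rk A + \rk B \leq n$ or $\rk A_- + \rk B^- \leq n - 1$, matching the support of the RHS.

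Next I would upgrade to scheme-theoretic equality by expanding the minors of $M$. An $(n + 1) \times (n + 1)$ minor of $M$ either (i) omits the shared row, in which case the block form expresses it as a product $(p\text{-minor of }A_-) \cdot ((n + 1 - p)\text{-minor of }B^-)$ for some $p$, or (ii) includes the shared row, in which case Laplace expansion along that row writes it as a sum of signed products, each rewritable either as $(p\text{-minor of }A) \cdot ((n + 1 - p)\text{-minor of }B^-)$ or as $(p\text{-minor of }A_-) \cdot ((n + 1 - p)\text{-minor of }B)$. Since $p + (n + 1 - p) = n + 1$, for each $(a, b)$ with $a + b = n$ we have $p \geq a + 1$ or $n + 1 - p \geq b + 1$, so every such product lies in $I_{a + 1}(A) + I_{b + 1}(B)$, and analogously in every $I_{a + 1}(A_-) + I_{b + 1}(B^-)$ with $a + b = n - 1$; this gives $I_n(M) \subseteq \mathfrak{I}$. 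For the reverse inclusion, each generator of $\mathfrak{I}$ is, up to sign, a specific $(n + 1)$-minor of $M$ obtained by choosing the corresponding rows and columns (including the shared row when needed).

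The main obstacle is the final ideal identity, since $\mathfrak{I}$ is defined as an intersection of ideal sums while Laplace expansion yields an ideal sum of products; for universal matrices with independent entries, the identity $\bigcap_{a + b = n}(I_{a + 1}(A) + I_{b + 1}(B)) = \sum_{a + b = n} I_{a + 1}(A) \cdot I_{b + 1}(B)$ is classical from the Eagon--Northcott theory of determinantal ideals, and the general case follows by base change along the classifying map $S \times T \to \Mat(x_1, y_1) \times \Mat(x_2, y_2)$. The final clause of the lemma is immediate from \eqref{schemey}: each $S_a \times T_b$ and $S_a^- \times T_b^-$ is reduced as a product of reduced schemes over the algebraically closed field $K$, and a scheme-theoretic union of reduced subschemes is reduced.
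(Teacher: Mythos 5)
Your set-theoretic rank formula and overall plan (set-theoretic identity, then two ideal inclusions) are reasonable, but the scheme-theoretic half has a genuine gap. A small repair first: an individual Laplace-expansion summand of an $(n+1)$-minor of $M$ along the shared row is an entry of that row times a complementary $n$-minor, which is generally \emph{not} a product of block minors; what is true is that a nonzero $(n+1)\times(n+1)$ submatrix of $M$ is forced to be block-triangular, so the full minor itself factors. Tracking which rows appear, one finds $I_{n+1}(M)=\sum_{a+b=n+1}\bigl(I_a(A_-)\,I_b(B)+I_a(A)\,I_b(B^-)\bigr)$; the distinction between $A$ and $A_-$, and between $B$ and $B^-$, is essential here and is exactly what produces the two families of strata $S_a\times T_b$ and $S_a^-\times T_b^-$ on the right-hand side of \eqref{schemey}. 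Your reverse inclusion must match against this ideal, not $\sum I_{a+1}(A)\,I_{b+1}(B)$.

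The serious gap is the final step. The asserted identity $\bigcap_{a+b=n}\bigl(I_{a+1}(A)+I_{b+1}(B)\bigr)=\sum_{a+b=n}I_{a+1}(A)\,I_{b+1}(B)$ is \emph{false} even for generic matrices: with $A$ and $B$ generic $1\times 1$ matrices (entries $\alpha$, $\beta$) and $n=0$, the left side is the maximal ideal $(\alpha,\beta)$ and the right side is $(\alpha\beta)$. It is not a consequence of Eagon--Northcott theory, and in any case base change along the classifying map $S\times T\to\Mat(x_1,y_1)\times\Mat(x_2,y_2)$ does not commute with intersection of ideals, since that map need not be flat. What actually closes the argument is a purely combinatorial device. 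Because $A_-$ is $A$ with one row deleted, the rank loci interleave into a single chain $\emptyset=S_{-1}^-\subseteq S_0\subseteq S_0^-\subseteq S_1\subseteq S_1^-\subseteq\cdots$, and likewise for $T$ and $T^-$; the paper then proves separately that for any nested chains $\emptyset=Y_0\subseteq\cdots\subseteq Y_m\subseteq Y$ and $\emptyset=Z_0\subseteq\cdots\subseteq Z_m\subseteq Z$ one has $\bigcap_{i+j=m}\bigl((Y_i\times Z)\cup(Y\times Z_j)\bigr)=\bigcup_{i+j=m+1}Y_i\times Z_j$ as \emph{schemes}, by induction from the elementary distributivity $A_2\cap(A_1\cup B)=A_1\cup(A_2\cap B)$ for $A_1\subseteq A_2$. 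Applying this lemma to the interleaved chains converts the sum-of-products ideal into the desired intersection-of-sums; no theorem about generic determinantal ideals enters. You should replace the Eagon--Northcott appeal with this nested-chain argument.
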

\begin{proof}
The scheme structure on the left hand side of \eqref{schemey} is defined by the $(n + 1) \times (n+1)$ minors of~$M$. 
Any such minor contains $a$ columns meeting $A$ and $b$ columns meeting $B$ for some $a + b = n+1$. 

First consider a square submatrix of $M$ that uses the common row (colored violet in the diagram).
If its determinant is nonzero, then the submatrix contains at least $a-1$ rows meeting $A_-$ and at least $b-1$ rows meeting $B^-$. 
Either there are $a$ rows meeting $A_-$, and its determinant is an $a \times a$ minor of $A_-$ times a $b \times b$ minor of $B$; or there are $b$ rows meeting $B^-$, and its determinant is an $a \times a$ minor of $A$ times a $b \times b$ minor of $B^-$. 
Now consider a square submatrix of $M$ that does not use the common row. If its determinant is nonzero, then it is a product of an $a \times a$ minor of $A_-$ and a $b \times b$ minor of $B^-$.
Ranging over all minors with this distribution of columns we obtain elements that generate
\[ I(S_{a-1}^-) \cdot I(T_{b-1}) + I(S_{a-1}) \cdot I(T_{b-1}^-) = I((S_{a-1}^- \times T \cup S \times T_{b-1}) \cap (S_{a-1} \times T \cup S \times T_{b-1}^-)),\]
where $S_{-1} = S_{-1}^- = T_{-1} = T_{-1}^- = \varnothing$ by convention.

We therefore find that
\[\{(s,t) \in S \times T :\rk M(s,t) \leq n\} =  \bigcap_{a + b = n+1} (S_{a-1}^- \times T \cup S \times T_{b-1}) \cap (S_{a-1} \times T \cup S \times T_{b-1}^-).\]
Because $A_-$ is obtained from $A$ by removing a single row, the rank loci of $A$ and $A_-$ are nested
\[\varnothing = S_{-1}^- \subseteq S_0 \subseteq S_0^- \subseteq S_1 \subseteq S_1^- \subseteq S_2 \subseteq S_2^- \subseteq \cdots \subseteq S_{n-1} \subseteq S_{n-1}^- \subseteq S_{n} \subseteq S,\]
and similarly for $B$ and $B^-$
\[\varnothing = T_{-1}^- \subseteq T_0 \subseteq T_0^- \subseteq T_1 \subseteq T_1^- \subseteq T_2 \subseteq T_2^- \subseteq \cdots \subseteq T_{n-1} \subseteq T_{n-1}^- \subseteq T_{n} \subseteq T. \]
The claim now follows from the following general fact regarding such intersections.
\end{proof}
\begin{lem}
Given nested sequences of schemes
\[\emptyset = Y_0 \subseteq Y_1 \subseteq \cdots \subseteq Y_m \subseteq Y \qquad \text{and} \qquad \emptyset = Z_0 \subseteq Z_1 \subseteq \cdots \subseteq Z_m \subseteq Z,\]
we have
\[\bigcap_{i+j = m} (Y_i \times Z) \cup (Y \times Z_j) = \bigcup_{i+j=m + 1} Y_i \times Z_j \]
as schemes.
\end{lem}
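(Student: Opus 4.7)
The plan is to translate the scheme-theoretic identity into an identity of ideals on $Y \times Z$, and then prove it by element-chasing. Write $I_i$ for the ideal sheaf of $Y_i \times Z$ (i.e.\ the pullback of the ideal of $Y_i$) and $J_j$ for the ideal sheaf of $Y \times Z_j$, adopting the convention $I_0 = J_0 = \O_{Y \times Z}$ coming from $Y_0 = Z_0 = \emptyset$. Two observations reduce the lemma to a purely algebraic statement: scheme-theoretic unions and intersections of closed subschemes are defined by intersections and sums of ideals, respectively; and, crucially, for ideals pulled back from distinct factors of a product we have the ``transversality'' identity $I_i \cap J_j = I_i J_j$ (which follows from the external tensor product structure on $\O_{Y \times Z}$). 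Therefore $I(Y_i \times Z \cup Y \times Z_j) = I_i \cap J_j = I_i J_j$ while $I(Y_i \times Z_j) = I_i + J_j$, so the lemma reduces to the ideal identity
\[\sum_{i+j=m} I_i J_j \;=\; \bigcap_{i+j=m+1} (I_i + J_j).\]

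The inclusion $\subseteq$ is elementary: for any $(i,j)$ with $i+j=m$ and $(i',j')$ with $i'+j'=m+1$, either $i' \leq i$ (giving $I_i J_j \subseteq I_i \subseteq I_{i'} \subseteq I_{i'}+J_{j'}$) or $i' > i$, equivalently $j' \leq j$ (giving $I_i J_j \subseteq J_j \subseteq J_{j'} \subseteq I_{i'}+J_{j'}$). For the reverse inclusion I would use a telescoping argument. Given $x$ in the intersection, pick decompositions $x = a_i + b_{m+1-i}$ with $a_i \in I_i$ and $b_{m+1-i} \in J_{m+1-i}$ for each $i = 1, \ldots, m$. Comparing the decompositions for consecutive indices yields
\[a_i - a_{i+1} \;=\; b_{m-i} - b_{m+1-i} \;\in\; I_i \cap J_{m-i} \;=\; I_i J_{m-i},\]
since $a_{i+1} \in I_{i+1} \subseteq I_i$ and $b_{m+1-i} \in J_{m+1-i} \subseteq J_{m-i}$, followed by transversality. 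Telescoping then gives $a_1 = a_m + \sum_{i=1}^{m-1}(a_i - a_{i+1}) \in I_m + \sum_{i=1}^{m-1} I_i J_{m-i}$, so $x = a_1 + b_m$ lies in $I_m + \sum_{i=1}^{m-1} I_i J_{m-i} + J_m$, which is precisely $\sum_{i=0}^{m} I_i J_{m-i}$ (using $I_0 = J_0 = \O_{Y \times Z}$).

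The only nontrivial ingredient is the transversality identity $I_i \cap J_j = I_i J_j$; everything else is bookkeeping, and the telescoping trick effectively stitches together the two extremal decompositions $x = a_1 + b_m$ and $x = a_m + b_1$ through the intermediate ones. I expect the only subtlety to be in correctly handling the indexing conventions (in particular the role of the boundary cases $I_0 = J_0 = \O_{Y \times Z}$) rather than any genuine obstacle.
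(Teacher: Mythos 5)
Your proof is correct, and it takes a genuinely different route from the paper's. You reduce the statement to the ideal identity
\[\sum_{i+j=m} I_i J_j \;=\; \bigcap_{i+j=m+1} (I_i + J_j)\]
using the ``transversality'' identity $I_i \cap J_j = I_i J_j$ for ideals pulled back from the two factors, and then establish this by a direct telescoping argument. The paper instead proves a one-step distributivity statement for nested subschemes --- namely $A_2 \cap (A_1 \cup B) = A_1 \cup (A_2 \cap B)$ whenever $A_1 \subseteq A_2$ --- and then inducts on $m$ by peeling off the $(Y_1, Z_m)$ term. The two approaches trade off differently: the paper's argument never invokes the product structure at all, so it actually proves the more general statement that for \emph{any} nested chains of closed subschemes $A_0 \subseteq \cdots \subseteq A_m$ and $B_0 \subseteq \cdots \subseteq B_m$ of an ambient scheme one has $\bigcap_{i+j=m}(A_i \cup B_j) = \bigcup_{i+j=m+1}(A_i \cap B_j)$. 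Your argument is tied to the product case, because the transversality identity $I_i \cap J_j = I_i J_j$ is precisely what can fail for an arbitrary pair of nested chains; it does hold here because one factor's ideal is pulled back from $Y$ and the other from $Z$, and the product is over the ground field $K$ (so the relevant flatness holds). Since the lemma is applied in exactly this product-over-a-field setting, your proof suffices for the paper's purposes, and the telescoping mechanism is a clean alternative to the induction. One small point worth making explicit if you wrote this up: the transversality identity does require a flatness hypothesis (automatic over a field), so the lemma as you've proved it is a statement about external products over $K$ rather than a statement about abstract nested chains.
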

\begin{proof}
In general, intersection does not distribute across unions scheme-theoretically. However, we first show that
if $A_1 \subset A_2$ and $B$ are subschemes of any scheme, then 
\[A_2 \cap (A_1 \cup B) = A_1 \cup (A_2 \cap B).\]
as schemes. That is, intersection distributes across union if appropriate containments are satisfied. It suffices to prove the statement in the affine case, where this becomes a statement about ideals. Suppose $I_1 \colonequals I(A_1) \supset I_2 \colonequals I(A_2)$ and $J \colonequals I(B)$. Then we must show
\[ I_2 + (I_1 \cap J) = I_1 \cap  (I_2 +  J). \]
If $a \in I_2$ and $b \in I_1 \cap J$, then it's clear that $a + b \in I_1$ and $a + b \in I_2 + J$.
Now suppose we have $a \in I_2$ and $b \in J$ so that $a+b \in I_1$. Since $I_2 \subset I_1$, it follows that $b \in I_1$. Hence $a + b \in I_2 + I_1 \cap J$.

To prove the lemma, we induct on $m$. The case $m = 0$ is immediate ($\emptyset = \emptyset$). Suppose we know the result for chains of length one less.
We want to study
\begin{align*}
\bigcap_{i+j = m} (Y_i \times Z) \cup (Y \times Z_j) &= (Y \times Z_m) \cap [(Y_1 \times Z) \cup (Y \times Z_{m-1})] \cap \bigcap_{\substack{i+j = m \\ i \geq 2}} (Y_i \times Z) \cup (Y \times Z_j) \\
&= [(Y \times Z_{m-1}) \cup (Y_1 \times Z_m)] \cap \bigcap_{\substack{i+j = m \\ i \geq 2}} (Y_i \times Z) \cup (Y \times Z_j) \\
&= (Y_1 \times Z_m) \cup \left[(Y \times Z_{m-1}) \cap \bigcap_{\substack{i+j = m \\ i \geq 2}} (Y_i \times Z) \cup (Y \times Z_j) \right].
\end{align*}
Now the term in large square brackets is the intersection of complementary unions of the chains
\[\emptyset = Y_0 \subseteq Y_2 \subseteq Y_3 \subseteq \cdots \subseteq Y_m \subseteq Y \qquad \text{and} \qquad \emptyset = Z_0 \subseteq Z_1 \subseteq \cdots \subseteq Z_{m-1} \subseteq Z,\]
which have length one less. The result now follows by induction.
\end{proof}

\noindent
We now study the rank loci of evaluation maps on elliptic curves.

\begin{lem} \label{one}
Let $E$ be an elliptic curve and $\mathcal{L}$ a universal line bundle on $\pi\colon E \times \Pic^d (E) \rightarrow \Pic^d(E)$
with $d \geq 1$. Suppose $D$ is an effective divisor and 
\[\ev\colon \pi_*\mathcal{L} \rightarrow \pi_*(\mathcal{L}|_{D \times \Pic^d(E)}) \]
 is the evaluation map. For any $n$, the scheme
 \[S_n = \{L \in \Pic^d(E) : \rk \ev|_L \leq n\}\]
 is either empty, all of $\Pic^d(E)$, or a single reduced point. In particular, it is reduced.
\end{lem}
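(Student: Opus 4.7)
The plan is to reduce to a single interesting subcase by Riemann--Roch and then finish that subcase with a tangent-space computation. Since $d \geq 1$ and $E$ is elliptic, every $L \in \Pic^d(E)$ satisfies $h^1(L) = h^0(L^{-1}) = 0$, so $h^0(L) = d$ by Riemann--Roch. By cohomology and base change, $\pi_*\L$ is locally free of rank $d$ and $\pi_*(\L|_{D \times \Pic^d(E)})$ is locally free of rank $\deg D$; moreover, the rank of $\ev|_L$ equals $d - h^0(L(-D))$, since the kernel is $H^0(L(-D))$.

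Next I would split into cases based on $\deg D$. If $\deg D \neq d$, then $L(-D)$ has nonzero degree for every $L$, so $h^0(L(-D)) = \max(0, d - \deg D)$ is a constant function of $L$; hence $\rk \ev|_L$ is constant on $\Pic^d(E)$ and each $S_n$ is either all of $\Pic^d(E)$ or empty. If $\deg D = d$, then $L(-D)$ has degree $0$, so $h^0(L(-D))$ is $1$ when $L \simeq \O_E(D)$ and $0$ otherwise; consequently $\rk \ev|_L = d-1$ at the single point $\O_E(D)$ and $\rk \ev|_L = d$ elsewhere. Thus $S_n = \Pic^d(E)$ for $n \geq d$, $S_n = \emptyset$ for $n \leq d-2$, and $S_{d-1}$ is set-theoretically the single point $\O_E(D)$, leaving only the scheme-structure of $S_{d-1}$ in this case to analyze.

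The crux is to show that in this last subcase $S_{d-1}$ is reduced. Since both $\pi_*\L$ and $\pi_*(\L|_D)$ are locally free of the same rank $d$, $S_{d-1}$ is cut out by $\det(\ev)$, a Cartier divisor on the smooth one-dimensional scheme $\Pic^d(E)$; being supported at a single point it is $0$-dimensional, so reducedness is equivalent to having vanishing Zariski tangent space at $\O_E(D)$. I would compute this tangent space via the standard identification for degeneracy loci: it is the kernel of the connecting map
\[T_{\O_E(D)} \Pic^d(E) \longrightarrow \Hom\bigl(\ker \ev|_{\O_E(D)},\ \coker \ev|_{\O_E(D)}\bigr).\]
The long exact sequence of $0 \to \O_E \to \O_E(D) \to \O_E(D)|_D \to 0$ identifies $\ker \ev|_{\O_E(D)} \simeq H^0(\O_E)$ and $\coker \ev|_{\O_E(D)} \simeq H^1(\O_E)$, and under the usual identification $T_{\O_E(D)} \Pic^d(E) = H^1(\O_E)$ this connecting map becomes cup product with the constant section $1 \in H^0(\O_E)$, i.e., the identity on $H^1(\O_E)$. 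Its kernel is zero, so $S_{d-1}$ is a reduced point. The main step requiring care is identifying the derivative of the universal evaluation map at $\O_E(D)$ with this cup-product map; once that is in place, the rest is routine Riemann--Roch.
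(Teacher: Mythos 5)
Your reduction to the single interesting case $\deg D = d$, $n = d-1$ matches the paper, and both arguments agree that $S_{d-1}$ is the vanishing of $\det(\ev)$, set-theoretically supported at $\O_E(D)$. Where you diverge is in proving reducedness. The paper computes the \emph{degree} of the line bundle $\det(\pi_*\L)^\vee \otimes \det(\pi_*(\L|_{D\times\Pic^d(E)}))$ on $\Pic^d(E) \cong E$ via Grothendieck--Riemann--Roch: the relative Todd class is trivial since $E$ is elliptic, the computation collapses to $-\deg \ch_2(\L(-D\times\Pic^d(E)))$, and identifying $E \times \Pic^d(E) \cong E \times E$ writes this class as $\Delta - f$ with $(\Delta - f)^2 = -2$, yielding degree $1$. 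A degree-$1$ effective Cartier divisor supported at one point on a smooth curve is a reduced point.

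You instead compute the Zariski tangent space of $S_{d-1}$ at $\O_E(D)$. The identification of $T_{[L]}S_{d-1}$ with the kernel of the map $T_{[L]}\Pic^d(E) \to \Hom(\ker\ev|_L, \coker\ev|_L)$ is the standard local description of a degeneracy locus at a point where the rank drops by exactly one, and your identifications $\ker \ev|_{\O_E(D)} \simeq H^0(\O_E)$, $\coker \ev|_{\O_E(D)} \simeq H^1(\O_E)$ from the twisting sequence are correct (using $H^1(\O_E(D)) = 0$ since $d\geq 1$). The key claim, that under $T_{\O_E(D)}\Pic^d(E) \cong H^1(\O_E)$ this map is cup product by $1 \in H^0(\O_E)$, is the Kodaira--Spencer/Petri-map computation: a first-order deformation class $\xi \in H^1(\O_E)$ of $L(-D)=\O_E$ obstructs the extension of the constant section $1$ precisely by $\xi \cup 1 = \xi$. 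So the map is the identity, its kernel is zero, and $S_{d-1}$ is a reduced point.

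Both arguments are correct. The paper's is more computational and global — it records the concrete fact that the determinant bundle has degree $1$ — while yours is local and deformation-theoretic, sidestepping the explicit $E\times E$ and Chern-class manipulation in favor of the standard tangent-space description of degeneracy loci. Your argument arguably generalizes more readily (it does not lean on the elliptic-curve-specific identity $\Delta^2 = 0$), but the step you flag — identifying the derivative of $\ev$ with the cup product — is precisely where the care has to go, and you are right to single it out; once that is granted, the proof closes cleanly.
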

\begin{proof}
We have that $S_n$ is empty or all of $\Pic^d(E)$ unless $\deg D = d$ and $n = d - 1$. When $\deg D = d$, the evaluation map is between vector bundles that both have rank $d$. The locus where the map drops rank is cut out by the determinant, which is a section of $\det (\pi_*\mathcal{L})^\vee \otimes \det (\pi_*(\mathcal{L}|_{D\times \Pic^d(E)}))$. Set theoretically, the vanishing of this section is supported on $L = \O(D) \in \Pic^d(E)$.
To see it is reduced, we compute its degree:
\begin{align*}
\deg \left(\det (\pi_*\mathcal{L})^\vee \otimes \det (\pi_*(\mathcal{L}|_{D\times \Pic^d(E)}))\right) &= - \deg c_1(\pi_*\mathcal{L}) + \deg c_1(\pi_*(\mathcal{L}|_{D\times \Pic^d(E)})) \\
\intertext{Using Grothendieck--Riemann--Roch (noting that the relative Todd class is trivial since $E$ is an elliptic curve):}
&=-\deg \mathrm{ch}_2(\mathcal{L}) + \deg \ch_2(\mathcal{L}|_{D\times \Pic^d(E)}) \\
\intertext{Using the additivity of Chern characters in exact sequences:}
&= - \deg \ch_2(\mathcal{L}(-D \times \Pic^d(E))) \\  
&= - \frac{1}{2} \deg c_1(\mathcal{L}(-D \times \Pic^d(E)))^2 \\
\intertext{Given an identification $E \times \Pic^d(E) \cong E \times E$, the line bundle $\mathcal{L}(-D \times \Pic^d(E))$ can be represented by the diagonal  $\Delta$ minus a fiber $f$. Since $\Delta^2 = 0$ (by adjunction) and $f^2 = 0$, we obtain $(\Delta - f)^2 = -2$. Thus:}
&= 1. \qedhere
\end{align*}
\end{proof}

\begin{lem} \label{uis}
Let $\mathcal{S}$ be the collection of subvarieties of $\Pic^{\vec{d}}(X)$ that are unions of reduced preimages of points via projections $(\prod_{i \in S} \pr^i) \colon \Pic^{\vec{d}}(X) \rightarrow \prod_{i \in S} \Pic^{d^i}(E^i)$ for some $S \subseteq \{1, \ldots, g\}$. Then $\mathcal{S}$ is closed under union and intersection.
\end{lem}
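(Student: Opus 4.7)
Closure under union is immediate from the definition.  For closure under intersection, writing
\[
Y = \bigcup_\alpha Y_{S_\alpha, p_\alpha} \qquad \text{and} \qquad Z = \bigcup_\beta Y_{T_\beta, q_\beta}
\]
for two elements of $\mathcal{S}$, where $Y_{S,p} \colonequals (\prod_{i \in S} \pr^i)^{-1}(p)_{\mathrm{red}}$, my plan is to show the scheme-theoretic identity
\begin{equation}
Y \cap Z \;=\; \bigcup_{\alpha, \beta}\, \bigl(Y_{S_\alpha, p_\alpha} \cap Y_{T_\beta, q_\beta}\bigr), \label{eqn:distrib}
\end{equation}
and verify that each term on the right again lies in $\mathcal{S}$.

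The second assertion is short.  Using $\Pic^{\vec{d}}(X) \cong \prod_i \Pic^{d^i}(E^i)$, the ideal of $Y_{S_\alpha, p_\alpha}$ is $\sum_{i \in S_\alpha} (\pr^i)^* \mathfrak{m}_{p_{\alpha, i}}$, and similarly for $Y_{T_\beta, q_\beta}$.  Their sum either contains a unit (when $p_{\alpha, i} \neq q_{\beta, i}$ for some $i \in S_\alpha \cap T_\beta$, in which case the intersection is empty) or equals $\sum_{i \in S_\alpha \cup T_\beta} (\pr^i)^* \mathfrak{m}_{r_i}$ for a well-defined combined point $r$, defining the reduced preimage of that point under $\prod_{i \in S_\alpha \cup T_\beta} \pr^i$.

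The identity \eqref{eqn:distrib} is local, so I work at a point $L \in \Pic^{\vec{d}}(X)$ and choose a regular system of parameters $(t_1, \ldots, t_g)$ of $\O_L$ by pulling back uniformizers from each factor $\Pic^{d^i}(E^i)$ at $L^i$.  A component $Y_{S_\alpha, p_\alpha}$ passes through $L$ iff $p_{\alpha, i} = L^i$ for all $i \in S_\alpha$; when it does, its local ideal at $L$ is $\mathfrak{p}_{S_\alpha} \colonequals (t_i : i \in S_\alpha)$, and when it does not, it contributes the unit ideal and can be discarded.  After discarding trivial contributions, $I(Y)_L = \bigcap_\alpha \mathfrak{p}_{S_\alpha}$ and $I(Z)_L = \bigcap_\beta \mathfrak{p}_{T_\beta}$, so what remains is the combinatorial identity
\[
\Bigl(\bigcap_\alpha \mathfrak{p}_{S_\alpha}\Bigr) + \Bigl(\bigcap_\beta \mathfrak{p}_{T_\beta}\Bigr) \;=\; \bigcap_{\alpha, \beta} \bigl(\mathfrak{p}_{S_\alpha} + \mathfrak{p}_{T_\beta}\bigr).
\]
I will verify this on monomials in $(t_1, \ldots, t_g)$: since $m \in \mathfrak{p}_S$ iff $\operatorname{supp}(m) \cap S \ne \emptyset$, a monomial $m$ lies in the left hand side iff $\operatorname{supp}(m)$ meets every $S_\alpha$ or meets every $T_\beta$, while $m$ lies in the right hand side iff for every pair $(\alpha, \beta)$, $\operatorname{supp}(m)$ meets $S_\alpha \cup T_\beta$; these two conditions are evidently equivalent.

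The main obstacle --- more bookkeeping than conceptual --- is verifying that scheme-theoretic intersection distributes over scheme-theoretic union in this setting, a fact which fails for general reduced subschemes but holds here because the local ideals in question are each generated by parts of a regular system of parameters, reducing the distributivity to a statement about monomial ideals in a regular local ring.
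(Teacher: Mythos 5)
Your proposal is correct and takes essentially the same route as the paper. Both arguments reduce, after choosing local (or \'etale) coordinates pulled back from the factors $\Pic^{d^i}(E^i)$, to the observation that the ideals involved are squarefree monomial ideals, and that this class is well-behaved under sums and intersections. The paper gets away with less bookkeeping by noting that the intersection identity is clear set-theoretically, so only reducedness must be verified, and reducedness follows immediately from ``squarefree monomial ideals are closed under addition.'' You instead prove the full scheme-theoretic distributivity $Y \cap Z = \bigcup_{\alpha,\beta}(Y_{S_\alpha,p_\alpha} \cap Y_{T_\beta,q_\beta})$ directly via the monomial-support calculation, which simultaneously establishes reducedness and the shape of the result; that is a fine and slightly more self-contained packaging of the same idea. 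One small point of rigor that both you and the paper gloss over: to justify testing the ideal identity on monomials in the $t_i$, one should either work in \'etale coordinates so that the relevant ideals are honestly extended from a polynomial ring, or pass to the completion $K[[t_1,\ldots,t_g]]$ and use faithful flatness; worth a phrase to nail this down, but it is not a conceptual gap.
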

\begin{proof}
It is clear that $\mathcal{S}$ is closed under union. The intersection statement is clear set-theoretically, so it suffices to show that the intersection of two elements of $\mathcal{S}$ is reduced.

Suppose $A, B \in \mathcal{S}$ and $p \in A \cap B$. 
Choose \'etale coordinates $x_i$ on $\Pic^{d^i}(E^i)$ near $\pr^i(p)$.
Then \'etale-locally, $A$ and $B$ are reduced unions of coordinate linear spaces.
Equivalently, $I(A)$ and $I(B)$ are reduced monomial ideals
(i.e.\ generated by monomials in the $x_i$ whose exponents are $0$ or $1$).
The class of such ideals is closed under addition.
\end{proof}

\noindent
Putting together Lemmas \ref{AB}, \ref{one}, and \ref{uis}, we deduce the desired reducedness property.
\begin{thm} \label{thm:reduced}
For any $r \geq 0$ and degree distribution $\vec{d}$, the scheme $W^r_{\vec{d}}(X)$
is a union of preimages of reduced points via projections $\Pic^{\vec{d}}(X) \rightarrow \prod_{i \in S} \Pic^{d_i}(E_i)$ for subsets $S \subseteq \{1, \ldots, g\}$. It follows that $W^{\vec{e}}(X)$ is reduced.
\end{thm}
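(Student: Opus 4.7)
The plan is to establish the first statement by iteratively applying Lemma~\ref{AB} to the almost-block-diagonal structure of $\psi \oplus \eta$, and then to deduce the reducedness of $W^{\vec{e}}(X)$ from the intersection-closure of the class $\mathcal{S}$ in Lemma~\ref{uis}. By the equivalence of rank loci in \eqref{psi-eta}, it suffices to analyze the rank loci of $\psi \oplus \eta$. The matrix of $\psi \oplus \eta$ consists of $g$ blocks $\ev^i$ stacked along the diagonal, with consecutive blocks $\ev^i$ and $\ev^{i+1}$ sharing exactly the row corresponding to evaluation at the node $p^i$; this is precisely the setup of Lemma~\ref{AB}. Taking $A = \ev^1$ and $B$ the join of $\ev^2,\ldots,\ev^g$, and iterating, I would peel off one block at a time.

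At each stage, the row-removed variants $A_-$ and $B^-$ from Lemma~\ref{AB} are themselves evaluation maps on the relevant elliptic curve $E^i$, with evaluation at one of its nodes deleted. By Lemma~\ref{one}, every rank locus of such an evaluation map on $\Pic^{d^i}(E^i)$ is either empty, all of $\Pic^{d^i}(E^i)$, or a single reduced point. Consequently the iterated application of Lemma~\ref{AB} expresses each rank locus of $\psi \oplus \eta$ scheme-theoretically as a finite union of products $\prod_{i=1}^g R^i$, where each $R^i \subseteq \Pic^{d^i}(E^i)$ is either everything or a reduced point. Such a product pulls back along $\prod \pr^i$ to a reduced preimage of a point via the projection to $\prod_{i \in S} \Pic^{d^i}(E^i)$, where $S = \{i : R^i \text{ is a point}\}$. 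Thus $W^r_{\vec{d}}(X) \in \mathcal{S}$.

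For the concluding reducedness statement, recall that $W^{\vec{e}}(X) = \bigcap_{m,\vec{d}} W^{r_m}_{\vec{d}(m)}(X)$, taken in a single fixed $\Pic^{\vec{d}_0}(X)$ after using the twisting relations \eqref{twisting-game} to identify the various $\Pic^{\vec{d}}(X)$. These identifications are translations in the $\Pic^\circ(X)$-torsor $\Pic^{\vec{d}_0}(X)$, and because the projections $\pr^i$ are translation-equivariant they carry $\mathcal{S}$ to itself. Since Lemma~\ref{uis} shows $\mathcal{S}$ is closed under intersection and each member of $\mathcal{S}$ is reduced by construction, we conclude $W^{\vec{e}}(X) \in \mathcal{S}$ and is in particular reduced. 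The only real task, rather than obstacle, is to carefully track at each step of the recursion of Lemma~\ref{AB} which evaluation maps appear as $A_-$ and $B^-$, so that Lemma~\ref{one} genuinely applies to all the pieces produced.
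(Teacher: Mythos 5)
Your proof follows essentially the same route as the paper's: rewrite $W^r_{\vec{d}}(X)$ as a rank locus of the almost-block-diagonal $\psi \oplus \eta$ via \eqref{psi-eta}, peel off blocks via iterated use of Lemma~\ref{AB}, identify the row-deleted variants as evaluation maps on $E^i$ with one or both nodes dropped (covered by Lemma~\ref{one}), and conclude that each rank locus lies in $\mathcal{S}$, which by Lemma~\ref{uis} is closed under the intersection defining $W^{\vec{e}}(X)$. The only place you are slightly more detailed than the paper is the final step, where you spell out that the twisting identifications between the various $\Pic^{\vec{d}}(X)$ are translations in the $\Pic^\circ(X)$-torsor commuting with the projections $\pr^i$, so that they preserve $\mathcal{S}$; the paper states this consequence without the torsor discussion. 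Conversely, the paper is a touch more explicit than you in naming exactly which evaluation maps arise in the recursion (namely $\ev^i$ with $D = D^i \cup p^i \cup p^{i-1}$, $D^i \cup p^i$, $D^i \cup p^{i-1}$, or $D^i$), but you signal awareness of this when you say the $A_-$, $B^-$ pieces are evaluation maps with node evaluations deleted.
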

\begin{proof}
By \eqref{psi-eta},
\[W^r_{\vec{d}}(X) = \{L \in \Pic^{\vec{d}}(X) : \rk (\eta \oplus \psi)|_L \leq d + \deg(D) - (r+1)\}.\]
Applying Lemma \ref{one} with $D = D^i \cup p^i \cup p^{i-1}$ (respectively $D = D^i \cup p^i$ or $D^i \cup p^{i-1}$ or $D^i$), we see that the rank loci of the maps $\ev^i$ (respectively $\ev^i$ with top row or bottom row or top and bottom row removed) are all in $\mathcal{S}$.
Repeated application of Lemma \ref{AB} then shows that the rank loci of $\eta \oplus \psi$ are all in $\mathcal{S}$. Thus, $W^{\vec{e}}(X)$ is an intersection of subschemes in $\mathcal{S}$, so $W^{\vec{e}}(X)$ is in $\mathcal{S}$, and in particular is reduced.
\end{proof}

\begin{cor} \label{reg-scheme} We have $W^{\vec{e}}(X) = W^{\Gamma(\vec{e})}(X)$ is reduced, and equal scheme-theoretically
to the closure of $W^{\vec{e}}(\mathcal{X}^*/B^*)$
in the central fiber.
\end{cor}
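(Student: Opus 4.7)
The strategy is to assemble the three set-theoretic inclusions already in hand into a chain, then upgrade the chain to a scheme-theoretic equality using Theorem~\ref{thm:reduced}.

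First, I would write down the set-theoretic inclusions
\[
W^{\vec{e}}(X)_{\mathrm{red}} \;\subseteq\; W^{\Gamma(\vec{e})}(X) \;\subseteq\; \overline{W^{\vec{e}}(\mathcal{X}^*/B^*)}\bigr|_{X} \;\subseteq\; W^{\vec{e}}(X).
\]
The leftmost inclusion is exactly Proposition~\ref{li-value}. For the middle inclusion, Proposition~\ref{efficient-only} lets us write $W^{\Gamma(\vec{e})}(X) = \bigcup_{T} W^{T}(X)$ over efficient tableaux $T$ of shape $\Gamma(\vec{e})$, and Corollary~\ref{reg-st} places each $W^{T}(X)$ inside the closure of $W^{\vec{e}}(\mathcal{X}^*/B^*)$. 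The rightmost inclusion is the a priori containment noted at the start of Section~\ref{regeneration-st}: $W^{\vec{e}}(\mathcal{C})$ is, by construction, a proper closed subscheme of $\Pic^{d}(\mathcal{C}/B)$ that restricts over $B^{*}$ to $W^{\vec{e}}(\mathcal{X}^{*}/B^{*})$, so the scheme-theoretic closure of the generic fiber is contained in $W^{\vec{e}}(\mathcal{X}/B)$ and therefore restricts on the central fiber to a closed subscheme of $W^{\vec{e}}(X)$.

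Next, I would observe that the composite inclusion $W^{\vec{e}}(X)_{\mathrm{red}} \subseteq W^{\vec{e}}(X)$ is an equality by Theorem~\ref{thm:reduced}, which forces every inclusion in the displayed chain to be an equality as underlying sets. Since $W^{\Gamma(\vec{e})}(X)$ was defined in Definition~\ref{WT} as a reduced subscheme, and $W^{\vec{e}}(X)$ is reduced, the set-theoretic equality $W^{\vec{e}}(X) = W^{\Gamma(\vec{e})}(X)$ promotes to an equality of schemes. The closure $\overline{W^{\vec{e}}(\mathcal{X}^*/B^*)}|_{X}$ is a closed subscheme of the reduced scheme $W^{\vec{e}}(X)$ whose support is all of $W^{\vec{e}}(X)$, hence equals $W^{\vec{e}}(X)$ scheme-theoretically as well. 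This yields the three-fold scheme-theoretic equality claimed in the corollary.

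The plan has no real obstacle remaining: all substantive work has been done in Sections~\ref{sec:epos}--\ref{sec:red_CM}. The only subtle point is bookkeeping the scheme structure on the closure $\overline{W^{\vec{e}}(\mathcal{X}^*/B^*)}$, which I would handle by taking the scheme-theoretic image of the generic stratum of $W^{\vec{e}}(\mathcal{X}/B)$ and noting that this image is a closed subscheme of the reduced scheme $W^{\vec{e}}(X)$ with the full underlying set, hence coincides with it.
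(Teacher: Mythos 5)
Your proof is correct and matches the paper's argument essentially verbatim: the paper closes the same cycle of inclusions
\[
\overline{W^{\vec{e}}(\mathcal{X}^*/B^*)}|_0 \subseteq W^{\vec{e}}(X) = W^{\vec{e}}(X)_{\mathrm{red}} \subseteq W^{\Gamma(\vec{e})}(X) \subseteq \overline{W^{\vec{e}}(\mathcal{X}^*/B^*)}|_0
\]
citing the construction in Section~\ref{sec:lim-line}, Theorem~\ref{thm:reduced}, Proposition~\ref{li-value}, and Corollary~\ref{reg-st} respectively. Your only addition is to make explicit the intermediate role of Proposition~\ref{efficient-only} in passing from ``all fillings'' to ``efficient fillings,'' which the paper leaves implicit; otherwise the starting point of the cycle is the only cosmetic difference.
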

\begin{proof}
We have shown the following containments. (In order: by construction c.f.\ Definition~\ref{def-epos} and following discussion, by Theorem~\ref{thm:reduced}, by Proposition~\ref{li-value}, and by Corollary~\ref{reg-st} respectively.)
\[\overline{W^{\vec{e}}(\mathcal{X}^*/B^*)}|_0 \subseteq W^{\vec{e}}(X) = W^{\vec{e}}(X)_{\text{red}} \subseteq W^{\Gamma(\vec{e})}(X) \subseteq \overline{W^{\vec{e}}(\mathcal{X}^*/B^*)}|_0.\]
Therefore all containments are equalities.
\end{proof}

\subsection{Cohen--Macaulayness}

For any $k$-convex diagram $\Gamma$, we will prove that $W^\Gamma(X)$ is Cohen--Macaulay by inducting on $g$.
 The key to running our induction is the following standard fact:
 \begin{lem}[See, for example, Proposition~4.1 of \cite{cmv}] \label{std}
Suppose $A$ and $B$ are Cohen--Macaulay and $A \cap B $ is codimension $1$ in both $A$ and $B$. Then $A \cup B$ is Cohen--Macaulay if and only if $A \cap B$ is Cohen--Macaulay.
 \end{lem}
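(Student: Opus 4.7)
The plan is to deduce this from the Mayer--Vietoris short exact sequence of structure sheaves
\[
0 \to \mathcal{O}_{A \cup B} \to \mathcal{O}_A \oplus \mathcal{O}_B \to \mathcal{O}_{A \cap B} \to 0,
\]
where the first map is $(\text{restriction}, -\text{restriction})$ and the second is the sum of the restrictions. This sequence is exact because the defining sheaf of ideals of $A \cup B$ is the intersection $\mathcal{I}_A \cap \mathcal{I}_B$, while the defining sheaf of ideals of $A \cap B$ is the sum $\mathcal{I}_A + \mathcal{I}_B$. The statement is local, so I would work in the stalk at an arbitrary closed point $p \in A \cap B$ and translate the Cohen--Macaulay condition into a depth condition: a finitely generated module $M$ over a Noetherian local ring with maximal ideal $\mathfrak{m}$ is Cohen--Macaulay iff $\mathrm{depth}\, M = \dim M$. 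Write $d = \dim A = \dim B$, so that $\dim (A \cap B) = d - 1$ and $\dim(A \cup B) = d$ by the codimension hypothesis.

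The main tool is the standard depth lemma for short exact sequences $0 \to M' \to M \to M'' \to 0$ of finitely generated modules, which gives
\[
\mathrm{depth}\, M' \;\geq\; \min(\mathrm{depth}\, M,\, \mathrm{depth}\, M'' + 1) \qquad \text{and} \qquad \mathrm{depth}\, M'' \;\geq\; \min(\mathrm{depth}\, M - 1,\, \mathrm{depth}\, M').
\]
Applied to our sequence with $M = \mathcal{O}_{A,p} \oplus \mathcal{O}_{B,p}$ (of depth $d$ by the Cohen--Macaulay assumption on $A$ and $B$), the first inequality shows that if $A \cap B$ is Cohen--Macaulay, i.e.\ $\mathrm{depth}\, \mathcal{O}_{A\cap B, p} = d-1$, then $\mathrm{depth}\, \mathcal{O}_{A \cup B, p} \geq \min(d, d) = d$, so $A \cup B$ is Cohen--Macaulay. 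Conversely, the second inequality shows that if $A \cup B$ is Cohen--Macaulay (so $\mathrm{depth}\, \mathcal{O}_{A\cup B, p} = d$), then $\mathrm{depth}\, \mathcal{O}_{A\cap B, p} \geq \min(d-1, d) = d-1$, so $A \cap B$ is Cohen--Macaulay.

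There is essentially no hard step here: once one writes down the Mayer--Vietoris sequence and invokes the depth lemma, both directions fall out. The one point requiring a little care is verifying that the short exact sequence is genuinely exact at the level of structure sheaves --- i.e.\ that $\mathcal{I}_{A \cup B} = \mathcal{I}_A \cap \mathcal{I}_B$ (which holds by definition of the scheme-theoretic union) and that $\mathcal{O}_A \oplus \mathcal{O}_B \to \mathcal{O}_{A \cap B}$ is surjective (which is clear locally). Since the lemma is well-known in commutative algebra, I would simply cite \cite{cmv} rather than reproduce the argument in full.
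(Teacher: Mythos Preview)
The paper gives no proof of this lemma; it simply records the statement with a citation to Proposition~4.1 of \cite{cmv}. Your Mayer--Vietoris plus depth-lemma argument is exactly the standard proof one finds in such references, and the overall strategy is correct.

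One small slip: your second depth inequality is misstated. The correct form is
\[
\mathrm{depth}\, M'' \;\geq\; \min(\mathrm{depth}\, M,\, \mathrm{depth}\, M' - 1),
\]
with the $-1$ on the $M'$ term, not on $M$. (A counterexample to your version: take $0 \to \mathfrak{m} \to R \to k \to 0$ for $R$ regular local of dimension $\geq 2$.) In your application this makes no difference, since both $\mathrm{depth}\, M$ and $\mathrm{depth}\, M'$ equal $d$ and the minimum comes out to $d-1$ either way --- but you should fix the statement.
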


\begin{thm} \label{CM}
Given any $k$-convex shape $\Gamma$, the scheme $W^\Gamma(X)$ is Cohen--Macaulay.
In particular, $W^{\vec{e}}(X)$ is Cohen--Macaulay, and therefore $W^{\vec{e}}(C)$ is Cohen--Macaulay for a general degree~$k$
genus~$g$ cover
$f \colon C \to \pp^1$.
\end{thm}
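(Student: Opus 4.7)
The plan is to induct on $g$, combining Proposition~\ref{efficient-only} (to reduce to efficient fillings) with Lemma~\ref{std} (to glue the pieces $W^T(X)$). The base case $g \leq u(\Gamma)$ is immediate: $W^\Gamma(X)$ is either empty or a zero-dimensional reduced scheme.

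For the inductive step, fix a degree distribution $\vec{d}$ and work inside $\Pic^{\vec{d}}(X) \simeq \prod_i \Pic^{d^i}(E^i)$. We partition the efficient fillings of $\Gamma$ by their value at $g$ and decompose
\[W^\Gamma(X) = A \cup \bigcup_j B_j,\]
where $A = \bigcup_{T : T[g] = *} W^T(X)$ and, for each residue $j$ of a box in a removable corner of $\Gamma$, $B_j = \bigcup_{T : T[g] \equiv j} W^T(X)$. Let $\rho\colon \Pic^{\vec{d}}(X) \to \Pic^{\vec{d}'}(X^{\leq g-1})$ be the (smooth) forgetful projection. Since fillings of $\Gamma$ with $T[g] = *$ are precisely efficient fillings of $\Gamma$ on the shorter chain, $A = \rho^{-1}(W^\Gamma(X^{\leq g-1}))$. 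Meanwhile, Section~\ref{tab-word}(\ref{eff}) gives a bijection between efficient fillings of $\Gamma$ with $T[g] \equiv j$ and efficient fillings of $\Gamma_j \colonequals s_j \cdot \Gamma$ using symbols in $\{1, \ldots, g-1\}$, while Proposition~\ref{li-value} pins $L^g$ to a specific line bundle $\mathcal{L}^g_j$ on $E^g$; hence $B_j$ is the graph of $\mathcal{L}^g_j$ over $W^{\Gamma_j}(X^{\leq g-1})$. Both $W^\Gamma(X^{\leq g-1})$ and $W^{\Gamma_j}(X^{\leq g-1})$ are Cohen--Macaulay by the inductive hypothesis, so $A$ and each $B_j$ are as well.

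For the pairwise intersections: $B_j \cap B_{j'} = \emptyset$ for distinct $j, j'$, since $p^{g-1} - p^g$ being exactly $k$-torsion in $\Pic^0 E^g$ forces $\mathcal{L}^g_j \neq \mathcal{L}^g_{j'}$; and $A \cap B_j$ is the graph of $\mathcal{L}^g_j$ over $W^\Gamma(X^{\leq g-1})$ (using the scheme-theoretic inclusion $W^\Gamma(X^{\leq g-1}) \subseteq W^{\Gamma_j}(X^{\leq g-1})$, which follows from Proposition~\ref{efficient-only} together with the reducedness of Theorem~\ref{thm:reduced}), and hence is Cohen--Macaulay by induction. Each piece has pure dimension $g - u(\Gamma)$ and each nonempty intersection has codimension $1$, so iterated application of Lemma~\ref{std}---adding the $B_j$'s one at a time to $A$---yields that $W^\Gamma(X)$ is Cohen--Macaulay. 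The statement for $W^{\vec{e}}(C)$ on a general $k$-gonal curve $C$ then follows: with the central fiber attaining the expected codimension, $W^{\vec{e}}(\mathcal{X}/B)$ is a determinantal locus of expected codimension in a smooth scheme---hence Cohen--Macaulay and flat over $B$---and Cohen--Macaulayness descends to the general fiber.

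The hard part will be checking the \emph{scheme-theoretic} (rather than merely set-theoretic) identifications of $B_j$ and $A \cap B_j$ as graphs over the respective Brill--Noether loci on $X^{\leq g-1}$, and in particular confirming the pure-dimensionality needed to apply Lemma~\ref{std}; both of these depend crucially on the reducedness result of Theorem~\ref{thm:reduced}.
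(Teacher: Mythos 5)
Your inductive argument tracks the paper's proof closely: induct on $g$ via Lemma~\ref{std}, decomposing $W^\Gamma(X)$ according to whether and with what residue the symbol $g$ appears. The paper works locally near a point $L$ (where only one residue $j$ can occur, so its $A$ is a single graph $\iota_L(W^{s_j\cdot\Gamma}(X^{\leq g-1}))$ and its $B$ is the preimage $\pi^{-1}(W^\Gamma(X^{\leq g-1}))$), while you make the decomposition global by observing the graphs $B_j$ for distinct residues are pairwise disjoint and can therefore be glued in one at a time. These are the same argument; your global phrasing is perhaps slightly cleaner since it avoids the implicit case $L \notin A$. The identification of $A \cap B_j$ as a graph over $W^\Gamma(X^{\leq g-1})$ does use the set-theoretic inclusion $W^\Gamma(X^{\leq g-1}) \subseteq W^{s_j\cdot\Gamma}(X^{\leq g-1})$, which follows from the subword property for Coxeter groups rather than directly from Proposition~\ref{efficient-only}, but the paper states the same thing without proof, so this is not a gap relative to the paper.

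Your final step for deducing the statement on a general smooth cover $C$ contains a genuine error. You assert that ``$W^{\vec{e}}(\mathcal{X}/B)$ is a determinantal locus of expected codimension in a smooth scheme --- hence Cohen--Macaulay and flat.'' But $W^{\vec{e}}(\mathcal{X}/B)$ is defined (see Section~\ref{sec:lim-line}) as an \emph{intersection} of Fitting support loci over many twists $m$ and degree distributions $\vec{d}$; it is not a single determinantal locus, so the classic fact that a determinantal locus of expected codimension is Cohen--Macaulay does not apply, and you have no a priori control on the total space $W^{\vec{e}}(\mathcal{X}/B)$. The argument the paper is compressing into ``Cohen--Macaulayness is an open condition'' runs instead through Corollary~\ref{reg-scheme}: the scheme-theoretic closure $\overline{W^{\vec{e}}(\mathcal{X}^*/B^*)}$ inside $\Pic^d(\mathcal{X}/B)$ is automatically flat over the DVR $B$ (it is the closure of the generic fiber, hence has no associated points over the closed point), its central fiber is identified with $W^{\vec{e}}(X)$ by Corollary~\ref{reg-scheme}, and that fiber is Cohen--Macaulay by what you just proved; since having Cohen--Macaulay fibers is an open condition for flat proper morphisms of finite type, the conclusion propagates to nearby smooth covers.
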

\begin{proof}
We will induct on $g$. For the base case $g = 1$, we know $W^\Gamma(X)$ is either all of $\Pic^d(E^1)$, or a single reduced point. 
For the inductive step, suppose we are given $L \in W^\Gamma(X)$.
Let
\[\pi\colon \Pic^d(X) \cong \prod_{i=1}^g \Pic^d(E^i) \rightarrow \Pic^d(X^{\leq g - 1}) \cong \prod_{i < g} \Pic^d(E^i)\]
be the projection map, i.e.\ $(M^1, \ldots, M^g) \mapsto (M^1, \ldots, M^{g-1})$. Let $\iota_L\colon \Pic^d(X') \rightarrow \Pic^d(X)$ be the
section which sends $(M^1, \ldots, M^{g-1}) \mapsto (M^1, \ldots, M^{g-1}, L^g)$.
Define
\[A \colonequals \bigcup_{T : g \in T} W^T(X) \qquad \text{and} \qquad B \colonequals \bigcup_{T : g \notin T} W^T(X). \]

If $L \in A$, then $L^g$ is a linear combination of $p^{g-1}$ and $p^g$, and
$\Gamma$ has a removable corner in the corresponding residue $j$.
Recall that $u(s_j \cdot \Gamma) = u(\Gamma) - 1$
(c.f.\ Section~\ref{tab-word} item~\eqref{eff}).

Then $A = \iota_L(W^{s_j \cdot \Gamma}(X^{\leq g - 1}))$ in a neighborhood of $L$, so $A$ is Cohen--Macaulay by induction.
Meanwhile, we have $B = \pi^{-1}(W^{\Gamma}(X^{\leq g - 1}))$, so $B$ is Cohen--Macaulay by induction.
Moreover, $A \cap B = \iota_L(W^{\Gamma}(X^{\leq g - 1}))$ in a neighborhood of $L$,
so is Cohen--Macaulay by induction.
Since $A \cap B$ is codimension $1$ in both $A$ and $B$,
Lemma~\ref{std} implies that $W^\Gamma(X) = A \cup B$ is Cohen--Macaulay.

Since Cohen--Macaulayness is an open condition, $W^{\vec{e}}(C)$ is Cohen--Macaulay for
a general degree~$k$ cover $f \colon C \to \pp^1$.
\end{proof}

\begin{rem}
As explained in the introduction, this also establishes Cohen-Macaulayness, and hence reducedness of universal splitting loci (Corollary \ref{cor:ESconj}). The authors suspect that a more direct argument may be given by just degenerating the $\pp^1$ (without considering another curve). Reducedness confirms that the scheme structure on splitting loci obtained from Fitting supports --- i.e. non-transverse intersections of determinantal loci --- is the ``correct" scheme structure.
Cohen-Macaulyness supports the perspective that, despite this failure of transversality, splitting loci ought to behave like determinantal loci (see e.g.\ \cite{P1} where analogues of the Porteous formula were given).
\end{rem}

\section{Connectedness}\label{sec:conn}

In this section, we show $W^{\vec{e}}(C)$ is connected when $g > u(\vec{e})$, where $f \colon C \to \pp^1$
is a general degree $k$ cover.
Since ``geometrically connected and geometrically reduced'' is an open condition
in flat proper families \cite[Theorem 12.2.4(vi)]{ega4},
and we have already shown that $W^{\vec{e}}(X)$ is reduced (c.f.\ Theorem~\ref{thm:reduced}),
it suffices to see that $W^{\vec{e}}(X) = W^{\Gamma(\vec{e})}(X)$ is
connected. In other words, we want to show the transitivity of the following equivalence relation:

\begin{defin}
We say two $k$-core tableau $T$ and $T'$ with the same shape $\Gamma$
\defi{meet} if $W^T(X)$ and $W^{T'}(X)$ intersect.
We say that $T$ and $T'$ are \defi{connected} if $W^T(X)$ and $W^{T'}(X)$ are in the same connected
component of $W^\Gamma(X)$.
\end{defin}

By definition, ``connected'' is the equivalence relation generated by ``meet.''
Unwinding Definition~\ref{WT}, $T$ and $T'$ meet if and only if for every $t$,
either:
\[T[t] = *, \quad T'[t] = *, \quad \text{or} \quad T[t] \equiv T'[t] \pmod{k}.\]
The simplest example of two tableaux that are connected is the following.

\begin{example}\label{swap_unused}
Suppose that $T$ is a tableau filled with a subset of the symbols $\{1, \dots, g\}$ that does not include the symbol $N$.  Let $N'$ be the smallest symbol greater than (respectively largest symbol less than) $N$ appearing in $T$.  Then $T$ meets the tableau obtained from $T$ by replacing the symbol $N'$ with $N$. 
Applying this repeatedly, 
every tabuleau is connected to the tableau obtained by relabeling symbols via an order preserving map from the subset of symbols used to any other subset of $u$ symbols. 
\end{example}

By Example \ref{swap_unused}, it suffices to show that all efficiently-filled tableau filled with symbols $\{1, \ldots, u\}$ are connected when $g > u$.
To do this, we use the relations in the affine symmetric group
\[s_j s_{j'} = s_{j'} s_j \ \text{(for $j - j'\neq \pm 1$)} \quad \text{and} \quad s_j s_{j + 1} s_j = s_{j + 1} s_j s_{j + 1}.\]
These relations give rise to two basic moves, known as the \defi{braid moves} for the affine symmetric group,
between reduced words:
\begin{align*}
s_{j_u} \cdots s_{j_1} &\leftrightarrow s_{j_u} \cdots s_{j_{i + 2}} s_{j_i} s_{j_{i + 1}} s_{j_{i - 1}} \cdots s_{j_1} \quad \text{(for $j_i - j_{i+1} \neq \pm 1$)} && F^i \\
s_{j_u} \cdots s_{j_1} &\leftrightarrow s_{j_u} \cdots s_{j_{i + 3}} s_{j_{i + 1}} s_{j_i} s_{j_{i + 1}} s_{j_{i - 1}} \cdots s_{j_1} \quad \text{(for $j_i = j_{i+1} \pm 1 = j_{i + 2}$)} && S^i \\
\end{align*}

Given our identification of reduced words with efficient $k$-core tableaux,
this is equivalent to the following moves on tableaux:

\begin{defin} Let $T$ be an efficiently filled $k$-core tableau. We define the two \defi{braid moves}
as follows:
\begin{description}
\item[F]
If $T[i] - T[i + 1] \not\equiv \pm 1$ mod $k$, define the \defi{flip} $F^i T$ by
\[(F^i T)[t] = \begin{cases}
T[i + 1] & \text{if $t = i$;} \\
T[i] & \text{if $t = i + 1$;} \\
T[t] & \text{otherwise.}
\end{cases}
\]
\item[S]
Similarly, if $T[i] \equiv T[i + 2]$ and $T[i + 1] \equiv T[i] \pm 1$ mod $k$,
define the \defi{shuffle} $S^i T$ by
\[(S^i T)[t] = \begin{cases}
T[i] & \text{if $t = i + 1$;} \\
T[i + 1] & \text{if $t \in \{i, i + 2\}$;} \\
T[t] & \text{otherwise.}
\end{cases}
\]
\end{description}
\end{defin}

Note that both braid moves are involutions, i.e.\ $F^i F^i T = T$ and $S^i S^i T = T$.
It is known that we can get from any reduced word to any other reduced word --- or equivalently from any efficient
tableau to any other efficient tableau --- by applying a sequence of braid moves
(c.f.\ Theorem~3.3.1(ii) of \cite{ccg}).
Therefore, it suffices to prove that $T$ and $F^i T$ (when defined),
respectively $T$ and $S^i T$ (when defined), are connected.

\begin{lem}
Suppose that $T$ is an efficient filling of a $k$-core $\Gamma$
with symbols $\{1, \ldots, u\}$, and that $F^i T$ is defined.
If $g > u(\Gamma)$, then $T$ and $F^i T$ are connected.
\end{lem}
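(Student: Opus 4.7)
The plan is to exhibit two auxiliary efficient fillings $T_\ast$ and $T_{\ast\ast}$ that meet one another directly, together with chains of meetings $T \sim T_\ast$ and $F^iT \sim T_{\ast\ast}$ obtained from iterated applications of Example~\ref{swap_unused}. The hypothesis $g > u(\Gamma) = u$ will be used precisely to guarantee the existence of an unused symbol (say $u+1$) which plays the role of a ``spacer'' in both slidings; since the flip $F^i$ is defined we have $i+1 \le u < u+1$, and $T[u+1] = F^iT[u+1] = *$.

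First, I will take $T_\ast$ to be the tableau obtained from $T$ by the order-preserving relabeling of used symbols $\{1,\dots,u\} \to \{1,\dots,i-1,i+1,\dots,u+1\}$ --- informally, this slides the $*$ at position $u+1$ down into position $i$, so
\[
T_\ast[i] = *, \qquad T_\ast[t] = T[t-1] \ \text{for}\ i < t \le u+1, \qquad T_\ast[t] = T[t] \ \text{otherwise}.
\]
Iterating Example~\ref{swap_unused} --- at each step swapping the free $*$ with its nearest currently-used neighbor --- produces a chain of meetings from $T$ to $T_\ast$. Analogously, I will define $T_{\ast\ast}$ from $F^iT$ by sliding its $*$ from $u+1$ down into position $i+2$, so that
\[
T_{\ast\ast}[i] = T[i+1], \quad T_{\ast\ast}[i+1] = T[i], \quad T_{\ast\ast}[i+2] = *, \quad T_{\ast\ast}[t] = T[t-1] \ \text{for}\ i+2 < t \le u+1,
\]
and $T_{\ast\ast}[t] = T[t]$ for $t < i$ or $t > u+1$; this yields a chain $F^iT \sim T_{\ast\ast}$ by the same reasoning.

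The main step will be the position-by-position verification that $T_\ast$ and $T_{\ast\ast}$ meet. For $t < i$ and $t > u+1$, both equal $T[t]$ (or both $*$); at $t = i$, $T_\ast$ has $*$; at $t = i+1$, both equal $T[i]$; at $t = i+2$, $T_{\ast\ast}$ has $*$; and for $i+2 < t \le u+1$, both equal $T[t-1]$ (using that $F^iT$ agrees with $T$ at positions $> i+1$). Concatenating gives $T \sim T_\ast \sim T_{\ast\ast} \sim F^iT$, which establishes the desired connection.

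The main thing to check carefully is that the delicate match at position $i+1$ holds: the slide in $T$ brings $T[i]$ to position $i+1$ of $T_\ast$, while the flip $F^i$ already places $T[i]$ at position $i+1$ of $F^iT$, and the subsequent slide into position $i+2$ does not disturb that entry; the clash between $T[i]$ and $T[i+1]$ at positions $i$ and $i+2$ is then absorbed by the $*$'s we have placed there. Checking that each intermediate slide is a legal application of Example~\ref{swap_unused} is straightforward, since at each stage we move the free $*$ by one position through its currently-used neighbor.
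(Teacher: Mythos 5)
Your proof is correct and follows essentially the same route as the paper. The paper also builds the chain $T \sim T_\ast \sim T_{\ast\ast} \sim F^iT$ with the $*$ slid from position $u+1$ into position $i$ (giving $T_\ast$) and into position $i+2$ (giving $T_{\ast\ast}$), the middle meeting being verified by noting the two tableaux differ only at positions $i$ and $i+2$, each of which carries a $*$ in one of them. The one small stylistic difference is worth noting: the paper constructs $T_{\ast\ast}$ from $T_\ast$ by replacing all symbols $i+2$ with $i$, and must then argue this yields a legitimate filling (``no $i+1$ appears to the upper-left of an $i+2$,'' which is where $T[i]-T[i+1]\not\equiv\pm1\pmod k$ enters). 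You instead define $T_{\ast\ast}$ by relabeling $F^iT$, which makes legitimacy automatic --- the flip hypothesis enters only in guaranteeing $F^iT$ is a valid efficient filling --- and then check the meeting directly from the $T[\cdot]$ functions. This is the same argument reorganized so as to avoid the explicit tableau-validity check; either presentation is fine.
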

\begin{proof}
Start with $T$ and relabel symbols according to $\{1, \ldots, u\} \mapsto \{1, \ldots, i-1, i+1, \ldots, u+1\}$ (see Example \ref{swap_unused}). 
In this filling, all $(i+2)$'s may be replaced with $i$'s because 
no $i+1$ appears to the upper-left of an $i+2$.  After this replacement, we further
relabel symbols according to $\{1, \ldots, i+1, i+3, \ldots, u+1\} \mapsto \{1, \ldots, u\}$.
\begin{center}
\begin{tikzpicture}[scale = .4]
\node at (4, 1) {Start with $T$ \ldots};
\draw (0, 0) -- (0, -8);
\draw (1, 0) -- (1, -4);
\draw (2, 0) -- (2, -2);
\draw (3, 0) -- (3, -1);
\draw (4, 0) -- (4, -1); 
\draw (0, 0) -- (8, 0);
\draw (0, -4) -- (1, -4);
\draw (0, -3) -- (2, -3);
\draw (0, -2) -- (2, -2);
\draw (0, -1) -- (3, -1);
\draw [color = blue, fill = blue!20] (0, -6) rectangle (1, -5);
\node [color = blue, scale = 0.7] at (0.5, -5.5) {\tiny $i+1$};
\draw [color = blue, fill = blue!20] (0+2, -6+3) rectangle (1+2, -5+3);
\node [color = blue, scale = 0.7] at (0.5+2, -5.5+3) {\tiny $i+1$};
\draw [color = blue, fill = blue!20] (0+2+3, -6+3+2) rectangle (1+2+3, -5+3+2);
\node [color = blue, scale = 0.5] at (0.5+2+3, -5.5+3+2) {$i+1$};

\draw [color = red, fill = red!20] (0+1, -6+1) rectangle (1+1, -5+1);
\node [color = red] at (0.5+1, -5.5+1) {\tiny $i$};
\draw [color = red, fill = red!20] (0+1+2, -6+1+3) rectangle (1+1+2, -5+1+3);
\node [color = red] at (0.5+1+2, -5.5+1+3) {\tiny $i$};
\draw[thick] (0, -5) -- (1,-5) -- (1, -4) -- (2, -4) -- (2, -2) -- (3, -2) -- (3, -1) -- (5, -1) -- (5, 0);
\node[rotate = 45, scale = 0.7] at (5,-4) {symbols $> i+1$};
\end{tikzpicture}
\hspace{.2in}
\begin{tikzpicture}[scale = .4]
\node at (4.4, 1) {relabel symbols \ldots};
\draw (0, 0) -- (0, -8);
\draw (1, 0) -- (1, -4);
\draw (2, 0) -- (2, -2);
\draw (3, 0) -- (3, -1);
\draw (4, 0) -- (4, -1); 
\draw (0, 0) -- (8, 0);
\draw (0, -4) -- (1, -4);
\draw (0, -3) -- (2, -3);
\draw (0, -2) -- (2, -2);
\draw (0, -1) -- (3, -1);
\draw [color = violet, fill = violet!20] (0, -6) rectangle (1, -5);
\node [color = violet, scale = 0.7] at (0.5, -5.5) {\tiny $i+2$};
\draw [color = violet, fill = violet!20] (0+2, -6+3) rectangle (1+2, -5+3);
\node [color = violet, scale = 0.7] at (0.5+2, -5.5+3) {\tiny $i+2$};
\draw [color = violet, fill = violet!20] (0+2+3, -6+3+2) rectangle (1+2+3, -5+3+2);
\node [color = violet, scale = 0.5] at (0.5+2+3, -5.5+3+2) {$i+2$};

\draw [color = blue, fill = blue!20] (0+1, -6+1) rectangle (1+1, -5+1);
\node [scale = .7, color = blue] at (0.5+1, -5.5+1) {\tiny $i+1$};
\draw [color = blue, fill = blue!20] (0+1+2, -6+1+3) rectangle (1+1+2, -5+1+3);
\node [scale = .7, color = blue] at (0.5+1+2, -5.5+1+3) {\tiny $i+1$};
\draw[thick] (0, -5) -- (1,-5) -- (1, -4) -- (2, -4) -- (2, -2) -- (3, -2) -- (3, -1) -- (5, -1) -- (5, 0);
\node[rotate = 45, scale = 0.7] at (5,-4) {symbols $> i+2$};
\end{tikzpicture}
\hspace{.2in}
\begin{tikzpicture}[scale = .4]
\node at (4, 1) {replace $i+2$ with $i$};
\draw (0, 0) -- (0, -8);
\draw (1, 0) -- (1, -4);
\draw (2, 0) -- (2, -2);
\draw (3, 0) -- (3, -1);
\draw (4, 0) -- (4, -1); 
\draw (0, 0) -- (8, 0);
\draw (0, -4) -- (1, -4);
\draw (0, -3) -- (2, -3);
\draw (0, -2) -- (2, -2);
\draw (0, -1) -- (3, -1);
\draw [color = red, fill = red!20] (0, -6) rectangle (1, -5);
\node [color = red] at (0.5, -5.5) {\tiny $i$};
\draw [color = red, fill = red!20] (0+2, -6+3) rectangle (1+2, -5+3);
\node [color = red] at (0.5+2, -5.5+3) {\tiny $i$};
\draw [color = red, fill = red!20] (0+2+3, -6+3+2) rectangle (1+2+3, -5+3+2);
\node [color = red] at (0.5+2+3, -5.5+3+2) {\tiny $i$};

\draw [color = blue, fill = blue!20] (0+1, -6+1) rectangle (1+1, -5+1);
\node [scale = .7, color = blue] at (0.5+1, -5.5+1) {\tiny $i+1$};
\draw [color = blue, fill = blue!20] (0+1+2, -6+1+3) rectangle (1+1+2, -5+1+3);
\node [scale = .7, color = blue] at (0.5+1+2, -5.5+1+3) {\tiny $i+1$};
\node[rotate = 45, scale = 0.7] at (5,-4) {symbols $> i+2$};
\draw[thick] (0, -5) -- (1,-5) -- (1, -4) -- (2, -4) -- (2, -2) -- (3, -2) -- (3, -1) -- (5, -1) -- (5, 0);
\end{tikzpicture}
\hspace{.2in}
\begin{tikzpicture}[scale = .4]
\node at (4, 1) {\ldots relabel symbols};
\draw (0, 0) -- (0, -8);
\draw (1, 0) -- (1, -4);
\draw (2, 0) -- (2, -2);
\draw (3, 0) -- (3, -1);
\draw (4, 0) -- (4, -1); 
\draw (0, 0) -- (8, 0);
\draw (0, -4) -- (1, -4);
\draw (0, -3) -- (2, -3);
\draw (0, -2) -- (2, -2);
\draw (0, -1) -- (3, -1);
\draw [color = red, fill = red!20] (0, -6) rectangle (1, -5);
\node [color = red] at (0.5, -5.5) {\tiny $i$};
\draw [color = red, fill = red!20] (0+2, -6+3) rectangle (1+2, -5+3);
\node [color = red] at (0.5+2, -5.5+3) {\tiny $i$};
\draw [color = red, fill = red!20] (0+2+3, -6+3+2) rectangle (1+2+3, -5+3+2);
\node [color = red] at (0.5+2+3, -5.5+3+2) {\tiny $i$};

\draw [color = blue, fill = blue!20] (0+1, -6+1) rectangle (1+1, -5+1);
\node [scale = .7, color = blue] at (0.5+1, -5.5+1) {\tiny $i+1$};
\draw [color = blue, fill = blue!20] (0+1+2, -6+1+3) rectangle (1+1+2, -5+1+3);
\node [scale = .7, color = blue] at (0.5+1+2, -5.5+1+3) {\tiny $i+1$};
\node[rotate = 45, scale = 0.7] at (5,-4) {symbols $> i+1$};
\draw[thick] (0, -5) -- (1,-5) -- (1, -4) -- (2, -4) -- (2, -2) -- (3, -2) -- (3, -1) -- (5, -1) -- (5, 0);
\end{tikzpicture}
\end{center}
The resulting tableau is $F^i T$. 
Each tableau in this sequence is connected to the previous, so $T$ and $F^i T$ are connected.
\end{proof}

\begin{lem}
Suppose that $T$ is an efficient filling of a $k$-core $\Gamma$ with symbols $\{1, \ldots, u\}$, and that $S^i T$ is defined.
If $g > u(\Gamma)$, then $T$ and $S^i T$ are connected.
\end{lem}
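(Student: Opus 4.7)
The plan is to mimic the proof of the flip lemma, using a single unused symbol (available because $g > u(\Gamma) = u$) to build two intermediate tableaux---one coming from a relabeling of $T$, the other from a relabeling of $S^i T$---which meet directly. Write $j \equiv T[i] \equiv T[i+2]$ and $j' \equiv T[i+1] \equiv j \pm 1 \pmod{k}$, so that the residues of $S^i T$ at positions $i, i+1, i+2$ are $j', j, j'$, respectively.

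I would define $T^{(a)}$ to be the relabeling of $T$ via the order-preserving bijection $\{1, \ldots, u\} \to \{1, \ldots, i-1, i+1, \ldots, u+1\}$ (skipping the symbol $i$), so that $T^{(a)}[i] = *$ while $T^{(a)}[i+1] = j$, $T^{(a)}[i+2] = j'$, and $T^{(a)}[i+3] = j$. Symmetrically, define $T^{(b)}$ to be the relabeling of $S^i T$ via the order-preserving bijection $\{1, \ldots, u\} \to \{1, \ldots, i+2, i+4, \ldots, u+1\}$ (skipping the symbol $i+3$), so that $T^{(b)}[i+3] = *$ while $T^{(b)}[i] = j'$, $T^{(b)}[i+1] = j$, and $T^{(b)}[i+2] = j'$. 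Both relabelings use at most $u+1 \leq g$ symbols, so Example~\ref{swap_unused} applies and connects $T$ to $T^{(a)}$, as well as $S^i T$ to $T^{(b)}$.

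The key step is to verify that $T^{(a)}$ and $T^{(b)}$ meet. Since $S^i$ changes $T$ only at positions $i, i+1, i+2$, and since the two relabelings act trivially on positions $< i$ and shift positions $\geq i+4$ by the same amount, we have $T^{(a)}[t] = T^{(b)}[t] = T[t]$ for $t < i$ and for $t > i+3$. Inside the range $[i, i+3]$: position $i$ has $T^{(a)}[i] = *$, position $i+3$ has $T^{(b)}[i+3] = *$, and at positions $i+1$ and $i+2$ the two tableaux carry the matching residues $j$ and $j'$. Hence $W^{T^{(a)}}(X)$ and $W^{T^{(b)}}(X)$ intersect. Chaining $T \rightsquigarrow T^{(a)} \rightsquigarrow T^{(b)} \rightsquigarrow S^i T$ then yields that $T$ and $S^i T$ lie in the same connected component of $W^{\Gamma}(X)$.

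The main point---and the reason one extra symbol suffices even though the braid relation $s_j s_{j\pm 1} s_j = s_{j\pm 1} s_j s_{j\pm 1}$ affects three consecutive positions---is that the relabeling ``budget'' can be split asymmetrically between $T$ and $S^i T$, placing the two gaps at opposite ends of the affected range $[i, i+3]$; the middle residues $j$ and $j'$ at positions $i+1$ and $i+2$ already agree between $T$ and $S^i T$, which makes the meeting condition there automatic. The only thing to check carefully is that the two specific relabelings produce the stated residues at the ``shifted'' positions $i+1, i+2, i+3$ versus $i, i+1, i+2$, which is a direct application of the definition of the bijections.
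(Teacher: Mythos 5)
Your proof is correct, and it is in fact a slight streamlining of the paper's argument. Both proofs pass through the same two intermediate tableaux: $T^{(a)}$ is the paper's filling (b), and $T^{(b)}$ is the paper's filling (d). The paper, however, inserts an extra intermediate filling (c) between (b) and (d) — obtained by placing the symbol $i$ at an addable corner of $T^{\leq i-1}$ with residue $j+1$ — which requires the geometric observation that such an addable corner exists (because $S^i T$ is defined). You observe that this extra step is unnecessary: $T^{(a)}$ and $T^{(b)}$ already meet directly, since the residues at positions $i+1$ and $i+2$ coincide after the two asymmetric relabelings (both read $j$ and $j'$), while the gaps $*$ sit at the opposite ends $t=i$ and $t=i+3$. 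Your verification of this is correct at every position, and it also removes the need to analyze addable corners. The remaining steps — using Example~\ref{swap_unused} for $T \rightsquigarrow T^{(a)}$ and $S^i T \rightsquigarrow T^{(b)}$, which requires exactly $u + 1 \leq g$ symbols available — match the paper.
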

\begin{proof}
Because $S^i$ is an involution, after possibly replacing $T$ with $S^i T$,
it suffices to treat the case $T[i + 1] - T[i] \equiv 1 \pmod k$.
Let $j = T[i] = T[i + 2]$.
Note that $T^{\leq i - 1}$ has addable corners with diagonal indices of residues $j$ and $j + 1$
(because $S^i T$ is defined).
In other words, the boundary segments of $T^{\leq i}$ neighboring boxes with diagonal index of residue class $j$ or $j+1$ must
proceed through:
\begin{center}
\begin{tikzpicture}[scale=.8]
\draw (0, 0)node{$\bullet$} -- (0, 1)node{$\bullet$} -- (0,2)node{$\bullet$} -- (0,3)node{$\bullet$} ;
\draw (2, 1)node{$\bullet$} -- (2, 2)node{$\bullet$} -- (2,3)node{$\bullet$} -- (3,3)node{$\bullet$} ;
\draw (5, 2)node{$\bullet$} -- (5, 3)node{$\bullet$} -- (6,3)node{$\bullet$} -- (7,3)node{$\bullet$} ;
\draw (9, 3)node{$\bullet$} -- (10, 3)node{$\bullet$} -- (11,3)node{$\bullet$} -- (12,3)node{$\bullet$} ;
\end{tikzpicture}
\end{center}

To see that $T$ and $S^i T$ are connected when $g > u$,
consider the following sequence of tableaux. First relabel $T$ according to $\{1, \ldots, u\} \mapsto \{1, \ldots, i-1, i+1, \ldots, u+1\}$ to get filling (b). Then place $i$ in the positions of $T^{\leq i - 1}$'s addable corner with diagonal index of
residue $j+1$ to get filling (c). Now all instances of $i+3$ may be replaced with $i+1$ to give filling (d).
Tableau (d) is a relabeling of $S^i T$ by $\{1, \ldots, u\} \mapsto \{1, \ldots, i+2, i+4, \ldots, u+1\}$.
\begin{center}
\begin{tikzpicture}[scale = 0.8]
\node at (4.5, 4.7) {(a) Start with $T$ \ldots};
\draw[violet, fill = violet!20] (2, 1) rectangle (3, 2);
\node[color = violet] at (2.5, 1.5) {$i+2$};
\draw[blue, fill=blue!20] (2, 2) rectangle (3, 3);
\node[color = blue] at (2.5, 2.5) {$i+1$};
\draw[red, fill=red!20] (2+3, 3) rectangle (3+3, 4);
\node[color = red] at (2.5+3, 3.5) {$i$};
\draw[blue, fill=blue!20] (2+3+1, 3) rectangle (3+3+1, 4);
\node[color = blue] at (2.5+3+1, 3.5) {$i+1$};
\draw (2, 1)node{$\bullet$} -- (2, 2)node{$\bullet$} -- (2,3)node{$\bullet$} -- (3,3)node{$\bullet$} ;
\draw (5, 3)node{$\bullet$} -- (5, 4)node{$\bullet$} -- (6,4)node{$\bullet$} -- (7,4)node{$\bullet$} ;
\node at (4, 3.5) {$\iddots$};
\draw[->] (8, 2) -- (9, 2);
\end{tikzpicture}
\hspace{.1in}
\begin{tikzpicture}[scale = 0.8]
\node at (4.5, 4.7) {(b) relabel \ldots};
\draw[orange!50!black, fill = orange!20] (2, 1) rectangle (3, 2);
\node[color = orange!50!black] at (2.5, 1.5) {$i+3$};
\draw[violet, fill=violet!20] (2, 2) rectangle (3, 3);
\node[color = violet] at (2.5, 2.5) {$i+2$};
\draw[blue, fill=blue!20] (2+3, 3) rectangle (3+3, 4);
\node[color = blue] at (2.5+3, 3.5) {$i+1$};
\draw[violet, fill=violet!20] (2+3+1, 3) rectangle (3+3+1, 4);
\node[color = violet] at (2.5+3+1, 3.5) {$i+2$};
\draw (2, 1)node{$\bullet$} -- (2, 2)node{$\bullet$} -- (2,3)node{$\bullet$} -- (3,3)node{$\bullet$} ;
\draw (5, 3)node{$\bullet$} -- (5, 4)node{$\bullet$} -- (6,4)node{$\bullet$} -- (7,4)node{$\bullet$} ;
\node at (4, 3.5) {$\iddots$};
\draw[->] (8, 2) -- (9, 2);
\end{tikzpicture}
\end{center}
\vspace{.1in}
\begin{center}
\begin{tikzpicture}[scale = 0.8]
\node at (4.5, 4.7) {(c) place $i$ \ldots};
\draw[orange!50!black, fill = orange!20] (2, 1) rectangle (3, 2);
\node[color = orange!50!black] at (2.5, 1.5) {$i+3$};
\draw[red, fill=red!20] (2, 2) rectangle (3, 3);
\node[color = red] at (2.5, 2.5) {$i$};
\draw[blue, fill=blue!20] (2+3, 3) rectangle (3+3, 4);
\node[color = blue] at (2.5+3, 3.5) {$i+1$};
\draw[violet, fill=violet!20] (2+3+1, 3) rectangle (3+3+1, 4);
\node[color = violet] at (2.5+3+1, 3.5) {$i+2$};
\draw (2, 1)node{$\bullet$} -- (2, 2)node{$\bullet$} -- (2,3)node{$\bullet$} -- (3,3)node{$\bullet$} ;
\draw (5, 3)node{$\bullet$} -- (5, 4)node{$\bullet$} -- (6,4)node{$\bullet$} -- (7,4)node{$\bullet$} ;
\node at (4, 3.5) {$\iddots$};
\draw[->] (8, 2) -- (9, 2);
\end{tikzpicture}
\hspace{.1in}
\begin{tikzpicture}[scale = 0.8]
\node at (4.5, 4.7) {(d) remove $i+3$};
\draw[blue, fill = blue!20] (2, 1) rectangle (3, 2);
\node[color = blue] at (2.5, 1.5) {$i+2$};
\draw[red, fill=red!20] (2, 2) rectangle (3, 3);
\node[color = red] at (2.5, 2.5) {$i$};
\draw[blue, fill=blue!20] (2+3, 3) rectangle (3+3, 4);
\node[color = blue] at (2.5+3, 3.5) {$i+1$};
\draw[violet, fill=violet!20] (2+3+1, 3) rectangle (3+3+1, 4);
\node[color = violet] at (2.5+3+1, 3.5) {$i+2$};
\draw (2, 1)node{$\bullet$} -- (2, 2)node{$\bullet$} -- (2,3)node{$\bullet$} -- (3,3)node{$\bullet$} ;
\draw (5, 3)node{$\bullet$} -- (5, 4)node{$\bullet$} -- (6,4)node{$\bullet$} -- (7,4)node{$\bullet$} ;
\node at (4, 3.5) {$\iddots$};
\draw[color = white, ->] (8, 2) -- (9, 2);
\end{tikzpicture}
\end{center}
Each tableau above is connected to the previous, so this shows that $T$ and $S^i T$ are connected.
\end{proof}

This establishes that $W^{\vec{e}}(X)$ is connected.  Since it is also reduced (Theorem \ref{thm:reduced}) and ``geometrically connected and geometrically reduced" is an open condition in flat proper families, we have therefore proven: 

\begin{thm}\label{thm:conn}
If $f \colon C \to \pp^1$ is a general degree~$k$ cover, and $g > u(\vec{e})$, then
$W^{\vec{e}}(C)$ is connected.
\end{thm}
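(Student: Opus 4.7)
The plan is to reduce the statement to a purely combinatorial claim about $k$-core tableaux and then resolve it using a standard fact about the affine symmetric group. Since ``geometrically connected and geometrically reduced'' is an open condition in flat proper families, and we have already shown in Theorem~\ref{thm:reduced} and Corollary~\ref{reg-scheme} that the special fiber $W^{\vec{e}}(X) = W^{\Gamma(\vec{e})}(X)$ is reduced, it suffices to prove that $W^{\Gamma(\vec{e})}(X)$ is connected. By Definition~\ref{WT} and Proposition~\ref{efficient-only}, this space is the union of the subvarieties $W^T(X)$ indexed by efficient fillings $T$ of $\Gamma(\vec{e})$, and two such $W^T(X)$ and $W^{T'}(X)$ intersect exactly when $T[t]$ and $T'[t]$ agree modulo $k$ whenever both are non-star. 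Thus I want to show that the equivalence relation generated by ``meets'' is the full relation.

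Next I would reduce to fillings drawn from a single symbol set. By the relabelling trick of Example~\ref{swap_unused} (swap an unused symbol $N$ with the nearest used symbol $N'$), every efficient filling is connected to one using the symbols $\{1,\ldots,u\}$ where $u = u(\vec{e}) = u(\Gamma(\vec{e}))$. So it remains to show that any two efficient fillings of $\Gamma(\vec{e})$ using the symbols $\{1,\ldots,u\}$ are connected, and this is where the hypothesis $g > u$ enters: it guarantees that there is at least one unused ``slot'' among $\{1,\ldots,g\}$ available for the manipulations described in Example~\ref{swap_unused}.

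By Section~\ref{tab-word}, efficient fillings of $\Gamma(\vec{e})$ correspond to reduced words for the associated element $w(\Gamma(\vec{e})) \in \tilde{S}_k$. Matsumoto's theorem (or the affine version, Theorem~3.3.1(ii) of \cite{ccg}) asserts that any two reduced words are related by a sequence of braid moves, translated into two operations on tableaux: the flip $F^i$ (when $T[i] - T[i+1] \not\equiv \pm 1 \bmod k$) and the shuffle $S^i$ (when $T[i] = T[i+2]$ and $T[i+1] \equiv T[i] \pm 1 \bmod k$). So the proof reduces to verifying that $T$ is connected to $F^i T$ and to $S^i T$ whenever these are defined.

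The main technical step will be these two verifications. For the flip $F^i$, the idea is: relabel the symbols $\{1,\dots,u\}$ order-preservingly onto $\{1,\dots,i-1,i+1,\dots,u+1\}$, freeing up the symbol $i$; then replace the copies of $i+2$ (which use the same residue as $i$) with $i$ (legal because no $i+1$ appears above-left of an $i+2$ in an efficient filling where $T[i] - T[i+1] \not\equiv \pm 1$); then relabel back. Each successive tableau ``meets'' the previous one by construction, and the final tableau is $F^i T$. For the shuffle $S^i$, after using the involution to reduce to the case $T[i+1] \equiv T[i] + 1 \bmod k$, I would relabel $\{1,\dots,u\} \to \{1,\dots,i-1,i+1,\dots,u+1\}$ to free up the symbol $i$, insert $i$ into the available addable corner of $T^{\leq i-1}$ of residue $T[i]+1$ (which exists because $S^i T$ is defined), replace the copies of $i+3$ with $i+1$, and relabel back; the result coincides with $S^i T$. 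The main obstacle, and the reason the above ``meeting'' chains are needed rather than a single replacement, is that the braid relations $s_j s_{j+1} s_j = s_{j+1} s_j s_{j+1}$ and $s_j s_{j'} = s_{j'} s_j$ rearrange which symbols live in which residue classes along the way, so one must pass through auxiliary fillings that use the extra symbol made available by $g > u$.
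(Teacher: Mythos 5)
Your proposal is correct and follows essentially the same route as the paper: reduce to connectedness of $W^{\Gamma(\vec{e})}(X)$ via openness of geometric connectedness plus reducedness of the special fiber, reinterpret connectivity combinatorially via the ``meets'' relation on $k$-core tableaux, invoke the relabeling move of Example~\ref{swap_unused} and Matsumoto's theorem for $\tilde{S}_k$, and verify directly (using the extra slot guaranteed by $g>u$) that $T$ is connected to $F^iT$ and $S^iT$. The two verification arguments you sketch, including the intermediate fillings obtained by relabeling, replacing $i+2$ by $i$ (for $F^i$) or inserting $i$ at the addable corner of residue $T[i]+1$ and then collapsing $i+3$ to $i+1$ (for $S^i$), match the paper's lemmas.
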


\section{Normality and Irreducibility}\label{sec:norm}

Let $f \colon C \to \pp^1$ be a general degree~$k$ cover.
In this section, we show that $W^{\vec{e}}(C)$ is smooth away from
more unbalanced splitting loci of codimension $2$ or more.
Since we have already established that $W^{\vec{e}}(C)$ is Cohen--Macaulay
(Theorem~\ref{CM}),
this implies that $W^{\vec{e}}(C)$ is normal by Serre's criterion ($R_1 + S_2$).  Combining this with connectedness (Theorem \ref{thm:conn}), this establishes that $W^{\vec{e}}(C)$ is irreducible when $g > u(\vec{e})$.

To do this, we will prove the following general result regarding splitting stratifications. Suppose $\E$ is a family of vector bundles on $\pi\colon B \times \pp^1 \rightarrow B$. For any splitting type $\vec{e}$, the scheme structure on the closed splitting locus $\Sigma_{\vec{e}} \subset B$ is defined as an intersection of determinantal loci. More precisely, the locus $\Sigma_{\vec{e}}$ is the intersection over all $m$ of the Fitting support for $\rk R^1 \pi_* \E(m) \geq h^1(\pp^1, \O(\vec{e})(m))$. We use $\Sigma_{\vec{e}}^\circ \subset \Sigma_{\vec{e}}$ to denote the open where the splitting type is exactly $\vec{e}$.

\begin{prop}
Suppose $\E$ is a family of vector bundles on $\pi\colon B \times \pp^1 \rightarrow B$ such that all open splitting loci $\Sigma_{\vec{e}}^\circ$ are smooth of the expected codimension $u(\vec{e})$. Then $\Sigma_{\vec{e}}$ is smooth away from all splitting loci of codimension $\geq 2$ inside $\Sigma_{\vec{e}}$.
\end{prop}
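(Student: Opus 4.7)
The plan is to verify smoothness of $\Sigma_{\vec{e}}$ pointwise. At points of $\Sigma_{\vec{e}}^\circ$, smoothness is immediate from the hypothesis, so the substantive case is a point $p \in \Sigma_{\vec{e}'}^\circ \subset \Sigma_{\vec{e}}$ with $\vec{e}' < \vec{e}$ and $u(\vec{e}') = u(\vec{e}) + 1$. First I would reduce to the versal setting: locally near $p$, the family $\E$ is classified by a morphism $\phi \colon B \to \mathrm{Def}(\O(\vec{e}'))$, and each splitting locus on $B$ is the scheme-theoretic pullback of the corresponding universal locus, since the defining Fitting ideals are functorial. The hypothesis guarantees that $\phi^{-1}(0) = \Sigma_{\vec{e}'}^\circ$ is smooth of codimension $u(\vec{e}') = \dim \mathrm{Def}(\O(\vec{e}'))$ at $p$, so $d\phi_p$ is surjective and $\phi$ is smooth at $p$. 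It therefore suffices to show that $\Sigma_{\vec{e}} \subset \mathrm{Def}(\O(\vec{e}')) \cong \operatorname{Ext}^1(\O(\vec{e}'),\O(\vec{e}'))$ is smooth at the origin.

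In this versal setting, $\Sigma_{\vec{e}}$ has dimension $u(\vec{e}') - u(\vec{e}) = 1$ inside the smooth affine space $\mathrm{Def}(\O(\vec{e}'))$ of dimension $u(\vec{e}')$, and by the hypothesis it is smooth away from the origin. I would then exhibit a smooth curve germ through the origin inside $\Sigma_{\vec{e}}$ by taking a one-parameter ``smoothing'' family: a one-parameter deformation of $\O(\vec{e}')$ whose generic member has splitting type $\vec{e}$. Its tangent vector at the origin is a nonzero element of $T_0 \Sigma_{\vec{e}}$, giving $\dim T_0 \Sigma_{\vec{e}} \geq 1$.

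The main obstacle is the reverse inequality $\dim T_0 \Sigma_{\vec{e}} \leq 1$: no additional first-order deformations of $\O(\vec{e}')$ should lie in $\Sigma_{\vec{e}}$. Since $\Sigma_{\vec{e}}$ in the versal deformation is Cohen--Macaulay of codimension $u(\vec{e}') - 1$ inside a smooth ambient (as a determinantal scheme cut out by Fitting ideals of the expected codimension), it is locally cut out by $u(\vec{e}') - 1$ equations, and smoothness at the origin is equivalent to the linearizations of these equations being linearly independent in $T_0^* \mathrm{Def}(\O(\vec{e}'))$. I would carry this out via a direct first-order cohomological computation in the coordinates on $\operatorname{Ext}^1(\O(\vec{e}'),\O(\vec{e}'))$: for a value of $m$ making the rank jump $h^1(\O(\vec{e}')(m)) - h^1(\O(\vec{e})(m))$ minimal, the relevant Fitting minors expand to polynomials whose linear parts one must verify span a codimension-one subspace. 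This is ultimately driven by the combinatorics of minimal specializations of splitting types, which correspond to distinguished simple moves in the affine symmetric group as developed earlier in the paper.
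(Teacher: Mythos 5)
Your reduction to the versal deformation $\mathrm{Def}(\O(\vec{e}'))$ is valid: since $\Sigma_{\vec{e}'}^\circ = \phi^{-1}(0)$ is smooth of codimension $u(\vec{e}') = \dim \mathrm{Def}(\O(\vec{e}'))$ at $p$, lifting a regular system of parameters from $\O_{\Sigma_{\vec{e}'}^\circ, p}$ together with the pullbacks of coordinates on $\mathrm{Def}(\O(\vec{e}'))$ shows $\m_{B,p}$ is generated by $\dim_p B$ elements, so $B$ is regular at $p$ and $\phi$ is smooth there; and the splitting loci on $B$ are indeed scheme-theoretic pullbacks of the versal ones. So the problem does localize to the origin of $\mathrm{Def}(\O(\vec{e}'))$, as you say.

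The gap is in the step you defer to a ``direct first-order cohomological computation.'' That computation \emph{is} the proposition, and the scaffolding you erect around it does not stand. Two specific problems. First, you assert that the versal $\Sigma_{\vec{e}}$ is ``Cohen--Macaulay \ldots as a determinantal scheme,'' hence ``locally cut out by $u(\vec{e}')-1$ equations.'' Splitting loci are \emph{not} determinantal in the usual sense: they are intersections over all twists $m$ of Fitting supports of $R^1\pi_*\E(m)$, and nothing forces them to be complete intersections. Even granting Cohen--Macaulayness (which in this paper is a downstream \emph{corollary}---Corollary~\ref{cor:ESconj}---of the very result you are trying to prove, so it cannot be invoked here), Cohen--Macaulay of codimension $c$ does not imply locally cut out by $c$ equations; that is the strictly stronger complete-intersection condition, and your criterion ``smoothness $\Leftrightarrow$ linearizations independent'' is a Jacobian criterion that only makes sense for a complete intersection. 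Second, you never use the constraint that $u(\vec{e}') = u(\vec{e}) + 1$ to identify what $\vec{e}'$ can look like. The paper's proof begins precisely here: it shows $\vec{e}'$ must equal $\vec{e}_{rs}$ with $e_r = e_s$, so that after twisting $\O(\vec{e}) = N \oplus \O^{\oplus m}\oplus P$ and $\O(\vec{e}') = N \oplus \O(-1) \oplus \O^{\oplus (m-2)}\oplus \O(1) \oplus P$. Without this normal form, there is no way to set up the explicit cohomological calculation you gesture at; with it, the calculation becomes tractable because one can isolate the $\T$-piece with $\T|_b \simeq \O^{\oplus m}$ generically and $\O(-1)\oplus\O^{\oplus(m-2)}\oplus\O(1)$ on $\Sigma_{\vec{e}'}^\circ$.

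The paper's actual route avoids the versal reduction entirely. After the normal-form lemma, it constructs a global splitting $\E = \N\oplus\T\oplus\P$ over $\Sigma_{\vec{e}}\smallsetminus Z$ (using that the ambient is affine and $H^1$ of the relevant $\mathcal{H}om$ bundles vanishes fiberwise), observes that the extra Fitting condition cutting out $\Sigma_{\vec{e}'}$ inside $\Sigma_{\vec{e}}\smallsetminus Z$ comes entirely from $\T$ via a square matrix $\pi_*\T\otimes H^0(\O(1)) \to \pi_*\T(1)$ of equal rank $2m$, so that $\Sigma_{\vec{e}'}^\circ$ is cut out by a single determinant---a Cartier divisor. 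Then a two-line tangent-space count finishes: if $\Sigma_{\vec{e}}$ were singular at $b\in\Sigma_{\vec{e}'}^\circ$, the Cartier property forces $\dim T_b\Sigma_{\vec{e}'}^\circ \geq \dim T_b\Sigma_{\vec{e}} - 1 > \dim\Sigma_{\vec{e}'}^\circ$, contradicting the smoothness hypothesis. Notice this requires no Cohen--Macaulayness and no complete-intersection hypothesis at any point; the Cartier observation entirely supplants the tangent-space computation you were hoping to do by hand. To complete your approach you would need to supply an argument of comparable substance in place of the placeholder sentence, and delete the unsupportable CM/complete-intersection assertion along the way.
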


\begin{proof}
Since the statement is local on $B$, we may assume that $B$ is affine.

 Suppose $\Sigma_{\vec{e}'} \subset \Sigma_{\vec{e}}$ has codimension $1$ for some $\vec{e}' \leq \vec{e}$. Let $Z \subset \Sigma_{\vec{e}}$ be the union of all \textit{other} splitting loci properly contained in $\Sigma_{\vec{e}}$. We will show that $\Sigma_{\vec{e}'}^\circ \subset \Sigma_{\vec{e}} \smallsetminus Z$ is a Cartier divisor. It will follow that $\Sigma_{\vec{e}}$ is smooth along $\Sigma_{\vec{e}'}^\circ$, as we now explain. If $\Sigma_{\vec{e}}$ were singular at some $b \in \Sigma_{\vec{e}'}^\circ$, then $\dim T_b\Sigma_{\vec{e}} > \dim \Sigma_{\vec{e}}= \dim \Sigma_{\vec{e}'}^\circ + 1$. Assuming $\Sigma_{\vec{e}'}^\circ \subset \Sigma_{\vec{e}} \smallsetminus Z$ is Cartier, we would find $\dim T_b \Sigma_{\vec{e}'}^\circ \geq \dim T_b \Sigma_{\vec{e}} - 1  > \dim \Sigma_{\vec{e}'}^\circ$, forcing $\Sigma_{\vec{e}'}^\circ$ to be singular at $b$, which contradicts the assumption that all open splitting loci are smooth. Repeating the argument for each divisorial $\Sigma_{\vec{e}'} \subset \Sigma_{\vec{e}}$ shows that $\Sigma_{\vec{e}}$ is smooth away from all codimension $2$ subsplitting loci.
 
 \begin{center}
 \begin{tikzpicture}[scale = 1.5]
 \node at (1, 5.3) {$\Sigma_{\vec{e}}$};
 \draw(2.84,4.5) ellipse (2cm and 1cm);
\draw [blue] (.88,4.7) .. controls (2, 4.1+.4+.5) .. (2.5+.25+.125, 4.5+.125);
\draw [blue] (2.5+.25+.125, 4.5+.125) .. controls (2.5+.25+.5+.125 +.7, 4.5+.125-.65+.1) .. (4.825, 4.1+.3);
\draw [blue] (4.1, 4.4) node {$\Sigma_{\vec{e}'}$};
\draw [violet] (2.5, 3.51) .. controls (2.2, 4.2) .. (2.5+.25+.125, 4.5+.125);
\draw [violet] (2.5+.25+.125, 4.5+.125)   .. controls (3.2, 4.8) .. (3+.15, 5.49);
\draw [violet] (2.5, 4.1) node {$Z$};
 \end{tikzpicture}
 \end{center}

First, we show that  to have $\vec{e}' < \vec{e}$ with $u(\vec{e}') = u(\vec{e}) + 1$, the splitting types must have a special shape. Let $r$ be the smallest index such that $e_r' < e_r$ and let $s$ be the largest index such that $e_s' > e_s$. (The fact that $\vec{e}' < \vec{e}$ means $r \leq s$.) Then 
\begin{equation} \label{othere}
\vec{e}' \leq \vec{e}_{rs} \colonequals (e_1, \ldots, e_{r-1}, e_r - 1, e_{r+1}, \ldots, e_{s-1}, e_s + 1, e_{s+1}, \ldots, e_k) <\vec{e},
\end{equation}
from which we see
\begin{align*}
1 &= u(\vec{e}') - u(\vec{e}) \\
&\geq u(\vec{e}_{rs}) - u(\vec{e}) \\
&= \sum_{\ell \neq r, s}  \Big([h^1(\O(e_\ell - e_r + 1)) + h^1(\O(e_\ell - e_s - 1)) + h^1(\O(e_r - 1 - e_\ell)) + h^1(\O(e_s + 1 - e_\ell))] \\[-12pt]
&\qquad\qquad - [h^1(\O(e_\ell - e_r)) + h^1(\O(e_\ell - e_s)) + h^1(\O(e_r - e_\ell)) + h^1(\O(e_s - e_\ell))]\Big) \\
&\quad + [h^1(\O(e_r - e_s - 2)) + h^1(\O(e_s - e_r + 2))] - [h^1(\O(e_r - e_s)) + h^1(\O(e_s - e_r))] \\
&= \# \{\ell : e_r - 1 \leq \ell \leq e_s - 1\} + \# \{\ell : e_r + 1 \leq \ell \leq e_s + 1\} + \begin{cases}
1 & \text{if $e_r = e_s$;} \\
2 & \text{otherwise.}
\end{cases}
\end{align*}
It follows that the non-negative quantities
$\# \{\ell : e_r - 1 \leq \ell \leq e_s - 1\} = \# \{\ell : e_r + 1 \leq \ell \leq e_s + 1\} = 0$,
and $e_r = e_s$, and $\vec{e}' = \vec{e}_{rs}$.
In other words,
after twisting (down by $e_r =e_s$), we may assume $\O(\vec{e}) = N \oplus \O^{\oplus m} \oplus P$, where all parts of $N$ have degree at most $-2$, all parts of $P$ have degree at least $2$, and $\O(\vec{e}') = N \oplus \O(-1) \oplus \O^{\oplus (m-2)} \oplus \O(1) \oplus P$.

Away from $Z$, we will show that the vector bundle $\E$ on $\pi\colon \pp^1 \times (\Sigma_{\vec{e}} \smallsetminus Z) \rightarrow (\Sigma_{\vec{e}} \smallsetminus Z)$ splits as $\E = \N \oplus \T \oplus \P$ where for any $b \in \Sigma_{\vec{e}} \smallsetminus Z$,
\begin{gather}
\N|_b \simeq N, \quad \P|_b \simeq P, \quad \text{and} \quad \T|_b \simeq \begin{cases}
\O^{\oplus m} & \text{if $b \notin \Sigma_{\vec{e}'}$;} \\
\O(-1) \oplus \O^{\oplus (m-2)} \oplus \O(1) & \text{if $b \in \Sigma_{\vec{e}'}$.}
\end{cases} \label{ntp}
\end{gather}

To construct $\N$ and $\P$, let $\mathcal{Q}(-2)$ be the cokernel of $\pi^*\pi_* \E(-2) \rightarrow \E(-2)$,
which is locally free. Define $\P$ by the exact sequence
\[0 \rightarrow \mathcal{P}(-2) \rightarrow \E(-2) \rightarrow \mathcal{Q}(-2) \rightarrow 0.\]
This sequence splits if the induced map $H^0(\mathcal{H}om(\mathcal{Q}(-2), \E(-2))) \rightarrow H^0(\mathcal{H}om(\mathcal{Q}(-2), \Q(-2)))$ is surjective, which in turn follows if we show $H^1(\mathcal{H}om(\mathcal{Q}(-2), \mathcal{P}(-2))) = 0$. Now, $\P(-2)$ is globally generated on each fiber, and $\Q(-2)$ has negative summands on each fiber, so $\mathcal{H}om(\mathcal{Q}(-2), \P(-2))$ has positive summands on every fiber. Thus, by the theorem on cohomology and base change, $R^1\pi_* \mathcal{H}om(\mathcal{Q}(-2), \P(-2)) = 0$ and so $H^1(\mathcal{H}om(\mathcal{Q}(-2), \mathcal{P}(-2))) = 0$ because $B$ is affine.

Next, define $\N$ so that $\N(1)$ is the cokernel of $\pi^*\pi_* \Q(1) \rightarrow \Q(1)$, and define $\T$ by the sequence
\[0 \rightarrow \T \rightarrow \Q \rightarrow \N \rightarrow 0.\]
The same argument as before shows that this sequence splits too. Thus, $\E = \Q \oplus \P = \N \oplus \T \oplus \P$.
By construction, this splitting satisfies \eqref{ntp}.

On $\Sigma_{\vec{e}} \smallsetminus Z$, the fibers of $R^1 \pi_* \T(-1)$ have rank at most $1$. We also have that $\pi_*\T$ and $\pi_*\T(1)$ are locally free on $\Sigma_{\vec{e}} \smallsetminus Z$.
The equations that cut out $\Sigma_{\vec{e}'} \subset B$ are the same as the equations that cut out $\Sigma_{\vec{e}} \subset B$ except at one twist: namely, when we ask for the rank of $R^1\pi_*\E(-1)$. To cut out $\Sigma_{\vec{e}}$, we ask that $\rk R^1\pi_*\E(-1) \geq h^1(\pp^1, \O(\vec{e})(-1)) \colonequals n$, whereas to cut out $\Sigma_{\vec{e}'}$, we ask that $\rk R^1\pi_*\E(-1) \geq h^1(\O(\vec{e}')(-1)) = n+1$.

 To study these equations, we must build a resolution of $R^1\pi_*\E(-1)$ by vector bundles.
 On $\Sigma_{\vec{e}} \smallsetminus Z$, the theorem on cohomology and base change shows $R^1\pi_*\N(-1)$ is a vector bundle,
 and $R^1\pi_*\P(-1) = 0$. 
We also have a resolution by vector bundles
\[U_1 \colonequals \pi_* \T \otimes H^0(\pp^1, \O(1)) \xrightarrow{\psi} U_2 \colonequals \pi_*\T(1) \rightarrow R^1\pi_*\T(-1) \rightarrow 0, \]
where $U_1$ and $U_2$ have the same rank $2m$. \hannah{add reference}
Therefore,
\[0 \oplus U_1 \xrightarrow{0 \oplus \psi} R^1 \pi_*\N(-1)  \oplus U_2 \rightarrow R^1 \pi_*\N(-1) \oplus R^1 \pi_* \T(-1) =  R^1\pi_* \E(-1)\]
is a resolution of $R^1 \pi_*\E(-1)$ by vector bundles. Now, $\Sigma_{\vec{e}'}^\circ$ is cut out inside $\Sigma_{\vec{e}} \smallsetminus Z$ by the condition that $\coker(0 \oplus \psi) \geq n+1$, or equivalently that
$\coker \psi \geq 1$.
This locus is cut by the vanishing of its determinant, which we can view as a section of the line bundle
\[ {\bigwedge}^{2m}(\pi_* \T \otimes H^0(\pp^1, \O(1)) )^\vee \otimes {\bigwedge}^{2m}(\pi_*\T(1)).\]
Hence, $\Sigma_{\vec{e}}^\circ \subset \Sigma_{\vec{e}} \smallsetminus Z$ is Cartier, as desired.
\end{proof}

In Theorem~1.2 of \cite{refinedBN}, it was established that for $f \colon C \to \pp^1$ a general degree~$k$ cover,
the open Brill-Noether splitting loci are smooth of the expected dimension. The above proposition thus implies $W^{\vec{e}}(C)$ is smooth away from all splitting loci of codimension $2$ or more.
Together with Theorem~\ref{CM}, we have thus shown:

\begin{thm} \label{normal}
Let $f\colon C \rightarrow \pp^1$ be a general genus $g$, degree $k$ cover. Then $W^{\vec{e}}(C)$ is smooth away from all codimension $2$ splitting loci. Thus $W^{\vec{e}}(C)$ is normal.
\end{thm}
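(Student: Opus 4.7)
The plan is to invoke the preceding proposition applied to the pushforward of the universal line bundle. Specifically, let $\mathcal{L}$ denote a universal line bundle on $C \times \Pic^d(C)$, and let $\E \colonequals (f \times \Id)_* \mathcal{L}$, a family of rank $k$ vector bundles on $\pp^1 \times \Pic^d(C) \to \Pic^d(C)$. The closed splitting loci of this family are precisely the Brill--Noether splitting loci $W^{\vec{e}}(C) \subseteq \Pic^d(C)$, endowed with the Fitting scheme structure defined in Section~\ref{sec:lim-line}.

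To apply the proposition, I need to check that every open splitting locus $W^{\vec{e}}(C)^\circ$ is smooth of the expected codimension $u(\vec{e})$. This is exactly Theorem~1.2 of \cite{refinedBN}, which requires precisely the characteristic hypothesis (char does not divide $k$) that is in force in this section. The proposition then yields that $W^{\vec{e}}(C)$ is smooth away from all subsplitting loci $W^{\vec{e}'}(C)$ of codimension~$\geq 2$ inside $W^{\vec{e}}(C)$; this is the first assertion.

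For the normality statement, I would apply Serre's criterion $R_1 + S_2$. The condition $R_1$ (regularity in codimension one) is exactly the first assertion just established. The condition $S_2$ follows because $W^{\vec{e}}(C)$ is Cohen--Macaulay by Theorem~\ref{CM}, and Cohen--Macaulay schemes satisfy $S_n$ for every $n$. Hence $W^{\vec{e}}(C)$ is normal.

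There is no significant obstacle here since the hard work has all been done: the smooth-codimension-one statement is extracted purely formally from the general proposition combined with the smoothness result of \cite{refinedBN}, and normality is then an immediate application of Serre's criterion combined with the already-proved Cohen--Macaulayness. The only minor point to verify is that the Fitting scheme structure on $W^{\vec{e}}(C)$ used here agrees with the one used in Theorem~\ref{CM}, but both are defined by the same determinantal equations on resolutions of $R^1 \pi_* \E(m)$, so this is automatic.
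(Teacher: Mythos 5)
Your proposal is correct and follows the same route as the paper: apply the general splitting-stratification proposition to $\E = (f\times\Id)_*\mathcal{L}$, with the smoothness-of-open-strata input supplied by Theorem~1.2 of \cite{refinedBN}, and then combine the resulting $R_1$ with the $S_2$ furnished by Theorem~\ref{CM} via Serre's criterion. Your closing remark on the compatibility of scheme structures is a reasonable point to make explicit, and it holds because $f$ is finite (so $R^1\pi_*\mathcal{L}(m) \cong R^1\pi'_*\E(m)$) and Fitting supports are resolution-independent.
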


\noindent
Combined with \ref{thm:conn}, we have therefore proven:

\begin{thm}
If $f \colon C \to \pp^1$ is a general degree~$k$ cover, and $g > u(\vec{e})$, then
$W^{\vec{e}}(C)$ is irreducible.
\end{thm}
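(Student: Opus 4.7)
The plan is to deduce irreducibility as a formal consequence of the two results just established: normality (Theorem~\ref{normal}) and connectedness (Theorem~\ref{thm:conn}). The elementary fact I will invoke is that a connected normal Noetherian scheme is irreducible. Indeed, if $X$ is normal, then each local ring $\O_{X,x}$ is a domain, so $X$ is locally irreducible; equivalently, no point of $X$ lies on two distinct irreducible components. Consequently, distinct irreducible components of $X$ are disjoint, and the irreducible components coincide with the connected components.

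Applying this to $W^{\vec{e}}(C)$: by Theorem~\ref{normal}, $W^{\vec{e}}(C)$ is normal (smooth in codimension $1$, plus Cohen--Macaulay, hence satisfying Serre's $R_1 + S_2$ criterion); by Theorem~\ref{thm:conn}, $W^{\vec{e}}(C)$ is connected whenever $g > u(\vec{e})$. Combining these gives that $W^{\vec{e}}(C)$ is irreducible under the stated hypothesis.

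There is no real obstacle here: both ingredients have already been proved, and the argument is the standard ``normal $+$ connected $\Rightarrow$ irreducible'' implication. The only thing to double-check is that connectedness of the general fiber $W^{\vec{e}}(C)$ really follows from connectedness of the special fiber $W^{\vec{e}}(X)$ established in Section~\ref{sec:conn} — but this is precisely the content of Theorem~\ref{thm:conn}, which invokes openness of ``geometrically connected and geometrically reduced'' in flat proper families (together with reducedness from Theorem~\ref{thm:reduced}) to transport connectedness from $X$ to the general smooth $k$-gonal curve $C$.
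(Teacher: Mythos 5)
Your argument is exactly the paper's: normality from Theorem~\ref{normal} (via Serre's $R_1 + S_2$) and connectedness from Theorem~\ref{thm:conn} are combined using the standard fact that a connected normal Noetherian scheme is irreducible. Correct, and the same route as the paper.
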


\section{Monodromy}\label{sec:mon}

For the remainder of the paper, we suppose that the characteristic of
our ground field does not divide $k$.
Our final task is to show that the universal $\W^{\vec{e}}$ has a unique irreducible
component dominating a component of the unparameterized Hurwitz stack $\H_{k,g}$
when $g \geq u(\vec{e})$.
When $g > u(\vec{e})$ --- or when $k = 2$ in which case $N(\vec{e}) = 1$ for all $\vec{e}$ ---
we have that $W^{\vec{e}}(C)$ is irreducible for $C$ general.
So for the remainder of this section, we suppose $g = u(\vec{e})$ and $k > 2$.

When $g = u(\vec{e})$,
we have shown that $W^{\vec{e}}(X)$ is a reduced finite set of line bundles. Using this, we obtain:

\begin{lem} \label{lem:etale} Let $\mathfrak{f}^* \colon \X^* \to \P^*$ be a deformation of $f\colon X \to P$ to a smooth cover,
with smooth total space (c.f.\ Section~\ref{our_degen}).
If $\mathcal{C} \to \P' \to B$ is a family of smooth covers containing $\mathfrak{f}^*$, over a reduced base $B$,
then $W^{\vec{e}}(\mathcal{C}/B) \to B$ is \'etale near $\mathfrak{f}^*$.
\end{lem}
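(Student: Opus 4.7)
The plan is to leverage Corollary~\ref{reg-scheme}: since $g = u(\vec{e})$, the scheme $W^{\vec{e}}(X) = W^{\Gamma(\vec{e})}(X)$ is reduced of dimension $0$, consisting of $N(\vec{e})$ distinct points. I will verify that $W^{\vec{e}}(\mathcal{C}/B) \to B$ is flat and unramified at every point $L$ in the fiber over the image $b_0 \in B$ of $\mathfrak{f}^*$, from which \'etaleness at $L$ follows; openness of the \'etale locus will then propagate \'etaleness to a neighborhood of the entire fiber.

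First I would handle the one-parameter case: the degeneration $\mathcal{X} \to \P \to B_0$ of Section~\ref{our_degen}, with $B_0 = \Spec K[[t]]$ regular. Here $W^{\vec{e}}(\mathcal{X}/B_0)$ sits inside the smooth relative-dimension-$g$ scheme $\Pic^d(\mathcal{X}/B_0)$, cut out by the Fitting-support equations whose expected codimension equals $u(\vec{e}) = g$. At each $L$ in the special fiber, these $g$ equations restrict to a system whose vanishing locus in $\Pic^d(X)$ is $0$-dimensional at $L$, so by the usual dimension count in a regular local ring they form a regular sequence in $\mathcal{O}_{\Pic^d(X), L}$. Lifting them through the smooth morphism $\Pic^d(\mathcal{X}/B_0) \to B_0$ produces a regular sequence in $\mathcal{O}_{\Pic^d(\mathcal{X}/B_0), L}$ whose quotient is flat over $B_0$ at $L$; combined with the reducedness of the special fiber this gives \'etaleness at $L$. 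Consequently, the generic fiber $W^{\vec{e}}(\mathcal{X}^*/B_0^*)$ is finite \'etale of degree $N(\vec{e})$, and pulling back along the classifying map of $\mathfrak{f}^*$ shows that the fiber of $W^{\vec{e}}(\mathcal{C}/B)$ at $b_0$ is reduced of length $N(\vec{e})$.

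I then repeat the regular-sequence argument over the arbitrary reduced base $B$: $W^{\vec{e}}(\mathcal{C}/B)$ is cut out in the smooth relative-dimension-$g$ scheme $\Pic^d(\mathcal{C}/B)$ by $g$ Fitting-support equations, and by the preceding paragraph the fiber over $b_0$ is $0$-dimensional and reduced at $L$. The defining equations therefore restrict to a regular sequence in the fiber at $L$, and --- using smoothness of $\Pic^d(\mathcal{C}/B) \to B$ --- lift to a regular sequence in $\mathcal{O}_{\Pic^d(\mathcal{C}/B), L}$, so that $W^{\vec{e}}(\mathcal{C}/B) \to B$ is flat at $L$. Together with reducedness of the fiber this yields \'etaleness at $L$.

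The main obstacle is really the input from Corollary~\ref{reg-scheme}: once we know the special fiber $W^{\vec{e}}(X)$ is reduced and $0$-dimensional, both flatness (via lifting a regular sequence through the smooth relative Picard scheme) and unramifiedness follow by standard commutative algebra. The key point that makes the argument go through over an arbitrary reduced $B$ --- rather than requiring $B$ to be regular, as in a naive miracle-flatness approach --- is that the lifting step uses only smoothness of $\Pic^d(\mathcal{C}/B) \to B$, not any regularity assumption on $B$ itself.
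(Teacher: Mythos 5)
There is a genuine gap in the flatness step, and it is the crux of the argument. You assert that $W^{\vec{e}}(\mathcal{C}/B)$ is ``cut out by $g$ Fitting-support equations'' which then form a regular sequence. But the Fitting-support ideal is generated by the $(n+1)\times(n+1)$ minors of restriction maps for many twists $m$ and degree distributions $\vec{d}$ --- typically far more than $g$ elements --- and ``expected codimension $u(\vec{e})=g$'' does not mean ``generated by $g$ elements.'' Even granting that $W^{\vec{e}}(X)$ is a reduced point at $L$, so that $\mathfrak{m}_L \subset \mathcal{O}_{\Pic^d(X),L}$ is generated by a regular sequence $\bar f_1,\dots,\bar f_g$ drawn from the fiber ideal, lifting these to $f_1,\dots,f_g$ in $\mathcal{O}_{\Pic^d(\mathcal{C}/B),L}$ produces an ideal $(f_1,\dots,f_g)$ that is merely \emph{contained in} the ideal $I$ of $W^{\vec{e}}(\mathcal{C}/B)$. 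You therefore get a flat local ring $Q \colonequals \mathcal{O}_{\Pic^d(\mathcal{C}/B),L}/(f_1,\dots,f_g)$ and a surjection $Q \twoheadrightarrow \mathcal{O}_{W^{\vec{e}}(\mathcal{C}/B),L}$, but no argument that this surjection is an isomorphism. Flatness of the target is exactly what is to be proved, and it does not follow from flatness of $Q$ alone.

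What closes the gap --- and what the paper's proof uses directly instead of the regular-sequence construction --- is the \emph{a priori lower bound on dimension}: because $u(\vec{e}) = g$ and splitting loci have codimension at most $u(\vec{e})$ in $\Pic^d(\mathcal{C}/B)$ (a structural fact about splitting strata, not a consequence of counting equations), every component of $W^{\vec{e}}(\mathcal{C}/B)$ has dimension $\geq \dim B$. Combined with properness of $W^{\vec{e}}(\mathcal{C}/B)\to B$ and finiteness and reducedness of the fiber over $b_0$, one sees that $\mathcal{O}_{W^{\vec{e}}(\mathcal{C}/B),L}$ is a finite $\mathcal{O}_{B,b_0}$-module generated by one element (Nakayama), hence a quotient $\mathcal{O}_{B,b_0}/J$; the dimension bound forces $J=0$ when $B$ is reduced and irreducible near $b_0$, giving \'etaleness. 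Your proposal never invokes this dimension bound, and without it the surjection $Q \twoheadrightarrow \mathcal{O}_{W^{\vec{e}}(\mathcal{C}/B),L}$ cannot be upgraded to an isomorphism. The first part of your argument, establishing that the fiber over $b_0$ is finite and reduced via the one-parameter degeneration and Corollary~\ref{reg-scheme}, is correct and matches the paper's input.
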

\begin{proof}
Because $u(\vec{e}) = g$,  every component of $W^{\vec{e}}(\C/B)$ has dimension at least $\dim B$. 
Moreover, $W^{\vec{e}}(\C/B) \to B$ is proper, and the fiber over $\mathfrak{f}^*$ is a finite set of reduced points.
Since $B$ is reduced, we conclude that the map is \'etale near  $\mathfrak{f}^*$. 
\end{proof}

Since $f \colon X \to P$ is separable, $\mathfrak{f}^*$ is also separable. In particular, its cotangent complex is punctual,
so $\H_{k,g}$ is smooth at $\mathfrak{f}^*$.
We can therefore apply this lemma to the universal family over a component $B$ of $\H_{k,g}$.

In greater generality, suppose that $B$ is any irreducible base, and $\pi \colon W \to B$ is \'etale near $b \in B$.
Then, any irreducible component of $W$ dominating $B$ meets $\pi^{-1}(b)$,
and every point of $\pi^{-1}(b)$ is contained in a unique irreducible component of $W$, which
dominates $B$.
To show $W$ has a unique irreducible component dominating $B$, it thus suffices to show
that any two points of $\pi^{-1}(b)$ are contained in the same irreducible component of $W$.
In particular, suppose that $B'$ is irreducible and the image of $B' \to B$ meets $b$.
If $W \times_B B' \to B$ has a unique irreducible component dominating $B'$, then $W$ has a unique irreducible component dominating $B$.

Therefore, if there is \emph{some} family $\mathcal{C}/B'$ over a reduced irreducible base $B'$, containing $\mathfrak{f}^*$, so that
$W^{\vec{e}}(\mathcal{C}/B') \to B'$ has a unique component dominating $B'$, then the universal $\W^{\vec{e}}$ has a unique component dominating our component of $\H_{k,g}$.
The argument will proceed in the following steps:

\begin{enumerate}
\item In Section~\ref{sec:loc},
we define the stack $\H_{k,g,2}$ of degree $k$ genus $g$ covers with total ramification at $2$ points,
and partial and total compactifications thereof:
\[\H_{k,g,2} \subseteq \bar{\H}^{\text{sm-ch}}_{k,g,2} \subseteq \bar{\H}^{\text{ch}}_{k,g,2} \subseteq \bar{\H}_{k,g,2}.\]
The universal curve $\C^{\text{sm-ch}}$ over $\bar{\H}^{\text{sm-ch}}_{k,g,2}$
will be a family of chain curves with smooth total space. We may therefore construct
the universal $\vec{e}$-positive locus $W^{\vec{e}}(\C^{\text{sm-ch}})$
as in Section~\ref{sec:lim-line}.
We will also observe that:
\smallskip
\begin{enumerate}
\item $X$ lies in $\bar{\H}_{k,g,2}^{\text{sm-ch}}$, and $\H_{k,g,2}$ contains a deformation of $X$ as in Lemma~\ref{lem:etale},
\item $\bar{\H}^{\text{ch}}_{k,g,2}$ is smooth, so $X$ lies in a \emph{unique} component $\bar{\H}_{k,g,2}^{\text{sm-ch}, \circ}$
of $\bar{\H}_{k,g,2}^{\text{sm-ch}}$,
\item $W^{\vec{e}}(\C^{\text{sm-ch}}) \to \bar{\H}_{k,g,2}^{\text{sm-ch}}$ is \'etale near $X$.
\end{enumerate}
\end{enumerate}

We may therefore reduce to studying $W^{\vec{e}}(\C^{\text{sm-ch}})$.
(The reason for this first reduction is to sidestep questions related to the existence of a nice compactification of the Hurwitz space
in positive characteristic, given that we will need these compactifications of $\H_{k,g,2}$ anyways
later in our argument.)
In light of (1)(c), our problem is now to show that every two points
of $W^{\vec{e}}(X)$ lie in the same irreducible component of $W^{\vec{e}}(\C^{\text{sm-ch}})$.

Recall that $W^{\vec{e}}(X)$ is the reduced
finite set of line bundles $L_T$ indexed by the efficient fillings $T$
of $\Gamma(\vec{e})$ (c.f.\ Definition~\ref{WT}).
Therefore, it suffices to see that for any two tableaux $T$ and $T'$ of shape $\Gamma(\vec{e})$,
the irreducible components of $W^{\vec{e}}(\C^{\text{sm-ch}})$
containing $T$ and $T'$ coincide.
Because any two tableaux can be connected via a sequence of braid moves (c.f.\ Section~\ref{sec:conn}),
it suffices to show that $L_T$ and $L_{F^i T}$
(respectively $L_T$ and $L_{S^i T}$), when defined, lie in the same irreducible component.

\begin{enumerate}
\addtocounter{enumi}{1}
\item In Section~\ref{mon-flips} (respectively Section~\ref{mon-shuffle}),
we restrict $W^{\vec{e}}(\C^{\text{sm-ch}})$ to certain families in $\bar{\H}_{k,g,2}^{\text{sm-ch}, \circ}$ whose closures contain $X$;
these families arise by smoothing nodes of $X$ and are themselves parameterized by $\H_{k,2,2}^\circ$
(respectively $\H_{k,3,2}^\circ$).

\smallskip
\noindent
The restriction of $W^{\vec{e}}(\C^{\text{sm-ch}})$ to these families is not irreducible.
Nonetheless, for each $T$ such that $F^i T$ (respectively $S^i T$) is defined,
we describe substacks $Y_F(i, T)$ (respectively $Y_S(i, T)$) of the restriction of $W^{\vec{e}}(\C^{\text{sm-ch}})$
to these families; the fiber over $X$ of the
closure of $Y_F(i, T)$ (respectively $Y_S(i, T)$) in $W^{\vec{e}}(\C^{\text{sm-ch}})$
consists of $L_T$ and $L_{F^i T}$
(respectively $L_T$ and $L_{S^i T}$).

\medskip

\item Finally, in Section~\ref{mon-disc}, we prove that $Y_F(i, T)$ and $Y_S(i, T)$
are irreducible, thereby establishing that $L_T$ and $L_{F^i T}$ (respectively $L_T$ and $L_{S^i T}$)
lie in the same irreducible component of $W^{\vec{e}}(\C^{\text{sm-ch}})$, as desired.

\smallskip
\noindent
We do this by observing that $Y_F(i, T)$ (respectively $Y_S(i, T)$) extends naturally
over the entirety of $\bar{\H}_{k,2,2}$ (respectively $\bar{\H}_{k,3,2}$).
Moreover, they remain generically \'etale over a certain boundary stratum $R_2$ (respectively $R_3$),
where we can write down explicit equations and check irreducibility.
\end{enumerate}

\subsection{\boldmath Covers with $2$ points of total ramification\label{sec:loc}}

\begin{defin} \label{def-h2kg}
Let $\M_{g, 2}$ denote the moduli stack of curves of genus $g$ with $2$
marked points, and $\bar{\M}_{g, 2}$ denote its Deligne--Mumford compactification
by stable curves.
Write $\H_{k, g, 2} \subseteq \M_{g, 2}$ for the substack of
$(C, p, q) \in \M_{g, 2}$ with $\O_C(kp) \simeq \O_C(kq)$.
(In other words, for a scheme $B$, the $B$-points of
$\H_{k, g, 2}$ parameterize relative smooth curves $C \to B$ equipped with a pair of sections
$\{p, q\}$,
such that $B$ can be covered by opens $U$ with $\O_{C|_U}(kp) \simeq \O_{C|_U}(kq)$.)

Write $\bar{\H}_{k, g, 2} \subseteq \bar{\M}_{g, 2}$ for the closure of $\H_{k, g, 2}$.
Let $\bar{\H}_{k, g, 2}^{\text{sm-ch}} \subseteq \bar{\H}_{k, g, 2}$ (respectively $\bar{\H}_{k, g, 2}^{\text{ch}} \subseteq \bar{\H}_{k, g, 2}$)
denote the open substack parameterizing chains of smooth (respectively irreducible) curves where the marked points
are on opposite ends. Let $\C^{\text{sm-ch}}$ denote the universal curve over $\bar{\H}_{k, g, 2}^{\text{sm-ch}}$.
\end{defin}

Any $(C, p, q) \in \M_{g, 2}$ lies in $\H_{k, g, 2}$ if and only if $C$
admits a map $C \to \pp^1$
of degree $k$, totally ramified at $p$ and $q$.
Such a map is unique up to $\operatorname{Aut} \pp^1$,
so there is a natural map $\H_{k, g, 2} \to \H_{k, g}$.
The boundary can be understood explicitly in a similar fashion.
Suppose $C = C^1 \cup_{p^1} \cup_{p^2} \cdots \cup_{p^{n-1}} C^n$ with $p = p^0 \in C^1$ and $q = p^n \in C^n$ is a chain of irreducible curves where the
marked points are on opposite ends. By the theory of admissible covers (c.f.\ Section~5 of~\cite{liu}), $(C, p, q) \in \bar{\H}_{k,g,2}^{\text{ch}}$ if and only if $C$ admits a map of degree $k$
to a chain of $n$ copies of $\pp^1$ totally ramified over the $p^i$. Such a map exists if and only if $p^{i} - p^{i-1}$ is $k$-torsion in $\Pic^0(C^i)$ for $i = 1, \ldots, n$, in which case it is unique.

Note that by construction, $(X, p^0, p^g)$ lies in $\bar{\H}_{k, g, 2}^{\text{sm-ch}}$,
and the deformation constructed in Section~\ref{our_degen} lies in $\H_{k, g, 2}$.

\begin{lem} \label{lem:smooth}
$\bar{\H}_{k,g,2}^{\mathrm{ch}}$ is smooth, as is the total space $\C^{\mathrm{sm-ch}}$ of the universal curve
over $\bar{\H}_{k,g,2}^{\mathrm{sm-ch}}$.
\end{lem}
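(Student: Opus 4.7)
The plan is to analyze the local structure of $\bar{\H}_{k,g,2}^{\mathrm{ch}}$ via deformation theory at any chain point $(C, p^0, p^n)$ with $C = C^1 \cup_{p^1} \cdots \cup_{p^{n-1}} C^n$. First I would note that the versal deformation of $(C, p^0, p^n)$ as a $2$-pointed stable curve is smooth and, \'etale-locally, splits as a product
\[
\prod_{i=1}^{n} \mathrm{Def}(C^i, p^{i-1}, p^i) \;\times\; \prod_{i=1}^{n-1}\Spec K[[t_i]],
\]
where each $t_i$ is the smoothing parameter of the node $p^i$ with local equation $xy = t_i$ in the total family. By the theory of admissible covers (\cite{admis}, extended to arbitrary characteristic in \cite{liu}), the intersection of $\bar{\H}_{k,g,2}^{\mathrm{ch}}$ with this versal deformation is
\[
\prod_{i=1}^{n}\bigl(\mathrm{Def}(C^i, p^{i-1}, p^i)\cap \H_{k, g^i, 2}\bigr) \;\times\; \mathbb{A}^{n-1},
\]
because an admissible cover of the chain exists (and is unique up to $\Aut(P)$) precisely when each component $C^i$ satisfies the per-component torsion condition $kp^{i-1}\sim kp^i$, and because the nodal smoothings are unobstructed and independent of the per-component deformations.

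Next I would invoke smoothness of each individual Hurwitz stack $\H_{k, g^i, 2}$. Since $\mathrm{char}\, K \nmid k$, every degree-$k$ cover $C^i \to \pp^1$ totally ramified at $p^{i-1}$ and $p^i$ is tame, so its deformations with these ramification indices prescribed are unobstructed; equivalently, $\H_{k, g^i, 2}$ is smooth as an \'etale cover of an open in the smooth Hurwitz stack $\H_{k, g^i}$. Combining these smooth factors with the free nodal-smoothing factor $\mathbb{A}^{n-1}$ exhibits $\bar{\H}_{k,g,2}^{\mathrm{ch}}$ locally as a product of smooth pieces, proving its smoothness.

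For the total space $\C^{\mathrm{sm-ch}}$ of the universal curve over $\bar{\H}_{k,g,2}^{\mathrm{sm-ch}}$, smoothness at any point lying over a smooth fiber-point is automatic once the base is smooth and the fibers are smooth at that point. The remaining case is a fiber node $r = p^i$; my plan is to use that the nodal smoothing parameter $t_i$ identified above is a genuine independent coordinate on $\bar{\H}_{k,g,2}^{\mathrm{sm-ch}}$ (by the product decomposition of the first paragraph). Consequently, in local coordinates near $(r, [C])$, the total space is cut out by the single equation $xy - t_i = 0$ inside a smooth ambient $\Spec K[[x, y, t_i, \ldots]]$, which is smooth since $\partial(xy - t_i)/\partial t_i = -1$ is a unit. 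The main technical point throughout is the product decomposition of the versal deformation intersected with $\bar{\H}_{k,g,2}^{\mathrm{ch}}$---once this is in hand, the smoothness of both stacks follows formally.
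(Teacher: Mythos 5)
Your approach is genuinely different from the paper's: you work inside the versal deformation of the $2$-pointed curve $(C, p^0, p^n)$ and assert a product decomposition of $\bar{\H}^{\mathrm{ch}}_{k,g,2}$ there, whereas the paper deforms the admissible cover $f' \colon C \to P$ directly, using formal patching (Lemma~5.6 of \cite{liu}) to reduce to local deformation computations at branch points and at the nodes $q^i$ of $P$. Your route is attractive --- in particular the total-space smoothness becomes almost formal once the $t_i$ are honest coordinates on the base --- and, as you correctly flag, everything hinges on the product decomposition.

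That product decomposition is precisely where the gap lies. The assertion that ``the nodal smoothings are unobstructed and independent of the per-component deformations,'' i.e.\ that each full $\Spec K[[t_i]]$ factor lies in $\bar{\H}^{\mathrm{ch}}_{k,g,2}$ and that the intersection with the versal deformation is a product, is the nontrivial content, and you give only a citation for it. The paper proves it by exhibiting the explicit local versal deformation of the cover at each node of $P$:
\[
\Spec K[[x,y,t]]/(xy - t) \;\longrightarrow\; \Spec K[[a,b,t]]/(ab - t^k),
\]
which shows the local deformation functor at the node is smooth of dimension one with free coordinate $t$. This computation uses $\operatorname{char} K \nmid k$ a second time, beyond the per-component tameness you invoke: one needs $a = x^k$, $b = y^k$ to be local coordinates on the target, which requires $k$ to be invertible. (The same assumption also enters upstream, via separability of $f'$, to make the relative cotangent complex punctual at branch points.) Without this per-node model --- or an imported statement from \cite{liu} that is precise about both the branch-point and node-of-$P$ behavior --- the product decomposition, and hence both smoothness conclusions, do not follow. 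One minor additional point: the map $\H_{k, g^i, 2} \to \H_{k, g^i}$ you reference is not \'etale (it is closer to the image of a $\mathbb{G}_m$-torsor under a smooth map), but the conclusion you want, smoothness of $\H_{k, g^i, 2}$, is nonetheless standard from unobstructedness of tame covers.
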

\begin{proof}
Let $C = C^1 \cup_{p^1} C^2 \cup_{p^2} \cdots \cup_{p^{n - 1}} C^n$ be a point of $\bar{\H}_{k,g,2}^{\mathrm{ch}}$,
with marked points $p = p^0 \in C^1$ and $q = p^n \in C^n$.
Let $f' \colon C \to P$ be the unique map of degree $k$
to a chain of $n$ copies of $\pp^1$ totally ramified over the $p^i$.
Because the characteristic does not divide $k$ by assumption, $f'$ is separable.

By formal patching (c.f.\ Lemma~5.6 of~\cite{liu}),
a deformation of $f'$ is uniquely determined by deformations in formal neighborhoods of every branch point $b$ of
$f'$, and every node $q^i$ of $P$.
Near a branch point $b$, the deformation space is smooth since the relative cotangent complex is punctual.
Near a node $q^i$, write $x$ and $y$ for local coordinates on $C^i$ and $C^{i + 1}$ at $p^i$.
Since the map is totally ramified and $k$ is not a multiple of the characteristic,
$a = x^k$ and $b = y^k$ give local coordinates on the two copies of $\pp^1$ meeting at $q^i$.
A local versal deformation space is then smooth of dimension~$1$:
a versal deformation with coordinate $t$ is
$\Spec K[[x, y, t]] / (xy - t) \to \Spec K[[a, b, t]] / (ab - t^k)$.
Thus $\bar{\H}_{k,g,2}^{\mathrm{ch}}$ is smooth.

Finally, along the fiber $C$, the map $\C^{\mathrm{sm-ch}} \to \bar{\H}_{k,g,2}^{\mathrm{sm-ch}}$
is smooth away from the $p^i$.
Therefore the only possible singularities of the total space $\C^{\mathrm{sm-ch}}$
along $C$ occur at the $p^i$.
In a formal neighborhood of $p^i$, the total space $\C^{\mathrm{sm-ch}}$
is a pullback under a smooth map of the total space $\Spec K[[x, y, t]] / (xy - t)$
of the universal source over the versal deformation space appearing above, which is smooth.
\end{proof}

In particular, $X$ lies in a unique irreducible component $\bar{\H}_{k,g,2}^{\mathrm{sm-ch}, \circ}$ of $\bar{\H}_{k,g,2}^{\mathrm{sm-ch}}$. The work in Section~\ref{sec:lim-line} defines a universal $\vec{e}$-positive locus $W^{\vec{e}}(\C^{\text{sm-ch}})$, which is proper over $\bar{\H}_{k, g, 2}^{\text{sm-ch}}$, and whose fiber over $X$ is $W^{\vec{e}}(X)$,
which we have shown is a reduced finite set.
By regeneration (Theorem~\ref{reg-st}), every point of $W^{\vec{e}}(X)$ lies in a component of $W^{\vec{e}}(\C^{\text{sm-ch}})$
dominating $\bar{\H}_{k,g,2}^{\mathrm{sm-ch}, \circ}$.
Since $\bar{\H}_{k,g,2}^{\mathrm{sm-ch}, \circ}$ is reduced by Lemma~\ref{lem:smooth},
we conclude as in the proof of Lemma~\ref{lem:etale} that $W^{\vec{e}}(\C^{\text{sm-ch}}) \to \bar{\H}_{k,g,2}^{\mathrm{sm-ch}, \circ}$
is \'etale near $X$.

\subsection{Flips in monodromy\label{mon-flips}}
Suppose that the flip $F^i T$ is defined. 
Our deformation of $X$ will be obtained by smoothing the node $p^i$.
Such curves are parametrized by a component $\H^\circ_{k, 2, 2}$ of 
$\H_{k, 2, 2}$. The map $\iota \colon \H^\circ_{k, 2, 2} \hookrightarrow \bar{\H}_{k, g, 2}^{\text{sm-ch}}$
sends $(C, p, q)$ to
the chain curve obtained by attaching $E^1 \cup_{p^1} \cdots \cup_{p^{i-2}} E^{i-1}$ to $C$ so that $p^{i-1}$ is identified with $p$,
and attaching $E^{i+2} \cup \cdots \cup E^g$ so that $p^{i+1}$ is identified with $q$.
\vspace{0.5\baselineskip}
\begin{center}
\begin{tikzpicture}
\draw (-2, 0) .. controls (-1, -1) and (0, -1) .. (1, 0);
\draw (0, 0) .. controls (1, -1) and (2, -1) .. (3, 0);
\draw (2, 0) .. controls (3.3, -1) and (7-1.3, -1) .. (7, 0);
\filldraw (7.8+4, -0.5) circle[radius=0.02];
\filldraw (7.5+4, -0.5) circle[radius=0.02];
\filldraw (7.2+4, -0.5) circle[radius=0.02];
\filldraw (7.8-10, -0.5) circle[radius=0.02];
\filldraw (7.5-10, -0.5) circle[radius=0.02];
\filldraw (7.2-10, -0.5) circle[radius=0.02];
\draw (6, 0) .. controls (7, -1) and (8, -1) .. (9, 0);
\draw (8, 0) .. controls (9, -1) and (10, -1) .. (11, 0);
\filldraw (2.5-2, -0.42) circle[radius=0.03];
\filldraw (4.5-2+.085, -0.42+.07) circle[radius=0.03];
\filldraw (4.5+2-.085, -0.42+.07) circle[radius=0.03];
\node at (4.5-2+.085, -.7) {$p^{i-1}$};
\node [rotate = 90] at (4.5-2+.085, -.7-.3) {$=$};
\node at (4.5-2+.085, -.7-.6) {$p$};
\node at (4.5+2-.085-.1, -.7) {$p^{i+1}$};
\node [rotate = 90] at (4.5+2-.085-.1, -.7-.3) {$=$};
\node at (4.5+2-.085-.1, -.7-.6) {$q$};
\filldraw (10.5-2, -0.42) circle[radius=0.03];
\draw (2.5-2, -0.8) node{$p^{i-2}$};
\draw (4.5, -1) node{$C$};
\draw (10.5-2, -0.8) node{$p^{i+2}$};
\draw (1.5, -0.95) node{$E^{i-1}$};
\draw (3.5-4, -0.95) node{$E^{i-2}$};
\draw (9.5, -0.95) node{$E^{i +3}$};
\draw (11.5-4, -0.95) node{$E^{i+2}$};
\end{tikzpicture}
\end{center}
Our original chain curve $X$ is in the closure of $\iota(\H_{k, 2, 2}^\circ)$.  

\begin{lem} \label{g2}
Let $L$ be a limit line bundle on $\iota(C, p, q)$ with $L^{E^t} = L_T^t$ for $t \notin \{i, i + 1\}$,
such that there exist points $x, y \in C$ with
\[L^C \simeq \O((T[i]+i-1)p + (d - T[i] - i)q + x) \simeq \O((T[i+1] +i - 1)p + (d - T[i+1] - i)q + y)\]
(which forces $\O_C(x - y) \simeq \O_C((T[i+1] - T[i])(p - q))$).
Then $L$ is limit $\vec{e}$-positive.
\end{lem}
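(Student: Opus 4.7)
The strategy is to show that $L$ lies in the closure of line bundles with splitting type $\vec{e}$ on smooth covers within the proper scheme $W^{\vec{e}}(\C^{\mathrm{sm\text{-}ch}})$, which forces $L$ to be $\vec{e}$-positive. The plan is to construct a refined, non-colliding $\vec{e}$-nested limit linear series on the partial chain $\iota(C, p, q)$ with underlying limit line bundle $L$, and then to apply a version of the regeneration theorem (Theorem~\ref{regen}) in this setting.

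The hypothesis furnishes two distinguished sections of $L^C$: a section $\sigma_1 \in H^0(C, L^C)$ whose zero divisor is $(T[i]+i-1)p + (d-T[i]-i)q + x$, and a section $\sigma_2 \in H^0(C, L^C)$ whose zero divisor is $(T[i+1]+i-1)p + (d-T[i+1]-i)q + y$. These will play the role of the ``new sections'' associated to times $i$ and $i+1$ in the construction of Lemma~\ref{hasit}, except that they both live on the single genus-$2$ component $C$ rather than on the two consecutive elliptic components $E^i$ and $E^{i+1}$ of $X$. The assumption $T[i] - T[i+1] \not\equiv \pm 1 \pmod k$ ensuring the flip $F^i T$ is defined, combined with the ramification analysis underlying Lemma~\ref{hasit} applied to $L_T$, implies that the vanishing orders of $\sigma_1$ and $\sigma_2$ at both $p$ and $q$ have residues modulo $k$ that are distinct from each other and from all ramification indices at lower layers, giving the non-colliding property.

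I would then assemble the full $\vec{e}$-nested limit linear series on $\iota(C, p, q)$ layer by layer, component by component, exactly as in Lemma~\ref{hasit}: on each component $E^t$ for $t \notin \{i, i+1\}$ take the $E^t$-aspect of the linear series for $L_T$ on $X$ (valid since $L^{E^t} = L_T^{E^t}$); on $C$, at each layer $j$ the linear series consists of the images of lower-layer sections (via multiplication by sections of $\O_C(d_{j'} - d_j)$) together with $\sigma_1$ (introduced at the layer containing time $i$) and $\sigma_2$ (at the layer containing time $i+1$). Compatibility of the sections on $C$ with those on $E^{i-1}$ and $E^{i+2}$ at the nodes $p = p^{i-1}$ and $q = p^{i+1}$ is automatic, because the vanishing orders of $\sigma_1$ and $\sigma_2$ at $p$ and $q$ are precisely the ramification indices prescribed by $T$ at $p^{i-1}$ and $p^{i+1}$.

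Finally, applying Theorem~\ref{regen} in a form adapted to a family smoothing the remaining nodes of $\iota(C, p, q)$ produces a quasiprojective scheme whose general fiber parameterizes line bundles of splitting type $\vec{e}$ on smooth genus-$g$ covers and whose special fiber contains $L$; combined with the properness of $W^{\vec{e}}(\C^{\mathrm{sm\text{-}ch}})$ over $\bar{\H}_{k,g,2}^{\mathrm{sm\text{-}ch}}$, this yields the $\vec{e}$-positivity of $L$. The main obstacle will be verifying that the Grassmann-bundle and frame-space construction in the proof of Theorem~\ref{regen} extends cleanly to the case where one component of the chain is the genus-$2$ curve $C$: the expected dimension of the nested linear series parameter scheme must increase by exactly one relative to the original setting, matching both the extra modulus of $\Pic^0$ on $C$ and the one-parameter family of $L$'s parametrized by $x \in C$ (with $y$ determined by the difference condition $\O_C(x - y) \simeq \O_C((T[i+1] - T[i])(p - q))$).
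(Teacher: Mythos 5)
Your approach is fundamentally different from the paper's, and it carries substantial unfilled gaps. The statement to be proved is precisely that $L$ satisfies the $h^0$ inequalities of Definition~\ref{def-epos}, and the paper verifies these directly: since $L^{E^t} = L_T^t$ for $t \notin \{i,i+1\}$ and $L_T$ is known to be $\vec{e}$-positive, the required inequalities reduce to the single-component bound
\[
h^0\bigl(L^C(-ap-bq)\bigr) \;\geq\; \min_{d^i + d^{i+1} = d} h^0\bigl((L_T)^{E^i \cup E^{i+1}}_{(d^i,d^{i+1})}(-ap-bq)\bigr)
\]
for all integers $a,b$, which is then checked by short casework on $a+b$ using Riemann--Roch on the genus-$2$ curve $C$ and the flip hypothesis $T[i]-T[i+1]\not\equiv\pm 1 \pmod{k}$. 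That is a one-page, self-contained argument.

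You instead try to show $L$ is an honest limit of line bundles of splitting type $\vec{e}$ on smooth covers, by building a refined non-colliding $\vec{e}$-nested limit linear series on $\iota(C,p,q)$ and invoking regeneration. In principle this would imply $\vec{e}$-positivity by the semicontinuity discussion of Section~\ref{sec:lim-line}, but three things are missing. First, Lemma~\ref{hasit} and Theorem~\ref{regen} are stated and proved only for chains of elliptic components; their proofs lean on genus-$1$ Riemann--Roch and $\dim\Pic^0(E^i)=1$, and neither is carried over to a chain containing the genus-$2$ curve $C$ (you acknowledge the dimension issue but do not resolve it). Second, your construction of the aspect on $C$ is incomplete: an $\vec{e}$-nested linear series on $C$ must have $m_j$ independent new sections at \emph{each} layer $j$ (Lemma~\ref{max-dim}), and the sections for indices $\ell \neq \ell_\pm$ --- which on an elliptic component come from degree-$1$ line bundles with $h^0=1$ --- must be re-examined on $C$, where the analogous degree-$2$ twists can have $h^0 = 2$ (the hyperelliptic class). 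Third, even granting a regeneration statement, concluding that $L$ lies in the closure of $W^{\vec{e}}(\C^{*})$ requires the dimension-comparison argument of Corollary~\ref{reg-st}, which needs the uniqueness statement of the genus-$2$ analogue of Lemma~\ref{hasit} and a computation of $\dim W^T(\iota(C,p,q))$; none of this appears. Filling these gaps would amount to redoing a large part of Sections~\ref{regeneration-st}--\ref{sec:red_CM} in a new setting, whereas the lemma has a short direct proof from the definition.
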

\begin{proof}
Because $L_T$ is $\vec{e}$-positive, and $L^{E^t} = L_T^t$ for $t \notin \{i, i + 1\}$, it suffices to show that for all $a, b \in \zz$
(with the notation of Definition~\ref{limit-linebundle}):
\begin{equation} \label{want2}
h^0(L^C(-ap - bq)) \geq f(a, b) \colonequals \min_{d^i + d^{i + 1} = d} h^0\big((L_T)^{E^i \cup E^{i + 1}}_{(d^i, d^{i + 1})}(-ap - bq)\big).
\end{equation}
If $a + b \geq d$, then straight-forward casework (using our assumption
that $T[i + 1] \not \equiv T[i] - 1$ mod $k$) implies
\[f(a, b) \leq h^0\big((L_T)^{E^i \cup E^{i + 1}}_{(d - b, b)}(-ap - bq)\big) = 0,\]
and so \eqref{want2} holds. Otherwise, if $a + b \leq d - 1$, then
straight-forward casework (using our assumption
that $T[i + 1] \not \equiv T[i] + 1$ mod $k$)
implies
\[f(a, b) \leq h^0\big((L_T)^{E^i \cup E^{i + 1}}_{(d - b - 1, b + 1)}(-ap - bq)\big) = \begin{cases}
1 & \text{if $(a, b) = (T[i] + i - 1, d - T[i] - i)$;} \\
1 & \text{if $(a, b) = (T[i + 1] + i - 1, d - T[i + 1] - i)$;} \\
d - a - b - 1 & \text{otherwise.}
\end{cases}\]
This immediately implies \eqref{want2} in the ``otherwise'' case by
Riemann--Roch. In the first two cases, this implies \eqref{want2}
by our assumption that $L^C(-ap - bq)$ is effective for these
values of $a$ and $b$.
\end{proof}

On a smooth curve of genus $2$, the map $C \times C \to \Pic^0 C$
defined by $(x, y) \mapsto \O_C(x - y)$ is finite of degree $2$
away from the diagonal (which is contracted to the identity in $\Pic^0 C$).
Lemma~\ref{g2} thus produces two limit $\vec{e}$-positive line bundles
on the general curve in $\iota(\H^\circ_{k, 2, 2})$, i.e.\ a substack $Y_F(i, T)$ of the restriction of
$W^{\vec{e}}(\C^{\text{sm-ch}})$  to $\iota(\H^\circ_{k, 2, 2})$.
Limiting to $X$, we obtain two $\vec{e}$-positive line bundles
(which must be distinct because $W^{\vec{e}}(\C^{\text{sm-ch}}) \to \bar{\H}^{\text{sm-ch}}_{k,g,2}$ is \'etale near $X$).
By construction these correspond to tableaux $T_1$ and $T_2$ satisfying $T_j[t] = T[t]$ for $t \neq \{i, i + 1\}$.
But there are only two such tableaux: $T$ itself and $F^i T$.
Therefore the fiber of the closure of $Y_F(i, T)$ over $X$ consists of $L_T$ and $L_{F^i T}$ as desired.

Note that, although the map $Y_F(i, T) \to W^{\vec{e}}(\C^{\text{sm-ch}})$
depends on $i$ and $T$, the stack $Y_F(i, T)$ and the map $Y_F(i, T) \to \H_{k,2,2}^\circ$ depend only on
$n \colonequals T[i + 1] - T[i]$, up to sign and modulo $k$.
We therefore write $Y_F(n) = Y_F(i, T) \to \H_{k,2,2}^\circ$.

\subsection{Shuffles in monodromy\label{mon-shuffle}}
Suppose that the shuffle $S^i T$ is defined, i.e., $T[i] = T[i+2]$ and $T[i+1] = T[i] \pm 1$.  Without loss of generality, suppose that
$T[i + 1] = T[i] + 1$.
Our deformation of $X$ will be obtained by simultaneously smoothing the nodes $p^i$ and $p^{i + 1}$.
Such curves are parametrized by a component $\H^{\circ}_{k, 3, 2}$ of 
$\H_{k, 3, 2}$. The map $\iota \colon \H^\circ_{k, 3, 2} \hookrightarrow \bar{\H}_{k, g, 2}^{\text{sm-ch}}$
sends $(C, p, q)$ to
the chain curve obtained by attaching $E^1 \cup_{p^1} \cdots \cup_{p^{i-2}} E^{i-1}$ to $C$ so that $p^{i-1}$ is identified with $p$, and attaching $E^{i+3} \cup \cdots \cup E^g$ so that $p^{i+2}$ is identified with $q$. 
\begin{center}
\begin{tikzpicture}
\draw (-2, 0) .. controls (-1, -1) and (0, -1) .. (1, 0);
\draw (0, 0) .. controls (1, -1) and (2, -1) .. (3, 0);
\draw (2, 0) .. controls (3, -1) and (8, -1) .. (9, 0);
\filldraw (7.8+4, -0.5) circle[radius=0.02];
\filldraw (7.5+4, -0.5) circle[radius=0.02];
\filldraw (7.2+4, -0.5) circle[radius=0.02];
\filldraw (7.8-10, -0.5) circle[radius=0.02];
\filldraw (7.5-10, -0.5) circle[radius=0.02];
\filldraw (7.2-10, -0.5) circle[radius=0.02];
\draw (8, 0) .. controls (9, -1) and (10, -1) .. (11, 0);
\filldraw (2.5-2, -0.42) circle[radius=0.03];
\filldraw (4.5-2+.085, -0.42+.07) circle[radius=0.03];
\filldraw (4.5+2-.085+2, -0.42+.07) circle[radius=0.03];
\node at (4.5-2+.085, -.7) {$p^{i-1}$};
\node [rotate = 90] at (4.5-2+.085, -.7-.3) {$=$};
\node at (4.5-2+.085, -.7-.6) {$p$};
\node at (4.5+2-.085-.1+2, -.7) {$p^{i+2}$};
\node [rotate = 90] at (4.5+2-.085-.1+2, -.7-.3) {$=$};
\node at (4.5+2-.085-.1+2, -.7-.6) {$q$};
\draw (2.5-2, -0.8) node{$p^{i-2}$};
\draw (5.5, -1) node{$C$};
\draw (1.5, -0.95) node{$E^{i-1}$};
\draw (3.5-4, -0.95) node{$E^{i-2}$};
\draw (9.5, -0.95) node{$E^{i +3}$};
\end{tikzpicture}
\end{center}
Our original chain curve $X$ is in the closure of $\iota(\H^\circ_{k, 3, 2})$.

\begin{lem} \label{g3}
Suppose that $C$ is a non-hyperelliptic curve of genus $3$. Let
$L$ be a limit line bundle on $\iota(C, p, q)$ with $L^{E^t} = L_T^t$ for $t \notin \{i, i + 1, i + 2\}$,
such that 
\[L^C \simeq \O_C(z + (T[i]+i)p + (d - T[i] - i-1)q)\] 
for $z \in \{x, y\}$, where $x, y \in C$ is the pair of points such that $p +q + x + y \sim K_C$
(i.e.\ the two points colinear with $p$ and $q$ in the canonical model of $C$ as a plane quartic).
 \begin{center}
 \begin{tikzpicture}[rotate = 45]
 \draw[domain=.4:3.9,smooth,variable=\x] plot ({\x},{(\x-.7)*(\x-1.6)*(\x-2.6)*(\x-3.8)});
 \draw[dashed] (-.3, 0) -- (4.7, 0);
 \node at (.7, 0) {$\bullet$};
 \node [color = violet] at (1.6, 0) {$\bullet$};
 \node [color = violet] at (2.6, 0) {$\bullet$};
 \node at (3.8, 0) {$\bullet$};
 \node at (.5, 3.2) {$C$};
 \node[color=violet] at (1.5, .3) {$x$};
  \node[color=violet] at (2.7, .3) {$y$};
  \node at (0.45, .2) {$p$};
    \node at (4.05, .25) {$q$};
 \end{tikzpicture}
 \end{center}
Then $L$ is limit $\vec{e}$-positive.
\end{lem}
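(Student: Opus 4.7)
The plan is to mimic the proof of Lemma~\ref{g2}: since $L^{E^t} = L_T^t$ for $t \notin \{i, i+1, i+2\}$ and $L_T$ is $\vec{e}$-positive, it suffices to establish
\[ h^0(L^C(-ap - bq)) \;\geq\; f(a,b) \colonequals \min_{d^i + d^{i+1} + d^{i+2} = d}\; h^0\!\left((L_T)^{E^i \cup E^{i+1} \cup E^{i+2}}_{(d^i, d^{i+1}, d^{i+2})}(-ap-bq)\right) \]
for every $(a, b) \in \zz^2$. I would carry this out by casework on $a+b$, introducing the abbreviations $\alpha \colonequals T[i]+i$ and $\beta \colonequals d - T[i] - i - 1$ (so $\alpha + \beta = d - 1$ and $L^C \simeq \O_C(z + \alpha p + \beta q)$).

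\textbf{The easy ranges.} If $a + b \geq d + 2$, a suitable choice of degree distribution makes the restriction to some $E^j$ have negative total degree, giving $f(a,b) = 0$. If $a + b \leq d - 3$, Riemann--Roch on the genus-$3$ curve $C$ yields $h^0(L^C(-ap-bq)) \geq d - a - b - 2 \geq f(a,b)$. The interesting range is $a + b \in \{d-2, d-1, d, d+1\}$. Within this range, the excess contributions to $f$ come from the two valid local fillings at positions $(i, i+1, i+2)$---namely the original pattern $(j, j+1, j)$ of $T$ (with $j \colonequals T[i]$) and the shuffled pattern $(j+1, j, j+1)$ of $S^i T$---and I would enumerate the finite list of pairs $(a, b)$ at which $f(a,b) \geq 1$, reading off from \eqref{ramj} what degree distribution achieves the minimum.

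\textbf{The main obstacle: the critical pairs.} The hard part will be verifying the inequality at each exceptional $(a, b)$, where I would use the canonical relation $p + q + x + y \sim K_C$. Writing $\{z, z'\} = \{x, y\}$, this relation gives
\[ L^C(-ap-bq) \;\simeq\; K_C\bigl((\alpha - 1 - a)p + (\beta - 1 - b)q\bigr) \otimes \O_C(-z'), \]
and combining this with Serre duality reduces the computation of $h^0$ at each critical pair to a transparent count. Here the non-hyperellipticity of $C$ is essential: it guarantees that the four points $p, q, x, y$ impose independent conditions on $|K_C|$, and it eliminates the possible $g^1_2$ that would otherwise cause $x$ and $y$ to be linearly equivalent or $p + x \sim q + y$, either of which could cause the above isomorphism to produce fewer sections than $f(a,b)$ demands.

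\textbf{Conclusion and link to the main argument.} Just as in Lemma~\ref{g2}, the two choices $z \in \{x, y\}$ produce two distinct limit $\vec{e}$-positive line bundles on $\iota(C, p, q)$. As $C$ degenerates to $E^i \cup E^{i+1} \cup E^{i+2}$, the unordered pair $\{x, y\}$ specializes so that these two $z$-choices limit respectively to $L_T$ and $L_{S^i T}$, producing the substack $Y_S(i, T) \subseteq W^{\vec{e}}(\C^{\text{sm-ch}})|_{\iota(\H^\circ_{k,3,2})}$ that feeds into the monodromy argument of Section~\ref{sec:mon}.
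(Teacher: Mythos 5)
Your overall plan is the right one (reduce to $h^0(L^C(-ap-bq)) \geq f(a,b)$, dispatch most $(a,b)$ by Riemann--Roch and negativity, and handle a finite list of critical pairs via the canonical relation), but the threshold analysis is shifted and this causes you to miss the one case that actually needs the relation $p + q + x + y \sim K_C$.

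The claim that ``$a+b \leq d-3$ is automatically handled by Riemann--Roch'' is false. At $(a,b) = (T[i]+i-1,\; d-T[i]-i-2)$ we have $a + b = d - 3$, so Riemann--Roch gives only $h^0(L^C(-ap-bq)) \geq d - a - b - 2 = 1$, whereas at this pair $f(a,b)$ can be as large as $2$ (realized by the degree distribution $(a+1, d-a-b-2, b+1)$, which makes each of the three restrictions nontrivial). Meanwhile the pairs at $a + b \in \{d, d+1\}$ in your ``interesting range'' all have $f = 0$ and are not interesting at all. The genuine critical pairs sit at $a+b \in \{d-3, d-2, d-1\}$, namely $(T[i]+i-1,\, d-T[i]-i-2)$, $(T[i]+i-1,\, d-T[i]-i-1)$, $(T[i]+i,\, d-T[i]-i-2)$, and $(T[i]+i,\, d-T[i]-i-1)$. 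The last three are controlled by the single condition $h^0(L^C(-(T[i]+i)p - (d-T[i]-i-1)q)) = h^0(\O_C(z)) \geq 1$, which is automatic; it is the first one, at $a+b = d-3$, that needs $h^0(L^C(-(T[i]+i-1)p - (d-T[i]-i-2)q)) = h^0(\O_C(z + p + q)) \geq 2$, and \emph{that} is precisely where $p + q + x + y \sim K_C$ enters: by Serre duality $h^1(\O_C(z+p+q)) = h^0(K_C(-z-p-q)) = h^0(\O_C(z')) = 1$, so $h^0 = 1 + 1 = 2$. By declaring the entire range $a+b \leq d-3$ safe, your outline skips exactly this step.

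A secondary point: non-hyperellipticity is not needed to make $p,q,x,y$ impose independent conditions on $|K_C|$ or to prevent $x \sim y$ --- it is needed because $Y_S$ is only defined (and of the correct degree) away from the hyperelliptic locus of $\H_{k,3,2}$, since on a hyperelliptic curve $\omega(-p-q)$ has $h^0 = 2$ when $p, q$ are conjugate and the unordered pair $\{x,y\}$ ceases to be unique. So the hypothesis is a precondition for the statement to make sense, not an ingredient in the $h^0$ counts at the critical pairs.
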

\begin{proof}
Because $L_T$ is $\vec{e}$-positive, and $L^{E^t} = L_T^t$ for $t \notin \{i, i + 1, i + 2\}$,
it suffices to show that for all $a, b \in \zz$:
\begin{equation} \label{want3}
h^0(L^C(-ap - bq)) \geq f(a, b) \colonequals \min_{d^i + d^{i + 1} + d^{i + 2} = d} h^0\big((L_T)^{E^i \cup E^{i + 1} \cup E^{i + 2}}_{(d^i, d^{i + 1}, d^{i + 2})}(-ap - bq)\big).
\end{equation}
If $a + b \geq d - 1$, then straight-forward casework implies
\[f(a, b) \leq h^0\big((L_T)^{E^i \cup E^{i + 1} \cup E^{i + 2}}_{(a, d - a - b, b)}(-ap - bq)\big) = \begin{cases}
1 & \text{if $(a, b) = (T[i] + i, d - T[i] - i - 1)$;} \\
0 & \text{otherwise}.
\end{cases}\]
This immediately implies \eqref{want3} in the ``otherwise'' case,
and shows that
in the first case, \eqref{want3} follows from the condition
\begin{equation} \label{c1}
h^0(L^C(-(T[i] + i)p - (d - T[i] - i - 1)q)) \geq 1.
\end{equation}
Otherwise, if $a + b \leq d - 2$, then straight-forward casework implies
\[f(a, b) \leq h^0\big((L_T)^{E^i \cup E^{i + 1} \cup E^{i + 2}}_{(a + 1, d - a - b - 2, b + 1)}(-ap - bq)\big) = \begin{cases}
2 & \text{if $(a, b) = (T[i] + i - 1, d - T[i] - i - 2)$;} \\
1 & \text{if $(a, b) = (T[i] + i - 1, d - T[i] - i - 1)$;} \\
1 & \text{if $(a, b) = (T[i] + i, d - T[i] - i - 2)$;} \\
d - a - b - 2 & \text{otherwise.}
\end{cases}\]
This immediately implies \eqref{want3} in the ``otherwise'' case by Riemann--Roch. Since vanishing at at $p$
or $q$ imposes at most one condition on sections of any line bundle, in the first
three cases, \eqref{want3} follows from the single condition
\begin{equation} \label{c2}
h^0(L^C(-(T[i] + i - 1)p - (d - T[i] - i - 2)q)) \geq 2.
\end{equation}
We conclude by observing that $L^C$ satisfies
\eqref{c1} and \eqref{c2} by its definition.
\end{proof}

Lemma~\ref{g3} thus produces two limit $\vec{e}$-positive line bundles
on the general curve in $\iota(\H^\circ_{k,3, 2})$, i.e.\ a substack $Y_S(i, T)$ of the restriction of
$W^{\vec{e}}(\C^{\text{sm-ch}})$  to $\iota(\H^\circ_{k, 3, 2})$.
As in the previous ``flip'' case, the
fiber of the closure of $Y_S(i, T)$ over $X$ consists of $L_T$ and $L_{S^i T}$ as desired.
Moreover, $Y_S(i, T) \to \H^\circ_{k,3, 2}$ is independent of $i$ and $T$.
We therefore write $Y_S = Y_S(i, T) \to \H^\circ_{k,3, 2}$.

\subsection{\boldmath Irreducibility of $Y_{F}(n)$ and $Y_S$\label{mon-disc}}
Our final task is to show that the following two double covers have a unique irreducible component dominating
the desired component of the base:
\begin{description}
\item[\boldmath $Y_F(n)$] The double cover of $\H_{k, 2, 2}$
parameterizing points $x$ and $y$ with $\O(x - y) \simeq \O(n(p - q))$
(where $n$ is an integer not equal to $0, \pm 1$ mod $k$);
\item[\boldmath $Y_S$] The double cover of the complement of the hyperelliptic locus in $\H_{k, 3, 2}$ parameterizing points
$x$ and $y$ with $\O(x + y) \simeq \omega(-p - q)$ 
(where $k > 2$).
\end{description}
Notice that the definition of $Y_F(n)$ (respectively $Y_S$) extends naturally to the entire closure $\bar{\H}_{k,2, 2}$ (respectively $\bar{\H}_{k,3, 2}$), although it is not a priori finite flat of degree $2$.

\begin{prop} \label{prop:finflat} \hfill
\begin{enumerate}
\item $Y_F(n)$ is finite flat of degree $2$
over the open substack of $\bar{\H}^{\mathrm{ch}}_{k, 2, 2}$ defined by $(C, p, q)$ satisfying the following conditions:
\begin{enumerate}
\item $C$ is irreducible,
\item $p - q$ is exactly $k$-torsion on $C$,
\item $p - q$ is exactly $k$-torsion on the partial normalization of $C$ at any node
(this condition is vacuous if $C$ is smooth and implies the previous one if $C$ is singular).
\end{enumerate}
\item $Y_S$ is finite flat of degree $2$
over the open substack of $\bar{\H}^{\mathrm{ch}}_{k, 3, 2}$ defined by $(C, p, q)$ satisfying the following conditions:
\begin{enumerate}
\item $C$ is irreducible,
\item $p$ and $q$ are not conjugate under the hyperelliptic involution if $C$ is hyperelliptic
(this condition is vacuous if $C$ is not hyperelliptic), 
\item $p$ and $q$ are not conjugate under the hyperelliptic involution on the partial normalization of $C$ at any node
(this condition is vacuous if $C$ is smooth and implies the previous one if $C$ is singular).
\end{enumerate}
\end{enumerate}
\end{prop}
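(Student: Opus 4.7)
The plan is to establish both parts in parallel by exhibiting a fiberwise $\mathbb{Z}/2$-action on $Y$ that swaps the two points in each generic fiber, checking that the fiber has length $2$ throughout the good locus, and then promoting this to finite flatness via miracle flatness.

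\emph{The involution and fiber length.} For $Y_F(n)$, every curve in the good locus --- irreducible of arithmetic genus $2$, whether smooth or with one node whose normalization is an elliptic curve on which $p - q$ has order exactly $k$ --- is hyperelliptic in the generalized sense: $|\omega_C|$ is a base-point-free pencil inducing an involution $\iota$ on $C$ that deforms in families. The map $(x,y) \mapsto (\iota(y), \iota(x))$ preserves the defining condition, since
\[
\O_C(\iota(y) - \iota(x)) = (\omega_C \otimes \O_C(-y)) \otimes (\omega_C \otimes \O_C(-x))^{-1} = \O_C(x - y).
\]
A direct theta-intersection calculation on the (compactified) Jacobian shows each fiber has length exactly $2$: for a nontrivial degree-$0$ line bundle $L$ on a (possibly nodal) genus-$2$ curve, the set $\{(x,y) : \O_C(x-y) = L\}$ is the intersection of two translates of the Abel--Jacobi image of $C$ in $\Pic^{n+1}(C)$, each of class $\theta$, with intersection number $\theta^2 = 2$. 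Conditions (b) and (c) guarantee that $L = \O_C(n(p-q))$ is nontrivial on every normalization stratum, so the count persists to the boundary. For $Y_S$ the involution is the swap $(x,y) \leftrightarrow (y,x)$; conditions (b) and (c) force $h^0(\omega_C \otimes \O_C(-p-q)) = 1$ on every stratum (either by non-hyperellipticity, or by $(p,q)$ failing to be a hyperelliptic pair), so $|M|$ has a unique effective representative and the fiber consists of exactly two ordered pairs.

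\emph{Cohen--Macaulayness and flatness.} Both $Y_F(n)$ and $Y_S$ are closed subschemes of the proper relative fiber product $\C \times_B \C$, cut out by a Picard-theoretic relation of codimension $2$. Since the total space $\C$ of the universal chain curve is smooth above the good locus (Lemma~\ref{lem:smooth}), the fiber product $\C \times_B \C$ is Cohen--Macaulay of relative dimension $2$ over $B$, and the defining equations make each $Y$ a local complete intersection, hence Cohen--Macaulay. The base $\bar{\H}^{\mathrm{ch}}_{k,g,2}$ is smooth by Lemma~\ref{lem:smooth}. Thus $Y \to B$ is a proper, quasi-finite morphism of constant fiber length $2$ with Cohen--Macaulay source and smooth target, and miracle flatness finishes.

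The main obstacle will be verifying the fiber-length count on the boundary strata, where the relative Picard acquires $\mathbb{G}_m$-factors and the theta-divisor argument must be re-run on the compactified Jacobian. Conditions (b) and (c) in both parts are designed exactly to forbid the two pathological degenerations: $L$ becoming trivial after pullback to the normalization, or the hyperelliptic pencil on the normalization producing an excess $\pp^1$-component in the fiber.
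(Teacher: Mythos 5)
Your route is genuinely different from the paper's, and the comparison is instructive; but as written it has a real gap.

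The paper never goes to the Jacobian. It observes that $Y_F(n)$ is literally the zero divisor of the (unique up to scale) section of the relative line bundle $\omega_{\C/U_2}(n(\mathfrak{p} - \mathfrak{q}))$, and likewise $Y_S$ for $\omega_{\C/U_3}(-\mathfrak{p} - \mathfrak{q})$. Conditions (b) and (c) are used exactly twice: to ensure $h^0 = 1$ (so the universal pushforward is a line bundle, by cohomology and base change over the reduced base $U_h$), and to show the resulting degree-$2$ divisor stays in the smooth locus of the fibers. Once you know $\mathcal{D}$ is a relative Cartier divisor of degree $2$ supported in the smooth locus of $\C \to U_h$, it is finite flat of degree $2$ with no further work: a Cartier divisor on a curve smooth over the base, with constant degree, is automatically flat. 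This avoids the Jacobian, the compactified Jacobian, LCI/Cohen--Macaulay verifications, and miracle flatness entirely.

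Your approach defers the boundary fiber-length count to a final paragraph and calls it the ``main obstacle,'' but this is not a detail --- it is the entire content of the proposition. The theta-intersection count $\theta^2 = 2$ is fine on a smooth genus-$2$ fiber, but you do not actually carry out the replacement on nodal fibers, where $\Pic^0$ has a $\gg_m$ factor and the compactified Jacobian is singular. Nor do you verify that the defining locus in $\C \times_B \C$ is a local complete intersection of the right codimension through the boundary; it is the preimage of a section under a map to $\Pic^0(\C/B)$, which is not proper over $B$ when fibers go nodal, so the argument does not terminate on its own. Miracle flatness is also stated with slightly scrambled inputs: it needs Cohen--Macaulay source, regular target, and \emph{equidimensional fibers}, not ``constant fiber length $2$'' (which is an output once flatness and finiteness are known). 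If you want to save your approach, you would need to (i) show $Y$ is quasi-finite and LCI of codimension $2$ in $\C \times_B \C$ over the entire good locus, and (ii) verify equidimensionality at the boundary --- at which point you will likely rediscover the paper's divisor $D$ anyway, since its avoidance of the nodes under conditions (c) is what makes the count work.

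One correct and useful piece of your writeup: the involution computation $\O_C(\iota(y) - \iota(x)) \cong \O_C(x - y)$ is valid and matches the paper's implicit pairing $x \leftrightarrow \bar{y}$, and the identification of conditions (b), (c) as precisely ruling out trivialization of $\omega(n(p-q))$ (resp.\ $\omega(-p-q)$) on the partial normalizations is the right intuition. You just need to turn that intuition into the section-of-$\omega$ argument rather than a Jacobian count.
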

\begin{proof}
Write $U_2 \subseteq \bar{\H}_{k, 2, 2}^{\mathrm{ch}}$ (respectively $U_3 \subseteq \bar{\H}_{k, 3, 2}^{\mathrm{ch}}$)
for the open substacks defined by the above conditions. 

\smallskip

\paragraph{\boldmath \textbf{For $Y_F(n)$:}} Consider any $(C, p, q) \in U_2$.
Since $C$ is irreducible,
the condition $\O(x - y) \simeq \O(n(p - q))$ is equivalent to
$\O(x + \bar{y}) \simeq \omega(n(p - q))$,
where $\bar{y}$ denotes the conjugate of $y$ under the hyperelliptic involution.
The line bundle $\omega(n(p - q))$ is of degree $2$, and not isomorphic to $\omega$
because $p - q$ is exactly $k$ torsion and $n \not\equiv 0$ mod $k$.
Therefore, since $C$ is irreducible, $\omega(n(p - q))$ has a unique section (up to scaling),
vanishing on a Cartier divisor $D \subset C$ of degree $2$.
If $D$ were supported at a node of $C$, consider the partial normalization $C^\nu$ of $C$
at this node, and write $s$ and $t$ for the points on $C^\nu$ above this node.
Then $\omega_C(n(p - q)) \simeq \O(s + t)$ as line bundles on $C^\nu$.
Since $\omega_C \simeq \omega_{C^\nu}(s + t) \simeq \O(s + t)$,
we would have $\O_{C^\nu}(n(p - q)) \simeq \O_{C^\nu}$.
But this is impossible since $p - q$ is exactly $k$-torsion on $C^\nu$ by assumption.
Thus $D \subset C_{\text{sm}}$.

It thus remains to see that these divisors $D$ fit together to form a Cartier divisor $\mathcal{D}$
on the universal curve $\pi \colon \mathcal{C} \to U_2$ of relative degree $2$
(which will then be supported in the smooth locus and identified with $Y_F(n)$).
Write $\mathfrak{p}, \mathfrak{q} \colon U_2 \to \mathcal{C}$ for the universal sections.
Observe that $\omega_{\mathcal{C} / U_2} (n(\mathfrak{p} - \mathfrak{q}))$ is a line bundle on $\mathcal{C}$,
with a unique section up to scaling on every geometric fiber.
Moreover, by Lemma~\ref{lem:smooth}, the base $U_2$ is smooth, and in particular reduced.
Cohomology and base change thus implies that $\pi_* \omega_{\mathcal{C} / U_2} (n(\mathfrak{p} - \mathfrak{q}))$
is a line bundle on $U_2$.
Working locally on $U_2$, we may trivialize it by picking a section, which gives a section of
$\omega_{\mathcal{C} / U_2} (n(\mathfrak{p} - \mathfrak{q}))$, vanishing along a Cartier divisor $\mathcal{D}$.

\smallskip

\paragraph{\boldmath \textbf{For $Y_S$:}}
Consider any $(C, p, q) \in U_3$.
Since $C$ is irreducible, and $p$ and $q$ are not conjugate under the hyperelliptic involution
if $C$ is hyperelliptic, we have $h^0(C, \O(p + q)) = 1$.
By Serre duality, $\omega(-p - q)$ has a unique section (up to scaling),
vanishing on a Cartier divisor $D \subset C$ of degree $2$.
If $D$ were supported at a node of $C$, consider the partial normalization $C^\nu$ of $C$
at this node, and write $s$ and $t$ for the points on $C^\nu$ above this node.
Then $\omega_C(-p-q) \simeq \O(s + t)$ as line bundles on $C^\nu$.
Since $\omega_C \simeq \omega_{C^\nu}(s + t)$,
we would have $\O_{C^\nu}(p+q) \simeq \omega_{C^\nu}$.
But this is impossible since $p$ and $q$ are not conjugate under the hyperelliptic involution
on $C^\nu$ by assumption.
Thus $D \subset C_{\text{sm}}$.

As in the previous case, these divisors $D$ fit together to form a Cartier divisor $\mathcal{D}$
on the universal curve $\pi \colon \mathcal{C} \to U_3$ of relative degree $2$.
\end{proof}

We now show that $Y_F(n)$ (respectively $Y_S$) has a unique irreducible component dominating
$\bar{\H}^{\text{ch},\circ}_{k, 2, 2}$ (respectively $\bar{\H}^{\text{ch},\circ}_{k, 3, 2}$).
To do this, we will restrict these double covers to certain schemes $R_h \to \bar{\H}^{\text{ch},\circ}_{k, h, 2}$
(with $h = 2$ respectively $h = 3$),
where we can write down the equations of $Y_F(n)$ and $Y_S$ explicitly and see that they are irreducible.
The scheme $R_2$ is an open in $\{r_1, r_2\} \in \operatorname{Sym}^2 \pp^1$,
respectively $R_3$ is an open in $\{r_1, r_2, r_3\} \in \operatorname{Sym}^3 \pp^1$.
Let $\zeta$ denote a primitive $k$th root of unity
(which exists by our assumption that the characteristic does not divide $k$).
Our schemes $R_h$ will parameterize stable curves of geometric genus $0$ of the following forms:

\begin{center}
\begin{tikzpicture}[scale=1.3]
\filldraw (0, 1) circle[radius=0.03];
\filldraw (4, 1) circle[radius=0.03];
\draw (0, 1.15) node{$p = 0$};
\draw (4, 1.15) node{$q = \infty$};
\draw (0.7, 0.67) node{$r_1$};
\draw (1.4, 0.67) node{$r_1 \zeta$};
\draw (2.7, 0.67) node{$r_2$};
\draw (3.4, 0.67) node{$r_2 \zeta$};
\draw (0, 1) .. controls (1, 1) and (2, 0) .. (1, 0);
\draw (2, 1) .. controls (1, 1) and (0, 0) .. (1, 0);
\draw (2, 1) .. controls (3, 1) and (4, 0) .. (3, 0);
\draw (4, 1) .. controls (3, 1) and (2, 0) .. (3, 0);
\end{tikzpicture}
\hspace{0.25in}
\begin{tikzpicture}[scale=1.3]
\filldraw (0, 1) circle[radius=0.03];
\filldraw (6, 1) circle[radius=0.03];
\draw (0.7, 0.67) node{$r_1$};
\draw (1.4, 0.67) node{$r_1 \zeta$};
\draw (2.7, 0.67) node{$r_2$};
\draw (3.4, 0.67) node{$r_2 \zeta$};
\draw (4.7, 0.67) node{$r_3$};
\draw (5.4, 0.67) node{$r_3\zeta$};
\draw (0, 1.15) node{$p = 0$};
\draw (6, 1.15) node{$q = \infty$};
\draw (0, 1) .. controls (1, 1) and (2, 0) .. (1, 0);
\draw (2, 1) .. controls (1, 1) and (0, 0) .. (1, 0);
\draw (2, 1) .. controls (3, 1) and (4, 0) .. (3, 0);
\draw (4, 1) .. controls (3, 1) and (2, 0) .. (3, 0);
\draw (4, 1) .. controls (5, 1) and (6, 0) .. (5, 0);
\draw (6, 1) .. controls (5, 1) and (4, 0) .. (5, 0);
\end{tikzpicture}
\end{center}

On such curves (and their normalizations at any one node), $p - q = 0 - \infty$ has order exactly $k$;
the function $t^k$ gives the linear equivalence between $kp$ and $kq$.
Moreover, any involution of $\pp^1$ exchanging $0$ and $\infty$
has the form $t \mapsto c/t$. Such an involution exchanges $r_i$ and $r_i \zeta$ only if $c = r_i^2 \zeta$.
Therefore, if the $r_i^2 \zeta$ are distinct, the points $p$ and $q$ are not conjugate
under the hyperelliptic involution on the normalization of $R_3$ at any node.
The general curves over $R_h$ therefore satisfy the conditions of Proposition~\ref{prop:finflat}.
A priori, however, $R_h$ may not map to the desired component
$\bar{\H}^{\text{ch}, \circ}_{k, h, 2}$.

\begin{prop} \label{prop:rightcomponent}
The image of $R_h$ lies in the component $\bar{\H}^{\mathrm{ch}, \circ}_{k, h, 2}$.
\end{prop}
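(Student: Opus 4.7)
The plan is to exhibit, for each point of $R_h$, an explicit deformation within $\bar{\H}_{k,h,2}^{\mathrm{ch}}$ to a chain of $h$ smooth elliptic curves of the type that defines $\bar{\H}^{\mathrm{ch}, \circ}_{k, h, 2}$. Since $\bar{\H}_{k,h,2}^{\mathrm{ch}}$ is smooth by Lemma~\ref{lem:smooth}, any point in the same connected family as a point of $\bar{\H}^{\mathrm{ch}, \circ}_{k, h, 2}$ must lie in $\bar{\H}^{\mathrm{ch}, \circ}_{k, h, 2}$ itself.

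First, I would unpack the structure of the curves parametrized by $R_h$. Each such curve is a chain of $h$ irreducible components, where the $i$th component is the nodal rational curve $\pp^1 / (r_i \sim r_i\zeta)$ with distinguished smooth points $0$ and $\infty$ playing the roles of the ``left'' and ``right'' nodes (or marked points on the extremal components). Each component has arithmetic genus $1$, and the chain has arithmetic genus $h$. The required degree $k$ map $C \to \pp^1$ totally ramified at $p, q$ is given component-wise by $t \mapsto t^k$; the condition $\zeta^k = 1$ ensures this function is invariant under the identification $r_i \sim r_i\zeta$ and hence descends through the self-node, exhibiting the linear equivalence $\O(kp) \simeq \O(kq)$. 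Thus $R_h$ does map into $\bar{\H}_{k,h,2}^{\mathrm{ch}}$.

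Next, on each single nodal rational component $\pp^1/(r_i \sim r_i\zeta)$, I would use the classical fact that this curve is the special fiber of a versal smoothing whose general fiber is a smooth elliptic curve $E^i$ on which the images of $0$ and $\infty$ differ by a point of order exactly $k$ in $\Pic^0 E^i$ (i.e.\ the $k$-torsion structure coming from $t \mapsto t^k$ deforms to the generic fiber). Performing these smoothings component-by-component gives a family over $\Spec K[[t_1, \ldots, t_h]]$ whose special fiber is the chosen point of $R_h$ and whose general fiber is a chain of $h$ smooth elliptic curves with $p^i - p^{i-1}$ of exact order $k$ in $\Pic^0 E^i$ for each $i$. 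Such a general fiber is, by construction (and by the defining property of $\bar{\H}^{\mathrm{ch}, \circ}_{k,h,2}$ as the component containing such chains, generalizing the description of $X$ in Section~\ref{our_degen}), a point of $\bar{\H}^{\mathrm{ch}, \circ}_{k,h,2}$. Since the whole family lies inside the smooth stack $\bar{\H}_{k,h,2}^{\mathrm{ch}}$, the special fiber lies in the same irreducible component as the general fiber, proving the claim.

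The main thing that needs care is verifying that the simultaneous smoothing really does stay inside $\bar{\H}_{k,h,2}$ and not merely $\bar{\M}_{h,2}$; equivalently, that the function $t^k$ deforms across the smoothing. This follows because on the versal deformation $\Spec K[[x,y,\tau]]/(xy - \tau)$ of a node, a function that is the $k$th power of a uniformizer on each branch extends (using the explicit form $\tau^k$ in the central fiber, matched with $t^k$ on each branch of the smoothing). Aside from this check, the argument is purely deformation-theoretic and relies only on the smoothness statement of Lemma~\ref{lem:smooth}.
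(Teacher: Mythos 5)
Your argument contains a genuine gap stemming from a misreading of what $R_h$ parameterizes. You assert that each curve in $R_h$ is ``a chain of $h$ irreducible components, where the $i$th component is the nodal rational curve $\pp^1/(r_i \sim r_i\zeta)$.'' But that is not what the figures and surrounding text describe: the curves parameterized by $R_h$ are \emph{irreducible} --- a single $\pp^1$ with $h$ self-nodes, obtained by gluing $r_i$ to $r_i\zeta$ for $i = 1, \ldots, h$ on one and the same copy of $\pp^1$. Such a curve has geometric genus $0$ and arithmetic genus $h$, but it is a chain of length $1$, not length $h$. The curve you describe --- a chain of $h$ nodal cubics --- is a different point of $\bar{\H}^{\mathrm{ch}}_{k,h,2}$, and it is exactly the \emph{intermediate} curve that the paper's proof uses.

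As a consequence, your ``component-by-component smoothing'' does not apply to points of $R_h$: there are no components to smooth separately, only $h$ self-nodes of an irreducible curve, and smoothing any of those produces an irreducible curve of higher geometric genus, not a chain of elliptic curves. The argument you give is in fact (the second half of) a correct argument, but starting from the wrong curve. What is missing is the first half of the paper's argument: degenerate within $\bar{\H}^{\mathrm{ch}, \circ}_{k,h,2}$-speak, one must first show that the chain of $h$ nodal cubics lies in the closure of the image of $R_h$ (this is the ``smooth the top nodes'' direction in the paper's figure), and then, separately, that it lies in the closure of the chain-of-elliptic-curves locus (your ``smooth the bottom nodes'' direction). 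Only after both halves can one invoke smoothness of $\bar{\H}^{\mathrm{ch}}_{k,h,2}$ (Lemma~\ref{lem:smooth}) to conclude uniqueness of the component. You have supplied one half and silently assumed the other by misidentifying $R_h$.
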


\begin{rem}
The authors conjecture $\bar{\H}_{k, h, 2}$ is irreducible (when the characteristic does not divide $k$),
which would immediately imply Proposition~\ref{prop:rightcomponent}.
Indeed, in characteristic zero, one can establish this using transcendental techniques.
Moreover, techniques developed by Fulton in \cite{fulton} show that
irreducibility in characteristic zero implies irreducibility when the characteristic is greater than $k$.
However, it seems difficult to extend this argument to small characteristics.
Instead, we give here a direct algebraic proof of Proposition~\ref{prop:rightcomponent},
which requires only our less strict hypothesis that the characteristic
does not divide $k$.
\end{rem}

\begin{proof}
By definition, $\bar{\H}^{\text{ch}, \circ}_{k, h, 2}$ is the component of $\bar{\H}^{\text{ch}}_{k, h, 2}$
containing the locus of chains of elliptic curves.
(Note that the locus of chains of elliptic curves is itself irreducible, as it is isomorphic to
a $g$-fold product 
$X_1(k) \times \cdots \times X_1(k)$ of the classical modular curve $X_1(k)$, which is irreducible
in characteristic not dividing $k$.)
Consider the following points of $\bar{\H}^{\text{ch}}_{k, h, 2}$:

\smallskip

\begin{minipage}{.42\textwidth}
\begin{center}
\begin{tikzpicture}[scale=1.3]
\filldraw (0, 1) circle[radius=0.04];
\filldraw (4, 1) circle[radius=0.04];
\draw (0.7, 0.67) node{$1$};
\draw (1.3, 0.67) node{$\zeta$};
\draw (2.7, 0.67) node{$1$};
\draw (3.3, 0.67) node{$\zeta$};
\draw (2.3, 1.3) node{$0$};
\draw (1.7, 1.3) node{$\infty$};
\draw (0, 1.15) node{$p = 0$};
\draw (4, 1.15) node{$q = \infty$};
\draw (0, 1) .. controls (1, 1) and (2, 0) .. (1, 0);
\draw (2.5, 1.5) .. controls (1, 1) and (0, 0) .. (1, 0);
\draw (1.5, 1.5) .. controls (3, 1) and (4, 0) .. (3, 0);
\draw (4, 1) .. controls (3, 1) and (2, 0) .. (3, 0);

\filldraw[color=violet] (.98, 0.705) circle[radius=0.04];
\filldraw[color=violet] (3.025, 0.705) circle[radius=0.04];

\filldraw[color=blue] (2, 1.307) circle[radius=0.04];

\node at  (4.7, 0.67) {or}; 
\end{tikzpicture}
\end{center}
\end{minipage}
\begin{minipage}{.5\textwidth}
\begin{center}
\begin{tikzpicture}[scale=1.3]
\filldraw (0, 1) circle[radius=0.04];
\filldraw (6, 1) circle[radius=0.04];
\draw (0.7, 0.67) node{$1$};
\draw (1.3, 0.67) node{$\zeta$};
\draw (2.7, 0.67) node{$1$};
\draw (3.3, 0.67) node{$\zeta$};
\draw (4.7, 0.67) node{$1$};
\draw (5.3, 0.67) node{$\zeta$};
\draw (4.3, 1.3) node{$0$};
\draw (2.3, 1.3) node{$0$};
\draw (3.7, 1.3) node{$\infty$};
\draw (1.7, 1.3) node{$\infty$};
\draw (0, 1.15) node{$p = 0$};
\draw (6, 1.15) node{$q = \infty$};
\draw (0, 1) .. controls (1, 1) and (2, 0) .. (1, 0);
\draw (2.5, 1.5) .. controls (1, 1) and (0, 0) .. (1, 0);
\draw (1.5, 1.5) .. controls (3, 1) and (4, 0) .. (3, 0);
\draw (4.5, 1.5) .. controls (3, 1) and (2, 0) .. (3, 0);
\draw (3.5, 1.5) .. controls (5, 1) and (6, 0) .. (5, 0);
\draw (6, 1) .. controls (5, 1) and (4, 0) .. (5, 0);

\filldraw[color=violet] (.98, 0.705) circle[radius=0.04];
\filldraw[color=violet] (3, 0.723) circle[radius=0.04];
\filldraw[color=violet] (5.025, 0.705) circle[radius=0.04];

\filldraw[color=blue] (2, 1.307) circle[radius=0.04];
\filldraw[color=blue] (4, 1.307) circle[radius=0.04];

\end{tikzpicture}
\end{center}
\end{minipage}

\smallskip

By smoothing the top (blue) nodes, these curves are visibly in the closure of the image of $R_h$.
Similarly, by smoothing the bottom (violet) nodes, these curves are visibly in
the closure of the locus of chains of elliptic curves.
Finally, by Lemma~\ref{lem:smooth},
these curves lie in a unique component.
\end{proof}

We finally compute explicitly the restriction of the covers $Y_F(n)$ and $Y_S$ to $R_2$ and $R_3$
respectively. In particular, we will see that these covers are generically \'etale of degree~$2$
and have a unique irreducible component dominating $R_2$ and $R_3$ respectively.
Therefore $Y_F(n)$ and $Y_S$ have a unique irreducible component dominating $\bar{\H}_{k,2,2}^{\text{ch}, \circ}$
and $\bar{\H}_{k,3,2}^{\text{ch}, \circ}$ respectively
(c.f.\ discussion after the proof of Lemma~\ref{lem:etale}).

\smallskip

For $Y_F(n)$, we have
$\O(x - y) \simeq \O(n(0 - \infty))$, so there is a function vanishing
along $n \cdot 0 + y$, with a pole along $n \cdot \infty + x$.
The only such function on the normalization is
$t^n(t - y) / (t - x)$; this function must therefore descend to the nodal curve, i.e.:
\[\frac{r_1^n(r_1 - y)}{r_1 - x} = \frac{(r_1\zeta)^n(r_1\zeta - y)}{r_1\zeta - x} \quad \text{and} \quad \frac{r_2^n(r_2 - y)}{r_2 - x} = \frac{(r_2\zeta)^n(r_2\zeta - y)}{r_2\zeta - x}.\]
The first of these equations is linear in $y$; we may thus solve for $y$
and substitute into the second equation.
Clearing denominators, we obtain
a quadratic equation for $x$, whose coefficients are symmetric in $r_1$ and $r_2$.
Written in terms of the elementary symmetric functions $e_1 = r_1 + r_2$ and $e_2 = r_1r_2$ on $\operatorname{Sym}^2 \pp^1$,
this equation is:
\begin{equation} \label{e-2}
(\zeta^{n + 1} - 1) \cdot x^2 + (\zeta - \zeta^{n + 1}) e_1 \cdot x + (\zeta^{n + 1} - \zeta^2)e_2 = 0.
\end{equation}

This is linear in $e_1$ and $e_2$, so can only be reducible if it has a root $x \in \pp^1$
which is constant (i.e.\ independent of $e_1$ and $e_2$).
But upon setting $e_2 = \infty$, this quadratic has a double root at $x = \infty$
(note that $n \not\equiv 1$ mod $k$, so $\zeta^{n + 1} - \zeta^2 \neq 0$).
Similarly, upon setting $e_1 = e_2 = 0$,
this quadratic has a double root at $x = 0$
(note that $n \not\equiv -1$ mod $k$, so $\zeta^{n + 1} - 1 \neq 0$).
Thus no such constant root exists, and \eqref{e-2} is irreducible as desired.

For $Y_S$, we have $\O(x + y) \simeq \omega(-p - q)$, so there is a section of the dualizing sheaf
vanishing at $x$, $y$, $0$, and $\infty$. When pulled back to the normalization, this gives
a meromorphic $1$-form with poles at the points lying above the nodes ($r_1, r_1\zeta, r_2, r_2\zeta, r_3, r_3\zeta$)
that vanishes at $x$, $y$, $0$, and $\infty$. The only such $1$-form is
\[\alpha = \frac{t(t - x)(t - y) \cdot dt}{(t - r_1)(t - r_1\zeta)(t - r_2)(t - r_2\zeta)(t - r_3)(t - r_3\zeta)}.\]
This $1$-form must therefore descend to a section of the dualizing sheaf on the nodal curve, i.e.:
\[\Res_{t = r_1} \alpha + \Res_{t = r_1\zeta} \alpha = \Res_{t = r_2} \alpha + \Res_{t = r_2\zeta} \alpha = \Res_{t = r_3} \alpha + \Res_{t = r_3\zeta} \alpha = 0.\]
Since the sum of all residues
($\Res_{t = r_1} \alpha + \Res_{t = r_1\zeta} \alpha + \Res_{t = r_2} \alpha + \Res_{t = r_2\zeta} \alpha + \Res_{t = r_3} \alpha + \Res_{t = r_3\zeta} \alpha$) automatically vanishes, this is really just two conditions:
\begin{multline*}
\frac{r_1(r_1 - x)(r_1 - y)}{(r_1 - r_1\zeta)(r_1 - r_2)(r_1 - r_2\zeta)(r_1 - r_3)(r_1 - r_3\zeta)} + \frac{(r_1 \zeta)(r_1\zeta - x)(r_1\zeta - y)}{(r_1\zeta - r_1)(r_1\zeta - r_2)(r_1\zeta - r_2\zeta)(r_1\zeta - r_3)(r_1\zeta - r_3\zeta)} \\
= \Res_{t = r_1} \alpha + \Res_{t = r_1\zeta} \alpha = 0.
\end{multline*}
\begin{multline*}
\frac{r_2(r_2 - x)(r_2 - y)}{(r_2 - r_2\zeta)(r_2 - r_1)(r_2 - r_1\zeta)(r_2 - r_3)(r_2 - r_3\zeta)} + \frac{(r_2 \zeta)(r_2\zeta - x)(r_2\zeta - y)}{(r_2\zeta - r_2)(r_2\zeta - r_1)(r_2\zeta - r_1\zeta)(r_2\zeta - r_3)(r_2\zeta - r_3\zeta)} \\
= \Res_{t = r_2} \alpha + \Res_{t = r_2\zeta} \alpha = 0.
\end{multline*}
The first of these equations is linear in $y$; we may thus solve for $y$
and substitute into the second equation.
Clearing denominators,
we obtain a quadratic equation for $x$, whose coefficients are symmetric in $r_1$, $r_2$, and $r_3$.
Written in terms of the elementary symmetric functions
$e_1 = r_1 + r_2 + r_3$ and $e_2 = r_1r_2 + r_2 r_3 + r_3 r_1$
and $e_3 = r_1 r_2 r_3$ on $\operatorname{Sym}^3 \pp^1$, this equation is:
\[(\zeta + 1) e_2 \cdot x^2 - [\zeta e_1 e_2 + (\zeta^2 + \zeta + 1) e_3] \cdot x + (\zeta^2 + \zeta) e_1 e_3 = 0.\]

To see this is irreducible, it suffices to check irreducibility after specializing $e_1 = 1$,
which yields the equation
\begin{equation} \label{e1-1-3}
(\zeta + 1) e_2 \cdot x^2 - [\zeta e_2 + (\zeta^2 + \zeta + 1) e_3] \cdot x + (\zeta^2 + \zeta) e_3 = 0.
\end{equation}
This is linear in $e_2$ and $e_3$, so can only be reducible if it has a root $x \in \pp^1$
which is constant.
But upon setting $e_2/e_3 = 0$, the roots are $x = \infty$ and $x = (\zeta^2 + \zeta) / (\zeta^2 + \zeta + 1) \neq 0$
(note that $\zeta^2 + \zeta = \zeta(\zeta + 1) \neq 0$ because $\zeta$ is a primitive $k$th root of unity with $k > 2$).
Similarly, upon setting $e_2/e_3 = \infty$, the roots are $x = 0$ and $x = \zeta / (\zeta + 1) \neq \infty$
(again $\zeta + 1 \neq 0$).
It thus remains to observe that $(\zeta^2 + \zeta) / (\zeta^2 + \zeta + 1) \neq \zeta / (\zeta + 1)$
because $\zeta \neq 0$.

\bibliographystyle{amsplain.bst}
\bibliography{spl}

\providecommand{\bysame}{\leavevmode\hbox to3em{\hrulefill}\thinspace}
\providecommand{\MR}{\relax\ifhmode\unskip\space\fi MR }
\providecommand{\MRhref}[2]{%
  \href{http://www.ams.org/mathscinet-getitem?mr=#1}{#2}
}
\providecommand{\href}[2]{#2}
\begin{thebibliography}{10}

\bibitem{BK}
E.~Ballico and C.~Keem, \emph{On linear series on general {$k$}-gonal
  projective curves}, Proc. Amer. Math. Soc. \textbf{124} (1996), no.~1, 7--9.
  \MR{1317030}

\bibitem{bh}
S.~Billey and M.~Haiman, \emph{Schubert polynomials for the classical groups},
  J. Amer. Math. Soc. \textbf{8} (1995), no.~2, 443--482. \MR{1290232}

\bibitem{ccg}
A.~Bj\"{o}rner and F.~Brenti, \emph{Combinatorics of {C}oxeter groups},
  Graduate Texts in Mathematics, vol. 231, Springer, New York, 2005.
  \MR{2133266}

\bibitem{Clifford}
W.~Clifford, \emph{On the classification of loci}, Philosophical Transactions
  of the Royal Society of London \textbf{169} (1878), 663--681.

\bibitem{CPJ}
K.~Cook-Powell and D.~Jensen, \emph{Components of {B}rill-{N}oether loci for
  curves with fixed gonality}, 2019.

\bibitem{CPJ2}
\bysame, \emph{Tropical methods in {H}urwitz-{B}rill--{N}oether theory}, 2020.

\bibitem{CKM}
M.~Coppens, C.~Keem, and G.~Martens, \emph{The primitive length of a general
  {$k$}-gonal curve}, Indag. Math. (N.S.) \textbf{5} (1994), no.~2, 145--159.
  \MR{1284560}

\bibitem{CM99}
M.~Coppens and G.~Martens, \emph{Linear series on a general {$k$}-gonal curve},
  Abh. Math. Sem. Univ. Hamburg \textbf{69} (1999), 347--371. \MR{1722944}

\bibitem{CM00}
\bysame, \emph{Linear series on 4-gonal curves}, Math. Nachr. \textbf{213}
  (2000), 35--55. \MR{1755245}

\bibitem{CM02}
\bysame, \emph{On the varieties of special divisors}, Indag. Math. (N.S.)
  \textbf{13} (2002), no.~1, 29--45. \MR{2014973}

\bibitem{eca}
D.~Eisenbud, \emph{Commutative algebra}, Graduate Texts in Mathematics, vol.
  150, Springer-Verlag, New York, 1995, With a view toward algebraic geometry.
  \MR{1322960}

\bibitem{lls}
D.~Eisenbud and J.~Harris, \emph{Limit linear series: basic theory}, Invent.
  Math. \textbf{85} (1986), no.~2, 337--371. \MR{846932}

\bibitem{im}
\bysame, \emph{Irreducibility and monodromy of some families of linear series},
  Ann. Sci. \'{E}cole Norm. Sup. (4) \textbf{20} (1987), no.~1, 65--87.
  \MR{892142}

\bibitem{ES}
D.~Eisenbud and F.-O. Schreyer, \emph{Relative {B}eilinson monad and direct
  image for families of coherent sheaves}, Trans. Amer. Math. Soc. \textbf{360}
  (2008), no.~10, 5367--5396. \MR{2415078}

\bibitem{eriksson}
K.~Eriksson, \emph{Reduced words in affine {C}oxeter groups}, Proceedings of
  the 6th {C}onference on {F}ormal {P}ower {S}eries and {A}lgebraic
  {C}ombinatorics ({N}ew {B}runswick, {NJ}, 1994), vol. 157, 1996,
  pp.~127--146. \MR{1417291}

\bibitem{fan}
C.~Fan, \emph{Schubert varieties and short braidedness}, Transform. Groups
  \textbf{3} (1998), no.~1, 51--56. \MR{1603806}

\bibitem{fanstem}
C.~Fan and J.~Stembridge, \emph{Nilpotent orbits and commutative elements}, J.
  Algebra \textbf{196} (1997), no.~2, 490--498. \MR{1475121}

\bibitem{fulton}
W.~Fulton, \emph{Hurwitz schemes and irreducibility of moduli of algebraic
  curves}, Ann. of Math. (2) \textbf{90} (1969), 542--575. \MR{260752}

\bibitem{fl}
W.~Fulton and R.~Lazarsfeld, \emph{On the connectedness of degeneracy loci and
  special divisors}, Acta Math. \textbf{146} (1981), no.~3-4, 271--283.
  \MR{611386}

\bibitem{gp}
D.~Gieseker, \emph{Stable curves and special divisors: {P}etri's conjecture},
  Invent. Math. \textbf{66} (1982), no.~2, 251--275. \MR{656623 (83i:14024)}

\bibitem{bn}
P.~Griffiths and J.~Harris, \emph{On the variety of special linear systems on a
  general algebraic curve}, Duke Math. J. \textbf{47} (1980), no.~1, 233--272.
  \MR{563378 (81e:14033)}

\bibitem{ega4}
A.~Grothendieck, \emph{\'{E}l\'{e}ments de g\'{e}om\'{e}trie alg\'{e}brique.
  {IV}. \'{E}tude locale des sch\'{e}mas et des morphismes de sch\'{e}mas.
  {III}}, Inst. Hautes \'{E}tudes Sci. Publ. Math. (1966), no.~28, 255.
  \MR{217086}

\bibitem{hai}
M.~Haiman, \emph{Dual equivalence with applications, including a conjecture of
  {P}roctor}, Discrete Math. \textbf{99} (1992), no.~1-3, 79--113. \MR{1158783}

\bibitem{admis}
J.~Harris and D.~Mumford, \emph{On the {K}odaira dimension of the moduli space
  of curves}, Invent. Math. \textbf{67} (1982), no.~1, 23--88, With an appendix
  by William Fulton. \MR{664324}

\bibitem{cmv}
M.~Hochster, \emph{Cohen-{M}acaulay varieties, geometric complexes, and
  combinatorics}, The mathematical legacy of {R}ichard {P}. {S}tanley, Amer.
  Math. Soc., Providence, RI, 2016, pp.~219--229. \MR{3618036}

\bibitem{JR}
D.~Jensen and D.~Ranganathan, \emph{{B}rill-{N}oether theory for curves of a
  fixed gonality},  (2017).

\bibitem{kempf}
G.~Kempf, \emph{Schubert methods with an application to algebraic curves}, Pub.
  Math. Centrum, Amsterdam (1971).

\bibitem{kl}
S.~Kleiman and D.~Laksov, \emph{On the existence of special divisors}, Amer. J.
  Math. \textbf{94} (1972), 431--436. \MR{0323792}

\bibitem{ksf}
T.~Lam, L.~Lapointe, J.~Morse, A.~Schilling, M.~Shimozono, and M.~Zabrocki,
  \emph{{$k$}-{S}chur functions and affine {S}chubert calculus}, Fields
  Institute Monographs, vol.~33, Springer, New York; Fields Institute for
  Research in Mathematical Sciences, Toronto, ON, 2014. \MR{3379711}

\bibitem{lm}
L.~Lapointe and J.~Morse, \emph{Tableaux on {$k+1$}-cores, reduced words for
  affine permutations, and {$k$}-{S}chur expansions}, J. Combin. Theory Ser. A
  \textbf{112} (2005), no.~1, 44--81. \MR{2167475}

\bibitem{trig}
H.~Larson, \emph{Refined {B}rill-{N}oether theory for all trigonal curves},
  2019.

\bibitem{refinedBN}
\bysame, \emph{A refined {B}rill-{N}oether theory over {H}urwitz spaces}, 2019.

\bibitem{P1}
\bysame, \emph{Universal degeneracy classes for vector bundles on
  $\mathbb{P}^1$ bundles}, 2019.

\bibitem{las}
A.~Lascoux, \emph{Ordering the affine symmetric group}, Algebraic combinatorics
  and applications ({G}\"{o}\ss weinstein, 1999), Springer, Berlin, 2001,
  pp.~219--231. \MR{1851953}

\bibitem{liu}
Q.~Liu, \emph{Reduction and lifting of finite covers of curves}, Proceedings of
  the 2003 Workshop on Cryptography and Related Mathematics, Chuo University,
  2003, pp.~161--180.

\bibitem{maroni}
A.~Maroni, \emph{Le serie lineari speciali sulle curve trigonali}, Ann. Mat.
  Pura Appl. (4) \textbf{25} (1946), 343--354. \MR{24182}

\bibitem{M96}
G.~Martens, \emph{On curves of odd gonality}, Arch. Math. (Basel) \textbf{67}
  (1996), no.~1, 80--88. \MR{1392056}

\bibitem{MS}
G.~Martens and F.-O. Schreyer, \emph{Line bundles and syzygies of trigonal
  curves}, Abh. Math. Sem. Univ. Hamburg \textbf{56} (1986), 169--189.
  \MR{882414}

\bibitem{park}
S.-S. Park, \emph{On the variety of special linear series on a general 5-gonal
  curve}, Abh. Math. Sem. Univ. Hamburg \textbf{72} (2002), 283--291.
  \MR{1941560}

\bibitem{Pf}
N.~Pflueger, \emph{Brill-{N}oether varieties of {$k$}-gonal curves}, Adv. Math.
  \textbf{312} (2017), 46--63. \MR{3635805}

\bibitem{riemann}
B.~Riemann, \emph{Grundlagen f\"ur eine allgemeine theorie der funktionen einer
  ver\"anderlichen complexen gr\"o{\ss}e}, Ph.D. thesis, 1851.

\bibitem{stanley}
R.~Stanley, \emph{On the number of reduced decompositions of elements of
  {C}oxeter groups}, European J. Combin. \textbf{5} (1984), no.~4, 359--372.
  \MR{782057}

\bibitem{stem1}
J.~Stembridge, \emph{On the fully commutative elements of {C}oxeter groups}, J.
  Algebraic Combin. \textbf{5} (1996), no.~4, 353--385. \MR{1406459}

\bibitem{stem2}
\bysame, \emph{Some combinatorial aspects of reduced words in finite {C}oxeter
  groups}, Trans. Amer. Math. Soc. \textbf{349} (1997), no.~4, 1285--1332.
  \MR{1389789}

\bibitem{stem3}
\bysame, \emph{The enumeration of fully commutative elements of {C}oxeter
  groups}, J. Algebraic Combin. \textbf{7} (1998), no.~3, 291--320.
  \MR{1616016}

\end{thebibliography}

\end{document}